\newtheorem{theorem}{Theorem}[section]
\newtheorem{corollary}{Corollary}
\newtheorem{lemma}[theorem]{Lemma}
\newtheorem{proposition}{Proposition}
\newtheorem{conjecture}{Conjecture}
\theoremstyle{definition}
\newtheorem{problem}{Problem}
\newtheorem{definition}[theorem]{Definition}
\newtheorem{remark}{Remark}
\newtheorem{example}{Example}
\newcommand{\rednotes}[1]{}
\newcommand{\reviewerone}[1]{}
\newcommand{\reviewertwo}[1]{}
\providecommand{\divv}{\nabla\!\cdot}
\DeclareMathOperator{\tr}{tr}
\newcommand{\ee}{\mathrm{e}}
\newcommand{\pair}[1]{\left\langle #1 \right\rangle}
\providecommand{\norm}[1]{\lVert#1\rVert}
\providecommand{\abs}[1]{\lvert#1\rvert}
\providecommand{\vect}[1]{\boldsymbol{#1}}
\newcommand{\ud}{\mathrm{d}}
\newcommand{\pd}{\partial}
\newcommand{\RR}{{\mathbb R}}
\newcommand{\TT}{{\mathbb T}}
\newcommand{\vol}{\ud x}
\newcommand{\Diff}{\mathrm{Diff}}
\newcommand{\Diffvol}{{\Diff_{\vol}}}
\newcommand{\LieD}{\mathcal{L}}
\newcommand{\Dens}{\mathrm{Dens}}
\newcommand{\hess}{\mathrm{Hess}}
\newcommand*\GL{\mathrm{GL}}
\newcommand*\id{\mathrm{id}}
\newcommand*\gl{\mathfrak{gl}}
\newcommand*\SO{\mathrm{SO}}
\newcommand*\OO{\mathrm{O}}
\providecommand{\oo}{\mathfrak{o}}
\newcommand*\g{\mathfrak{g}}
\newcommand{\Sym}[1]{\mathrm{P}(#1)}
\newcommand{\TSym}[1]{\mathrm{S}(#1)}
\newcommand{\normal}{\mathcal{N}}
\newcommand{\Ver}{\mathrm{Ver}}
\newcommand{\Hor}{\mathrm{Hor}}
\newcommand{\costfunc}{J}
\DeclareMathOperator{\cov}{cov}
\newcommand{\diag}[1]{\mathrm{D}(#1)}
\newcommand{\poly}[1]{\mathrm{poly}^+_{#1}}
\newcommand{\Orb}[1]{\mathrm{Orb}(#1)}
\newcommand{\diagord}[1]{\mathrm{D}^<(#1)}
\title[Geometry of Matrix Decompositions] 
      {Geometry of Matrix Decompositions Seen Through Optimal Transport and Information Geometry}
\author[Klas Modin]{}
\subjclass{15A23, 53C21, 58B20, 15A18, 49M99, 65F15, 65F40.}
 \keywords{Matrix decompositions, polar decomposition, optimal transport, Wasserstein geometry, Otto calculus, entropy gradient flow, Lyapunov equation, information geometry, Fisher--Rao metric, $QR$ decomposition, Iwasawa decomposition, Cholesky decomposition, spectral decomposition, singular value decomposition, isospectral flow, Toda flow, Brockett flow, double bracket flow, orthogonal group, Hessian metric, multivariate Gaussian distribution.}
\email{klas.modin@chalmers.se}
\thanks{This project has received funding from the European Union’s Horizon 2020 research and innovation programme under grant agreement No 661482, and from the Swedish Foundation for Strategic Research under grant agreement ICA12-0052.}
\begin{document}
\maketitle

\centerline{\scshape Klas Modin$^*$}
\medskip
{\footnotesize
 \centerline{Department of Mathematical Sciences}
   \centerline{Chalmers University of Technology and University of Gothenburg}
   \centerline{SE-412~96 Gothenburg, Sweden}
} 



\begin{abstract}
The space of probability densities is an infinite-dimensional Riemannian manifold, with Riemannian metrics in two flavors: Wasserstein and Fisher--Rao.
The former is pivotal in optimal mass transport (OMT), whereas the latter occurs in information geometry---the differential geometric approach to statistics.
The Riemannian structures restrict to the submanifold of multivariate Gaussian distributions, where they induce Riemannian metrics on the space of covariance matrices.

Here we give a systematic description of classical matrix decompositions (or factorizations) in terms of Riemannian geometry and compatible principal bundle structures.
Both Wasserstein and Fisher--Rao geometries are discussed.
The link to matrices is obtained by considering OMT and information geometry in the category of linear transformations and multivariate Gaussian distributions.
This way, OMT is directly related to the polar decomposition of matrices, whereas information geometry is directly related to the $QR$, Cholesky, spectral, and singular value decompositions.
We also give a coherent description of gradient flow equations for the various decompositions;
most flows are illustrated in numerical examples.

The paper is a combination of previously known and original results.
As a survey it covers
the Riemannian geometry of OMT and polar decompositions (smooth and linear category),
entropy gradient flows, and the Fisher--Rao metric and its geodesics on the statistical manifold of multivariate Gaussian distributions.
The original contributions include new gradient flows associated with various matrix decompositions, new geometric interpretations of previously studied isospectral flows, and a new proof of the polar decomposition of matrices based an entropy gradient flow.
\end{abstract}

\section{Introduction}

The influence of matrix decompositions in scientific computing cannot be overestimated.
Numerical linear algebra, the subject treating computer algorithms for matrix decompositions, is part of the curriculum of almost every mathematics department.
A typical course follows an \emph{algorithmic} approach, based on algebra, combinatorics, and some analysis.
It is also possible, although far less common, to follow a \emph{geometric} approach, based on Riemannian geometry and Lie group theory; this
point of view reveals hidden dynamical features of matrix decompositions. 
The best known examples are perhaps the \emph{isospectral flows}~\cite{Wa1984}, in particular the \emph{Toda flow}~\cite{To1970} with its connection to the $QR$~algorithm~\cite{Sy1982,DeNaTo1983}, and the work by Brockett~\cite{Br1991b} who produced a gradient flow that diagonalize matrices.
There are also other examples, known to specialists but less known among most practitioners of numerical linear algebra~\cite{Ru1954,Ru1958,Bl1985,ChNo1988,Bl1990,ChDr1991,MoVe1991,DeDeLiTo1991,Ch1992,HeMo1992,HeMoPe1994,Ch1995,GeMa1997}.
Overviews and further references are available in survey papers~\cite{Wa1984,Ch1994,Ch2008b,To2013}.
\reviewertwo{
  “There are also other examples. . . ”: references?  \\[2ex]
  DONE!
}

In this paper we present a systematic Riemannian geometric description of classical matrix decompositions.
We thereby give a Lagrangian perspective of decompositions that have previously been studied as Hamiltonian problems, such as the integrable Hamiltonian structure of the Toda flow and its generalizations \cite{Mo1975,Ad1978,ReSe1979,Sy1980}.
We also show how entropy gradient flows, traditionally studied in infinite-dimensional settings, are connected to matrix decompositions.
The examples treated are well suited for a first course on Riemannian geometry, as they provide students with new perspectives on topics familiar from numerical linear algebra. 
In addition, our approach gently introduces the more advanced subjects \emph{optimal mass transport} (see Vilani~\cite{Vi2009}) and \emph{information geometry} (see Amari and Nagaoka~\cite{AmNa2000}).
Throughout the paper we avoid most aspects of analysis, focusing on geometry.

\reviewerone{
  The biggest issue I had with the paper was whether it was a “survey” paper or original research. Clearly it is some combination of both. The abstract describes it as a survey, and there is a large amount of background material, but Sections 2.3–2.4 and about half of Section 3 seem to be new (the remainder of Section 3 which is not new is largely due to the author in another paper). As a survey paper it lacks references (only 23 compared to the 39 that appear in “Generalized Hunter- Saxton...”) and could also use more historical context and explicit examples. As original research it seems to be rather casual about distinguishing between the author’s new work and the material that it is summarizing. This could be resolved with an explicit description in the beginning of the paper about what is old and what is new.
  \\[2ex]
  DONE! A sub-section detailing the contributions has been added.
}

The first step in our geometric approach to matrix decompositions is to provide the space of Gaussian distributions with a Riemannian metric.
There are two standard choices: Wasserstein and Fisher--Rao.
The former occurs in optimal mass transport (OMT) and gives the polar decomposition of matrices.
The latter occurs in information geometry and gives the $QR$, Cholesky, spectral, and singular value decompositions. 
In \autoref{sec:wasserstein} we describe the Wasserstein geometry, through optimal mass transport, and the link to polar decompositions.
We consider two different categories of transformations: diffeomorphisms (infinite-dimensional) and general linear transformations (finite-dimensional).
In \autoref{sec:fisher_rao} we describe the Fisher--Rao geometry, through information geometry, and we show how it gives rise to the aforementioned matrix decompositions.
Throughout these sections we also give a coherent approach to vertical and horizontal Riemannian gradient flows, to recover the matrix decompositions.



We now continue by introducing the basic ingredients of the paper, without specifying any Riemannian structure.




Let $x=(x^1,\ldots,x^n)$ denote Euclidean coordinates on $\RR^n$.
A multivariate Gaussian distribution with mean zero is a distribution with probability density of the form
\begin{equation}\label{eq:gaussians}
  p(x,\Sigma)\,\vol \coloneqq \frac{1}{\sqrt{\det(\Sigma)(2\pi)^n}}\exp(-\frac{1}{2}x^{\top}\Sigma^{-1}x)\,\vol,
\end{equation}
for some symmetric, positive definite \emph{covariance matrix} $\Sigma$.
In the language of information geometry (\emph{cf.} Amari and Nagaoka~\cite{AmNa2000}), the set $\normal_n$ of all such probability distributions constitutes a \emph{statistical manifold}.
That is, $\normal_n$ is a submanifold of the infinite-dimensional manifold
\begin{equation}
  \Dens(\RR^n) = \{ p\, \vol \mid p\in C^\infty(M),\int_{\RR^{n}}p\,\vol = 1 \}
\end{equation}
of all (smooth) probability densities on $\RR^n$.
  There are several ways to equip $\Dens(\RR^n)$ with an infinite-dimensional manifold structure, see for example~\cite{EbMa1970,Ha1982,KrMi1997b}.
  However, as mentioned we shall not go into analysis. 


Consider the space of all symmetric $n$-by-$n$ matrices
\begin{equation}
  \TSym{n} = \left\{  S\in \RR^{n\times n}; S^{\top} = S \right\}.
\end{equation}
Denote the subset of those matrices that are also positive definite by
\begin{equation}
  \Sym{n} = \left\{ \Sigma\in \TSym{n}; x^\top \Sigma x>0,\forall x\in\RR^{n}\backslash\{0\}\right\}.
\end{equation}
Then $\Sym{n}$ is a $\frac{1}{2}n(n+1)$--dimensional manifold that is isomorphic to the statistical manifold~$\normal_n$.
In other words, the mapping
\begin{equation}\label{eq:gaussians2}
  \Sym{n}\ni \Sigma \;\longmapsto\;
  p(\cdot,\Sigma)\,\vol
  \in\Dens(\RR^{n})
\end{equation}
is injective.
$\Sym{n}$ is therefore isomorphic, as a manifold, to $\normal_n$.
This is the first key ingredient towards matrix decompositions.
Let us now explain the second.

Denote by $\Diff(\RR^{n})$ the set of diffeomorphisms of $\RR^{n}$.
Then $\Diff(\RR^{n})$ is an infinite-dimensional Lie group (with respect to a certain topology) that is acting on $\Dens(\RR^{n})$ from the right by \emph{pullback}
\begin{equation}\label{eq:pullback_action}
  \Diff(\RR^{n})\times \Dens(\RR^{n}) \ni (\varphi,p\,\vol) \longmapsto \varphi^*(p\,\vol) = p\circ\varphi \det(D \varphi)\ud x \in \Dens(\RR^{n}).
\end{equation}
The corresponding left action is given by \emph{pushforward}
\begin{equation}
  \Diff(\RR^{n})\times \Dens(\RR^{n}) \ni (\varphi,p\,\vol) \longmapsto \varphi_*(p\,\vol) = (\varphi^{-1})^*(p\,\vol) \in \Dens(\RR^{n}).
\end{equation}
A subgroup of $\Diff(\RR^{n})$ is given by the linear transformations
\begin{equation}
  \{\varphi\in\Diff(\RR^n)\mid \varphi(x) = Ax,\; A\in\GL(n)\}\simeq \GL(n).
\end{equation}
The submanifold $\normal_n$ is invariant under the action of linear transformations (see \autoref{sub:omt_linear} for details).
Since $\normal_n\simeq \Sym{n}$ we can therefore replace the pair of infinite-dimensional manifolds $\Diff(\RR^{n})$ and $\Dens(\RR^{n})$ by the pair of finite-dimensional manifolds $\GL(n)$ and $\Sym{n}$.
This is the second key ingredient towards matrix decompositions.
Indeed, as we shall see all the aforementioned decompositions are obtained through an interplay between the action of $\GL(n)$ on $\Sym{n}$ and compatible Riemannian structures.

Throughout the paper we use the notion of \emph{Riemannian submersions} and \emph{descending metrics}.
A brief, self-contained presentation of these concepts is given in~\cite[\S\!~4]{Mo2015}.
For more details, we refer to Lang~\cite[Ch.\!~XIV]{La1999} and Petersen~\cite[\S\!~3.5]{Pe2006}.

\subsection{Contributions}\label{sec:contributions}
This paper is a combination of a survey and original research.
In this section we clarify what is old and what is new.

A first, general contribution is an explicit connection between matrix decompositions and infinite-dimensional geometry of groups of diffeomorphisms (as studied in \cite{KrMi1997b,KhWe2009} and references therein).
The more specific, original research contributions are
\begin{itemize}
  \item a vertical gradient flow for the polar decomposition of diffeomorphisms (\autoref{subsub:vertical_gradient_flow_smooth});
  \item a lifted entropy gradient flow for the polar decomposition of diffeomorphisms (\autoref{subsub:lifted_gradient_flow_smooth}), and a geometric proof strategy for OMT in the smooth category (\autoref{subsub:limit});
  \item a vertical gradient flow for the polar decomposition of matrices (\autoref{subsub:vert_gradient_flow_linear_omt});
  \item a new, geometric proof for the polar decomposition of matrices (\autoref{sub:polar_decomposition_matrices}, \autoref{thm:polar_decomposition_matrices}), based on convergence to a unique minimum of a lifted entropy gradient flow (\autoref{subsub:lifted_gradient_omt_linear}, \autoref{thm:limit_lifted_entropy_flow_omt_finite});
  \item a new, geometric proof for the $QR$ decomposition of matrices (\autoref{sec:qr}, \autoref{thm:QR}), based on convergence to a unique minimum of a lifted horizontal entropy gradient flow (\autoref{subsub:lifted_gradientflow_QR}, \autoref{thm:limit_lifted_entropy_flow_QR});
  \item a description of the Cholesky decomposition through Fisher--Rao geometry (\autoref{sec:cholesky});
  \item a new interpretation of Brockett's diagonalizing flow \cite{Br1991b} as a pullback entropy gradient flow (\autoref{sub:isospectral}, \autoref{cor:brockett_as_pullback_entropy}), and of the double bracket flow as an entropy gradient flow (\autoref{sub:isospectral}, \autoref{cor:brockett_double_bracket_as_entropy_gradient_flow});
  \item a new horizontal gradient flow to factorize characteristic polynomials and thereby obtain the spectral decomposition (\autoref{subsub:hor_flow_factorize_polynomials});
  \item a description of the singular value decomposition (SVD) through the developed Fisher--Rao geometry (\autoref{sec:svd}).
\end{itemize}

As a survey, the paper covers
\begin{itemize}
  \item the Riemannian structure of OMT and the Wasserstein distance (\autoref{sub:geometry_omt_smooth});
  \item the polar decomposition of diffeomorphisms described geometrically (\autoref{sub:polar_decomposition_smooth});
  \item the Fokker--Planck and heat equations as entropy gradient flows (\autoref{subsub:entropy_gradient_flow_smooth});
  \item the geometric description of optimal transport in the linear category (\autoref{sub:omt_linear});
  \item the polar decomposition of matrices described geometrically (\autoref{sub:omt_linear});
  \item the Fisher--Rao metric on the space of multivariate Gaussian distributions and the associated geodesics (\autoref{sec:fisher_rao});
  \item the extension of Fisher--Rao to the Riemannian metric on $\GL(n)$ associated with the $QR$ (or Iwasawa) decomposition (\autoref{sec:qr});
  \item the homogeneous space structure of the space of positive definite symmetric matrices (\autoref{sec:eigen});
  \item the Fisher--Rao metric on the space of diagonal positive definite matrices and the associated geodesics (\autoref{subsub:descending_metric_diag});
  \item the geometric description of isospectral flows (\autoref{sub:isospectral}).
\end{itemize}

Some readers might be interested in specific parts, but not the whole paper.
Therefore, we have tried to keep each section as independent as possible.
For example, sections \autoref{sub:geometry_omt_smooth}--\autoref{sub:polar_decomposition_smooth}, \autoref{sub:omt_linear}--\autoref{sub:polar_decomposition_matrices}, \autoref{sub:principal_bundle_qr}--\autoref{sec:qr}, and \autoref{sec:eigen} are almost independent in themselves.
Yet, care has been taken to point out both similarities and differences between the decompositions in Wasserstein geometry and in Fisher--Rao geometry.




\rednotes{

We have seen in \autoref{sec:fisher_rao} that every statistical manifold comes with a unique, invariant Riemannian metric---the Fisher--Rao metric.
Since $\Sym{n}$ is identified with a statistical manifold, it also comes with the Fisher--Rao metric.
To derive how it looks like, we first need to specify what invariance means in this case.
That is, we need to specify the subgroup of $\Diff(\RR^{n})$ that preserves the statistical manifold~\eqref{eq:gaussians2}.
First notice that it must consists of linear invertible maps $x\mapsto A x$, with $A$ belonging to some group of square matrices.
It is, in fact, the group $\GL(n)$ of all linear invertible matrices, as we shall now see.

The action of $\varphi\colon x\mapsto Ax$ on $p(\cdot,W)\vol$ is given by
\begin{equation}
  \begin{split}
  \varphi^*(p(\cdot,W)\vol)(x) &= p(Ax,W)\det(A)\vol \\
  &= \left( \sqrt{\frac{\det(A)^2{\det(W)}}{{(2\pi)^n}}}\exp(-\frac{1}{2}(Ax)^{\top}W Ax) \right)\vol \\
  &= \left( \sqrt{\frac{{\det(A^\top WA)}}{{(2\pi)^n}}}\exp(-\frac{1}{2}x^{\top}A^\top W Ax) \right)\vol \\
  &= p(x,A^\top WA)\vol.
  \end{split}
\end{equation}
Since $A^\top W A$ is symmetric and positive definite whenever $A\in\GL(n)$, it follows that $p(x,A^\top WA)\vol$ is another multivariate Gaussian distribution with mean zero.
So, indeed, the subgroup of diffeomorphisms that preserve the distribution \eqref{eq:gaussians3} consists of all linear invertible maps.
Also, we see that the corresponding action of $\GL(n)$ on $\Sym{n}$ is given by
\begin{equation}\label{eq:action_on_sym}
  \GL(n)\times\Sym{n}\ni(A,W)\mapsto A^\top W A \in \Sym{n}.
\end{equation}
The Fisher--Rao metric on $\Sym{n}$ must therefore be invariant with respect to~\eqref{eq:action_on_sym}.

An infinitesimal symmetric variation of a positive symmetric matrix is again a positive symmetric matrix.
The tangent space of an element in $\Sym{n}$ is therefore given by the space $\TSym{n}$ of all symmetric matrices.


\begin{lemma}\label{lem:sym_FR}
  The Fisher--Rao metric on $\Sym{n}$ is given by
  \begin{equation}\label{eq:FRsym}
    g^{F}_{W}(U,V) = \tr(W^{-1}UW^{-1}V), \quad U,V\in \TSym{n}.
  \end{equation}
\end{lemma}

\begin{proof}
  The action of $A\in\GL(n)$ on $(W,U)\in T\Sym{n}$ is given by $A\cdot (W,U) = (A^\top W A,A^\top U A)$.
  We then have
  \begin{equation}
    \begin{split}
        g^{F}_{A^\top W A}(A^\top U A,A^\top V A) 
        &= \tr((A^{\top}WA)^{-1}A^{\top}UA (A^{\top}WA)^{-1} A^{\top}VA) \\
        &= \tr(A^{-1}W^{-1}A^{-\top}A^{\top}UA A^{-1}W^{-1}A^{-\top} A^{\top}VA) \\
        &= \tr(A^{-1}W^{-1}UW^{-1}VA) \\
        & \text{(using cyclic property: $\tr(ABC) = \tr(BCA)$)} \\
        &= \tr(W^{-1}UW^{-1}VA A^{-1}) \\
        &= \tr(W^{-1}UW^{-1}V) = g^{F}_{W}(U,V)
    \end{split}
  \end{equation}
  The metric $g^{F}$ is therefore invariant.
  From the uniqueness theorem by~\citet{Ch1982} it then follows that~$g^F$ is the Fisher--Rao metric up to multiplication by a positive scalar.\todo{Compute the correct constant.}
\end{proof}

In summary, the statistical manifold of multivariate Gaussian distributions with zero mean is identified with $\Sym{n}$, and the Fisher--Rao metric represented on $\Sym{n}$ is given by~\eqref{eq:FRsym}.
Let us now discuss the connection to matrix factorizations.

\emph{Information geometry} makes use of differential geometry to unravel concepts in statistics and information theory~\cite{AmNa2000}.
The central object is the \emph{Fisher--Rao metric}: a Riemannian metric on parameterized sets of probability distributions, called \emph{statistical models}.
The Fisher--Rao metric is directly related to entropy.
Its most fundamental property, however, is invariance under reparameterizations. 

A modern notion is to regard statistical models as submanifolds of the infinite-dimensional manifold of all probability distributions---a viewpoint that has disclosed the Fisher--Rao metric under other designations, in contexts outside statistics and information theory.
Scattered in specialized journals, these interconnections are largely unknown to statisticians.
We give here a concise, geometric review of the Fisher--Rao metric and elucidate its appearance in \emph{topological hydrodynamics}, \emph{matrix factorizations}, and \emph{geometric quantum mechanics}---three compelling interconnections, not yet fully explored.
Our ambition is
to enable geometrically intrigued statisticians and information theorists to make further progress.

The paper is organized as follows...

}

\vspace*{-4pt}
\section{Wasserstein geometry and polar decompositions}\label{sec:wasserstein}

Recall the (real) polar decomposition of matrices: if $A\in\GL(n)$ there are unique matrices $P\in \Sym{n}$ and $Q\in \OO(n)$ such that $A=PQ$.
Brenier~\cite{Br1991} showed that this decomposition provides a finite-dimensional ``toy example'' of optimal mass transport.
In this section we shall look at optimal mass transport and polar decompositions from the point-of-view of Riemannian geometry.
Our presentation is essentially a combination of results found in~\cite{Br1991,Ot2001} and in~\cite[App.\!~5]{KhWe2009}, but with some new aspects, especially related to vertical and lifted gradient flows (as listed in \autoref{sec:contributions}).

\reviewertwo{
  At the beginning of Section 2 it is pointed out that the following presentation is based on previous results. I would strongly recommend to give further references throughout the Section itself. Possibly it may be helpful to refer to [3] \texttt{(J. Lott, given by reviewer)}.
  The citation is \citet{Lo2008}.
  \\[2ex]
  Citations are now given throughout the text.
  In particular, the paper by Lott suggested by the reviewer is cited.
}

\reviewertwo{
  A general reference for the concept of constructing metrics via descent would be appropriate. Probably the author can choose such a reference much better than I could. \\[2ex]
  DONE! (Added in the introduction.)
}

\reviewerone{
  On page 3 I would cite [13] Appendix 5; it’s a bit much to cite a whole book without specifying where the reader should go. \\[2ex]
  DONE!
}

\subsection{Optimal transport in the smooth category}\label{sub:geometry_omt_smooth}
Let us recall the classical $L^2$ optimal mass transport (OMT) problem of Monge~\cite{Mo1781}.
Usually, this problem is presented and analyzed in the very general setting of probability measures and measurable maps.
Here we present it in the smooth category, focusing on geometry rather than analysis.
\begin{problem}[Smooth OMT]\label{prob:smooth_omt}
 {\it Given $\mu_0,\mu_1\in\Dens(\RR^n)$, find $\varphi\in\Diff(\RR^n)$ that minimizes
  \begin{equation}\label{eq:omt_functional}
    \costfunc(\varphi) = \int_{\RR^n} \norm{x-\varphi(x)}^2\,\mu_0
  \end{equation}\\
  under the constraint
  \begin{equation}\label{eq:omt_constraint}
    \varphi_*\mu_0 = \mu_1.
  \end{equation}}
\end{problem}

Before diving into the geometric description of this problem, we list some properties.
\begin{enumerate}
  \item If we write the densities as $\mu_0 = \rho_0\ud x$ and $\mu_1 = \rho_1\ud x$, then the constraint~\eqref{eq:omt_constraint} reads
  \begin{equation}
    \det(D\varphi^{-1})\rho_0\circ\varphi^{-1} = \rho_1,
  \end{equation}
  where $\det(D\varphi^{-1})$ denotes the Jacobian determinant of $\varphi^{-1}$.
  \item The problem is symmetric in $\mu_0$ and $\mu_1$ under the inversion $\varphi\mapsto\varphi^{-1}$.
  Indeed, if $\varphi$ fulfills the constraint $\varphi_*\mu_0=\mu_1$, then
  \begin{equation}\label{eq:func_omt}
    \begin{split}
    \costfunc(\varphi) &= \int_{\RR^n} \norm{x-\varphi(x)}^2\,\mu_0 \\ &= \int_{\RR^n} \varphi_*\left(\norm{x-\varphi(x)}^2\,\mu_0\right)
    = \int_{\RR^n} \norm{x-\varphi^{-1}(x)}^2\,\mu_1.
    \end{split}
  \end{equation}
  Thus, if $\varphi$ is a solution to~\autoref{prob:smooth_omt}, then $\varphi^{-1}$ is a solution to the reverse problem ($\mu_0$ exchanged for $\mu_1$ and vice versa).
\end{enumerate}

To describe the geometry of~\autoref{prob:smooth_omt}, we think of $\Diff(\RR^n)$ and $\Dens(\RR^n)$ as infinite-dimensional manifolds.
The tangent space $T_\varphi\Diff(\RR^n)$ is identified with the space of smooth maps $C^\infty(\RR^n,\RR^n)$.
A Riemannian metric on $\Diff(\RR^n)$ is given by
\begin{equation}\label{eq:L2noninvariant}
  \mathcal{G}_\varphi(\dot\varphi,\dot\varphi) = \int_{\RR^{n}} \norm{\dot\varphi(x)}^{2}\mu_0 .
\end{equation}
\reviewerone{
  The factor of 1/2 in front of the metric in equation (3) seems odd to me. It does not affect the
geodesics, obviously, but it does keep showing up as a factor of 2 in the gradient flows that the author works with. Of course the 1/2 is standard when talking about the kinetic energy, but the Riemannian metric would ordinarily not have it. \\[2ex]
DONE! Now the factor 1/2 has been removed. As the reviewer hinted, many of the equations now look simpler.
}%
The geodesic equation associated with this Riemannian metric is very simple: since~$\mathcal{G}$ is independent of the base point $\varphi$, and since $C^\infty(\RR^n,\RR^n)$-variations of elements in $\Diff(\RR^n)$ remain in $\Diff(\RR^n)$, it is given by
\begin{equation}\label{eq:geodesic_eq_omt}
  \ddot\varphi = 0.
\end{equation}
In particular, the geometry is flat.

Given two diffeomorphisms $\varphi_0$ and $\varphi_1$, it follows from~\eqref{eq:geodesic_eq_omt} that there is a unique geodesic curve $[0,1]\mapsto\gamma(t)$ between them, given by
\begin{equation}
  \gamma(t) = (1-t)\varphi_0 + t \varphi_1.
\end{equation}
There is no guarantee that the path remains in $\Diff(\RR^n)$, although it always remains in $C^\infty(\RR^n,\RR^n)$, but we disregard this for now.
Instead, let us compute the energy (squared length) of~$\gamma$.
By definition, it is given by
\begin{equation}
  \begin{split}
  d^2(\varphi_0,\varphi_1) &\coloneqq \int_{0}^{1}\mathcal{G}_{\gamma(t)}(\dot\gamma(t),\dot\gamma(t))\,\ud t  = \int_{0}^{1} \int_{\RR^{n}}\norm{\varphi_1(x)-\varphi_0(x)}^{2}\,\mu_0 \,\ud t \\
  &=  \int_{\RR^{n}}\norm{\varphi_1(x)-\varphi_0(x)}^{2}\,\mu_0.
  \end{split}
\end{equation}
Therefore, if $\id$ denotes the identity mapping on $\RR^{n}$, the functional $\costfunc$ in~\eqref{eq:omt_functional} can be written
\begin{equation}
  \costfunc(\varphi) = d^2(\id,\varphi).
\end{equation}
Hence, OMT becomes a problem of Riemannian geometry: find the shortest geodesic curve from the identity to the constraint set $\mathcal C(\mu_0,\mu_1)\coloneqq\{\varphi\mid\varphi_*\mu_0=\mu_1 \}$.
This observation is already compelling, but it gets even more interesting.

The pushforward yields a left action of $\Diff(\RR^{n})$ on $\Dens(\RR^{n})$, with action map
\begin{equation}
  (\varphi,\mu) \mapsto \varphi_*\mu.
\end{equation}
The isotropy group of $\mu_0$ with respect to this action is given by the subgroup
\begin{equation}
  \Diff_{\mu_0}(\RR^{n})  = \{\varphi\in\Diff(\RR^{n})\mid \varphi_*\mu_0=\mu_0 \}.
\end{equation}
Notice that the constraint set $\mathcal C(\mu_0,\mu_1)$ is closed under the right action of $\Diff_{\mu_0}(\RR^{n})$.
That is, if $\zeta\in\mathcal C(\mu_0,\mu_1)$ and $\psi\in\Diff_{\mu_0}(\RR^{n})$, then $\zeta\circ\psi \in \mathcal C(\mu_0,\mu_1)$.
It is a short calculation to show that the reverse is also true: if $\zeta,\eta\in\mathcal C(\mu_0,\mu_1)$, then there exists a $\psi\in\Diff_{\mu_0}(\RR^{n})$ such that $\eta = \zeta\circ\psi$.
From the so called ``Moser-trick'' \cite{Mo1965} extended to non-compact manifolds~\cite{GrSh1979} we get that $\mathcal C(\mu_0,\mu_1)$ is always non-empty.
Thus, by fixing $\zeta\in\mathcal C(\mu_0,\mu_1)$, the constraint set $\mathcal C(\mu_0,\mu_1)$ is isomorphic to $\Diff_{\mu_0}(\RR^{n})$ by the mapping
\begin{equation}\label{eq:fiber_isomorphism_diffeos}
  \mathcal C(\mu_0,\mu_1)\ni \eta \mapsto \zeta^{-1}\circ\eta\in\Diff_{\mu_{0}}(\RR^{n}),
\end{equation}
so $\Diff_{\mu_0}(\RR^{n})$ parameterizes $\mathcal C(\mu_0,\mu_1)$.
If we define a projection mapping 
\begin{equation}\label{eq:projection_smooth_omt}
  \pi\colon\Diff(\RR^{n})\to\Dens(\RR^{n}), \quad \pi(\varphi) = \varphi_*\mu_0,
\end{equation}
\reviewerone{
  At the top of page 5 the use of variables makes things slightly confusing: in the first equation it is important that $\varphi$ is a fixed but arbitrary element of $C(\mu_0, \mu_1)$, while in the immediate next equation $\varphi$ is an arbitrary element of $\Diff(\RR^n)$. This could be a bit clearer just by specifying where these elements live when they are used. \\[2ex]
  DONE! (Now using $\zeta$ instead of $\varphi$ for the fixed element in the constraint set)
}%
then the constraint set $\mathcal C(\mu_0,\mu_1)$ is given by $\pi^{-1}(\mu_1)$.
\rednotes{
We call $\pi^{-1}(\mu_1)$ the \emph{fiber} of $\mu_1$.
Since $\pi^{-1}(\mu)$ is non-empty for every $\mu\in\Dens(\RR^{n})$, the left action of $\Diff(\RR^{n})$ on $\Dens(\RR^{n})$ is transitive.
By construction, the space of fibers is isomorphic to $\Dens(\RR^{n})$.
}
The sets $\pi^{-1}(\mu)$ are then the \emph{fibers} of a principal $\Diff_{\mu_0}(\RR^{n})$-bundle over $\Dens(\RR^{n})$:
\begin{equation}\label{eq:principal_bundle_omt}
  \begin{tikzcd}
    \Diff(\RR^{n}) \arrow[hookleftarrow]{r}{} \arrow{d}{\pi} & \Diff_{\mu_0}(\RR^{n}) \\
    \Dens(\RR^{n}) &
  \end{tikzcd}
\end{equation}
In other words, the space of left co-sets
\begin{equation}
  \Diff(\RR^{n})/\Diff_{\mu_0}(\RR^{n}) = \{ [\varphi]\coloneqq\varphi\circ\Diff_{\mu_0}(\RR^{n}) \mid \varphi\in\Diff(\RR^{n}) \}
\end{equation}
is isomorphic to $\Dens(\RR^{n})$ by the mapping~\eqref{eq:projection_smooth_omt}.

Let us now compute the derivative of the principal bundle projection~$\pi$ at $\varphi\in\Diff(\RR^{n})$.
To this end, let $\gamma(t)$ be a curve in $\Diff(\RR^{n})$ with $\gamma(0) = \varphi$ and let $u(t) = \dot\gamma(t)\circ\gamma(t)^{-1}$.
By the Lie derivative theorem (cf.~\cite[\S\,4.3]{MaRa1999}) it follows that
\begin{equation}
  \frac{\ud}{\ud t}\gamma(t)^*\mu = \gamma(t)^*\LieD_{u(t)}\mu,
\end{equation}
for any $\mu\in\Dens(\RR^{n})$.
From the product rule and $\gamma(t)_*\mu_0 \coloneqq (\gamma(t)^{-1})^*\mu_0$ we then get
\begin{equation}
  0 = \frac{\ud}{\ud t}\left( \gamma(t)^*\gamma(t)_*\mu_0 \right) = \gamma(t)^{*} \frac{\ud}{\ud t}\gamma(t)_{*}\mu_0 + \gamma(t)^{*}\LieD_{u(t)}\gamma(t)_{*}\mu_0.
\end{equation}
Applying the pullback of $\gamma(t)$ and rearranging the terms now yields
\begin{equation}
  \frac{\ud}{\ud t}\gamma(t)_{*}\mu_0 = - \LieD_{u(t)}\gamma(t)_{*}\mu_0.
\end{equation}
Since $\gamma(0)_{*}\mu_0 = \pi(\varphi)$ we get that the derivative of $\pi$ applied to $U\in T_{\varphi}\Diff(\RR^{n})$ is given by
\begin{equation}\label{eq:Dpi_inf_dim}
  D\pi(\varphi)\cdot U = - \LieD_{u}\varphi_{*}\mu_0,
  \qquad u \coloneqq U\circ\varphi^{-1}.
\end{equation}
Now, define the function $\rho\in C^{\infty}(\RR^{n})$ by
\begin{equation}
  \varphi_{*}\mu_0 = \rho \vol .
\end{equation}
Since $\mu_0 = \rho_0\vol$ and $\varphi_{*}(\rho_0\vol) = (\rho_0\circ\varphi^{-1})\varphi_{*}\vol$, it follows that
\begin{equation}
  \rho=\det(D\varphi^{-1})\rho_0\circ\varphi^{-1}.
\end{equation}
The derivative $D\pi$ can now be written
\begin{equation}\label{eq:deriv_pi}
  D\pi(\varphi)\cdot U = -\LieD_{u}(\rho\vol) = - (\nabla \rho\cdot u + \rho \divv{u})\vol = - (\divv{\rho u})\vol .
\end{equation}
Thus, we see that a vector $u\circ\varphi \in T_{\varphi}\Diff(\RR^{n})$ is in the kernel of $D\pi(\varphi)$ if and only if $\divv{\rho u} = 0$.
This leads us to the
the \emph{vertical distribution} 
\begin{equation}\label{eq:vert_dist_omt_infinite}
  \Ver_{\varphi} = \{ u\circ\varphi\in C^{\infty}(\RR^{n},\RR^{n})\mid \divv{\rho u}=0,\;\rho=\abs{D\varphi^{-1}}\rho_0\circ\varphi^{-1} \}.
\end{equation}
Geometrically, $\Ver_{\varphi}$ is the tangent space of the fiber going through $\varphi$.
Notice that the vertical distribution is defined without reference to a Riemannian metric: it is given solely by the principal bundle structure~\eqref{eq:principal_bundle_omt}.
Also notice that $u\circ\varphi \in \Ver_{\varphi}$ implies $u \in T_\id \Diff_{\rho\vol}(\RR^{n})$, i.e., $u$ is an element of the Lie algebra of $\Diff_{\rho\vol}(\RR^{n})$.

Let us now return to Riemannian geometry and the metric~\eqref{eq:L2noninvariant}.
The point is that the Riemannian metric $\mathcal G$ on $\Diff(\RR^{n})$ is compatible with the principal bundle~\eqref{eq:principal_bundle_omt}.
That is,~$\mathcal G$ is invariant under the action from the right of $\Diff_{\mu_0}(\RR^{n})$ on $\Diff(\RR^{n})$
\begin{equation}\label{eq:invariance_Wasserstein}
  \mathcal G_{\varphi}(\dot\varphi,\dot\varphi) = \mathcal G_{\varphi\circ\psi}(\dot\varphi\circ\psi,\dot\varphi\circ\psi), \quad\forall\, \psi\in \Diff_{\mu_0}(\RR^{n}).
\end{equation}
This means that $\mathcal G$ induces a Riemannian metric $\bar{\mathcal G}$ on $\Diff(\RR^{n})/\Diff_{\mu_0}(\RR^{n}) \simeq\Dens(\RR^{n})$.
Let us now compute what it is.

We first need
the orthogonal complement of the vertical distribution with respect to the Riemannian metric~\eqref{eq:L2noninvariant}.
This is the \emph{horizontal distribution}, given by
\begin{equation}\label{eq:horizontal_omt}
  \Hor_{\varphi} = \{ \nabla f\circ\varphi \mid f\in C^{\infty}(\RR^{n}) \}.
\end{equation}
\reviewertwo{
  The decomposition into vertical and horizontal bundle is well known. In the article it is only shown that (8) is a subset of the horizontal bundle, not that it is exhaustive. Maybe refer to Brenier’s original comment about the Helmholtz decomposition. \\[2ex]
  DONE! (added reference to the classical Helmholtz decomposition of vector fields)
}%
Indeed, if $u\circ\varphi\in \Ver_{\varphi}$, then
\begin{equation}
  \mathcal{G}_{\varphi}(u\circ\varphi,\nabla f\circ\varphi) = \int_{\RR^{n}} \nabla f\cdot u \,\rho\vol = \int_{\RR^{n}}-f\underbrace{\divv{\rho u}}_{0}\, \vol = 0.
\end{equation}
That every horizontal vector is of the form in \eqref{eq:horizontal_omt} follows from the classical Helmholtz decomposition of vector fields on $\RR^{n}$.

The vertical distribution depends on the reference volume form $\mu_0$.
A curious property of the Wasserstein geometry is that the horizontal distribution \emph{does not} depend on~$\mu_0$.

\reviewerone{
  Also on page 5, the formula for the vertical distribution is not completely obvious, so the author should either derive it or cite it. Here I thought the whole discussion on submersions felt rather rushed compared to the discussion on the metric and its geodesics and the group actions. For example the author mentions “from general results on Riemannian submersions” without a citation, and these are not completely standard results even in Riemannian geometry. \\[2ex]
  DONE! The full metric is now derived and there is a longer discussion about the distributions.
}

Since $\Hor_\varphi$ is transversal to the kernel of $D\pi(\varphi)$, it follows that
\begin{equation}
  D\pi(\varphi)\colon \Hor_{\varphi}\to T_{\pi(\varphi)}\Dens(\RR^{n})
\end{equation}
is an isomorphism.
The Riemannian metric $\bar{\mathcal G}$ is then defined as
\begin{equation}
  \bar{\mathcal G}_{\mu}(\dot\mu,\dot\mu) = \mathcal G_{\varphi}(D\pi(\varphi)^{-1}\cdot\dot\mu,D\pi(\varphi)^{-1}\cdot\dot\mu), \qquad \varphi\in \pi^{-1}(\mu).
\end{equation}
Due to the invariance~\eqref{eq:invariance_Wasserstein} the definition is independent of the choice of $\varphi\in \pi^{-1}(\mu)$.

To compute $D\pi(\varphi)^{-1}\cdot\dot\mu$ it follows from~\eqref{eq:horizontal_omt} that we need to find $\nabla f$ that fulfills
\begin{equation}
  D\pi(\varphi)\cdot \nabla f \circ \varphi = \dot \mu
\end{equation}
From~\eqref{eq:deriv_pi} this equation is given by
\begin{equation}
  -\divv{\rho \nabla f} = \dot\rho,
\end{equation}
where $\dot\mu = \dot\rho\vol$.
The operator $\Delta_{\rho} \coloneqq \divv \rho \nabla$ is an elliptic differential operator, invertible up to addition of constants.
Thus, $\nabla f = -\nabla \Delta_{\rho}^{-1}\dot\rho$ and $\bar{\mathcal G}$ is given by
\begin{eqnarray*}
  \bar{\mathcal G}_{\mu}(\dot\mu,\dot\mu)
    &= \mathcal G_{\varphi}(\nabla \Delta_{\rho}^{-1}\dot\rho\circ\varphi,\nabla \Delta_{\rho}^{-1}\dot\rho\circ\varphi)
\end{eqnarray*}
\begin{equation}\label{eq:wasserstein_metric_smooth_omt}
  \begin{split}
    &= \int_{\RR^{n}}\norm{\nabla \Delta_{\rho}^{-1}\dot\rho\circ\varphi}^2 \vol \\
    &= \int_{\RR^{n}}\norm{\nabla \Delta_{\rho}^{-1}\dot\rho}^2 \mu \\
    &= \int_{\RR^{n}}\pair{\nabla \Delta_{\rho}^{-1}\dot\rho,\rho\nabla \Delta_{\rho}^{-1}\dot\rho} \vol \\
    &= \int_{\RR^{n}}(-\Delta_{\rho}^{-1}\dot\rho)\dot\rho\, \vol.
  \end{split}
\end{equation}
Notice how the dependence on $\varphi$ is removed (by the change of variables in the third equality), as expected from the geometric considerations.
\reviewertwo{
  eq. (7): was the symbol $\rho$ introduced before? \\[2ex]
   FIXED!
}%
\reviewerone{
  In the middle of page 5, the 3-line computation that begins with “Consequently,” seems to come out of nowhere and does not make sense, since the author has not said anything about horizontal or vertical vectors yet; this looks like the result of a last-minute rearrangement of the paper. \\[2ex]
  FIXED!
}%
\reviewertwo{
  eq. (7) may be a bit much for someone not already knowing this result. A reference to a more detailed examination of the tangent space at a density may help (e.g. the relevant part of the aforementioned article by Otto). This will also simplify understanding the last equation before Remark 2.1, $f = \Delta^{-1}\dot{\bar\gamma}(0)$. \\[2ex]
  DONE! (A full derivation is now given)
}%

Using the language of geometry, the projection $\pi$ is a \emph{Riemannian submersion} between $(\Diff(\RR^{n}),\mathcal G)$ and $(\Dens(\RR^{n}),\bar{\mathcal G})$.
(See~\cite[\S~3.5]{Pe2006} for details on Riemannian submersions.)
\reviewertwo{
  “From general results on Riemannian submersions it follows...”: this should come with a reference. \\[2ex]
  DONE!
}%
Now, a general result on Riemannian submersions by Hermann~\cite{He1960} states that a geodesic curve $\gamma(t)$ such that $\dot\gamma(0)\in\Hor_{\gamma(0)}$ also fulfills $\dot\gamma(t)\in \Hor_{\gamma(t)}$ for any~$t$.
That is, $\gamma(t)$ remains tangential to the horizontal distribution.
Naturally, such geodesics are called \emph{horizontal}.
Geodesics on $(\Diff(\RR^{n}),\mathcal G)$ and $(\Dens(\RR^{n},\bar{\mathcal G})$ are related as follows.
\begin{enumerate}
  \item If $\gamma(t)$ is a horizontal geodesic on $\Diff(\RR^{n})$ then $\bar\gamma(t) \coloneqq \pi(\gamma(t))$ is a geodesic on $\Dens(\RR^{n})$.

  \item Conversely, if $\bar\gamma(t)$ is a geodesic on $\Dens(\RR^{n})$, then for every $\varphi_0\in\pi^{-1}(\bar\gamma(0))$ there exists a unique horizontal geodesic $\gamma(t)$ on $\Diff(\RR^{n})$ such that $\bar\gamma(t) = \pi(\gamma(t))$ and $\gamma(0)=\varphi_0$.

\end{enumerate}

Since the shortest path between the identity $\id$ and the fiber $\pi^{-1}(\mu_1)$ must be horizontal (otherwise we can make it shorter), it follows that \autoref{prob:smooth_omt} reduces to finding a geodesic $\bar\gamma(t)$ on $\Dens(\RR^{n})$ with $\bar\gamma(0)=\mu_0$ and $\bar\gamma(1)=\mu_1$.
The solution $\varphi$ is then obtained as the endpoint of the corresponding horizontal geodesic $\gamma(t)$ with $\gamma(0)=\id$.



Let us now explicitly explore horizontal solution geodesics of \autoref{prob:smooth_omt}.
From~\eqref{eq:geodesic_eq_omt} and the fact that $\Hor_{\id} = \{ \nabla f\mid f\in C^{\infty}(\RR^{n} \}$ it follows that such a curve is of the form
\begin{equation}\label{eq:hor_geodesics_omt_smooth}
  \gamma(t)(x) = x + t \nabla f(x) = \nabla (\underbrace{\frac{1}{2}\norm{x}^{2} + t f(x)}_{\phi(t,x)}).
\end{equation}
Since $\gamma(1) \in \pi^{-1}(\mu_1)$ we get a condition on $\phi(x)\coloneqq \phi(1,x)$ that reads
\begin{equation}
  (\nabla\phi)_*\mu_0 = \mu_1 \iff (\nabla\phi)^*\mu_1 = \mu_0. 
\end{equation}
Expressed in the variables $\rho_0$ and $\rho_1$, this is the Monge--Ampère equation for $\phi$
\begin{equation}\label{eq:monge_ampere}
  \det(\nabla^2\phi(x)) = \frac{\rho_0(x)}{\rho_1(\nabla\phi(x))}.
\end{equation}
Thus we recover the long-established result that \autoref{prob:smooth_omt} is equivalent to the Monge--Ampère equation.

Let us now summarize this section.
The flat Riemannian metric~\eqref{eq:L2noninvariant} on $\Diff(\RR^{n})$ induces a Riemannian metric on the space of probability densities $\Dens(\RR^{n})$.
\autoref{prob:smooth_omt} then becomes a standard geodesic problem: on the Riemannian manifold $(\Dens(\RR^{n}),\bar{\mathcal{G}})$ find the shortest curve $\bar\gamma(t)$ such that $\bar\gamma(0)=\mu_0$ and $\bar\gamma(1)=\mu_1$.
Once such a curve is found, the solution $\varphi$ to \autoref{prob:smooth_omt} is the endpoint of the corresponding horizontal geodesic $\gamma(t)$, given by~\eqref{eq:hor_geodesics_omt_smooth} with $f = -\Delta_{\rho_0}^{-1}\dot{\bar\gamma}(0)$.


\rednotes{



\begin{remark}
  It will be a formal presentation, although it can be made rigorous, for example by working instead of $\RR^n$ on a compact manifold, either using the Banach manifold category (as in Ebin and Marsden~\cite{EbMa1970}) or the tame Fréchet manifold category (as in Hamilton~\cite{Ha1982}).
\end{remark}
}

\begin{remark}
  The setup in this section can be made completely geometric, in that $\RR^{n}$ can be exchanged for any Riemannian manifold $M$.
  The key is that the geodesic equation~\eqref{eq:geodesic_eq_omt} instead becomes the point-wise geodesic equation of $M$, and the explicit formula \eqref{eq:hor_geodesics_omt_smooth} becomes
  \begin{equation}
    \gamma(t)(x) = \exp_x(t\nabla f(x)),
  \end{equation}
  where $\exp_x$ denotes the Riemannian exponential on $M$.
  For details we refer to McCann~\cite{Mc2001}, who developed optimal transport on Riemannian manifolds.
  The extension of Otto's geometric framework to Riemannian manifolds is discussed by Lott~\cite{Lo2008} and by Vilani~\cite{Vi2009}.
\end{remark}

\reviewerone{
  In Remark 2.1 the author mentions that the setup makes sense on a manifold, but certainly equations (9) and (10) do not make sense since gradients cannot be identified with vector fields in that setting. It would make sense to cite McCann’s work here.
  \\[2ex]
  DONE! Now explaining a bit more, and added citations.
}

\subsection{Polar decomposition of diffeomorphisms}\label{sub:polar_decomposition_smooth}

We now show how the geometry of OMT gives rise to the polar decomposition of maps, obtained by Brenier~\cite{Br1991}.
We shall also discuss different dynamical formulations, aiming to recover the polar decomposition as limits of gradient flows.

Let us first state the result.
\begin{theorem}[Polar decomposition of diffeomorphisms]\label{thm:polar_decomposition_smooth}
  Let $\varphi\in\Diff(\RR^{n})$ and $\mu_0\in\Dens(\RR^{n})$.
  Then there exists a strictly convex function $\phi\in C^{\infty}(\RR^{n})$, unique up to addition of a constant, and a unique diffeomorphism $\psi\in\Diff_{\mu_0}(\RR^{n})$ such that
  \begin{equation}\label{eq:polar_decomposition_diffeos}
    \varphi = \nabla\phi \circ \psi .
  \end{equation}
  The diffeomorphism $\nabla\phi$ is the unique solution of \autoref{prob:smooth_omt} with $\mu_1=\pi(\varphi)$.
\end{theorem}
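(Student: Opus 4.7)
The plan is to exploit the principal bundle structure $\pi\colon\Diff(\RR^n)\to\Dens(\RR^n)$ developed in \autoref{sub:geometry_omt_smooth} together with the fact, established in~\eqref{eq:hor_geodesics_omt_smooth}, that every horizontal geodesic emanating from $\id$ has the form $x\mapsto\nabla\phi(x)$ for some smooth potential $\phi$. The polar decomposition will be nothing but the factorization of an arbitrary $\varphi\in\pi^{-1}(\mu_1)$ into its ``horizontal part'' (the endpoint of the minimizing horizontal geodesic from $\id$ to the fiber of $\mu_1$) and the unique element of $\Diff_{\mu_0}(\RR^{n})$ relating these two points in the fiber.

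More concretely, first set $\mu_1\coloneqq \pi(\varphi)=\varphi_*\mu_0$, so that $\varphi\in\mathcal C(\mu_0,\mu_1)=\pi^{-1}(\mu_1)$. Invoke \autoref{prob:smooth_omt} with these boundary densities to produce a minimizer $\nabla\phi\in\pi^{-1}(\mu_1)$; as noted after~\eqref{eq:hor_geodesics_omt_smooth}, the solution is the endpoint of the unique horizontal geodesic from $\id$ reaching the fiber, and it is a diffeomorphism precisely when $\phi$ is strictly convex (so that $\nabla^2\phi$ is positive definite, hence invertible). Next, because $\mathcal C(\mu_0,\mu_1)$ is a single right coset of $\Diff_{\mu_0}(\RR^n)$ under the identification~\eqref{eq:fiber_isomorphism_diffeos}, there is a unique $\psi\in\Diff_{\mu_0}(\RR^n)$ with $\varphi=\nabla\phi\circ\psi$, namely $\psi=(\nabla\phi)^{-1}\circ\varphi$; a direct check gives $\psi_*\mu_0=\bigl((\nabla\phi)^{-1}\bigr)_*\varphi_*\mu_0=\bigl((\nabla\phi)^{-1}\bigr)_*\mu_1=\mu_0$, confirming $\psi\in\Diff_{\mu_0}(\RR^n)$.

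For uniqueness, suppose $\varphi=\nabla\tilde\phi\circ\tilde\psi$ is another factorization with $\tilde\phi$ strictly convex and $\tilde\psi\in\Diff_{\mu_0}(\RR^n)$. Then $\nabla\tilde\phi=\varphi\circ\tilde\psi^{-1}$ pushes $\mu_0$ forward to $\mu_1$ and, being a gradient of a strictly convex function, is the endpoint of a horizontal geodesic from $\id$. By uniqueness of the OMT minimizer (equivalently, uniqueness of convex solutions of the Monge--Ampère equation~\eqref{eq:monge_ampere} up to additive constants in $\phi$) we conclude $\tilde\phi=\phi+\mathrm{const}$, whence $\nabla\tilde\phi=\nabla\phi$ and then $\tilde\psi=\psi$.

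The hard part is analytic rather than geometric: to carry out the argument above one must know that \autoref{prob:smooth_omt} actually admits a minimizer in the smooth category and that the minimizer is a smooth diffeomorphism of the required form. This amounts to smooth solvability of the Monge--Ampère equation with smooth, positive data, which rests on Brenier's factorization theorem combined with Caffarelli-type regularity theory; the paper deliberately sidesteps this step and treats the existence of $\nabla\phi$ as input from optimal transport theory, leaving the bundle-theoretic argument above to deliver the decomposition itself and its uniqueness.
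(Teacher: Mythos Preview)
Your proposal is correct and follows essentially the same route as the paper: set $\mu_1=\pi(\varphi)$, invoke the existence of a unique $\nabla\phi$ in the polar cone with $\pi(\nabla\phi)=\mu_1$ (the hard analytic step, packaged in the paper as \autoref{lem:shortest_geodesic} and attributed to Caffarelli's regularity theory), note via \autoref{lem:polar_cone_isomorphism} that $\phi$ is strictly convex, and then read off $\psi=(\nabla\phi)^{-1}\circ\varphi\in\Diff_{\mu_0}(\RR^n)$ from the fiber structure. The only cosmetic difference is that the paper isolates the analytic input into the two preparatory lemmas before the proof, whereas you weave that content into the proof narrative and flag the Brenier--Caffarelli dependence at the end.
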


To prove this result we need two lemmas. 

The subset $K_{\diamondsuit} \subset \Diff(\RR^{n})$ of diffeomorphisms connected to the identity by horizontal geodesics is called the \emph{polar cone}.
Thus, $\varphi\in K_{\diamondsuit}$ if and only if there is a horizontal geodesic $\gamma(t)$ on $\Diff(\RR^{n})$ such that $\gamma(0)=\id$ and $\gamma(1)=\varphi$.
\begin{lemma}\label{lem:polar_cone_isomorphism}
  Up to addition of constants, the mapping $\phi \mapsto \nabla\phi$ is an isomorphism between the space of smooth strictly convex functions on $\RR^{n}$ and the polar cone $K_{\diamondsuit}$.
  The polar cone itself is a convex subset of $(\Diff(\RR^{n}),\mathcal G)$.
\end{lemma}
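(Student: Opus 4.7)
The plan is to use the explicit parametrization of horizontal geodesics from the identity given in~\eqref{eq:hor_geodesics_omt_smooth}. By definition, $\varphi\in K_{\diamondsuit}$ iff there exists $f\in C^{\infty}(\RR^{n})$ such that $\gamma(t)(x) = x + t\nabla f(x) = \nabla\phi_{t}(x)$, with $\phi_{t}(x) \coloneqq \frac{1}{2}\norm{x}^{2} + tf(x)$, is a diffeomorphism for every $t\in[0,1]$ and $\gamma(1)=\varphi$. Setting $\phi \coloneqq \phi_{1}$, the identity $\varphi = \nabla\phi$ is immediate, so the work reduces to matching the admissibility condition on the path $\gamma(t)$ with the strict convexity of $\phi$.

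First I would show that $\gamma(t)\in\Diff(\RR^{n})$ for every $t\in[0,1]$ if and only if $\phi$ is strictly convex. The Hessian of $\phi_{t}$ equals $I + t\nabla^{2}f$, and the convex combination identity $I + t\nabla^{2}f = (1-t)I + t\nabla^{2}\phi$ shows that $\nabla^{2}\phi > 0$ implies $\nabla^{2}\phi_{t}> 0$ for all $t\in[0,1]$. Local invertibility then upgrades to global invertibility on $\RR^{n}$ by standard convex analysis (strict convexity together with the $\frac{1}{2}\norm{x}^{2}$ growth inherited from $\phi_{t}$). Conversely, if any $\gamma(t)$ fails to be a diffeomorphism then $\nabla^{2}\phi_{t}$ degenerates somewhere, which precludes strict convexity of the endpoint $\phi=\phi_{1}$.

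Second, the isomorphism claim follows: the map $\phi\mapsto\nabla\phi$ is surjective onto $K_{\diamondsuit}$ by construction, and its kernel consists exactly of constants on the connected manifold $\RR^{n}$. Finally, for the convexity of $K_{\diamondsuit}$ in $(\Diff(\RR^{n}),\mathcal G)$, recall from~\eqref{eq:geodesic_eq_omt} that $\mathcal G$-geodesics are straight line segments in $C^{\infty}(\RR^{n},\RR^{n})$. Given $\nabla\phi_{0},\nabla\phi_{1}\in K_{\diamondsuit}$, the linearity of the gradient yields
\begin{equation}
  (1-s)\nabla\phi_{0} + s\nabla\phi_{1} = \nabla\bigl((1-s)\phi_{0} + s\phi_{1}\bigr),
\end{equation}
and a convex combination of strictly convex smooth functions is again strictly convex, so the segment lies in $K_{\diamondsuit}$.

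The main obstacle is the passage from "$\phi$ strictly convex and smooth" to "$\nabla\phi$ is a global $C^{\infty}$-diffeomorphism of $\RR^{n}$", since strict convexity alone does not guarantee surjectivity without a coercivity hypothesis. Fortunately, every element of $K_{\diamondsuit}$ arises as $\phi = \frac{1}{2}\norm{x}^{2}+f$, and this built-in quadratic term supplies precisely the required growth. The fact that the analysis in the paper is kept deliberately formal means one should, at worst, interpret $\Diff(\RR^{n})$ in the appropriate restricted sense (e.g., affine-plus-rapidly-decaying perturbations), and everything above then goes through verbatim.
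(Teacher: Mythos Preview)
Your approach is essentially the paper's: both rely on the parametrization $\gamma(t)=\nabla\phi_t$ with $\phi_t=(1-t)\tfrac12\norm{x}^2+t\phi$, and both deduce convexity of $K_\diamondsuit$ from convex combinations of strictly convex functions.

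There is one logical slip. Your ``Conversely'' sentence is the contrapositive of your forward implication, not the converse you still need. What remains to be shown is that if $\gamma(t)\in\Diff(\RR^n)$ for all $t\in[0,1]$ then $\phi$ is strictly convex. The paper does this by a continuity argument on eigenvalues: each $\nabla^2\phi_t(x)$ is non-degenerate (since $\gamma(t)$ is a diffeomorphism), the path starts at $I>0$, so the eigenvalues stay positive and hence $\nabla^2\phi=\nabla^2\phi_1>0$. Your convex-combination identity gives the same conclusion just as directly (a non-positive eigenvalue of $\nabla^2\phi$ at some point forces $\nabla^2\phi_t$ to degenerate at some $t\in(0,1]$), but you must argue in this direction rather than restate the contrapositive of what you already proved.

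On the other hand, you are more careful than the paper on one point: you correctly flag that strict convexity alone does not make $\nabla\phi$ a global diffeomorphism of $\RR^n$ (surjectivity can fail without coercivity, as for $\phi(x)=\sqrt{1+x^2}$). The paper simply asserts that $\gamma(t)$ lies in $\Diff(\RR^n)$ when $\phi$ is strictly convex, in keeping with its declared avoidance of analytical issues; your suggestion to read $\Diff(\RR^n)$ in a suitably restricted sense is the honest resolution here.
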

\reviewertwo{
  In Lemma 2.3 strict convexity of $\phi$ seems to be required. It is shown that any diffeomorphism can be induced by a potential $\phi$. For the sake of completeness maybe add that any $\phi$ induces a diffeomorphism. \\[2ex]
  DONE!
}

\begin{proof}
  From~\eqref{eq:hor_geodesics_omt_smooth} it follows that elements in $K_{\diamondsuit}$ are of the form $\nabla\phi$ for some $\phi\in C^{\infty}(\RR^{n})$.
  Since $\nabla\phi$ is a diffeomorphism,
  \begin{equation}
    \det(\nabla^2\phi(x)) \neq 0, \quad \forall\;x\in\RR^{n}.
  \end{equation}
  Consequently, the symmetric matrix $\nabla^2\phi(x)$ has only non-zero eigenvalues.
  Let
  \begin{equation}
    \phi(t,x) \coloneqq \frac{1-t}{2}\norm{x}^{2} + t\,\phi(x).
  \end{equation}
  By definition of $K_{\diamondsuit}$, the path $\gamma(t)=\nabla\phi(t,\cdot)$ is a horizontal geodesic on $\Diff(\RR^{n})$.
  In particular, $t\mapsto\nabla^2\phi(t,x)$ is a continuous path of non-degenerate symmetric matrices.
  Since the eigenvalues of $\nabla^2\phi(t,x)$ are positive for $t=0$, they remain positive for any~$t \in [0,1]$.
  Thus, $\phi(1,\cdot)=\phi$ is a strictly convex function.

  Now, if $\phi$ is a given strictly convex function, then $\gamma(t) = (1-t)\id + t\nabla\phi$ is a horizontal geodesic curve in $\Diff(\RR^{n})$ originating from the identity.
  By the definition of $K_{\diamondsuit}$, it thereby follows that $\nabla\phi=\gamma(1)\in K_{\diamondsuit}$.

  Convexity of $K_{\diamondsuit}$ follows since a convex combination of two strictly convex functions is again strictly convex.
\end{proof}


The second lemma is the following non-trivial result.
\reviewerone{
  Lemma 2.4 is not proved, and the statement “the following non-trivial result, stated without a proof” immediately makes the reader pause. It seems necessary to make the author’s approach to decompositions work; glossing over it seems to negate the idea that one can prove matrix decompositions using only Riemannian geometry.
  \\[2ex]
  DONE! We now give a discussion of how a geometric proof could look like (to carry out this in detail for the infinite-dimensional case is left as a future topic.)
}%
\reviewertwo{
  Lemma 2.4 is the crucial non-trivial ingredient for Theorem 2.2. If it is stated without proof, a reference and a more detailed discussion should be added.
  \\[2ex]
  DONE! (see remark above)
}%
\begin{lemma}\label{lem:shortest_geodesic}
  The polar cone $K_{\diamondsuit}$ is a \emph{section} of the principal bundle~\eqref{eq:principal_bundle_omt}.
  That is, the mapping
  \begin{equation}
    \pi\big|_{K_{\diamondsuit}}\colon K_{\diamondsuit} \to \Dens(\RR^{n})
  \end{equation}
  is an isomorphism.
\end{lemma}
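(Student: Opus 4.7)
The plan is to establish surjectivity and injectivity of $\pi|_{K_\diamondsuit}$ separately, leveraging the Riemannian submersion structure from \autoref{sub:geometry_omt_smooth} and the variational principle behind \autoref{prob:smooth_omt}. The guiding geometric idea is that the polar cone coincides with the set of minimizers of \autoref{prob:smooth_omt} as $\mu_1$ ranges over $\Dens(\RR^n)$.

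For surjectivity, I would fix $\mu\in\Dens(\RR^n)$ and consider any minimizer $\varphi$ of \autoref{prob:smooth_omt} with target $\mu_1=\mu$. The claim is that $\varphi\in K_\diamondsuit$. This follows from a first-order argument: among all $\varphi\in\pi^{-1}(\mu)$, the quantity $\costfunc(\varphi)=d^2(\id,\varphi)$ is minimized precisely when the (unique, by flatness of $\mathcal G$) straight-line geodesic from $\id$ to $\varphi$ meets the fiber $\pi^{-1}(\mu)$ orthogonally; otherwise a variation of $\varphi$ along the fiber would decrease the cost. Orthogonality to the fiber is exactly the statement that the terminal velocity is horizontal, i.e., $\varphi$ is the endpoint of a horizontal geodesic starting at $\id$, which is the defining property of $K_\diamondsuit$. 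By \autoref{lem:polar_cone_isomorphism} this produces a strictly convex $\phi$ with $\nabla\phi\in\pi^{-1}(\mu)$. For injectivity, if $\nabla\phi_1,\nabla\phi_2\in K_\diamondsuit$ both project to $\mu$, the same argument shows that both are optimal transport maps from $\mu_0$ to $\mu$; Brenier's uniqueness---the optimal map, once expressed as the gradient of a convex function, is unique up to the inherent ambiguity $\phi\mapsto\phi+\mathrm{const}$---then forces $\nabla\phi_1=\nabla\phi_2$.

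The main obstacle will be the existence step underlying surjectivity: solvability of \autoref{prob:smooth_omt} in the smooth category amounts to producing a smooth strictly convex solution of the Monge--Amp\`ere equation~\eqref{eq:monge_ampere} with smooth positive data, which requires genuinely analytic input---Brenier's existence theorem combined with Caffarelli's interior regularity theory. The Riemannian-submersion framework reformulates this as geodesic completeness of $(\Dens(\RR^n),\bar{\mathcal G})$ along horizontal lifts through $\id$, but does not bypass it. A purely geometric alternative would construct the horizontal minimizer as the $t\to\infty$ limit of a lifted entropy gradient flow on $\Diff(\RR^n)$, as foreshadowed in \autoref{subsub:lifted_gradient_flow_smooth}; carrying this out rigorously in the smooth category is a natural topic for future work.
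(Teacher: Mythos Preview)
Your proposal is well-aligned with the paper's own treatment. The paper does \emph{not} prove this lemma: it simply states that the result ``follows from the work of Caffarelli on regularity of solutions of the Monge--Amp\`ere equation,'' notes that Caffarelli's proof is PDE-analytic rather than geometric, and then records the conjecture that a purely geometric proof via the lifted entropy gradient flow (as carried out in the finite-dimensional case in \autoref{subsub:lifted_gradient_omt_linear}) should also work in the smooth category---precisely the alternative you flag in your final paragraph. So you have correctly identified both the analytic core (Brenier existence plus Caffarelli regularity) and the conjectural geometric route.

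Where your sketch goes beyond the paper is in outlining \emph{how} the Brenier--Caffarelli input would be packaged into the bundle-theoretic statement (first-order optimality for surjectivity, Brenier uniqueness for injectivity). One small gap worth noting: your first-order argument yields $\varphi-\id\in\Hor_\varphi$ (terminal horizontality), whereas membership in $K_\diamondsuit$ requires horizontality at $\id$. Hermann's result bridges this only if the straight line $t\mapsto(1-t)\id+t\varphi$ stays in $\Diff(\RR^n)$, which is not automatic and is essentially what \autoref{lem:polar_cone_isomorphism} characterises. In practice one closes this by invoking Brenier directly: the minimizer is already $\nabla\phi$ with $\phi$ strictly convex, hence in $K_\diamondsuit$ by \autoref{lem:polar_cone_isomorphism}. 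Likewise, for injectivity you should note that any $\nabla\phi\in K_\diamondsuit\cap\pi^{-1}(\mu)$ is automatically an optimizer (its horizontal geodesic projects to a Wasserstein geodesic realising $\bar d(\mu_0,\mu)$), so Brenier uniqueness applies.
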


  This lemma follows from the work of Caffarelli~\cite{Ca1992b} on regularity of solutions of the Monge--Ampére equation (see also \cite[Ch.\!~12]{Vi2009} for a wider discussion about regularity).
  Caffarelli's proof, however, is not based on the Riemannian geometry considered here, but rather on PDE analysis techniques.
  In the linear, finite-dimensional category of optimal transport in \autoref{sub:omt_linear} below, the analog of \autoref{lem:shortest_geodesic} is proved geometrically by showing existence and uniqueness of a lifted gradient flow.
  We conjecture that the same proof technique can be used also in the smooth, infinite-dimensional category of optimal transport.
  A careful investigation of this, however, is outside the scope here and left for future work (see \autoref{subsub:limit} for a brief justification of the conjecture).


We are now ready to prove \autoref{thm:polar_decomposition_smooth}.
\reviewerone{
  In Theorem 2.2 and its proof the author switches back and forth between $\mu$ and $\mu_1$, which is slightly confusing. Also there is a “corresponding unique element $\nabla\phi \in K$” such that $\pi(\nabla\phi) = \mu$; it looks strange to not have the defining condition that ensures uniqueness mentioned here.
  \\[2ex]
  DONE! $\mu_1$ is now used consistently, and the defining condition for uniqueness is mentioned.
}

\begin{proof}[Proof of \autoref{thm:polar_decomposition_smooth}]
  Let $\mu_1=\pi(\varphi)$.
  From \autoref{lem:shortest_geodesic} we get that $\pi|_{K_{\diamondsuit}}\to \Dens(\RR^{n})$ is a bijection, so there is a unique $\nabla\phi\in K_{\diamondsuit}$ such that $\pi(\nabla\phi) = \mu_1$.
  From \autoref{lem:polar_cone_isomorphism} it follows that $\phi$ is strictly convex.
  \rednotes{
  A shortest geodesic $\bar\gamma(t)$ between $\vol$ and $\mu_1$.
  Let $\gamma(t)$ be the corresponding horizontal geodesic on $\Diff(\RR^{n})$ with $\gamma(0)=\id$.
  Then $\gamma(t)$ is the shortest curve between $\id$ and the fiber $\pi^{-1}(\mu_1)$, so $\gamma(1)$ solves \autoref{prob:smooth_omt} with $\mu_0=\vol$.
  From the geometry described in \autoref{sub:geometry_omt_smooth} it follows that $\gamma$ is of the form
  \begin{equation}
    \gamma(t)(x) = \nabla\big(\frac{1}{2}\norm{x}^{2} + t f(x)\big).
  \end{equation}
  In particular, $\gamma(1)(x) = \nabla\phi$ with $\phi = \frac{1}{2}\norm{x}^{2} + f(x)$.
  Since $\gamma(t) \in \Diff(\RR^{n})$ it follows that $D\gamma(t)$ is symmetric and has non-zero eigenvalues for every~$t$, and since $D\gamma(0)$ has only positive eigenvalues and $\gamma(t)$ is continuous in $t$, $D\gamma(1)=\nabla^2\phi$ also has positive eigenvalues, so it is positive definite, i.e., convex.
  }
  To obtain the actual decomposition, we notice that by construction $\varphi$ and $\nabla\phi$ belong to the same fiber $\pi^{-1}(\mu_1)$, so $\psi\coloneqq (\nabla\phi)^{-1}\circ\varphi$ is an element of $\Diff_{\mu_0}(\RR^{n})$.
  Hence, we arrive at the decomposition
    $\varphi = \nabla\phi\circ\psi$.
  From the geometry described in \autoref{sub:geometry_omt_smooth} it follows that a solution to \autoref{prob:smooth_omt} must be the endpoint of a horizontal geodesic from the identity, i.e., an element in $K_{\diamondsuit}$.
  The last assertion then follows since $\nabla\phi$ is unique.
\end{proof}

The geometric insights of OMT suggest the study of several \emph{Riemannian gradient flows}.
We consider three different types.
\begin{description}
  \item[Vertical gradient flow] This is a gradient flow restricted to the fibers of the principal bundle \eqref{eq:principal_bundle_omt}.
  The flow is constructed so that the element $\nabla\phi$ in \autoref{thm:polar_decomposition_smooth} is an equilibrium.
  \item[Entropy type gradient flow] This is a gradient flow on $\Dens(\RR^{n})$ with respect to the Riemannian metric \eqref{eq:wasserstein_metric_smooth_omt}.
  We shall use relative entropy as functional, but other choices are also possible (for example the family of porous medium potentials discussed by Otto~\cite{Ot2001}).
  \item[Lifted gradient flow] This is the lifting of an entropy type gradient flow to the polar cone.
  As mentioned, a strategy for a geometric proof of \autoref{lem:shortest_geodesic} is to establish existence and uniqueness of a limit.
  An interesting computational approach for \autoref{prob:smooth_omt} is to discretize the lifted gradient flow numerically. 
\end{description}
An illustration of the different types of gradient flows is given in \autoref{fig:polar_omt_smooth}.
Let us continue with a more detailed description of each type of flow.

\begin{figure}
\centering

\pgfmathsetmacro{\hoffset}{1}
\pgfmathsetmacro{\voffset}{0.35}
\pgfmathsetmacro{\gradscale}{0.45}
\pgfmathsetmacro{\bulletra}{0.05}
\begin{tikzpicture}[scale=1,>=latex]
  \coordinate (id) at (0,0);
  \coordinate (hor) at (4,0);
  \coordinate (p) at (4,2);
  \coordinate (psi) at (0,2);

  \draw[gray] (-\hoffset,0) node[left,black] {polar cone}  -- (4+\hoffset,0);
  \draw[gray] (4,-\voffset) -- (4,2+\voffset) node[rotate=-90,left,black] {\vphantom{pd}fiber};
  \draw[gray] (0,-\voffset) -- (0,2+\voffset) node[rotate=-90,left,black] {\vphantom{pd}fiber};

  \node[below right] at (id) {$\id$};
  \node[below right] at (hor) {$\nabla\phi$};
  \node[right] at (p) {$\varphi=\nabla\phi\circ\psi$};
  \node[right] at (psi) {$\psi$};


  \draw[black,dotted] (4,2-\gradscale*2) -- (4-\gradscale*4,2-\gradscale*2); 
  \draw[thick,black,->] (4,2) -- (4,2-\gradscale*2); 
  \draw[thick,gray,->] (4,2) -- (4-\gradscale*4,2-\gradscale*2) node[left,black] {$-\nabla_{\mathcal G} J$};

  \draw[thick,black,->] (0,0) -- (1,0) node[above,black] {$X$};

  \draw[fill] (id) circle [radius=\bulletra];
  \draw[fill] (hor) circle [radius=\bulletra];
  \draw[fill] (p) circle [radius=\bulletra];
  \draw[fill] (psi) circle [radius=\bulletra];

  \draw[gray] (-\hoffset,-2) node[left,black] {\vphantom{pd}densities}  -- (4+\hoffset,-2);
  \draw[gray,dashed] (0,-\voffset) -- (0,-2);
  \draw[gray,dashed] (4,-\voffset) -- (4,-2);
  \draw[fill] (0,-2) circle [radius=\bulletra] node[below] {$\mu_0$};
  \draw[fill] (4,-2) circle [radius=\bulletra] node[below] {$\mu_1$};
  \draw[thick,black,->] (0,-2) -- (1,-2) node[above,black] {$\bar X$};

  \draw[black,->] (2,-0.5) -- (2,-1.5) node[midway,right] {$\pi$};

\end{tikzpicture}
\caption{
Illustration of the geometry of the polar decomposition of diffeomorphisms.
The element $\nabla\phi$ in the factorization $\varphi=\nabla\phi\circ\psi$ is obtained at the intersection of the polar cone and the fiber of $\mu_1=\pi(\varphi)$.
To compute $\nabla\phi$, one may start at $\varphi$ and follow a gradient flow constrained to the fiber of $\mu_1$ (vertical gradient flow, see \autoref{subsub:vertical_gradient_flow_smooth}), or one may take a gradient flow of a functional on the space of densities that approaches $\mu_1$ (entropy gradient flow, see \autoref{subsub:entropy_gradient_flow_smooth}) and lift it to a corresponding gradient flow on the polar cone (lifted gradient flow, see \autoref{subsub:lifted_gradient_flow_smooth}).
}
\label{fig:polar_omt_smooth}
\end{figure}
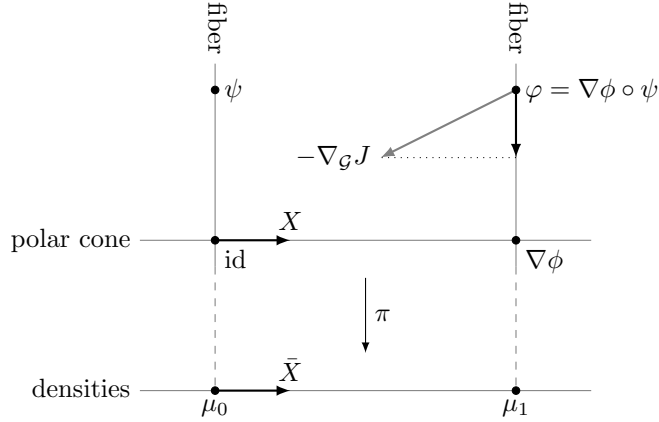



\reviewertwo{
  The gradient flows presented in Sections 2.2.1 and 2.2.2 are very interesting ideas. However, in the given presentation it would probably take the reader a long time to work out all the explicit formulas (I did not check the details of all presented steps, although the rough outline seems ok.).
  \\[2ex]
  DONE! Many more details are now given for all types of flows considered.
}

\reviewertwo{
  The vertical gradient flow 2.2.1 (or something very similar) seems to be discussed in [2, 1]. So it would help to add a few more intermediate steps in the presentation and discuss the relation with [2, 1] \texttt{papers by Haker, Tannenbaum, et. al.}.
  \\[2ex]
  DONE! A careful comparison with \cite*{AnHaTa2003} is now given in a remark.
}

\subsubsection{Vertical Gradient Flow}\label{subsub:vertical_gradient_flow_smooth}

Recall that the solution to \autoref{prob:smooth_omt} is obtained as the point on the fiber of $\mu_1$ that is closest to the identity.
Because we have the closed form expression $J$, given by~\eqref{eq:omt_functional}, for the distance from any diffeomorphism to the identity, it is natural to consider the constrained gradient flow
\begin{equation}\label{eq:gradient_flow_diffeos}
  \dot\eta = -\mathrm{Pr}_{\Ver}\nabla_{\mathcal G}J(\eta) ,\quad \eta(0) = \varphi
\end{equation}
where $\nabla_{\mathcal G}$ denotes the gradient with respect to $\mathcal G$ and $\mathrm{Pr}_{\Ver}$ denotes orthogonal projection onto the vertical distribution.
This gives us a gradient flow on the constraint manifold $\mathcal C(\mu_0,\pi(\varphi))=\pi^{-1}(\pi(\varphi))$ for which $\nabla\phi$ in the polar decomposition~\eqref{eq:polar_decomposition_diffeos} is an equilibrium.
Since $\mathcal{G}_{\eta}(\nabla_{\mathcal{G}}J(\eta),\dot\eta) = \mathcal{G}_{\eta}(\eta-\id,\dot\eta)$ it follows that \eqref{eq:gradient_flow_diffeos} becomes
\begin{equation}\label{eq:gradient_flow_diffeos2}
  \dot\eta + 2(\eta - \id) = - \nabla p \circ\eta, \qquad \eta_*\mu_0 = \pi(\varphi),
\end{equation}
where the smooth function $p$ is the Lagrangian multiplier enforcing $\eta$ to remain on the constraint manifold $\mathcal C(\mu_0,\pi(\varphi))$.
In particular, the constraint ensures that $\eta$ is always a diffeomorphism.

We mention that the term $\eta - \id$ in \eqref{eq:gradient_flow_diffeos2} should be interpreted as a tangent vector in $T_\eta\Diff(\RR^{n})$; the Riemannian notation would be $\log_{\eta}(\id) = \eta-\id$, where $$\log_\eta\colon\Diff(\RR^{n})\to T_\eta\Diff(\RR^{n})$$ is the inverse of the Riemannian exponential $$\exp_{\eta}\colon T_{\eta}\Diff(\RR^{n})\to \Diff(\RR^{n}).$$

\reviewerone{
  {\color{gray} At the top of page 13 I expected to see some mention of how one computes S} (and similarly how one would compute p in the corresponding situation at the end of page 8). {\color{gray} Also this would be an excellent place to write what the equation looks like in a low-dimensional concrete example.} \\[2ex]
  DONE! An equation for $p$ is now given.
}%

Let us now turn to the Lagrange multiplier $p$ in \eqref{eq:gradient_flow_diffeos2}.
Since $\dot\eta \in \Ver_\eta$, it follows from \eqref{eq:vert_dist_omt_infinite} that
\begin{equation}
  \divv{\rho_1 u} = 0 ,
\end{equation}
where $u=\dot\eta\circ\eta^{-1}$ and $\mu_1 = \rho_1 \vol$.
Composing \eqref{eq:gradient_flow_diffeos2} by $\eta^{-1}$ from the right, and applying $\divv{\rho_1\,\cdot\,}$, we then obtain an equation for $p$, namely
\begin{equation}\label{eq:lag_vert_flow_omt}
  \divv{\rho_1 \nabla p} = \divv{\rho_1 (\id - \eta^{-1})}. 
\end{equation}

We may write equation \eqref{eq:gradient_flow_diffeos2} in a ``fluid formulation'', using the right reduced variable $u=\dot\eta\circ\eta^{-1}$.
Indeed, composing \eqref{eq:gradient_flow_diffeos2} from the right by $\eta^{-1}$ leads to
\begin{equation}\label{eq:gradient_flow_diffeos3}
  \begin{split}
    \dot \eta &= u \circ\eta \\
    u &= \eta^{-1} - \id - \nabla p \\
    0 &= \divv{\rho_1 u } ,
  \end{split}
\end{equation}
where $p$, as before, is the Lagrange multiplier given by the solution of \eqref{eq:lag_vert_flow_omt}.


\begin{remark}
  The idea of computing the optimal transport map by a flow along the fiber has been considered before, by Angenent, Haker, and Tannenbaum~\cite{AnHaTa2003}.
  Their flow, however, is not a gradient flow with respect to the metric \eqref{eq:L2noninvariant}.
  Instead, it goes as follows.

  Parameterize $\eta$ as $\eta(t) = \eta_0\circ\psi(t)^{-1}$, where $\psi(t)\in \Diff_{\mu_0}(\RR^{n})$ (this is always possible because of the principal bundle structure \eqref{eq:principal_bundle_omt}).
  Next, $\psi$ itself is given as the flow of a time-dependent vector field $v(t)$ with $\divv{\rho_0 v(t)}=0$.
  Thus,
  \begin{equation}\label{eq:v_tannenbaum_right}
    \frac{\ud}{\ud t}\psi(t) = v(t)\circ\psi(t),\quad \psi(0) = \id.
  \end{equation}

  \rednotes{
  Alternatively, we can use that
  \begin{equation}
    (\frac{\ud}{\ud t}\psi(t)^{-1})\circ \psi + D\psi^{-1}\circ\psi\cdot\dot\psi = 0
    \iff
    (\frac{\ud}{\ud t}\psi(t)^{-1}) = - D\psi^{-1}\cdot\dot\psi \circ\psi^{-1}
  \end{equation}
  which together with \eqref{eq:v_tannenbaum_right} give
  \begin{equation}
    \frac{\ud}{\ud t}\psi(t)^{-1} = - D\psi(t)^{-1}\cdot v(t) , \quad \psi(0) = \id.
  \end{equation}
  }

  So far we have just carried out a change of variables $(\eta,\dot\eta) \leftrightarrow (\psi,v)$; to give the flow studied in \cite{AnHaTa2003} we need to specify what $v(t)$ is.
  The equation for $v(t)$ in \cite{AnHaTa2003} is
  \begin{equation}\label{eq:tannenbaum_flow}
    \begin{split}
      v(t) &= \eta(t) - \id - \nabla q, \\
      0 &= \divv{\rho_0 v}
    \end{split}
  \end{equation}
  where $q$ is the Lagrange multiplier corresponding to the Helmholtz projection.

  Now let us compare \eqref{eq:tannenbaum_flow} with \eqref{eq:gradient_flow_diffeos2}.
  To this extent, we need to see how the flow \eqref{eq:gradient_flow_diffeos2} looks like in the variables $(\psi,v)$.
  Differentiating $\eta(t)\circ\psi(t) = \eta_0$ we get
  \begin{align}
    \dot\eta(t)\circ\psi(t) + D\eta(t)\circ\psi(t)\cdot \dot\psi(t) &= 0
    \\ &\Updownarrow \\
    \dot\eta(t) + D\eta(t)\cdot (\dot\psi(t)\circ\psi(t)^{-1}) &= 0
    \\ &\Updownarrow \\
    u(t) + D\eta(t)\cdot v(t)\circ\eta(t)^{-1} &= 0
  \end{align}
  Thus, $u$ and $v$ are related by minus conjugation by $\eta(t)$.
  Using the fluid formulation \eqref{eq:gradient_flow_diffeos3} we then get
  \begin{equation}
    v(t) = (D\eta(t))^{-1}\cdot \Big(\eta(t) - \id + \nabla p \circ\eta(t)\Big).
  \end{equation}
  Notice that this choice of $v(t)$ is different from \eqref{eq:tannenbaum_flow}.
\end{remark}


\rednotes{
An alternative, equivalent formulation of this flow is obtained by considering the reverse problem: instead of starting at $\varphi$ and flow towards $\nabla\phi$, we can start at $\id$ and flow towards $\psi$.
That is, we consider the gradient flow on $\Diff_{\mu_0}(\RR^{n})$ given by
\begin{equation}
  \dot\eta = -\mathrm{Pr}_{\Ver}\nabla_{\mathcal G}V(\eta) ,\quad \eta(0) = \id,
\end{equation}
where
\begin{equation}
  V(\eta) = \frac{1}{2}d^{2}(\eta,\phi).
\end{equation}
\begin{equation}
  \frac{\delta V}{\delta \eta} \cdot \dot\eta = \mathcal{G}_{\eta}(\underbrace{\eta-\varphi}_{\nabla_{\mathcal G}V},\dot\eta) = \mathcal{G}_{\id}(\id-\varphi\circ\eta^{-1},u).
\end{equation}
\begin{equation}
  \frac{\delta J}{\delta \eta} \cdot \dot\eta = \mathcal{G}_{\eta}(\underbrace{\eta-\id}_{\nabla_{\mathcal G}V},\dot\eta) = \mathcal{G}_{\id}(\eta^{-1}-\id,u).
\end{equation}
}



\subsubsection{Entropy Gradient Flow} \label{subsub:entropy_gradient_flow_smooth}
In this section we consider gradient flows on $\Dens(\RR^{n})$ with respect to the Wasserstein Riemannian metric \eqref{eq:wasserstein_metric_smooth_omt}.
Such flows are studied by Jordan, Kinderlehrer, and Otto~\cite{JoKiOt1998} for the entropy functional (giving the Fokker--Planck equation) and later by Otto~\cite{Ot2001} for a more general class of functionals (giving porous medium equations).
Here, we focus on entropy as in \cite{JoKiOt1998}.

Let $\mu_1=\pi(\varphi)$ and take as potential function $H(\mu)$ the \emph{entropy of $\mu$ relative to $\mu_1$}, given by
\begin{equation}\label{eq:relative_entropy}
  H(\mu) = -\int_{\RR^{n}} \frac{\mu}{\mu_1}\log\left( \frac{\mu}{\mu_1}\right) \mu_1 .
\end{equation}
It is also called the \emph{Kullback--Leibler divergence}, especially in information theoretic contexts.
Differentiation with respect to time yields
\begin{equation}
  \begin{split}
    \frac{\ud}{\ud t}H(\mu) &= -\int_{\RR^{n}} \dot\mu\log\left( \frac{\mu}{\mu_1}\right) -\underbrace{\int_{\RR^{n}} \dot\mu}_{0} \\
    &= - \int_{\RR^{n}}\left(\Delta_{\rho}^{-1}\Delta_{\rho}\log\left( \frac{\mu}{\mu_1}\right)\right) \dot\mu \\
    &= \bar{\mathcal G}_{\mu}(\Delta_\rho \log\left( \frac{\mu}{\mu_1}\right)\vol, \dot\mu).
  \end{split}
\end{equation}
Since $\Delta_{\rho}\log(\mu/\mu_1) = \divv{\rho\nabla \log(\rho/\rho_1)} = \divv{\rho_1\nabla(\rho/\rho_1)}$, and since
\begin{equation}
  \int_{\RR^{n}} \divv{\rho_1\nabla(\rho/\rho_1)} \,\vol = 0
\end{equation}
from the divergence theorem, the Riemannian gradient flow
\begin{equation}
  \dot \mu = \nabla_{\bar{\mathcal G}} H(\mu)
\end{equation}
is given by
\begin{equation}\label{eq:dens_gradient_flow}
  \dot \mu = \divv{ \rho_1 \nabla \varrho} \;\vol
\end{equation}
where $\mu_1 = \rho_1\,\vol$ and $\mu = \varrho\mu_1$.
The flow strives toward the maximum of the relative entropy $H(\mu)$, which occurs at $\mu = \mu_1$.
Notice that if $\rho_1=1$ it becomes the standard heat flow.

The geometric insights of the flow \eqref{eq:dens_gradient_flow} can be used for analysis, in particular  the question of convergence towards a limit.
Indeed, by proving negativity of the Hessian of the relative entropy functional $H(\mu)$ with respect to the Wasserstein metric, Otto~\cite{Ot2001} was able to give exponential rates of convergence.
Of course, for the linear flow \eqref{eq:dens_gradient_flow} this can be achieved by standard PDE techniques, but Otto's geometric analysis also works for non-linear porous medium flows.

\todo[inline]{Say something about the geodesic approach for another choice of $\bar X$, using Fisher--Rao great circles.}

\rednotes{
The explicit formula for $X(t,\eta(t))$ is a bit convoluted, so instead one may chose to compute the flow $\bar\gamma(t)$ on $\Dens(\RR^{n})$ and then obtain  a curve $\nabla\phi(t)$ on $K_{\diamondsuit}$ by integration
\begin{equation}
  \phi(t) = \int_0^t \theta(s)\,\ud s,
\end{equation}
where $\theta(s)$ is the solution to the linearized Monge--Ampère equation
\begin{equation}
  (\nabla\phi)^*\LieD_{\theta\circ(\nabla\phi)^{-1}} \underbrace{(\nabla\phi)_*\mu_0}_{\mu} + (\nabla\phi)^* \frac{\ud}{\ud t}(\nabla\phi)_*\mu_0 = 0
\end{equation}
\begin{equation}
  \LieD_{D\eta^{-1}\cdot \nabla\theta} \eta_*\mu_0 = \bar X(\eta_*\mu_0)
\end{equation}
\begin{equation}
  \LieD_{\nabla\theta\circ(\nabla\phi)^{-1}}\mu = - \dot\mu .
\end{equation}
\begin{equation}
  \LieD_{\nabla\theta\circ(\nabla\phi)^{-1}}\rho\vol = - \dot\rho\vol \iff \divv{\rho \nabla\theta\circ(\nabla\phi)^{-1}} = -\dot\rho
\end{equation}
\begin{equation}
  \abs{\nabla^2\phi(x)}\tr( (\nabla^2\phi(x))^{-1}\nabla^{2}\dot\phi )
\end{equation}

Explicitly, the derivative $D\pi(\nabla\theta)\cdot \dot\theta$ for $\nabla\theta\in K_{\diamondsuit}$ and $\nabla\dot\theta \in T_{\nabla\theta}K_{\diamondsuit}$
is given as the solution to the linear equation
\begin{equation}
  \begin{split}
  \pi(\eta) = \eta_*\mu_0 \iff \eta^*\pi(\eta) = \mu_0 \Rightarrow \eta^* \LieD_{\dot\eta\circ\eta^{-1}} \pi(\eta) + \eta^* \frac{\ud}{\ud t} \pi(\eta)
  \\
  \nabla\theta^*\LieD_{\dot\theta\circ(\nabla\theta)^{-1}} \ =
  \end{split}
\end{equation}
}

\subsubsection{Lifted gradient flow}\label{subsub:lifted_gradient_flow_smooth}
Here we are interested in constructing a gradient flow of diffeomorphisms, evolving on the polar cone $K_{\diamondsuit}$ such that its limit is the solution to \autoref{prob:smooth_omt}.
To do so, we consider lifting of the entropy gradient flow in \autoref{subsub:entropy_gradient_flow_smooth} with respect to the principal bundle \eqref{eq:principal_bundle_omt}.
First, define the lifted functional
\begin{equation}
  F\colon \Diff(\RR^{n}) \to \RR, \quad F(\varphi) \coloneqq H(\pi(\varphi)) = -\int_{\RR^{n}}\frac{\varphi_{*}\mu_0}{\mu_1} \log\left(\frac{\varphi_{*}\mu_0}{\mu_1} \right) \mu_1.
\end{equation}
By construction, $F$ is constant on the fibers, so its gradient $\nabla_{\mathcal G} F$ with respect to \eqref{eq:L2noninvariant} is orthogonal to the fibers: $\nabla_{\mathcal G}F(\varphi) \in \Hor_{\varphi}$.
Thus, the unconstrained gradient flow
\begin{equation}\label{eq:unconstrained_lifted_omt_smooth}
  \dot\varphi = \nabla_{\mathcal G}F(\varphi)
\end{equation}
traces an integral curve of the horizontal distribution.
Furthermore, since the projection $\pi$ is a Riemannian submersion, it follows that
\begin{equation}
  D\pi(\varphi)\cdot\nabla_{\mathcal G}F(\varphi) = \nabla_{\bar{\mathcal G}}H(\pi(\varphi)),
\end{equation}
so if $\varphi(t)$ is an integral curve of \eqref{eq:unconstrained_lifted_omt_smooth}, then $\mu(t) = \pi(\varphi(t))$ is an integral curve of the entropy gradient flow \eqref{eq:dens_gradient_flow}.
Since \eqref{eq:dens_gradient_flow} has $\mu_1$ as a limit, it follows that $\varphi(t)$ approaches the fiber $\pi^{-1}(\mu_1)$ as $t\to\infty$.
At first sight, it therefore looks promising to use the flow \eqref{eq:unconstrained_lifted_omt_smooth} with initial data $\varphi(0) = \id$ as a way to compute the solution to \autoref{prob:smooth_omt} (recall from \autoref{sub:geometry_omt_smooth} that the solution to \autoref{prob:smooth_omt} is a horizontal geodesic from $\id$ to $\pi^{-1}(\mu_1)$).
However, things are not quite that simple, because the horizontal distribution is \emph{not integrable}, so two different horizontal paths starting at $\id$ and ending at $\pi^{-1}(\mu_1)$ typically end up at \emph{different points} of the fiber $\pi^{-1}(\mu_1)$.

As a remedy we shall instead consider the lifted gradient flow \emph{constrained to the polar cone} $K_{\diamondsuit}$.
Notice that, in general, $T_{\varphi}K_{\diamondsuit} \neq \Hor_{\varphi}$, although $T_{\id}K_{\diamondsuit} = \Hor_{\id}$.
(We also know that $T_{\varphi}K_{\diamondsuit} \cap \Hor_{\varphi}$ is at least 1-dimensional, since $K_{\diamondsuit}$ consists of endpoints of horizontal geodesics.)
Consequently, $K_{\diamondsuit}$ is not invariant under the unconstrained gradient flow \eqref{eq:unconstrained_lifted_omt_smooth}: we need to consider the projection
\begin{equation}\label{eq:constrained_lifted_omt_smooth}
  \dot\varphi = \Pi_\varphi\nabla_{\mathcal G}F(\varphi), \quad \varphi(0) = \id,
\end{equation}
where $\Pi_{\varphi}\colon T_{\varphi}\Diff(\RR^{n})\to T_\varphi K_{\diamondsuit}$ denotes the orthogonal projection.
The flow \eqref{eq:constrained_lifted_omt_smooth} is then the Riemannian gradient flow of $F$ restricted to $K_{\diamondsuit}$ with respect to the Riemannian metric \eqref{eq:L2noninvariant} restricted to $K_{\diamondsuit}$.

Let us now work out \eqref{eq:constrained_lifted_omt_smooth} explicitly.
First, recall from \autoref{lem:polar_cone_isomorphism} that elements in the polar cone are of the form $\nabla \phi$ for a strictly convex, smooth function $\phi$.
Since
\begin{equation}
  \nabla\phi_{*}\mu_0 = \frac{\rho_0\circ (\nabla\phi)^{-1}\,\vol}{\det(\nabla^{2}\phi\circ (\nabla\phi)^{-1})},
\end{equation}
the functional $F$ restricted to $K_{\diamondsuit}$ is given by
\begin{equation}
  F|_{K_{\diamondsuit}}(\nabla\phi) = - \int_{\RR^{n}} \frac{\rho_0\circ (\nabla\phi)^{-1}}{\det(\nabla^{2}\phi\circ (\nabla\phi)^{-1})} \log\left(
    \frac{\rho_0\circ (\nabla\phi)^{-1}}{\rho_1\det(\nabla^{2}\phi\circ (\nabla\phi)^{-1})}
  \right) \, \vol ,
\end{equation}
where $\nabla^{2}\phi$ denotes the Hessian of $\phi$.
The change of variables induced by $\nabla\phi$ then gives
\begin{align}
  F|_{K_{\diamondsuit}}(\nabla\phi) &= - \int_{\RR^{n}} \frac{\rho_0}{\det(\nabla^{2}\phi)} \log\left(
    \frac{\rho_0}{(\rho_1\circ\nabla\phi)\det(\nabla^{2}\phi)}
  \right) \det(\nabla^{2}\phi)\, \vol
  \\
  &=
  - \int_{\RR^{n}} \log\left(
    \frac{\rho_0}{(\rho_1\circ\nabla\phi)\det(\nabla^{2}\phi)}
  \right) \mu_0 .
\end{align}
Now, take a curve $\nabla\phi = \nabla\phi(t)$ in $K_{\diamondsuit}$.
Then
\begin{align}
  \frac{\ud}{\ud t}F|_{K_{\diamondsuit}}(\nabla\phi)
  &= \int_{\RR^{n}} \frac{1}{\det(\nabla^{2}\phi)} \frac{\ud}{\ud t} \det(\nabla^{2}\phi)\, \mu_0
  + \int_{\RR^{n}}\frac{\ud}{\ud t}\log(\rho_1\circ\nabla\phi)\mu_0 \\ \label{eq:dFdt_calc}
  &= \int_{\RR^{n}} \tr\left((\nabla^{2}\phi)^{-1}\nabla^{2}\dot\phi\right)\, \mu_0
  + \int_{\RR^{n}}\frac{\nabla\rho_1\circ\nabla\phi \cdot \nabla\dot\phi}{\rho_1\circ\nabla\phi}\mu_0 \\
  &= \int_{\RR^{n}} \tr\left((\nabla^{2}\phi)^{-\top}\nabla^{2}\dot\phi\right)\, \mu_0
  + \int_{\RR^{n}}\frac{\nabla\rho_1\circ\nabla\phi \cdot \nabla\dot\phi}{\rho_1\circ\nabla\phi}\mu_0 \\
  &= -\int_{\RR^{n}} (\divv{\rho_0(\nabla^{2}\phi)^{-1}}) \cdot \nabla\dot\phi \, \vol
  + \int_{\RR^{n}}\frac{\nabla\rho_1\circ\nabla\phi \cdot \nabla\dot\phi}{\rho_1\circ\nabla\phi}\mu_0 \\
  &= -\int_{\RR^{n}} (\divv{(\nabla^{2}\phi)^{-1}}) \cdot \nabla\dot\phi \, \mu_0
  - \int_{\RR^{n}} ((\nabla^{2}\phi)^{-1}\nabla\rho_0) \cdot \nabla\dot\phi \, \vol \\
  &\phantom{=}\qquad\qquad + \int_{\RR^{n}}\frac{\nabla\rho_1\circ\nabla\phi \cdot \nabla\dot\phi}{\rho_1\circ\nabla\phi}\mu_0
\end{align}
where $\divv{}$ on matrices denotes the divergence operator applied rowwise.
From the definition \eqref{eq:L2noninvariant} of $\mathcal G$ it follows next that
\begin{align}
  \frac{\ud}{\ud t}F|_{K_{\diamondsuit}}(\nabla\phi) &=
  - \mathcal{G}_{\nabla\phi}(\divv{(\nabla^{2}\phi)^{-1}}, \nabla\dot\phi)
  - \mathcal{G}_{\nabla\phi}\left( \frac{(\nabla^{2}\phi)^{-1}\nabla\rho_0}{\rho_0},\nabla\dot\phi \right) \\
  &\qquad\qquad + \mathcal{G}_{\nabla\phi}\left( \frac{\nabla\rho_1\circ\nabla\phi}{\rho_1\circ\nabla\phi},\nabla\dot\phi \right).
\end{align}
Thus, the gradient is
\begin{equation}\label{eq:gradF_weak_omt_smooth}
  \nabla_{\mathcal G} F(\nabla\phi) =
    - \divv{(\nabla^{2}\phi)^{-1}}
    - \frac{(\nabla^{2}\phi)^{-1}\nabla\rho_0}{\rho_0}
    + \frac{\nabla\rho_1\circ\nabla\phi}{\rho_1\circ\nabla\phi} .
\end{equation}

Using \autoref{lem:polar_cone_isomorphism} we can represent the polar cone by strictly convex functions $\phi$, defined up to addition by constants.
The Riemannian metric $\mathcal G$ on $K_{\diamondsuit}$ then induces the Riemannian metric on the space of strictly convex functions
\begin{equation}
  \hat{\mathcal G}_{\phi}(\dot\phi,\dot\phi) \coloneqq \mathcal G_{\nabla\phi}(\nabla\dot\phi,\nabla\dot\phi)
   = \int_{\RR^{n}}\nabla\dot\phi \cdot\nabla\dot\phi\; \mu_0 = - \int_{\RR^{n}} (\divv{ \rho_0 \nabla\dot\phi}) \dot\phi \,\vol.
\end{equation}
Likewise, $F$ induces the functional
\begin{equation}\label{eq:Fhat_omt_smooth}
  \hat F(\phi) \coloneqq F(\nabla\phi).
\end{equation}

Now, the constrained lifted gradient flow \eqref{eq:constrained_lifted_omt_smooth}, written in the variable $\phi$, is given by
\begin{equation}
  \dot\phi = \nabla_{\hat{\mathcal G}}\hat F(\phi).
\end{equation}
From \eqref{eq:gradF_weak_omt_smooth} we then obtain the explicit formulation of the flow as
\begin{equation}\label{eq:explicit_lifted_gradient_flow_OMT_smooth}
  \nabla\cdot\rho_0\nabla\dot\phi = -\divv{\rho_0 \divv{(\nabla^{2}\phi)^{-1}} }
  - \divv{ (\nabla^{2}\phi)^{-1}\nabla\rho_0}
  + \divv{\rho_0 \frac{\nabla\rho_1\circ\nabla\phi}{\rho_1\circ\nabla\phi}} .
\end{equation}
In the simple case when $\rho_0 \equiv 1$ we get
\begin{equation*}
  \Delta\dot\phi = -\divv{\divv{(\nabla^{2}\phi)^{-1}} }
  + \Delta{\log(\rho_1\circ\nabla\phi)} .
\end{equation*}

As already mentioned, an approach for a geometric proof of \autoref{lem:shortest_geodesic} is to show that the flow \eqref{eq:explicit_lifted_gradient_flow_OMT_smooth} has a unique limit in the set of strictly convex functions.
We conjecture this to be true, at least when $\rho_1$ is log-concave, based on calculations showing negativeness of the Hessian of $\hat F$ (see \autoref{subsub:limit} for a brief justification).

\rednotes{
Let $\phi=\phi(t)$ be a geodesic curve.
Then $\ddot\phi = 0$, so $\nabla^{2}\ddot\phi = 0$.
From \eqref{eq:dFdt_calc} we then get
\begin{align}
  \frac{\ud^{2}}{\ud t^{2}} \hat F(\phi)
  &= \int_{\RR^{n}} \tr\left( \nabla^{2}\dot\phi \frac{\ud}{\ud t}(\nabla^{2}\phi)^{-1} \right)\mu_0 + \frac{\ud}{\ud t}\int_{\RR^{n}} (\nabla \log(\rho_1))\circ\nabla\phi \cdot \nabla\dot\phi \mu_0 \\
  &= \int_{\RR^{n}} \tr\left( \nabla^{2}\dot\phi \frac{\ud}{\ud t}(\nabla^{2}\phi)^{-1} \right)\mu_0 + \int_{\RR^{n}} \left(\nabla^{2} \log(\rho_1)\circ\nabla\phi\right) \cdot \nabla\dot\phi \cdot \nabla\dot\phi \mu_0 \\
\end{align}
That the first term is negative follows from the same calculation as in the proof of \autoref{lem:pos_def_hessian_lifted_omt_finite}.
If $\rho_1$ is \emph{log-concave} then the Hessian matrix $\nabla^{2} \log(\rho_1)(x)$ is negative semi-definite for all $x\in\RR^{n}$, so the second term is non-positive.
}

\iftoggle{final}{}{{\color{blue}
A second dynamical approach to OMT is to consider flows on $\Dens(\RR^{n})$ and then use the correspondence to the polar cone $K_{\diamondsuit}$ given by \autoref{lem:shortest_geodesic}.
Suppose we have a path $\bar{\gamma}(t)$ on $\Dens(\RR^{n})$ such that $\bar{\gamma}(0)=\mu_0$ and $\bar{\gamma}(s)=\mu_1$ for some $s\in(0,\infty]$, and suppose that $\bar{\gamma}(t)$ can be embedded as an integral curve of a vector field $\bar X$ on $\Dens(\RR^{n})$, i.e., $\bar{\gamma}(t)$ is an integral curve of
\begin{equation}
  \dot\mu = \bar X(\mu).
\end{equation}
By linearizing the isomorphism between $K_{\diamondsuit}$ and $\Dens(\RR^{n})$ we obtain an isomorphism $D\pi\colon T K_{\diamondsuit}\to T\Dens(\RR^{n})$, which, by pullback, can be used to construct a vector field $X = D\pi^* \bar X$ on $K_{\diamondsuit}$.
The element $\nabla\phi$ in the polar decomposition~\eqref{eq:polar_decomposition_diffeos} is then given by $\eta(s)$, where $\eta(t)$ is the solution to the flow equation
\begin{equation}
  \dot\eta = X(\eta), \quad \eta(0) = \id.
\end{equation}
For each $\eta\in K_{\diamondsuit}$, the vector $X(\eta) = \nabla\theta$ is obtained by solving a second order linear PDE in~$\theta$ given by linearization of the Monge--Ampère equation~\eqref{eq:monge_ampere} and abstractly written
\begin{equation}
  D\pi(\eta)\cdot\nabla\theta = \bar X(\pi(\eta)).
\end{equation}

Let us now consider a specific choice of vector field $\bar X$ that fulfills the requirement.
It is the gradient flow on the (smooth) Wasserstein space $(\Dens(\RR^{n}),\bar{\mathcal G})$ suggested by~Otto~\cite{Ot2001}.
}}

\subsection{Optimal transport in the linear category}\label{sub:omt_linear}

As we have seen earlier, the multivariate Gaussian distributions $\normal_n$ constitute a submanifold of $\Dens(\RR^{n})$.
In this section we consider the geometry of optimal transport restricted to the linear category, such as studied by Takatsu~\cite{Ta2011}.

\begin{problem}[Linear OMT]\label{prob:linear_omt}
{\it  Given $\mu_0,\mu_1\in\normal_n$, find $\varphi(x) = Ax$ with $A\in\GL(n)$ that minimizes
  \begin{equation}\label{eq:lot_functional}
    \costfunc(\varphi) = \int_{\RR^n} \norm{x-\varphi(x)}^2\,\mu_0
  \end{equation}
  under the constraint
  \begin{equation}\label{eq:lot_constraint}
    \varphi_*\mu_0 = \mu_1.
  \end{equation}}
\end{problem}

If $\varphi(x)=Ax$ then
\begin{eqnarray*}
  \costfunc(\varphi) &= \int_{\RR^{n}} \norm{(I-A)x}^2 \,\mu_0 = \int_{\RR^{n}} x^{\top}\underbrace{(I-A)^\top(I-A)}_Bx \,\mu_0 \\
  &= \int_{\RR^{n}} \sum_{ij} B_{ij}x^j x^i \; \mu_0 =  \sum_{ij} B_{ij} \underbrace{\int_{\RR^{n}} x^j x^i \;\mu_0}_{\cov_{\mu_0}(x^i,x^j)}
\end{eqnarray*}
\begin{align}\label{eq:J_fin_dim}
  &= \tr(B\Sigma_0) = \tr\Big((I-A)\Sigma_0(I-A)^\top\Big),
\end{align}
where $(\Sigma_0)_{ij} = \cov_{\mu_0}(x^i,x^j)$.
Writing $\mu_0 = p(\cdot,\Sigma_0) \vol$, the left hand side of the constraint~\eqref{eq:lot_constraint} becomes
\begin{align}
  (\varphi_*\mu_0)(x) &= \varphi_*(p(\cdot,\Sigma_0)\vol)(x) = p(A^{-1}x,\Sigma_0)\det(A^{-1})\vol \\
  &= \left( \sqrt{\frac{1}{\det(A)^2\det(\Sigma_0)(2\pi)^n}}\exp(-\frac{1}{2}(A^{-1}x)^{\top}\Sigma_0^{-1} A^{-1}x) \right)\vol \\
  &= \left( \sqrt{\frac{1}{{\det(A \Sigma_0A^{\top})}{(2\pi)^n}}}\exp(-\frac{1}{2}x^{\top}(A \Sigma_0 A^{\top})^{-1}x) \right)\vol \\
  &= p(x,A \Sigma_0 A^{\top})\vol.
\end{align}
\reviewerone{
  In the middle of page 10, there is a $\det(W)$ which I think should be $\det(\Sigma_0)$.
  \\[2ex]
  DONE!
}
Since $\normal_n \simeq \Sym{n}$ we can now reformulate \autoref{prob:linear_omt} in terms of covariance matrices. 
\begin{problem}[Linear OMT, reformulated]\label{prob:linear_omt_2}
{\it  Given $\Sigma_0,\Sigma_1\in\Sym{n}$, find $P\in\GL(n)$ that minimizes
  \begin{equation}\label{eq:lot_functional_2}
    \costfunc(P) = \tr\Big((I-P)\Sigma_0(I-P)^\top\Big)
  \end{equation}
  under the constraint
  \begin{equation}\label{eq:lot_constraint_2}
    P \Sigma_0 P^\top = \Sigma_1.
  \end{equation}}
\end{problem}

The action of $\GL(n)$ on $\Sym{n}$ is transitive, so for each pair $\Sigma_0,\Sigma_1\in\Sym{n}$ there exists a $P\in\GL(n)$ fulfilling condition \eqref{eq:lot_constraint_2}.
This is the finite-dimensional analogue of the ``Moser trick'', used in \autoref{sub:geometry_omt_smooth}.
\reviewerone{
  At the end of page 10, the author mentions a finite-dimensional analogue of the Moser trick. While again this is a known linear algebra result, it would be interesting to see what the Moser trick looks like in this context.
  \\[2ex]
  DONE! The Moser trick in this context is a direct consequence of the homogeneous space structure of $\Sym{n}$, given in \eqref{eq:principal_bundle_finite_dim}.
}

Let us now proceed with the geometry of~\autoref{prob:linear_omt_2}.
Since
\begin{equation}\label{eq:GL_as_submanifold}
  \{\varphi \in \Diff(\RR^{n}) \mid \varphi(x)=Ax, \; A\in\GL(n)\}\simeq \GL(n)
\end{equation}
is a submanifold of $\Diff(\RR^n)$, it inherits the Riemannian metric~\eqref{eq:L2noninvariant}, so henceforth we think of $\GL(n)$ as a Riemannian submanifold.
In essence, the result is that the geometry of~\autoref{prob:linear_omt_2} duplicates that of the infinite-dimensional~\autoref{prob:smooth_omt}.
The key to see this is the following simple but important result.


\begin{lemma}\label{lem:tg}
  $\GL(n)$ is \emph{totally geodesic} in $\Diff(\RR^{n})$.
  That is, if $\gamma(t)$ is a geodesic curve in $\GL(n)$ then it is also a geodesic curve in $\Diff(\RR^n)$.
\end{lemma}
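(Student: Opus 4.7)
The plan is to exploit the fact, established in~\autoref{sub:geometry_omt_smooth}, that the Riemannian metric~\eqref{eq:L2noninvariant} on $\Diff(\RR^{n})$ is flat, with geodesic equation $\ddot\varphi=0$. Geodesics in $(\Diff(\RR^{n}),\mathcal{G})$ are therefore affine lines in the ambient vector space $C^{\infty}(\RR^{n},\RR^{n})$, that is, curves of the form $\gamma(t)=\varphi_{0}+tV$ with $V\in T_{\varphi_{0}}\Diff(\RR^{n})$.

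The first step is to recognize that under the embedding~\eqref{eq:GL_as_submanifold}, $\GL(n)$ corresponds to an \emph{open subset} of the linear subspace $\gl(n)\subset C^{\infty}(\RR^{n},\RR^{n})$ of linear maps $x\mapsto Bx$. In particular, the tangent space $T_{A}\GL(n)$ at $A\in\GL(n)$ is naturally identified with all linear maps $V\colon x\mapsto Bx$, $B\in\gl(n)$.

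Second, I would apply the standard characterization: a submanifold $N\subset M$ of a Riemannian manifold is totally geodesic if and only if for every $p\in N$ and every $v\in T_{p}N$, the ambient geodesic through $p$ with initial velocity $v$ stays in $N$ on some open time interval. So pick $A\in\GL(n)$ and $V(x)=Bx\in T_{A}\GL(n)$. The ambient geodesic is $\gamma(t)(x)=Ax+tBx=(A+tB)x$, which is linear for all $t$, hence an element of $\gl(n)$. Because $\GL(n)$ is open in $\gl(n)$ and $\det$ is continuous, $A+tB\in\GL(n)$ for all $t$ in some open neighbourhood of $0$, so $\gamma(t)\in\GL(n)$ on that interval. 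This shows $\GL(n)$ is totally geodesic.

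For the converse direction explicitly stated in the lemma: if $\gamma(t)$ is a geodesic of the induced metric on $\GL(n)$, then by what was just shown its initial data $(\gamma(0),\dot\gamma(0))$ generate an ambient geodesic lying in $\GL(n)$; by uniqueness of geodesics on $\GL(n)$ with prescribed initial data, this ambient geodesic must coincide with $\gamma$. Hence every $\GL(n)$-geodesic is a $\Diff(\RR^{n})$-geodesic. I do not expect any serious obstacle: the entire argument rests on the flatness of $\mathcal{G}$ and on the linear structure of $\GL(n)$ inside $C^{\infty}(\RR^{n},\RR^{n})$; no second-fundamental-form computation or infinite-dimensional analysis is required.
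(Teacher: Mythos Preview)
Your proposal is correct and follows essentially the same approach as the paper: both rely on the flatness of $\mathcal{G}$ (geodesic equation $\ddot\varphi=0$) together with the observation that a solution starting from linear data $\varphi(0)\in\GL(n)$, $\dot\varphi(0)\in\gl(n)$ remains linear. The paper's argument is a terse two-line version of yours; your added remarks about $\GL(n)$ being open in $\gl(n)$ and the uniqueness step for the converse direction are welcome clarifications but not structurally different.
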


\begin{proof}
  Under the identification~\eqref{eq:GL_as_submanifold}, tangent vectors of the submanifold $\GL(n)$ consist of linear mappings $\RR^{n}\to\RR^{n}$.
  The result follows since the solution to~\eqref{eq:geodesic_eq_omt} with $\varphi(0)\in\GL(n)$ and $\dot\varphi(0)$ a linear mapping remains a linear mapping.
\end{proof}

Explicitly, the Riemannian metric~\eqref{eq:L2noninvariant} restricted to $\GL(n)$ is given by
\begin{equation}\label{eq:metric_linear_OMT}
  \mathcal{G}_A(\dot A,\dot A) = \tr(\Sigma_0 \dot A^{\top}\dot A),
\end{equation}
which can also be written
\begin{equation}\label{eq:metric_linear_OMT_2}
  \mathcal{G}_A(\dot A,\dot A) = \tr(A\Sigma_0A^{\top} (\dot A A^{-1})^{\top}(\dot A A^{-1})).
\end{equation}
The corresponding Riemannian squared distance between $A_0,A_1\in\GL(n)$ is given by
\begin{equation}
  d^{2}(A_0,A_1) = \tr(\Sigma_0 (A_0-A_1)^{\top}(A_0-A_1)),
\end{equation}
so, as in the infinite-dimensional case, we have
\begin{equation}
  J(A) = d^{2}(I,A).
\end{equation}
Notice that if $\Sigma_0 = I$, then $d^{2}(A_0,A_1) = \norm{A_1-A_0}^{2}_F$, where $\norm{\cdot}_{F}$ is the Frobenius norm.
\reviewerone{
  Equation (20) appears too early; it doesn’t really make sense until after $O(n,\Sigma_0)$ is defined in Section 2.3.1. \\[2ex]
  DONE!
}%
\reviewertwo{
after eq. (20): set $O(n, \Sigma_0)$ is only introduced afterwards, in definition of $O(n, \Sigma_0)$ writing the constraint as $A\Sigma_0 A^T = \Sigma_0$ seems more consistent
\\[2ex]
DONE!
}%

A direct consequence of~\autoref{lem:tg} is that the squared distance from the identity to $A$ with respect to $\mathcal{G}$ is given by~\eqref{eq:lot_functional_2}.
Therefore, all the geometric aspects of \autoref{prob:smooth_omt} are valid also for \autoref{prob:linear_omt_2}, but in a finite-dimensional setting.
In particular, solutions are given by horizontal geodesics.
For completeness, we shall now derive explicitly the analogous finite-dimensional geometric concepts.

\subsubsection{Principal bundle structure}
The principal bundle analogous to~\eqref{eq:principal_bundle_omt} is
\begin{equation}\label{eq:principal_bundle_finite_dim}
  \begin{tikzcd}
    \GL(n) \arrow[hookleftarrow]{r}{} \arrow{d}{\pi} & \OO(n,\Sigma_0) \\
    \Sym{n} &
  \end{tikzcd}
\end{equation}
where
\begin{equation}
  \OO(n,\Sigma_0) = \{ Q\in\GL(n)\mid Q\Sigma_0 Q^{\top} = \Sigma_0 \}
\end{equation}
is the symmetry Lie group corresponding to $\Diff_{\mu_0}(\RR^{n})$ in the infinite-dimensional case.

The projection $\pi\colon\GL(n)\to\Sym{n}$ is given by
\begin{equation}
  \pi(A) = A\Sigma_0 A^{\top}.
\end{equation}
Its derivative $D\pi(A)$ is computed as follows:
if $\Sigma = \pi(A)$ then
\begin{equation}
  \dot\Sigma = \dot A A^{-1} A\Sigma_0 A^{\top} + A\Sigma_0 A^{\top} (\dot A A^{-1})^{\top} = \dot A A^{-1} \Sigma + \Sigma (\dot A A^{-1})^{\top}.
\end{equation}
\reviewerone{
  Equation (23) should have a transpose on the last term; this doesn’t matter when talking about horizontal vectors only, but it does when the author uses the formula to get the vertical vectors.
  \\[2ex]
  DONE!
}%
Thus,
\begin{equation}\label{eq:pi_deriv_finite}
  D\pi(A)\cdot\dot A = V\Sigma + \Sigma V^{\top}, \qquad V=\dot A A^{-1},\; \Sigma = \pi(A).
\end{equation}
We encourage the reader to compare this formula with the infinite-dimensional case~\eqref{eq:deriv_pi}.

From \eqref{eq:pi_deriv_finite} we get that the vertical distribution is given by
\begin{equation}\label{eq:vert_dist_finite}
  \Ver_A = \{VA \in T_A\GL(n)\mid V\pi(A)+\pi(A)V^{\top} = 0 \iff V\in \mathfrak{o}(n,\pi(A))\},
\end{equation}
where $\mathfrak{o}(n,\Sigma)$ denotes the Lie algebra of $\OO(n,\Sigma)$.
Again, compare with the infinite-dimensional case~\eqref{eq:vert_dist_omt_infinite}.
\reviewertwo{
  eq. (24): $o(n,\pi(A))$ seems to be the Lie-algebra associated to $O(n,\Sigma_0)$, but notation is not defined. \\[2ex]
  FIXED!
}



\subsubsection{Descending metric}
The formula \eqref{eq:metric_linear_OMT_2} reveals that the metric $\mathcal G$
is $\OO(n,\Sigma_0)$ right-invariant, as expected.
It therefore descends to a metric $\bar{\mathcal{G}}$ on $\GL(n)/\OO(n,\Sigma_0) \break \simeq\Sym{n}$, corresponding to the restriction of the Wasserstein metric~\eqref{eq:wasserstein_metric_smooth_omt} to $\mathcal{N}_n$.

The horizontal distribution~\eqref{eq:horizontal_omt} restricted to~$\GL(n)$ is given by
\begin{equation}\label{eq:horizontal_linear_omt}
  \Hor_A = \{ \dot A \in T_A\GL(n) \mid \dot A A^{-1} \in \TSym{n} \}.
\end{equation}
Indeed, if $\varphi(x) = A x$ and $\dot\varphi(x)=\dot A x$ then
\begin{equation}
  \dot\varphi \in \Hor_{\varphi} \iff \dot A A^{-1}x = \nabla f(x).
\end{equation}
Since $x\mapsto \dot A A^{-1} x$ is a linear mapping, it follows that $f(x)$ must be a quadratic form; we thereby recover \eqref{eq:horizontal_linear_omt}.
Another way to arrive at the same result is to directly compute the orthogonal complement in $T_{A}\GL(n)$ of $\Ver_{A}$.
\reviewerone{
  It is plausible but not completely obvious why the restriction of the horizontal space of gradients should restrict to symmetric matrices in equation (22); this deserves a comment or short justification. Also it seemed strange to discuss horizontal vectors before vertical vectors.
  \\[2ex]
  DONE! Vertical distribution is now described above, and a computation showing the horizontal dist is given.
}%
The metric $\bar{\mathcal{G}}$ is now defined by
\begin{equation}
  \bar{\mathcal{G}}_{\pi(A)}(D\pi(A)\cdot \dot A, D\pi(A)\cdot \dot A) = \mathcal{G}_A(\dot A,\dot A), \quad \forall\; \dot A\in \Hor_A .
\end{equation}
To work it out explicitly, first notice that
\begin{equation}
  D\pi(A)\cdot \dot A = \dot A\Sigma_0 A^{\top} + A\Sigma_0 \dot A^{\top}.
\end{equation}

From \eqref{eq:metric_linear_OMT_2} and \eqref{eq:horizontal_linear_omt} it then follows that
\begin{equation}\label{eq:bar_metric_linear_OMT}
  \bar{\mathcal{G}}_{\Sigma}(\dot\Sigma,\dot\Sigma) = \tr(\Sigma S S),
\end{equation}
where $S\in\TSym{n}$ is the solution to the \emph{continuous Lyapunov equation} (a special case of a \emph{Sylvester equation}~\cite{Sy1884,BaSt1972}) given by
\begin{equation}
  \dot\Sigma = S\Sigma + \Sigma S.
\end{equation}


\subsubsection{Finite-dimensional Monge--Ampère equation}
From~\eqref{eq:horizontal_linear_omt} it follows that the horizontal geodesics from the identity are of the form
\begin{equation}\label{eq:horizontal_geodesics_matrix}
  \gamma(t) = \underbrace{I + t S}_{P(t)}, \quad S\in \TSym{n}.
\end{equation}
Consequently, the finite-dimensional analogue of the Monge--Ampère equation~\eqref{eq:monge_ampere} consists in finding $P \coloneqq P(1)\in\TSym{n}$ such that
\begin{equation}\label{eq:monge_ampere_finite_dim}
  \pi(P) = \Sigma_1 \iff P\Sigma_0P = \Sigma_1 .
\end{equation}
In particular, if $\Sigma_0=I$ the solution is the matrix square root of $\Sigma_1$.
Hence we see
that the solution to the Monge--Ampère equation~\eqref{eq:monge_ampere} with $\rho_0=1$ is, in a certain sense, a generalization of the matrix square root.
\reviewerone{
  On page 12 the author calls it “startling” that the Monge-Ampere equation should relate to the matrix square root. Didn’t Brenier already make essentially the same point in his cited paper? This just seems like a strange word choice. \\[2ex]
  DONE!
}

\rednotes{
Differentiating this relation we get
\begin{equation}
  \dot\Sigma_1 = \dot P \Sigma_0 P + P \Sigma_0 \dot P^{\top} = \dot P P^{-1} \Sigma_1 + \Sigma_1 P^{-1}\dot P^{\top} = \dot P P^{-1} \Sigma_1 + \Sigma_1\dot P P^{-1}
\end{equation}
Next, using that $P$ and $\dot PP^{-1}$ are symmetric we get
\begin{equation}
  0 = P^{-1}\dot P^{\top} + P\dot\Sigma_1 P + P^{-1}\dot P
\end{equation}

\begin{equation}
  \begin{split}
  \dot P &= - P\dot\Sigma_1 \Sigma_1^{-1} - P\Sigma_1\dot P P^{-1}\Sigma_1^{-1} = - \Sigma_1^{-1}\dot\Sigma_1 P - \Sigma_1^{-1}P^{-1}\dot P \Sigma_1 P \\
  &= - P\Sigma_0^{-1} (P\dot\Sigma_1P + \dot P \Sigma_1 P) = P\Sigma_0^{-1} P\Sigma_1\dot P
  \end{split}
\end{equation}
Thus
\begin{equation}
  \tr(\Sigma_0 \dot P \dot P) = \tr(\Sigma_0 (P\dot\Sigma_1 \Sigma_1^{-1} - P\Sigma_1\dot P P^{-1}\Sigma_1^{-1}))
\end{equation}

In summary, we then obtain the following table:

\begin{tabular}{rcc}
 & \autoref{prob:smooth_omt} & \autoref{prob:linear_omt_2} \\
Transformation group &  $\Diff(\RR^{n})$ & $\GL(n)$ \\
Group metric & $\frac{1}{2}\int_{\RR^{n}} \norm{\dot\varphi(x)}^2\; \mu_0(x)$ & $\frac{1}{2}\tr(\Sigma_0\dot A^{\top}\dot A)$ \\
Isotropy group & $\Diff_{\mu_0}(\RR^{n})$ & $\OO(n,\Sigma_0)$ \\
Base manifold & $\Dens(\RR^{n})$ & $\Sym{n}$ \\
Wasserstein metric & & \\
Monge--Ampère eq. & & \\
\end{tabular}
}

\subsection{Polar decomposition of matrices}\label{sub:polar_decomposition_matrices}

The finite-dimensional polar cone $K_{\lozenge}$ consists of those matrices that are connected to the identity matrix by a curve in $\GL(n)$ of the form~\eqref{eq:horizontal_geodesics_matrix}, i.e., by a horizontal geodesic.
The result corresponding to \autoref{lem:polar_cone_isomorphism} is the following.
\begin{lemma}\label{lem:finite_polar_cone_isomorphism}
  The polar cone $K_{\lozenge}\subset \GL(n)$ consists of all positive definite symmetric matrices.
  It is a convex submanifold of $\GL(n)$.
\end{lemma}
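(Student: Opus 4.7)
My plan is to mirror the proof strategy of \autoref{lem:polar_cone_isomorphism} in this finite-dimensional setting, where everything reduces to spectral properties of symmetric matrices.

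First, I will show $K_{\lozenge}\subseteq \Sym{n}$. Suppose $P\in K_{\lozenge}$, so that $P=\gamma(1)$ for a horizontal geodesic $\gamma(t)=I+tS$ with $S\in \TSym{n}$ and $\gamma(t)\in\GL(n)$ for all $t\in[0,1]$. Symmetry of $P=I+S$ is immediate. For positive definiteness, I will diagonalise $S$ in an orthonormal basis; the eigenvalues of $\gamma(t)$ are then $1+t\lambda_i$, where $\lambda_1,\dots,\lambda_n$ are the eigenvalues of $S$. At $t=0$ these are all equal to $1>0$, and by the standing assumption $\gamma(t)\in\GL(n)$ they never vanish on $[0,1]$. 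Continuity in $t$ therefore forces $1+t\lambda_i>0$ for every $t\in[0,1]$, so in particular the eigenvalues $1+\lambda_i$ of $P$ are positive, i.e.\ $P\in\Sym{n}$.

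Conversely, given $P\in\Sym{n}$, I set $S=P-I\in\TSym{n}$ and check that the path $\gamma(t)=(1-t)I+tP$ stays in $\GL(n)$. If $\mu_1,\dots,\mu_n>0$ are the (positive) eigenvalues of $P$, then the eigenvalues of $\gamma(t)$ are the convex combinations $(1-t)+t\mu_i>0$, so $\gamma(t)\in\GL(n)$ for all $t\in[0,1]$. Hence $P=\gamma(1)\in K_{\lozenge}$ and equality $K_{\lozenge}=\Sym{n}$ follows.

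Convexity is then immediate: a convex combination of positive definite symmetric matrices is again positive definite symmetric. For the submanifold assertion, $\Sym{n}$ is an open subset of the linear subspace $\TSym{n}\subset \RR^{n\times n}$ (it is the preimage of the open set $(0,\infty)^n$ under the continuous map sending a symmetric matrix to the tuple of its eigenvalues), so it is an embedded submanifold of $\GL(n)$ of dimension $\tfrac{1}{2}n(n+1)$.

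The only delicate step is the eigenvalue continuity argument in the first paragraph, but since $S$ is symmetric we have a global orthonormal eigenbasis and the eigenvalues of $\gamma(t)$ depend affinely on $t$, so no genuine perturbation theory is required. This is the direct finite-dimensional analogue of the positivity-of-Hessian argument used in \autoref{lem:polar_cone_isomorphism}, which is no surprise given the totally geodesic embedding of \autoref{lem:tg}.
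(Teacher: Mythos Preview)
Your proof is correct and follows essentially the same approach as the paper's own proof: both directions use the straight-line path $\gamma(t)=(1-t)I+tP$ together with a continuity argument on the eigenvalues, and convexity is handled identically. Your version is slightly more explicit (diagonalising $S$ to track the eigenvalues as affine functions of $t$, and verifying in the converse direction that the path stays in $\GL(n)$), but these are refinements of detail rather than a different route.
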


\begin{proof}
  The proof is almost identical to that of~\autoref{lem:polar_cone_isomorphism}:
  Let $P\in K_{\lozenge}$ and take $\gamma(t)$ to be the horizontal geodesic such that $\gamma(0)=I$ and $\gamma(1)=P$.
  Then, for each $t\in[0,1]$, $\gamma(t)$ is a symmetric matrix and an element of $\GL(n)$.
  Since $I$ has only positive eigenvalues, it follows that $\gamma(t)$ has only positive eigenvalues.
  Thus, $\gamma(t)$ is positive definite, so $K_{\lozenge}$ consists of positive definite symmetric matrices.

  Now, if $P$ is any positive definite symmetric matrix, then $\gamma(t) =  (1-t) I + t P$ is a horizontal geodesic originating from the identity.
  Thus, $P = \gamma(1) \in K_{\lozenge}$ per definition. 

  Convexity of $K_{\lozenge}$ follows since a convex combination of positive definite symmetric matrices is positive definite symmetric. %
  \reviewertwo{
    How about a three-line proof for Lemma 2.6?
    \\[2ex]
    DONE!
  }%
\end{proof}

Next follows the analogue of \autoref{lem:shortest_geodesic}.
\reviewerone{
  Lemma 2.7 is similarly not proved or even discussed, and only in Lemma 3.9 is some effort made to justify the statement. At the very least the author should explain why these lemmas are true even if a formal proof is too much trouble.
  \\[2ex]
  DONE! We now give a complete geometric proof.
}
\begin{lemma}\label{lem:shortest_geodesic_finite_dim}
  The polar cone $K_{\lozenge}$ is a section of the principal bundle~\eqref{eq:principal_bundle_finite_dim}.
  That is, the mapping
  \begin{equation}
    \pi\big|_{K_{\lozenge}}\colon K_{\lozenge} \to \Sym{n}
  \end{equation}
  is an isomorphism.
\end{lemma}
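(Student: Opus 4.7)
The plan is to prove that $\pi|_{K_\lozenge}\colon P\mapsto P\Sigma_0 P$ (using symmetry of $P$, so $P^\top=P$) is a smooth bijection from $K_\lozenge$ onto $\Sym{n}$ with smooth inverse. I would pursue a direct, elementary route grounded in the spectral theorem, and then indicate how the more elaborate geometric route via a lifted gradient flow would go.

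First I would reduce to the case $\Sigma_0=I$. Writing $R=\Sigma_0^{1/2}$ for the unique positive definite square root (itself an element of $K_\lozenge$), the substitution $\tilde P=RPR$ is a diffeomorphism of $K_\lozenge$ onto itself, and it transforms the equation $\pi(P)=\Sigma$ into $\tilde P^{2}=R\Sigma R$. Hence it suffices to prove that the squaring map $\mathrm{sq}\colon K_\lozenge\to\Sym{n}$, $\mathrm{sq}(P)=P^{2}$, is a diffeomorphism.

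Next I would establish bijectivity. For surjectivity, given $\Sigma\in\Sym{n}$, the spectral theorem writes $\Sigma=UDU^\top$ with $U\in\OO(n)$ and $D$ diagonal with strictly positive entries; then $P\coloneqq UD^{1/2}U^\top$ lies in $K_\lozenge$ and squares to $\Sigma$. For injectivity, suppose $P_1^{2}=P_2^{2}=\Sigma$ with $P_i\in K_\lozenge$. Each $P_i$ commutes with $\Sigma$ since $P_i\Sigma=P_i^{3}=\Sigma P_i$, so the $P_i$ restrict to positive definite symmetric operators on every eigenspace $E_\lambda$ of $\Sigma$, squaring to $\lambda I|_{E_\lambda}$; spectral uniqueness on $E_\lambda$ forces $P_i|_{E_\lambda}=\sqrt{\lambda}\,I$, whence $P_1=P_2$. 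Smoothness of $\mathrm{sq}^{-1}$ then follows from the inverse function theorem once one verifies that the differential of $\mathrm{sq}$ at $P\in K_\lozenge$, namely the Lyapunov operator $L_P\colon \dot P\mapsto P\dot P+\dot P P$ on $\TSym{n}$, is invertible. This is a special case of the Sylvester theory already invoked for \eqref{eq:bar_metric_linear_OMT}: if $L_P\dot P=0$ with $\dot P\in\TSym{n}$, then $\tr(\dot P\,L_P\dot P)=2\tr(P\dot P^{2})=0$, and since $P$ is positive definite and $\dot P^{2}$ is positive semidefinite this forces $\dot P=0$.

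The geometric alternative indicated in \autoref{sec:contributions} bypasses the explicit spectral construction: it asserts that the lifted entropy gradient flow on $K_\lozenge$ (\autoref{subsub:lifted_gradient_omt_linear}), started at $I$ and targeting $\Sigma$, converges to the unique point of $K_\lozenge\cap\pi^{-1}(\Sigma)$, whose existence and uniqueness comes packaged in \autoref{thm:limit_lifted_entropy_flow_omt_finite}. The hard part along that route will be to prove global existence and convergence of the flow while ruling out escape to the boundary $\partial K_\lozenge$; this hinges on a coercivity estimate together with strict convexity (positive definiteness of the relevant Hessian) of the lifted entropy functional $\hat F$ restricted to $K_\lozenge$, and this is where I would expect the main technical obstacle to lie.
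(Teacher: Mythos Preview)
Your proposal is correct. The direct route—reduce to $\Sigma_0=I$ via conjugation by $R=\Sigma_0^{1/2}$, then invoke the spectral theorem for existence and the commuting-eigenspace argument for uniqueness of the positive square root, with the Lyapunov operator giving smoothness of the inverse—is exactly the ``linear algebraic technique'' the paper acknowledges would work but deliberately sets aside.

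The paper instead gives \emph{only} what you call the geometric alternative. Surjectivity comes from the existence of the limit $P_\infty$ in \autoref{thm:limit_lifted_entropy_flow_omt_finite}; injectivity is argued by observing that any $P'\in K_\lozenge\cap\pi^{-1}(\Sigma_1)$ satisfies $DF(P')=DH(\Sigma_1)\circ D\pi(P')=0$ (since $\Sigma_1$ maximizes $H$), so $P'$ is an equilibrium of the restricted gradient flow and hence equals the unique maximizer $P_\infty$. The technical obstacle you flag—strict concavity of $F|_{K_\lozenge}$—is dispatched in \autoref{lem:pos_def_hessian_lifted_omt_finite}, and the passage from $\Pi_{P'}\nabla_{\mathcal G}F(P')=0$ to $\nabla_{\mathcal G}F(P')=0$ uses the transversality \autoref{lem:transversal_tangent_cone_linear_omt}; together with convexity of $K_\lozenge$ these give global exponential convergence without a separate boundary analysis.

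Your argument is shorter and self-contained, but it imports the spectral theorem (and, in the reduction step, the existence of $\Sigma_0^{1/2}$, which is essentially the $\Sigma_0=I$ case of the lemma itself). The paper's point is the reverse: to \emph{derive} such decompositions from Riemannian gradient-flow arguments, and to produce a finite-dimensional template for attacking the infinite-dimensional \autoref{lem:shortest_geodesic}, where no spectral shortcut is available.
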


Whereas this result readily follows by linear algebraic techniques, we shall, as mentioned, give a new, geometric proof in \autoref{subsub:lifted_gradient_omt_linear}, based on a finite-dimensional analogue of the lifted Riemannian gradient flow in \autoref{subsub:lifted_gradient_flow_smooth}.


The decomposition now reads as follows.

\begin{theorem}[Polar decomposition of matrices]\label{thm:polar_decomposition_matrices}
  Let $A\in\GL(n)$ and $\Sigma_0\in\Sym{n}$.
  Then there exist unique matrices $P\in\Sym{n}$ and $Q\in\OO(n,\Sigma_0)$ such that
  \begin{equation}
    A = PQ.
  \end{equation}
  The matrix $P$ is the unique solution of \autoref{prob:linear_omt_2} with $\Sigma_1 = A\Sigma_0 A^{\top}$.
\end{theorem}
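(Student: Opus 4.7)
The plan is to mirror the proof of \autoref{thm:polar_decomposition_smooth} exactly, using \autoref{lem:finite_polar_cone_isomorphism} and \autoref{lem:shortest_geodesic_finite_dim} as the two key inputs. First I would set $\Sigma_1 \coloneqq \pi(A) = A\Sigma_0 A^{\top} \in \Sym{n}$. By \autoref{lem:shortest_geodesic_finite_dim}, the restricted projection $\pi|_{K_{\lozenge}}$ is an isomorphism onto $\Sym{n}$, so there is a unique $P \in K_{\lozenge}$ with $\pi(P) = \Sigma_1$. By \autoref{lem:finite_polar_cone_isomorphism}, the polar cone $K_{\lozenge}$ is exactly the set of positive definite symmetric matrices, so $P \in \Sym{n}$ as required.

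Next I would construct $Q$ as the obvious candidate $Q \coloneqq P^{-1}A$ and verify $Q \in \OO(n,\Sigma_0)$ by a short calculation: using $\pi(P) = P\Sigma_0 P = \Sigma_1$ together with the symmetry of $P$, one computes
\begin{equation}
  Q\Sigma_0 Q^{\top} = P^{-1} A \Sigma_0 A^{\top} P^{-1} = P^{-1}\Sigma_1 P^{-1} = P^{-1}(P\Sigma_0 P)P^{-1} = \Sigma_0.
\end{equation}
This gives the existence of the factorization $A = PQ$. Geometrically, this is just the statement that $A$ and $P$ live in the same fiber $\pi^{-1}(\Sigma_1)$ of the principal bundle~\eqref{eq:principal_bundle_finite_dim}, so they differ by a right multiplication by an element of the structure group $\OO(n,\Sigma_0)$.

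For uniqueness, suppose $A = P'Q'$ is another such factorization with $P' \in \Sym{n}$ and $Q' \in \OO(n,\Sigma_0)$. Then $\pi(P') = P'\Sigma_0 P'^{\top} = P'Q'\Sigma_0(Q')^{\top}P' = \pi(A) = \Sigma_1$. Since $\Sym{n} = K_{\lozenge}$ by \autoref{lem:finite_polar_cone_isomorphism}, the uniqueness clause of \autoref{lem:shortest_geodesic_finite_dim} forces $P' = P$, and consequently $Q' = (P')^{-1}A = Q$.

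The last assertion follows from the Riemannian geometry developed in \autoref{sub:omt_linear}: any minimizer of \autoref{prob:linear_omt_2} with constraint $\pi(A) = \Sigma_1$ must lie at the endpoint of a shortest curve from $I$ to the fiber $\pi^{-1}(\Sigma_1)$, which by the standard Riemannian-submersion argument has to be horizontal, hence an element of $K_{\lozenge}\cap\pi^{-1}(\Sigma_1) = \{P\}$. The main obstacle is not in this proof at all but in \autoref{lem:shortest_geodesic_finite_dim}; once one accepts that $\pi|_{K_{\lozenge}}$ is a bijection, the decomposition and its uniqueness are immediate. The author signals that \autoref{lem:shortest_geodesic_finite_dim} will be established in \autoref{subsub:lifted_gradient_omt_linear} via convergence and uniqueness of a lifted entropy gradient flow, so the geometric content of the theorem is really concentrated there.
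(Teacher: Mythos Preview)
Your proof is correct and follows essentially the same approach as the paper's own proof: set $\Sigma_1=\pi(A)$, invoke \autoref{lem:shortest_geodesic_finite_dim} to obtain the unique $P\in K_{\lozenge}=\Sym{n}$ on the fiber, define $Q=P^{-1}A$, and appeal to the Riemannian submersion geometry for the optimality claim. You have simply spelled out a few steps (the explicit verification that $Q\in\OO(n,\Sigma_0)$ and the uniqueness argument) that the paper leaves to the principal bundle structure~\eqref{eq:principal_bundle_finite_dim}, and you correctly identify that the substantive work is deferred to \autoref{lem:shortest_geodesic_finite_dim}.
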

\reviewerone{
  In Theorem 2.8 the notation is a bit confusing; the A in Problem 3 is not the A here, but rather the P here. \\[2ex]
  DONE! Now using $P$ instead in Problem 3.
}

\begin{proof}
  Let $\Sigma_1 = \pi(A)$.
  From \autoref{lem:shortest_geodesic_finite_dim} we get a unique corresponding matrix $P\in K_{\lozenge} = \Sym{n}$ such that $\pi(P)=\Sigma_1$.
  By construction, $A$ and $P$ belong to the same fiber, so by the principal bundle structure~\eqref{eq:principal_bundle_finite_dim} it follows that $Q\coloneqq P^{-1}A \in \OO(n,\Sigma_0)$.
  That $P$ is the solution of \autoref{prob:linear_omt_2} follows from the geometry since it is the endpoint of a horizontal geodesic originating from the identity, which is the shortest curve between the identity matrix and the fiber $\pi^{-1}(\Sigma_1)$.
\end{proof}

\subsubsection{Vertical Gradient Flow}\label{subsub:vert_gradient_flow_linear_omt}

Let $\nabla_{\mathcal G}$ denote the gradient with respect to the metric~\eqref{eq:metric_linear_OMT} and let $\mathrm{Pr}_{\Ver}$ denote orthogonal projection onto the vertical distribution~\eqref{eq:vert_dist_finite}.
With $J$ as in \autoref{prob:linear_omt_2} we are then interested in the constrained gradient flow
\begin{equation}
  \dot B = - \mathrm{Pr}_{\Ver}\nabla_{\mathcal G} J(B), \quad B(0) = A.
\end{equation}
An equilibrium of this flow is obtained at the symmetric matrix $P$ in the polar decomposition $A=PQ$.

Since
\begin{equation}
  \frac{\ud}{\ud t}J(B) = 2\tr(\Sigma_0(B-I)^{\top}\dot B) = \mathcal G_{B}(2(B-I),\dot B),
\end{equation}
and since the horizontal distribution is given by \eqref{eq:horizontal_linear_omt}, it follows that the vertical gradient flow is
\begin{equation}\label{eq:vert_flow_omt_linear}
  \dot B + 2(B - I) = -S B, \quad B\Sigma_0 B^{\top} = \Sigma_1,
\end{equation}
where $S\in\TSym{n}$ is a Lagrange multiplier and $\Sigma_1 = A\Sigma_0 A^{\top}$.
\reviewerone{
  At the top of page 13 I expected to see some mention of how one computes S (and similarly how one would compute p in the corresponding situation at the end of page 8). Also this would be an excellent place to write what the equation looks like in a low-dimensional concrete example.
  \\[2ex]
  DONE! We now explain in great detail how $S$ is computed, and also give numerical examples.
 }%

\begin{remark}
  In order for the flow \eqref{eq:vert_flow_omt_linear} to be able to reach $P$ in \autoref{thm:polar_decomposition_matrices} it is necessary that the initial data~$A$ belong to the identity component of $\GL(n)$.
  A future topic (see \autoref{subsub:outlook_vert_limit}) is to find minimal conditions under which the flow converges to $P$.
\end{remark}

By construction, $\dot B \in \Ver_{B}$.
Multiplying \eqref{eq:vert_flow_omt_linear} from the right by $B^{-1}$ and using from \eqref{eq:vert_dist_finite} that $\dot B B^{-1} \in \mathfrak{o}(n,\Sigma_1)$, we obtain a continuous Lyapunov equation for the Lagrange multiplier $S$, namely
\begin{equation}
  S \Sigma_1 + \Sigma_1 S  = 4 \Sigma_1 - 2 (B^{-1} \Sigma_1  + \Sigma_1 B^{-\top}).
\end{equation}

It is possible to formulate the equations in the right reduced variable $\Omega\coloneqq \dot B B^{-1}$, without using Lagrange multipliers.
Indeed, multiplying \eqref{eq:vert_flow_omt_linear} from the right and subtracting the transpose of the whole equation, we get
\begin{align} 
  \underbrace{\dot B B^{-1}}_{\Omega}  + 2(I - B^{-1}) &= -S \\
  &\Updownarrow \\
  \Omega - \Omega^{\top}  + 2(B^{-\top} - B^{-1}) &= 0 .
\end{align}
Using the characterization \eqref{eq:vert_dist_finite} of $\Ver$ we then get
\begin{align}
  \Omega + \Sigma_1^{-1} \Omega \Sigma_1  + 2(B^{-\top} - B^{-1}) &= 0 \\
      &\Updownarrow \\
  \Sigma_1 \Omega + \Omega \Sigma_1  + 2\Sigma_1 (B^{-\top} - B^{-1}) &= 0 .
\end{align}
Thus, an alternative form for the gradient flow \eqref{eq:vert_flow_omt_linear} is
\begin{equation}\label{eq:vert_flow_omt_linear2}
  \left\{
  \begin{aligned}
    \dot B &= \Omega B \\
    \Sigma_1 \Omega + \Omega \Sigma_1  &= 2\Sigma_1 (B^{-1} - B^{-\top}).
  \end{aligned}
  \right.
\end{equation}
Notice that $B^{-\top}$ can be computed from $A^{-1}$ (the inverse of the initial data) and $\Sigma_0^{-1}$, since
\begin{equation}
  B^{-1} A \in \OO(n,\Sigma_0) \iff \Sigma_0 A^{\top} B^{-\top} = A^{-1} B \Sigma_0
  \iff B^{-\top} = A^{-\top}\Sigma_0^{-1}A^{-1} B \Sigma_0.
\end{equation}
Likewise,
\begin{equation}
  A^{-1} B \in \OO(n,\Sigma_0) \iff \Sigma_0 B^{\top} A^{-\top} = B^{-1} A\Sigma_0
  \iff B^{-1} = \Sigma_0 B^{\top} A^{-\top} \Sigma_0^{-1}A^{-1}.
\end{equation}

\begin{figure}
  \iftoggle{arxiv}{\includegraphics[scale=\iftoggle{aims}{1.0}{1.2}]{fig_Bplot}}{\input{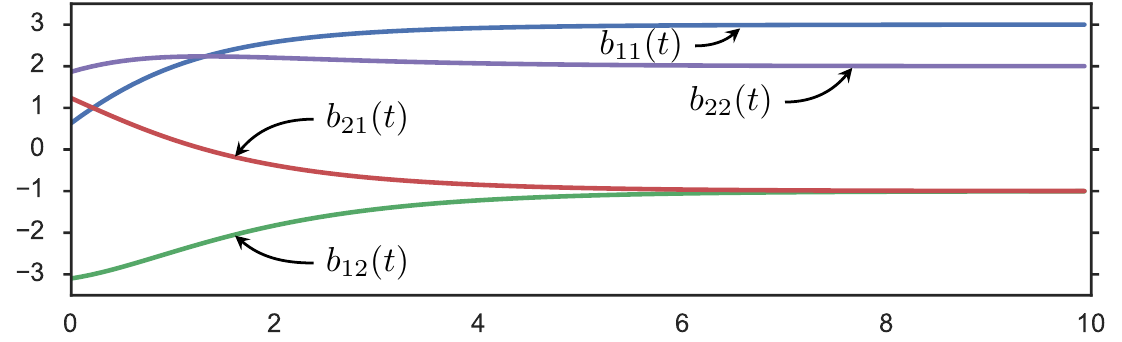}}
  \caption{Evolution of the matrix elements of $B(t)$ for the vertical gradient flow in \protect\autoref{ex:vert_flow_omt_finite}.
  Notice that $B(0)=A$ and that $B(t)$ converges towards $P_{\infty}$ in \protect\eqref{eq:Pinf_and_Qinf_vert} as $t\to\infty$.
  }\label{fig:vert_flow_example_omt_finite}
\end{figure}

\begin{figure}
  \iftoggle{arxiv}{\includegraphics[scale=\iftoggle{aims}{1.0}{1.2}]{fig_Berrorplot}}{\input{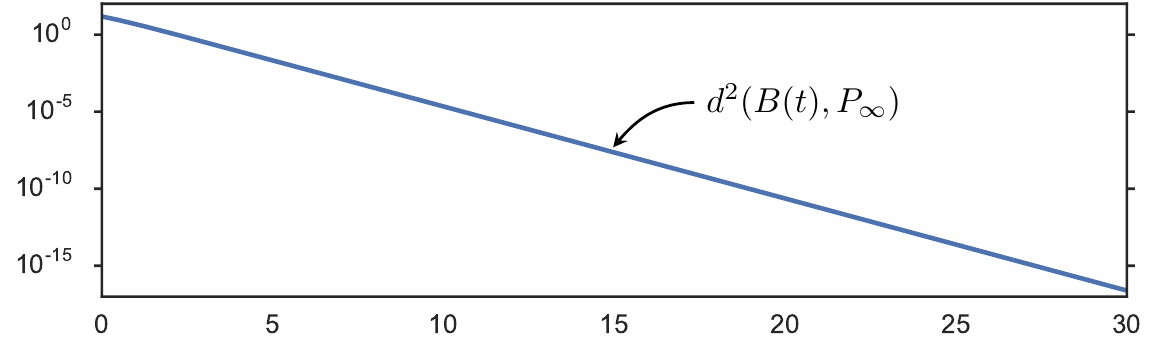}}
  \caption{Convergence towards the limit $P_{\infty}$ of the vertical gradient flow in \protect\autoref{ex:vert_flow_omt_finite}.
  }\label{fig:vert_flow_example_omt_finite_errors}
\end{figure}

\begin{example}\label{ex:vert_flow_omt_finite}
  We give here an explicit example of a vertical gradient flow.
  Take
  \begin{equation} \label{eq:Pinf_and_Qinf_vert}
    P_{\infty} = \begin{pmatrix} 3 & -1 \\ -1 & 2 \end{pmatrix} \qquad\text{and}\qquad
    Q_{\infty} = \begin{pmatrix} \cos\theta & -\sin\theta \\ \sin\theta & \cos\theta \end{pmatrix}
  \end{equation}
  with $\theta = \pi/3$.
  Set
  \begin{equation}
    A \coloneqq P_{\infty}Q_{\infty} = \frac{1}{2}\begin{pmatrix}
      {3-\sqrt{3}} & -{3\sqrt{3}-1} \\
      {2\sqrt{3}-1} & {\sqrt{3}+2} \\
    \end{pmatrix}
  \end{equation}
  and take $\Sigma_0 = I$.
  The matrix $\Sigma_1$ is then given by
  \begin{equation}
    \Sigma_1 = A A^{\top} = \begin{pmatrix}
      10 & -5 \\
      -5 & 5
    \end{pmatrix}.
  \end{equation}

  We discretize the vertical gradient flow \eqref{eq:vert_flow_omt_linear2} in time by the \emph{Lie--Euler method} (cf.\ \cite{CeMaOw2014})
  \begin{equation}
    B_{k+1} = \exp(\Delta t\, \Omega_k)B_k, \quad B_0 = A,
  \end{equation}
  where $\Omega_k$ is computed from $B_k$ by solving the Sylvester equation in \eqref{eq:vert_flow_omt_linear2} using the Bartels--Stewart algorithm \cite{BaSt1972}.
  We use $\Delta t = 0.1$ as time-step.

  The evolution of the matrix elements
  \begin{equation}
    B(t) = \begin{pmatrix}
      b_{11}(t) & b_{12}(t) \\
      b_{21}(t) & b_{22}(t)
    \end{pmatrix}
  \end{equation}
  is shown in \autoref{fig:vert_flow_example_omt_finite};
  $B$ starts at $A$ and converges towards $P_{\infty}$.
  The convergence in squared Riemannian distance is shown in \autoref{fig:vert_flow_example_omt_finite_errors}; it appears to be exponential.
  To give a full explanation of the rapid convergence rate observed here is an interesting, future topic (see \autoref{sec:outlook} below).
\end{example}

\reviewertwo{
  Similarly, the gradient flows in Sections 2.4.1 and 2.4.2 could benefit from a few more intermediate steps. \\[2ex]
  DONE! A lot more details is now given for both types of flows.
  In particular, we also give numerical examples.
}

\subsubsection{Entropy Gradient Flow}

Here we consider the analogue of the entropy gradient flow~\eqref{eq:dens_gradient_flow}.
To this extent, the relative entropy functional~\eqref{eq:relative_entropy} restricted to $\mathcal{N}_{n}\simeq\Sym{n}$ is given by
\begin{equation}\label{eq:rel_entropy_fin_dim}
  H(\Sigma) = \frac{n}{2} -\frac{1}{2}\tr(\Sigma_1^{-1}\Sigma) +\frac{1}{2}\log\left( \det(\Sigma_1^{-1}\Sigma)\right).
\end{equation}
To see this, let $\mu=p(\cdot,\Sigma)$ and $\mu_1=p(\cdot,\Sigma_1)$ with $\Sigma,\Sigma_1\in\Sym{n}$.
We then have
\begin{equation}
  \frac{\mu}{\mu_1} = \sqrt{\frac{\det(\Sigma_1)}{\det(\Sigma)}}\exp\left(-\frac{1}{2} x^{\top}(\Sigma^{-1}-\Sigma_1^{-1})x\right),
\end{equation}
so
\begin{equation}
  \log\left(\frac{\mu}{\mu_1} \right) = \frac{1}{2}\log\left( \frac{\det(\Sigma_1)}{\det(\Sigma)} \right) - \frac{1}{2} x^{\top}\left( \Sigma^{-1}-\Sigma_1^{-1} \right) x .
\end{equation}
From~\eqref{eq:relative_entropy} we now get
\begin{align}
  H(\mu) &= -\frac{1}{2}\int_{\RR^{n}} \left( \log\left( \frac{\det(\Sigma_1)}{\det(\Sigma)} \right) - \ x^{\top}\left( \Sigma^{-1}-\Sigma_1^{-1} \right) x \right) \mu \\
  &= -\frac{1}{2}\log\left( \frac{\det(\Sigma_1)}{\det(\Sigma)} \right) \underbrace{\int_{\RR^{n}}\mu}_{1} + \frac{1}{2}\int_{\RR^{n}} x^{\top}\Sigma^{-1}x \mu - \frac{1}{2}\int_{\RR^{n}}x^{\top}\Sigma_{1}^{-1} x \mu \\
  &= \frac{1}{2}\log\left( \det(\Sigma_1^{-1}\Sigma) \right) + \frac{1}{2}\underbrace{\tr(\Sigma^{-1}\Sigma)}_{n} - \frac{1}{2}\tr(\Sigma_{1}^{-1}\Sigma),
\end{align}
where the last equality follows from the same calculation as in~\eqref{eq:J_fin_dim}.
This proves the formula~\eqref{eq:rel_entropy_fin_dim} for $H(\Sigma)$.

\reviewerone{
  Also on page 13, I thought the formula for $H(\Sigma)$ should have been derived since it’s not obvious. It seems to me that it comes from integrating the formulas for $\mu$ which are given by the Gaussian density in terms of $\Sigma$, so there is a cancellation here that I wanted to see. A similar integration should produce the formula in Lemma 3.4.
  \\[2ex]
  DONE! The formulae for $H(\Sigma)$, as well as that in the Lemma, are now carefully derived.
}
If $\Sigma=\Sigma(t)$ is a curve in $\Sym{n}$, then
\begin{align}
  \frac{\ud}{\ud t}H(\Sigma) &=
  \frac{1}{2}\frac{\ud}{\ud t}\log\left( \frac{\det(\Sigma)}{\det(\Sigma_1)} \right) - \frac{1}{2}\tr(\Sigma_1^{-1}\dot\Sigma)
  \\
  &=
  \frac{1}{2}\tr(\Sigma^{-1}\dot\Sigma) - \frac{1}{2}\tr(\Sigma_1^{-1}\dot\Sigma).
\end{align}
Taking $\dot\Sigma = S\Sigma + \Sigma S$ for $S\in\TSym{n}$, we get
\begin{equation}
  \begin{split}
    \frac{\ud}{\ud t}H(\Sigma)
    &= \frac{1}{2}\tr(\Sigma^{-1}(S\Sigma+\Sigma S)) - \frac{1}{2}\tr(\Sigma_1^{-1}(S\Sigma+\Sigma S))  \\
    &= \frac{1}{2}\big(\tr(\Sigma^{-1}S\Sigma)+\tr(S)\big) - \frac{1}{2}\big(\tr(\Sigma_1^{-1}S\Sigma)+\tr(\Sigma_1^{-1}\Sigma S)\big)  \\
    &= \frac{1}{2}\big(\tr(\Sigma\Sigma^{-1}S)+\tr(S)\big) - \frac{1}{2}\big(\tr(\Sigma\Sigma_1^{-1}S)+\tr(S \Sigma_1^{-1} \Sigma)\big)  \\
    &= \tr(\Sigma\Sigma^{-1}S) - \frac{1}{2}\big(\tr(\Sigma\Sigma_1^{-1}S)+\tr((S \Sigma_1^{-1} \Sigma)^\top)\big)  \\
    &= \tr(\Sigma\Sigma^{-1}S) - \tr(\Sigma\Sigma_1^{-1}S)
  \end{split}
\end{equation}
From~\eqref{eq:bar_metric_linear_OMT} it then follows that
\begin{equation}
  \frac{\ud}{\ud t}H(\Sigma) = \bar{\mathcal{G}}_{\Sigma}(2 I - \Sigma_1^{-1}\Sigma-\Sigma\Sigma_1^{-1},\dot\Sigma)
\end{equation}
The gradient flow
\begin{equation}
  \dot\Sigma = \nabla_{\bar{\mathcal G}}H(\Sigma)
\end{equation}
is therefore given by
\begin{equation}\label{eq:sym_gradient_flow}
  \dot\Sigma = 2 I - \Sigma_1^{-1}\Sigma-\Sigma\Sigma_1^{-1}.
\end{equation}
By construction, this is the restriction to Gaussian distributions of the infinite-dimensional gradient flow~\eqref{eq:dens_gradient_flow}.
As in~\eqref{eq:dens_gradient_flow}, the flow strives toward the maximum of the relative entropy $H(\Sigma)$, which occurs at $\Sigma = \Sigma_1$.

We shall now give a result on the convergence of \eqref{eq:sym_gradient_flow}.
The essential result is the following on convexity of minus the relative entropy functional.

\begin{lemma}\label{lem:pos_def_hessian_omt_finite}
  The Hessian of the relative entropy function \eqref{eq:rel_entropy_fin_dim} with respect to the Riemannian metric \eqref{eq:bar_metric_linear_OMT} fulfills the following inequality:
  there exists $\alpha> 0$ such that
  \begin{equation}
    -\hess(H)_\Sigma(\dot\Sigma,\dot\Sigma) \geq \alpha\, \bar{\mathcal{G}}_{\Sigma}(\dot\Sigma,\dot\Sigma), \quad \forall\; (\Sigma,\dot\Sigma)\in T\Sym{n}.
  \end{equation}
\end{lemma}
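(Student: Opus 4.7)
My plan is to compute $\hess(H)_\Sigma(\dot\Sigma,\dot\Sigma)$ directly by evaluating $H$ along the Wasserstein geodesic through $\Sigma$ in the direction $\dot\Sigma$, and then to bound the resulting expression below by $\bar{\mathcal{G}}_\Sigma(\dot\Sigma,\dot\Sigma)$ using a Frobenius-norm inequality. To identify the geodesic, I would use the horizontal geodesic formula \eqref{eq:horizontal_geodesics_matrix} together with the Riemannian submersion property of $\pi$: if $S\in\TSym{n}$ is the unique solution of $\dot\Sigma = S\Sigma + \Sigma S$ (the same $S$ appearing in the definition \eqref{eq:bar_metric_linear_OMT} of $\bar{\mathcal{G}}$) and $A_0\in\pi^{-1}(\Sigma)$ is arbitrary, then $A(t)=(I+tS)A_0$ is a horizontal straight line in $\GL(n)$---hence a geodesic by \autoref{lem:tg}---whose projection
\begin{equation}
  \Sigma(t) = \pi(A(t)) = (I+tS)\Sigma(I+tS)
\end{equation}
is the geodesic on $(\Sym{n},\bar{\mathcal{G}})$ through $\Sigma$ with velocity $\dot\Sigma$.

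Next I would substitute into the closed form \eqref{eq:rel_entropy_fin_dim}. The determinant splits as $\det\Sigma(t) = \det\Sigma\,\det(I+tS)^2$, so $\frac{d^2}{dt^2}\big|_{t=0}\log\det\Sigma(t) = -2\tr(S^2)$; and expanding $(I+tS)\Sigma(I+tS) = \Sigma + t(S\Sigma+\Sigma S) + t^2 S\Sigma S$ gives $\frac{d^2}{dt^2}\big|_{t=0}\tr(\Sigma_1^{-1}\Sigma(t)) = 2\tr(\Sigma_1^{-1}S\Sigma S)$, whence
\begin{equation}
  \hess(H)_\Sigma(\dot\Sigma,\dot\Sigma) = -\tr(S^2) - \tr(\Sigma_1^{-1}S\Sigma S).
\end{equation}
Setting $N \coloneqq S\Sigma^{1/2}$, I would rewrite $\bar{\mathcal{G}}_\Sigma(\dot\Sigma,\dot\Sigma) = \norm{N}_F^2$ and $\tr(\Sigma_1^{-1}S\Sigma S) = \norm{\Sigma_1^{-1/2}N}_F^2$, and apply the elementary bound $\norm{BX}_F \geq \lambda_{\min}(B)\norm{X}_F$ for positive definite symmetric $B$ with $B=\Sigma_1^{-1/2}$ to obtain
\begin{equation}
  -\hess(H)_\Sigma(\dot\Sigma,\dot\Sigma) \;\geq\; \tr(\Sigma_1^{-1}S\Sigma S) \;\geq\; \frac{1}{\lambda_{\max}(\Sigma_1)}\,\bar{\mathcal{G}}_\Sigma(\dot\Sigma,\dot\Sigma),
\end{equation}
so one may take $\alpha = 1/\lambda_{\max}(\Sigma_1)$. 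The non-negative term $\tr(S^2)$ is discarded for a uniform bound, though retaining it could only sharpen the constant.

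The main obstacle I foresee is the first step: one must justify carefully that $t\mapsto(I+tS)\Sigma(I+tS)$ is a geodesic through an \emph{arbitrary} $\Sigma$, not just through a distinguished base point. This is not a new difficulty but demands a short bookkeeping argument resting on the right $\OO(n,\Sigma_0)$-invariance of $\mathcal{G}$ and the construction of $\bar{\mathcal{G}}$ as a descending metric. A satisfying feature of the resulting $\alpha = 1/\lambda_{\max}(\Sigma_1)$ is that it depends only on the target $\Sigma_1$ and is uniform in $\Sigma$: this is the finite-dimensional incarnation of Otto's displacement convexity of the relative entropy, and it should feed directly into exponential convergence of the gradient flow \eqref{eq:sym_gradient_flow}.
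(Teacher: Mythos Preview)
Your proof is correct and lands on exactly the same Hessian expression as the paper, namely $-\tr(S^2)-\tr(\Sigma_1^{-1}S\Sigma S)$. The route differs slightly: the paper lifts $H$ to $F=H\circ\pi$ on $\GL(n)$, computes $\hess(F)$ there along straight-line geodesics (\autoref{lem:hessian_lifted_omt_finite}), and then uses the Riemannian-submersion identity $\hess(F)_A(\dot A,\dot A)=\hess(H)_{\pi(A)}(D\pi(A)\dot A,D\pi(A)\dot A)$ for horizontal $\dot A$; substituting $\dot A=SA$ with $\pi(A)=\Sigma$ recovers your formula. Your direct computation on $\Sym{n}$ is more self-contained and produces the explicit constant $\alpha=1/\lambda_{\max}(\Sigma_1)$, whereas the paper leaves $\alpha$ as an abstract infimum via a Cholesky factor of $\Sigma_1^{-1}$. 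The payoff of the paper's detour through $\GL(n)$ is that the same Hessian formula on $\GL(n)$ is immediately reused to control $\hess(F|_{K_\lozenge})$ on the polar cone (\autoref{lem:pos_def_hessian_lifted_omt_finite}), where tangent vectors are no longer horizontal; your approach would need a separate argument there.
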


We postpone the proof of this result until the next section: to compute the Hessian of $H$ it is easier to first lift it to a function $F=H\circ\pi$ on $\GL(n)$, then compute the Hessian, and then restrict it to the horizontal distribution.

A consequence of \autoref{lem:pos_def_hessian_omt_finite} is the following result.

\begin{theorem}\label{thm:limit_entropy_flow_omt_finite}
  For any initial data $\Sigma(0)\in\Sym{n}$, the entropy gradient flow \eqref{eq:sym_gradient_flow} converges exponentially fast towards the minimum $\Sigma_1$ of the relative entropy $H(\Sigma)$.
\end{theorem}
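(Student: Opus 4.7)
The plan is to deduce exponential convergence directly from the Hessian estimate of \autoref{lem:pos_def_hessian_omt_finite}, via the standard Riemannian gradient-flow argument for strongly convex potentials, in the form of a Polyak--Łojasiewicz inequality.

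First I would observe that $\Sigma_1$ is a critical point of $H$: the right-hand side of \eqref{eq:sym_gradient_flow} vanishes for $\Sigma=\Sigma_1$. Since the Hessian bound makes $-H$ strongly geodesically convex with modulus $\alpha$, it is in fact the unique critical point and the unique global extremum of $H$ on $\Sym{n}$. Integrating the Hessian estimate twice along the minimizing geodesic from $\Sigma$ to $\Sigma_1$ yields the two standard consequences of strong convexity. The first is the Polyak--Łojasiewicz inequality
\begin{equation}
  H(\Sigma_1) - H(\Sigma) \leq \frac{1}{2\alpha}\,\bar{\mathcal G}_\Sigma\bigl(\nabla_{\bar{\mathcal G}}H(\Sigma),\nabla_{\bar{\mathcal G}}H(\Sigma)\bigr),
\end{equation}
and the second is the quadratic growth estimate
\begin{equation}
  d^2(\Sigma,\Sigma_1) \leq \tfrac{2}{\alpha}\bigl(H(\Sigma_1)-H(\Sigma)\bigr),
\end{equation}
where $d$ denotes the Wasserstein distance associated with $\bar{\mathcal G}$.

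Next, along the flow \eqref{eq:sym_gradient_flow} I would introduce the deficit $E(t) \coloneqq H(\Sigma_1) - H(\Sigma(t)) \geq 0$, which satisfies
\begin{equation}
  \dot E(t) = -\bar{\mathcal G}_{\Sigma(t)}\bigl(\nabla_{\bar{\mathcal G}}H,\dot\Sigma\bigr) = -\bar{\mathcal G}_{\Sigma(t)}\bigl(\nabla_{\bar{\mathcal G}}H,\nabla_{\bar{\mathcal G}}H\bigr).
\end{equation}
The Polyak--Łojasiewicz inequality then turns this into the scalar differential inequality $\dot E \leq -2\alpha E$, and Grönwall's lemma provides $E(t) \leq e^{-2\alpha t}E(0)$. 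Combining with the quadratic growth estimate yields $d(\Sigma(t),\Sigma_1) \leq C\,e^{-\alpha t}$, which is the desired exponential convergence.

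The main technical point I expect to handle is global-in-time existence of the flow inside the open cone $\Sym{n}$, since both the Polyak--Łojasiewicz inequality and the quadratic growth estimate are used globally. However, $H(\Sigma) \to -\infty$ as $\Sigma$ approaches the boundary $\partial\Sym{n}$ (through either a vanishing eigenvalue or the determinantal term), while $H(\Sigma(t))$ is non-decreasing along the flow. Trajectories therefore remain in a compact sublevel set of $-H$, bounded away from $\partial\Sym{n}$, so the maximal interval of existence is $[0,\infty)$. This, together with the fact that $(\Sym{n},\bar{\mathcal G})$ is geodesically complete (cf.\ \cite{Ta2011}), justifies applying the convexity argument globally.
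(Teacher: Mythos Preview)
Your proposal is correct and follows essentially the same approach as the paper: both rely on the Hessian bound of \autoref{lem:pos_def_hessian_omt_finite} and then invoke the standard Riemannian gradient-flow argument for strongly (geodesically) convex potentials. The paper simply cites this general result (\cite[\S\,3.5]{Ot2001}, \cite[Ch.~24, Remark~24.9]{Vi2009}) without spelling out the Polyak--\L ojasiewicz and Gr\"onwall steps or the global-existence argument, whereas you unpack these details explicitly.
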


\begin{proof}
  Given \autoref{lem:pos_def_hessian_omt_finite}, the result is a special instance of a general result on gradient flows on Riemannian manifolds.
  See, for example, \cite[\S\,3.5]{Ot2001} or \cite[Ch.~24: Remark~24.9]{Vi2009}.
\end{proof}

\todo[inline]{Possibly add a simple numerical example, $2\times 2$ or $3\times 3$ matrices.}

\subsubsection{Lifted Gradient Flow}\label{subsub:lifted_gradient_omt_linear} 


The objective here is to lift the relative entropy functional $H(\Sigma)$ in~\eqref{eq:rel_entropy_fin_dim} to $F=H\circ\pi$ and consider the gradient flow of $F$ restricted to the polar cone~$K_{\lozenge}$.
By showing that $-F$ is convex on $K_{\lozenge}$ we can thereby prove that the flow has a unique limit, which, as we shall see, implies that the mapping in \autoref{lem:shortest_geodesic_finite_dim} is an isomorphism.
In addition, the lifted gradient flow provides a dynamical method for computing~$P$ in the polar decomposition $A=PQ$, or, equivalently, the solution to \autoref{prob:linear_omt_2}.

The relative entropy lifted to $\GL(n)$ is given by
\begin{align}
  F\colon \GL(n) \to \RR, &\quad F(A)  = H(\pi(A)) \label{eq:lifted_rel_entropy_omt_finite} \\
  &= \frac{n}{2} - \frac{1}{2}\tr(\Sigma_1^{-1}A\Sigma_0 A^{\top}) + \frac{1}{2}\log\left(\frac{\det(A\Sigma_0 A^{\top})}{\det(\Sigma_1)} \right) \\
  &= \frac{n}{2} - \frac{1}{2}\tr(\Sigma_1^{-1}A\Sigma_0 A^{\top}) + \frac{1}{2}\log\left(\det(A)^2 \frac{\det(\Sigma_0)}{\det(\Sigma_1)} \right) \\
  &= \frac{n}{2} - \frac{1}{2}\tr(\Sigma_1^{-1}A\Sigma_0 A^{\top}) + \log\left(\det(A)\right) + \frac{1}{2}\log\left(\frac{\det(\Sigma_0)}{\det(\Sigma_1)} \right).
\end{align}
If $A = A(t)$ is a curve in $\GL(n)$, then
\begin{align}
  \frac{\ud}{\ud t} F(A) &= -\frac{1}{2}\tr(\Sigma_1^{-1}A\Sigma_0 \dot A^{\top}) -\frac{1}{2}\tr(\Sigma_1^{-1}\dot A\Sigma_0 A^{\top}) + \frac{1}{\det(A)}\frac{\ud}{\ud t}\det(A) \\
  &= -\frac{1}{2}\tr(\Sigma_0\dot A^{\top}\Sigma_1^{-1}A) - \frac{1}{2}\tr(\Sigma_0A^{\top}\Sigma_1^{-1}\dot A)) + \tr(A^{-1}\dot A) \label{eq:dFdt} \\
  &= -\frac{1}{2}\tr(\Sigma_0\dot A^{\top}\Sigma_1^{-1}A) - \frac{1}{2}\tr(\Sigma_0(\Sigma_1^{-1}A)^{\top}\dot A)) + \tr(\Sigma_0\Sigma_0^{-1}A^{-1}\dot A) \\
  &= -\frac{1}{2}\mathcal{G}_A(\dot A,\Sigma_1^{-1}A) - \frac{1}{2}\mathcal{G}_A(\Sigma_1^{-1}A,\dot A) + \mathcal{G}_A(A^{-\top}\Sigma_0^{-\top},\dot A) \\
  &= \mathcal{G}_A(A^{-\top}\Sigma_0^{-1}-\Sigma_1^{-1}A,\dot A).
\end{align}
Thus, the gradient of $F$ with respect to $\mathcal{G}$ is given by
\begin{equation}\label{eq:lifted_entropy_gradient}
  \nabla_{\mathcal G}F(A) = A^{-\top}\Sigma_0^{-1} - \Sigma_1^{-1}A
\end{equation}
and the corresponding gradient flow is
\begin{equation}\label{eq:lifted_gradient_flow_finite}
  \dot A = A^{-\top}\Sigma_0^{-1} - \Sigma_1^{-1}A
\end{equation}
Before we continue, let us make a few remarks.

\begin{itemize}
  \item $A\in\GL(n)$ is an equilibrium of $\nabla_{\mathcal G}F$ if and only if $A$ belongs to the fiber of $\Sigma_1$.
  That is, $\pi(A)=\Sigma_0$ if and only if $\nabla_{\mathcal G}F(A) = 0$.
  Indeed,
  \begin{equation}
    \pi(A) = \Sigma_1 \iff A\Sigma_0 A^{\top} = \Sigma_1 \iff A^{-\top}\Sigma_0^{-1}A^{-1} = \Sigma_1^{-1} \iff \nabla_{\mathcal G}F(A) = 0 .
  \end{equation}

  \item Because $F$ is lifted from a function on $\Sym{n}$ it is constant on the fibers: if $A,B\in\GL(n)$ and $\pi(A)=\pi(B)$ then $F(A)=F(B)$.
  Its gradient is therefore orthogonal to the fibers, i.e., $\nabla_{\mathcal G}F(A)$ is in the horizontal distribution $\Hor_{A}$.
  From the characterization \eqref{eq:horizontal_linear_omt} of $\Hor$ it means that
  \begin{equation}
    \nabla_{\mathcal G}F(A) A^{-1} \in \TSym{n}, \quad \forall\, A\in\GL(n).
  \end{equation}
  We encourage the reader to verify this from the expression~\eqref{eq:lifted_entropy_gradient}.
\end{itemize}

We aim to restrict the gradient flow \eqref{eq:lifted_gradient_flow_finite} to the polar cone $K_{\lozenge}$.
Recall from~\autoref{lem:finite_polar_cone_isomorphism} that the polar cone $K_{\lozenge}$ consist all positive definite symmetric matrices.
At the identity, it coincides with the horizontal distribution: $T_{e}K_{\lozenge} = \Hor_e = \TSym{n}$.
However, at points away from the identity this is not true;
for one thing, the horizontal distribution is not \emph{integrable}, i.e., it does not define a submanifold of~$\GL(n)$.
Thus, if $P\in K_{\lozenge}$ then $\nabla_{\mathcal G}F(P)$ is typically not an element of $T_PK_{\lozenge}$.
Consequently, the polar cone $K_{\lozenge}$ is \emph{not} invariant under the gradient flow~\eqref{eq:lifted_gradient_flow_finite}.
Indeed, from~\eqref{eq:lifted_entropy_gradient} we immediately see that for a generic positive definite symmetric matrix $P$, the vector $\nabla_{\mathcal G}F(P)$ fails to be a symmetric matrix.
Another way to understand this is to observe that the fibers generally do not cut the polar cone orthogonally (it cuts the horizontal distribution orthogonally by definition).

Now, the gradient of $F$ restricted to $K_{\lozenge}$ is given by
\begin{equation}\label{eq:grad_polar_cone_omt_finite}
  \Pi_P \nabla_{\mathcal{G}}F(P),
\end{equation}
where $\Pi_P$ is the orthogonal projection onto $T_PK_{\lozenge}$.
(The gradient on a submanifold is the projection of the gradient on the ambient Riemannian manifold.)
Thus, we need to know the orthogonal complement of the polar cone.

\begin{lemma}\label{lem:cone_complement_omt_finite}
  Let $P\in K_{\lozenge}$.
  Then the orthogonal complement of $T_{P}K_{\lozenge}$ inside $T_{P}\GL(n)$ with respect to the Riemannian metric~\eqref{eq:metric_linear_OMT} is given by
  \begin{equation}
    N_{P}K_{\lozenge} \coloneqq \{ V \in T_{P}\GL(n) \mid V \Sigma_0 + \Sigma_0V^{\top} = 0  \}.
  \end{equation}
\end{lemma}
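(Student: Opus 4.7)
The plan is a direct linear-algebraic computation, exploiting two features of the situation: first, that $K_{\lozenge}$ equals the convex open set $\Sym{n}$ inside the ambient vector space $\TSym{n}$, so its tangent space at any $P$ is simply $\TSym{n}$; and second, that the Riemannian metric
\begin{equation}
  \mathcal{G}_A(V,W) = \tr(\Sigma_0 V^{\top} W)
\end{equation}
is independent of the footpoint $A$, so the orthogonal complement will ultimately not depend on $P$ either.

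First I would compute, for an arbitrary $V \in T_P\GL(n) \simeq \RR^{n\times n}$ and an arbitrary $U \in T_P K_{\lozenge} \simeq \TSym{n}$,
\begin{equation}
  \mathcal{G}_P(V,U) = \tr(\Sigma_0 V^{\top} U) = \tr(U\, \Sigma_0 V^{\top}),
\end{equation}
using the cyclic property of the trace. Setting $M \coloneqq \Sigma_0 V^{\top}$, the condition that $V$ be $\mathcal{G}_P$-orthogonal to all of $T_P K_{\lozenge}$ becomes $\tr(M U) = 0$ for every symmetric matrix $U$.

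Next, I would decompose $M = \tfrac{1}{2}(M + M^{\top}) + \tfrac{1}{2}(M - M^{\top})$. The antisymmetric part pairs to zero against any symmetric $U$, so the orthogonality condition reduces to the vanishing of the symmetric part, $M + M^{\top} = 0$, which reads $\Sigma_0 V^{\top} + V \Sigma_0 = 0$. This is exactly the defining relation of $N_P K_{\lozenge}$, giving one inclusion; the converse is the same computation read backwards, since the antisymmetric-symmetric pairing is trivial.

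Finally I would verify the dimension count to confirm no hidden orthogonal directions have been missed. Substituting $W = V\Sigma_0$, the equation $W + W^{\top} = 0$ parameterizes $N_P K_{\lozenge}$ by antisymmetric matrices, so $\dim N_P K_{\lozenge} = n(n-1)/2$; combined with $\dim T_P K_{\lozenge} = n(n+1)/2$ this sums to $n^2 = \dim T_P \GL(n)$, confirming that $N_P K_{\lozenge}$ exhausts the orthogonal complement. There is no real obstacle here---the whole argument is a straightforward unfolding of the trace pairing between a general square matrix and a symmetric matrix, combined with the flatness of $\mathcal{G}$ and the trivial embedding $K_{\lozenge} \hookrightarrow \TSym{n}$.
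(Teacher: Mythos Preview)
Your proof is correct and follows essentially the same approach as the paper's, which simply observes that $T_P K_{\lozenge} = \TSym{n}$ and then invokes the standard fact that symmetric and skew-symmetric matrices are Frobenius-orthogonal. Your version is more explicit---spelling out the substitution $M = \Sigma_0 V^{\top}$, the symmetric/antisymmetric decomposition, and the dimension count---but the underlying argument is identical.
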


\begin{proof}
  By \autoref{lem:finite_polar_cone_isomorphism}, $T_{P}K_{\lozenge} = \TSym{n}$.
  The result now follows from~\eqref{eq:metric_linear_OMT} since the orthogonal complement of $\TSym{n}$ with respect to the Frobenius inner product consists of skew-symmetric matrices.
\end{proof}

To compute \eqref{eq:grad_polar_cone_omt_finite} from~\eqref{eq:lifted_entropy_gradient} we therefore need to find $V\in N_{P}K_{\lozenge}$ such that
\begin{equation}
  P^{-1}\Sigma_0^{-1} - \Sigma_1^{-1}P + V \in \TSym{n}.
\end{equation}
Thus,
\begin{equation}
  P^{-1}\Sigma_0^{-1} - \Sigma_1^{-1}P + V = \Sigma_0^{-1}P^{-1} - P\Sigma^{-1} + V^{\top}
\end{equation}
and from the definition of $N_{P}K_{\lozenge}$ we get
\begin{align}
  P^{-1}\Sigma_0^{-1} - \Sigma_1^{-1}P + V &= \Sigma_0^{-1}P^{-1} - P\Sigma_1^{-1} - \Sigma_0^{-1}V\Sigma_0 \\
  \Sigma_0 V + V\Sigma_0 &= \Sigma_0 \left( \Sigma_0^{-1}P^{-1} - P^{-1}\Sigma_0^{-1}  + \Sigma_1^{-1}P -  P\Sigma_1^{-1} \right).
  \label{eq:sylverster_V}
\end{align}
Finally, we thereby obtain the gradient flow on $K_{\lozenge}$ as
\begin{equation}\label{eq:lifted_gradient_flow_polar_omt_finite}
  \dot P = P^{-1}\Sigma_0^{-1} - \Sigma_1^{-1}P + V,
\end{equation}
where $V$ is the solution to the Sylvester equation~\eqref{eq:sylverster_V}.
It is worth pointing out that if $\Sigma_0 = I$, then we obtain the simple equation
\begin{equation}\label{eq:lifted_gradient_flow_polar_omt_finite_simple}
  \dot P = P^{-1} - \frac{1}{2}\left( \Sigma_1^{-1}P + P\Sigma_{1}^{-1} \right).
\end{equation}

\begin{figure}
  \iftoggle{arxiv}{\includegraphics[scale=\iftoggle{aims}{1.0}{1.2}]{fig_Pplot}}{\input{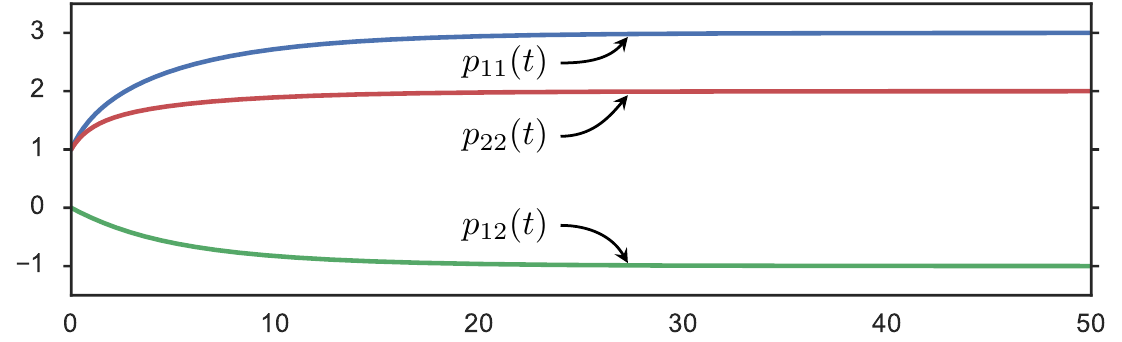}}
  \caption{Evolution of the lifted gradient flow in \protect\autoref{ex:lifted_omt_finite}.
  Notice that $P(0)$ is the identity and that $P(t)$ converges towards $P_{\infty}$ in \protect\eqref{eq:Pinf_and_Qinf} as $t\to\infty$.
  }\label{fig:lifted_example_omt_finite}
\end{figure}

\begin{figure}
  \iftoggle{arxiv}{\includegraphics[scale=\iftoggle{aims}{1.0}{1.2}]{fig_Perrorplot}}{\input{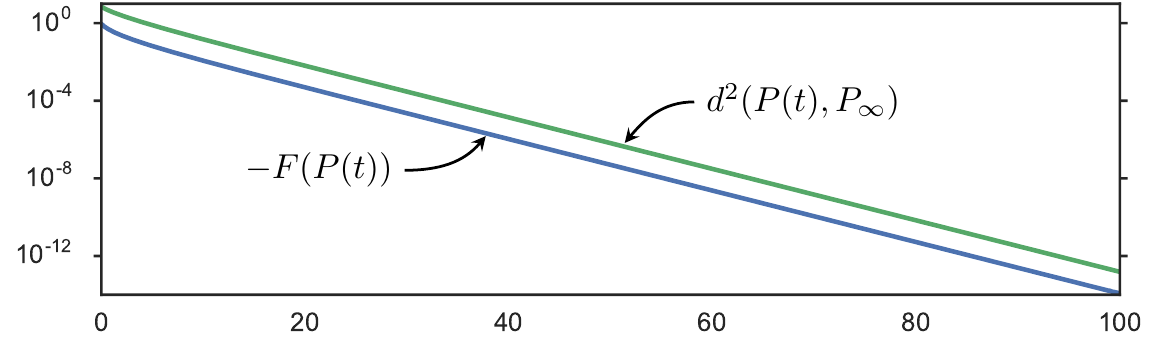}}
  \caption{Convergence towards the limit $P_{\infty}$ of the lifted gradient flow in \protect\autoref{ex:lifted_omt_finite}.
  Notice that the convergence of both $-F(P(t))$ and $d^{2}(P(t),P_{\infty})$ as $t\to\infty$ is exponential, as fully explained by \protect\autoref{thm:limit_lifted_entropy_flow_omt_finite}.
  }\label{fig:lifted_example_omt_finite_errors}
\end{figure}

\begin{example}\label{ex:lifted_omt_finite}
  Let us give a simple example of how the polar decomposition can be numerically computed by solving the lifted gradient flow~\eqref{eq:lifted_gradient_flow_polar_omt_finite}.
  We use the same data as in \autoref{ex:vert_flow_omt_finite}.
  Thus,
  \begin{equation} \label{eq:Pinf_and_Qinf}
    P_{\infty} = \begin{pmatrix} 3 & -1 \\ -1 & 2 \end{pmatrix} \qquad\text{and}\qquad
    Q_{\infty} = \begin{pmatrix} \cos\theta & -\sin\theta \\ \sin\theta & \cos\theta \end{pmatrix}
  \end{equation}
  with $\theta = \pi/3$.
  Our objective is to compute the polar decomposition of
  \begin{equation}
    A \coloneqq P_{\infty}Q_{\infty},
  \end{equation}
  with $\Sigma_0 = I$.
  Since $P_{\infty}\in K_{\lozenge}$ and $Q\in \OO(n,I)$, we know, by construction, the polar components of $A$.
  We first set $\Sigma_1 = AA^{\top}$.
  The lifted gradient flow on $K_{\lozenge}$ is then given by~\eqref{eq:lifted_gradient_flow_polar_omt_finite_simple}, with initial data $P(0) = I$.

  We discretize the equation by the classical 4th order Runge--Kutta method \cite[\S\,322]{Bu2008}, with time-step~$\Delta t = 0.1$.
  The evolution of the elements
  \begin{equation}
    P(t) = \begin{pmatrix}
      p_{11}(t) & p_{12}(t) \\
      p_{12}(t) & p_{22}(t)
    \end{pmatrix}
  \end{equation}
  is shown in \autoref{fig:lifted_example_omt_finite};
  $P$ starts at the identity and converges towards $P_{\infty}$.
  The rate of convergence is shown in \autoref{fig:lifted_example_omt_finite_errors};
  both quantities $-F(P(t))$ and $d^{2}(P(t),P_{\infty})$ converge exponentially to zero as $t\to\infty$.
  We shall now give theoretical results that fully explain these numerical observations.
\end{example}

\begin{lemma}\label{lem:hessian_lifted_omt_finite}
  The Hessian of the lifted relative entropy functional \eqref{eq:lifted_rel_entropy_omt_finite} with respect to the Riemannian metric \eqref{eq:metric_linear_OMT} is given by
  \begin{equation}\label{eq:hessian_lifted_omt_finite}
    \hess(F)_{A}(\dot A,\dot A) = -\tr(\dot AA^{-1}\dot A A^{-1}) - \tr(\Sigma_0 \dot A^{\top}\Sigma_1^{-1}\dot A).
  \end{equation}
\end{lemma}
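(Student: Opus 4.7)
The plan is to exploit the fact that the Riemannian metric $\mathcal{G}_A(\dot A,\dot A)=\tr(\Sigma_0\dot A^{\top}\dot A)$ is independent of the base point $A\in\GL(n)$, being the restriction to the open subset $\GL(n)\subset\RR^{n\times n}$ of a constant inner product on $\RR^{n\times n}$. Consequently, the Levi--Civita connection coincides with the trivial affine connection on $\RR^{n\times n}$ (this is the finite-dimensional counterpart of the flatness of \eqref{eq:L2noninvariant} noted in \autoref{sub:geometry_omt_smooth}, cf.\ the geodesic equation \eqref{eq:geodesic_eq_omt}). Therefore the geodesics through $A$ are simply the straight lines $t\mapsto A+t\dot A$, and the Riemannian Hessian reduces to the ordinary second directional derivative
\begin{equation}
  \hess(F)_A(\dot A,\dot A) \;=\; \left.\frac{\ud^{2}}{\ud t^{2}}\right|_{t=0} F(A+t\dot A).
\end{equation}

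First I would substitute $A+t\dot A$ into the closed form \eqref{eq:lifted_rel_entropy_omt_finite} of $F$. The trace term $-\frac{1}{2}\tr(\Sigma_1^{-1}(A+t\dot A)\Sigma_0(A+t\dot A)^{\top})$ is explicitly quadratic in $t$, and its second derivative at $t=0$ is
\begin{equation}
  -\tr(\Sigma_1^{-1}\dot A\Sigma_0\dot A^{\top}) \;=\; -\tr(\Sigma_0\dot A^{\top}\Sigma_1^{-1}\dot A)
\end{equation}
by the cyclic property of the trace. The constant terms $\tfrac{n}{2}$ and $\tfrac{1}{2}\log(\det\Sigma_0/\det\Sigma_1)$ contribute nothing.

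The only step requiring any care is the $\log\det$ piece. Writing $B(t)\coloneqq A+t\dot A$, Jacobi's formula gives $\tfrac{\ud}{\ud t}\log\det B(t) = \tr(B(t)^{-1}\dot A)$, and differentiating once more together with $\tfrac{\ud}{\ud t}B(t)^{-1} = -B(t)^{-1}\dot A B(t)^{-1}$ yields
\begin{equation}
  \left.\frac{\ud^{2}}{\ud t^{2}}\right|_{t=0}\log\det(A+t\dot A) \;=\; -\tr(A^{-1}\dot A A^{-1}\dot A).
\end{equation}
Summing the two contributions delivers precisely \eqref{eq:hessian_lifted_omt_finite}.

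I do not anticipate a serious obstacle: the crux is simply recognizing that $\mathcal G$ is flat so that the covariant Hessian agrees with the naive second derivative, thereby bypassing any Christoffel-symbol computation. This recognition is natural given that the same flatness was used earlier to write $J(\varphi)=d^{2}(\id,\varphi)$ in both the smooth and linear categories. The result will then be used in the next step to establish \autoref{lem:pos_def_hessian_omt_finite} by restricting the Hessian to the horizontal distribution $\Hor_A$ (where $\dot A A^{-1}\in\TSym{n}$) and invoking the compatibility of $\pi$ as a Riemannian submersion to descend the inequality to $\Sym{n}$.
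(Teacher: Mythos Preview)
Your proposal is correct and follows essentially the same approach as the paper: both exploit flatness of $\mathcal G$ to identify the Riemannian Hessian with the ordinary second derivative along the straight-line geodesic $t\mapsto A+t\dot A$, then differentiate the trace and $\log\det$ terms separately. The only cosmetic difference is that the paper recycles the already-computed first derivative \eqref{eq:dFdt} rather than invoking Jacobi's formula afresh.
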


\begin{proof}
  Let $\gamma(t)$ be a geodesic curve with $\gamma(0)=A$ and $\dot\gamma(0) = \dot A$.
  From \eqref{eq:geodesic_eq_omt} and \autoref{lem:tg} we get that $\gamma(t) = A + t\dot A$.
  The Hessian of $F$ at $A$ is a bilinear form on $T_A\GL(n)$.
  Applied to $\dot A$ it is given by
  \begin{equation}
    \hess(F)_{A}(\dot A,\dot A) = \frac{\ud^{2}}{\ud t^{2}}\Big|_{t=0} F(\gamma(t)).
  \end{equation}
  From \eqref{eq:dFdt} we already have
  \begin{align}
    \frac{\ud}{\ud t} F(\gamma(t)) = -\tr(\Sigma_0 \gamma(t)^{\top}\Sigma_1^{-1}\dot{\gamma}(t)) + \tr(\dot{\gamma}(t) \gamma(t)^{-1}).
  \end{align}
  Differentiating once more and using that $\frac{\ud }{\ud t}\dot\gamma(t) = 0$, we get
  \begin{align}
    \frac{\ud^2}{\ud t^2}\Big|_{t=0} F(\gamma(t)) &= -\tr(\Sigma_0 \dot A^{\top}\Sigma_1^{-1}\dot A) + \tr\left(\dot A \frac{\ud}{\ud t}\Big|_{t=0}\gamma(t)^{-1}\right) \\
    &= -\tr(\Sigma_0 \dot A^{\top}\Sigma_1^{-1}\dot A) - \tr(\dot A A^{-1}\dot A A^{-1}),
  \end{align}
  which proves the result.
\end{proof}

With formula \eqref{eq:hessian_lifted_omt_finite} at hand, it is now straightforward to give a proof of \autoref{lem:pos_def_hessian_omt_finite}.

\begin{proof}[Proof of \protect\autoref{lem:pos_def_hessian_omt_finite}]
  Since the projection $\pi$ is a Riemannian submersion, it follows from general results in Riemannian geometry (see \cite[Ch.XIV\,\S 4]{La1999}) that the Hessian of the lifted relative entropy on $\GL(n)$ restricted to the horizontal distribution coincide with the Hessian of the relative entropy on $\Sym{n}$.
  That is,
  \begin{equation}\label{eq:hessian_correspondence}
    \hess(F)_A(\dot A,\dot A) = \hess(H)_{\pi(A)}(D\pi(A)\cdot \dot A, D\pi(A)\cdot \dot A), \quad
    \forall\; \dot A\in \Hor_A.
  \end{equation}
  Now, for $(\Sigma,\dot\Sigma)\in T\Sym{n}$, take $A\in \pi^{-1}(\Sigma)$ and $\dot A\in \Hor_A$ such that $D\pi(A)\cdot A = \dot\Sigma$.
  Then by \eqref{eq:hessian_correspondence} and \autoref{lem:hessian_lifted_omt_finite} we have
  \begin{align}\label{eq:hess_sigma_expr}
    \hess(H)_{\Sigma}(\dot\Sigma,\dot\Sigma) &= -\tr(\dot A A^{-1}\dot A A^{-1}) - \tr(\Sigma_0\dot A^{\top}\Sigma_1^{-1}\dot A) 
  \end{align}
  Since $\dot A$ is horizontal, it follows from the characterization~\eqref{eq:horizontal_linear_omt} that $\dot A A^{-1}$ is symmetric.
  Therefore
  \begin{equation}\label{eq:hess_first_est}
    \tr(\dot A A^{-1}\dot A A^{-1}) = \tr((\dot A A^{-1})^{\top}\dot A A^{-1}) = \norm{\dot A A^{-1}}_F^{2} \geq 0,
  \end{equation}
  where $\norm{\cdot}_F$ denotes the Frobienius norm.

  Next, since $\Sigma_1^{-1}$ is a symmetric, positive definite matrix, we can can use the Cholesky factorization to obtain $\Sigma_1^{-1} = L^{\top} L$, where $L$ is a lower triangular matrix with positive entries on the diagonal (a geometric description of the Cholesky factorization is given in~\autoref{sec:cholesky}).
  Then
  \begin{equation}\label{eq:hess_second_est}
    \tr(\Sigma_0\dot A^{\top}\Sigma_1^{-1}\dot A) = \tr(\Sigma_0 \dot A^{\top}L^{\top}L\dot A) =
    \mathcal G_{A}(L\dot A,L\dot A) \geq \alpha \mathcal G_{A}(\dot A,\dot A)
  \end{equation}
  where $\alpha$ is given by
  \begin{equation}\label{eq:alpha_inf}
    \alpha = \inf_{\dot A\in \Hor_A} \frac{\mathcal G_{A}(L\dot A,L\dot A)}{\mathcal G_{A}(\dot A,\dot A)}.
  \end{equation}
  That $\alpha >0$ follows since $L$ is non-degenerate.
  Combining \eqref{eq:hess_first_est} and \eqref{eq:hess_second_est} with \eqref{eq:hess_sigma_expr} gives the result.
\end{proof}

We shall now prove existence and uniqueness of a limit for the lifted gradient flow \eqref{eq:lifted_gradient_flow_polar_omt_finite}.
Again, the key is to give a positive bound on minus the Hessian of $F$ restricted to $K_{\lozenge}$.
However, we cannot directly use \autoref{lem:pos_def_hessian_omt_finite}, since the tangent spaces of $K_{\lozenge}$ are not horizontal (equation \eqref{eq:hessian_correspondence} cannot be used).
Nevertheless, we have the following result.

\begin{lemma}\label{lem:pos_def_hessian_lifted_omt_finite}
  Let $F|_{K_{\lozenge}}$ denote the lifted entropy functional \eqref{eq:lifted_rel_entropy_omt_finite} restricted to $K_{\lozenge}$.
  Then the Hessian of $F|_{K_{\lozenge}}$ with respect to the Riemannian metric \eqref{eq:metric_linear_OMT} restricted to $K_{\lozenge}$ fulfills the following inequality: there exists $\alpha>0$ such that
  \begin{equation}
    - \hess(F|_{K_{\lozenge}})_P(\dot P,\dot P) \geq \alpha \mathcal{G}_{P}(\dot P,\dot P), \quad
    \forall\, (P,\dot P)\in TK_{\lozenge} .
  \end{equation}
\end{lemma}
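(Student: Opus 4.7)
The plan is to reduce to the ambient Hessian via total geodesicity and then bound each of the two terms in the formula from \autoref{lem:hessian_lifted_omt_finite}.

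First I would observe that $K_{\lozenge}$ is totally geodesic in $(\GL(n),\mathcal G)$. Indeed, by \autoref{lem:finite_polar_cone_isomorphism} we have $T_P K_{\lozenge} = \TSym{n}$, and by \autoref{lem:tg} the geodesics of $\GL(n)$ through $P\in K_{\lozenge}$ are the affine lines $\gamma(t)=P+t\dot P$. If $\dot P\in\TSym{n}$ then $\gamma(t)$ is symmetric for all $t$, and, because positive definiteness is open, it remains in $K_{\lozenge}$ for small $t$. Consequently the second fundamental form of $K_{\lozenge}\hookrightarrow\GL(n)$ vanishes, and for $(P,\dot P)\in TK_{\lozenge}$ the Hessian of $F|_{K_{\lozenge}}$ equals the restriction of the ambient Hessian:
\begin{equation}
  \hess(F|_{K_{\lozenge}})_P(\dot P,\dot P)
  = -\tr(\dot P P^{-1}\dot P P^{-1}) - \tr(\Sigma_0\dot P^{\top}\Sigma_1^{-1}\dot P).
\end{equation}
This bypasses the horizontal-distribution subtlety that appears in \autoref{lem:pos_def_hessian_omt_finite}, since here the tangent space of $K_{\lozenge}$ is \emph{not} horizontal in general.

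Next I would bound the two terms. Since $P\in\Sym{n}$ and $\dot P\in\TSym{n}$, setting $M = P^{-1/2}\dot P P^{-1/2}$ gives $\tr(\dot P P^{-1}\dot P P^{-1}) = \tr(M^{2}) = \norm{M}_F^{2}\geq 0$, so minus the first term contributes non-negatively to $-\hess(F|_{K_{\lozenge}})_P(\dot P,\dot P)$. For the second term, use the Cholesky factorization $\Sigma_1^{-1} = L^{\top}L$ (with $L$ non-singular and independent of $P$) to write
\begin{equation}
  \tr(\Sigma_0\dot P^{\top}\Sigma_1^{-1}\dot P) = \tr\bigl(\Sigma_0 (L\dot P)^{\top}(L\dot P)\bigr) = \mathcal G_{P}(L\dot P,L\dot P).
\end{equation}

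Finally, I would define
\begin{equation}
  \alpha \coloneqq \inf_{\dot P\in\TSym{n}\setminus\{0\}}\frac{\mathcal G(L\dot P,L\dot P)}{\mathcal G(\dot P,\dot P)}.
\end{equation}
Because $\mathcal G$ is base-point independent (see \eqref{eq:metric_linear_OMT}), this Rayleigh-type quotient is a function on a fixed finite-dimensional vector space, continuous on the unit sphere, and positive since $L$ is invertible; hence $\alpha>0$ and the bound is uniform in $P\in K_{\lozenge}$. Combining the two estimates yields $-\hess(F|_{K_{\lozenge}})_P(\dot P,\dot P)\geq \alpha\,\mathcal G_P(\dot P,\dot P)$.

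The only step requiring real thought is the total geodesicity claim, which is the whole point of passing to $K_{\lozenge}$ rather than to an arbitrary section of the bundle: the straight-line geodesics of $\mathcal G$ interact with the convex cone $\Sym{n}\subset \TSym{n}$ in a trivial way. Once this is in hand the rest is linear algebra, and the same argument would fail for a generic section because the second fundamental form would reintroduce extra terms that are not controlled by $\mathcal G$ alone.
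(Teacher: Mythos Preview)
Your proof is correct and follows essentially the same approach as the paper: compute the Hessian of $F|_{K_\lozenge}$ via the ambient formula from \autoref{lem:hessian_lifted_omt_finite}, show the first term $\tr(\dot P P^{-1}\dot P P^{-1})$ is non-negative using a factorization of $P^{-1}$, and bound the second term below by $\alpha\,\mathcal G_P(\dot P,\dot P)$ using a Cholesky factorization of $\Sigma_1^{-1}$. The only cosmetic differences are that you use the symmetric square root $P^{-1/2}$ where the paper uses the Cholesky factor of $P^{-1}$, and that you spell out the total-geodesicity argument explicitly (the paper simply invokes the Hessian formula, which was computed along straight-line geodesics and therefore already implicitly uses this fact).
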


\begin{proof}
  From \autoref{lem:pos_def_hessian_omt_finite} we get
  \begin{equation}\label{eq:hess_FK_calc}
    \hess(F|_{K_{\lozenge}})_P(\dot P,\dot P) = -\tr(\dot P P^{-1}\dot P P^{-1}) - \tr(\Sigma_0\dot P\Sigma_1^{-1}\dot P).
  \end{equation}
  As in~\eqref{eq:hess_second_est}, the second term is estimated by
  \begin{equation}
    \tr(\Sigma_0\dot P\Sigma_1^{-1}\dot P) \geq \alpha \mathcal G_{P}(\dot P,\dot P),
  \end{equation}
  with
  \begin{equation}
    \alpha = \inf_{\dot P \in T_PK_{\lozenge}} \frac{\mathcal G_{P}(L\dot P,L\dot P)}{\mathcal G_{P}(\dot P,\dot P)} > 0,
  \end{equation}
  with the same $L$ as in~\eqref{eq:alpha_inf}, but infimum now over $T_PK_{\lozenge}$ instead of $\Hor_A$.

  For the first term of~\eqref{eq:hess_FK_calc} we cannot immediately say that it is positive, since $\dot P$ is not necessarily horizontal, so $\dot P P^{-1}$ is in general not a symmetric matrix.
  We can, however, use that $P$ \emph{itself} is symmetric positive definite.
  Indeed, let $P^{-1} = Z^\top Z$ be the Cholesky factorization of $P^{-1}$.
  Then
  \begin{equation}
    \tr(\dot P P^{-1} \dot P P^{-1}) = \tr(\dot P Z^{\top} Z \dot PZ^{\top} Z) = \tr(\underbrace{Z\dot P Z^{\top}}_S \underbrace{Z \dot PZ^{\top}}_S).
  \end{equation}
  Since $\dot P$ is a symmetric matrix, it follows that $S$ is symmetric.
  Therefore,
  \begin{equation}
    \tr(\dot P P^{-1} \dot P P^{-1}) = \tr(SS) = \tr(S^{\top}S) = \norm{S}_F^{2} \geq 0.
  \end{equation}
  This concludes the proof.
\end{proof}

Recall that the tangent vectors of $K_{\lozenge}$ are not necessarily horizontal.
Nevertheless, the tangent bundle $TK_{\lozenge}$ and the vertical distribution are transversal.

\begin{lemma}\label{lem:transversal_tangent_cone_linear_omt}
  Let $P\in K_{\lozenge}$.
  Then
  \begin{equation}
    T_{P}K_{\lozenge} \bigcap \Ver_P = \{ 0\}.
  \end{equation}
\end{lemma}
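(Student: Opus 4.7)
The plan is to reduce the transversality claim to a uniqueness statement for a continuous Sylvester equation. First, I would unwind what it means for $\dot P$ to lie in $T_{P}K_{\lozenge}\cap\Ver_{P}$. By \autoref{lem:finite_polar_cone_isomorphism}, $T_{P}K_{\lozenge}=\TSym{n}$, so $\dot P$ is symmetric. The condition $\dot P\in\Ver_{P}$ means $D\pi(P)\cdot\dot P=0$, and specializing \eqref{eq:pi_deriv_finite} to $A=P$ symmetric (so $P^{-\top}=P^{-1}$) gives $\dot P\,\Sigma_{0}P+P\Sigma_{0}\dot P^{\top}=0$. Using $\dot P^{\top}=\dot P$, this collapses to the homogeneous Sylvester equation $M\dot P+\dot P M^{\top}=0$, where $M\coloneqq P\Sigma_{0}$.

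Second, I would invoke the classical uniqueness criterion for continuous Sylvester equations (\emph{cf.}\ \cite{BaSt1972}): the linear operator $X\mapsto MX+XM^{\top}$ is injective whenever $\sigma(M)\cap\sigma(-M^{\top})=\emptyset$. The whole proof therefore reduces to verifying this spectral separation for $M=P\Sigma_{0}$.

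Third---and this is the one substantive step---I would observe that, although $P\Sigma_{0}$ is not itself symmetric, it is conjugate via $P^{1/2}$ to the symmetric, positive definite matrix $P^{1/2}\Sigma_{0}P^{1/2}$. Hence $\sigma(M)=\sigma(P^{1/2}\Sigma_{0}P^{1/2})\subset(0,\infty)$, and likewise for $M^{\top}$, so the spectra of $M$ and $-M^{\top}$ lie on strictly opposite sides of zero. The Sylvester equation then forces $\dot P=0$, and the intersection is trivial. No real obstacle arises; the only subtlety is that positivity of $\sigma(P\Sigma_{0})$ cannot be read off self-adjointness directly but is obtained through the $P^{1/2}$-conjugation trick, which mirrors the role the symmetric square root already played in \autoref{lem:finite_polar_cone_isomorphism}.
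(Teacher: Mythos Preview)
Your argument is correct but follows a genuinely different route from the paper. The paper works with the \emph{Riemannian} characterization of $\Ver_P$: it assumes $\dot P\in\Ver_P\cap\TSym{n}$ and tests $\mathcal G_P(\dot P,SP)=0$ against all horizontal vectors $SP$ with $S\in\TSym{n}$; after a Cholesky factorization $\Sigma_0=L^\top L$ and the substitution $S=L^\top S'L$, the special choice $S'=L\dot P L^\top$ yields $\tr\bigl((L\dot P L^\top)^2\,LPL^\top\bigr)=0$, and positivity of $LPL^\top$ forces $\dot P=0$. You instead use the \emph{kernel} characterization $\Ver_P=\ker D\pi(P)$, reduce to the homogeneous Sylvester equation $(P\Sigma_0)\dot P+\dot P(\Sigma_0 P)=0$, and invoke the classical spectral disjointness criterion after the $P^{1/2}$-conjugation shows $\sigma(P\Sigma_0)\subset(0,\infty)$. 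Your route is arguably more direct---it never touches the metric $\mathcal G$ and appeals to a Sylvester uniqueness result already cited in the paper---while the paper's argument stays entirely inside the Riemannian framework and avoids matrix square roots of $P$. Both are short and clean; yours has the mild advantage of making the role of positivity of $P$ and $\Sigma_0$ completely transparent through the spectrum of $P\Sigma_0$.
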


\begin{proof}
  Let $\dot P \in T_PK_{\lozenge}$ and assume $\dot P \in \Ver_P$.
  Then
  \begin{equation}
    \mathcal G_P(\dot P,X) = 0 \qquad\forall\, X\in \Hor_P .
  \end{equation}
  Since $X \in \Hor_P$ if and only if $X = SP$ for some $S\in\TSym{n}$, we get
  \begin{align}
    \tr(\Sigma_0 \dot P^{\top} S P) &= 0 \qquad\forall\, S\in \TSym{n} \\
     & \Updownarrow \\
    \tr(\Sigma_0 \dot P S P) &= 0 \qquad\forall\, S\in \TSym{n} .
  \end{align}
  Let $L^{\top}L$ be the Cholesky factorization of $\Sigma_0$.
  Taking $S=L^{\top}S' L$ for $S'\in\TSym{n}$, we can reformulate the condition as
  \begin{equation}
    \tr(L \dot P L^{\top} S' L P L^{\top}) = 0 \qquad\forall\, S'\in\TSym{n}.
  \end{equation}
  Taking $S' = L \dot P L^{\top}$ and using that $L$ is non-degenerate and $L P L^{\top}$ is symmetric positive definite, we get $\dot P = 0$.
  This concludes the proof.
\end{proof}


\autoref{lem:pos_def_hessian_lifted_omt_finite} and \autoref{lem:transversal_tangent_cone_linear_omt} implies the following result, explaining the observed convergence in \autoref{ex:lifted_omt_finite}.

\begin{theorem}\label{thm:limit_lifted_entropy_flow_omt_finite}
  The lifted entropy function~\eqref{eq:lifted_rel_entropy_omt_finite} restricted to $K_{\lozenge}$ admits a unique maximum $P_{\infty}\in K_{\lozenge}$.
  It fulfills
  \begin{equation}
    \pi(P_{\infty}) = P_{\infty}\Sigma_0P_{\infty} = \Sigma_1.
  \end{equation}
  Furthermore, for any initial data $P(0) \in K_{\lozenge}$, the lifted gradient flow~\eqref{eq:lifted_gradient_flow_polar_omt_finite} converges towards $P_{\infty}$ as $t\to \infty$, with estimates
  \begin{equation}
    F(P(t)) \geq \ee^{-2\alpha t} F(P(0)) \quad\text{and}\quad
    d^{2}(P(t),P_{\infty}) \leq \ee^{-2\alpha t} d^{2}(P(0),P_{\infty}),
  \end{equation}
  where $d(\cdot,\cdot)$ is the distance function of the metric~\eqref{eq:metric_linear_OMT} and $\alpha > 0$ is the constant in \autoref{lem:pos_def_hessian_lifted_omt_finite}.
\end{theorem}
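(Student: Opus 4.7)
The plan is to derive all three claims of \autoref{thm:limit_lifted_entropy_flow_omt_finite} from \autoref{lem:pos_def_hessian_lifted_omt_finite} and \autoref{lem:transversal_tangent_cone_linear_omt}, supplemented by a transversality computation that identifies the unique critical point with $\Sigma_1$, and then closing with standard gradient-flow theory.

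\emph{Step 1: existence and uniqueness of $P_\infty$.} The Hessian in \autoref{lem:pos_def_hessian_lifted_omt_finite} is computed along the straight lines $P+t\dot P$ (as in the proof of \autoref{lem:hessian_lifted_omt_finite}), so the lemma says exactly that $-F|_{K_{\lozenge}}$ is $\alpha$-strongly convex along straight lines in the convex open set $K_{\lozenge}=\Sym{n}$ (\autoref{lem:finite_polar_cone_isomorphism}). Inspecting~\eqref{eq:lifted_rel_entropy_omt_finite}, the quadratic term $\tfrac12\tr(\Sigma_1^{-1}P\Sigma_0 P)$ forces $-F\to+\infty$ as $\|P\|\to\infty$, while $-\log\det P\to+\infty$ as $P$ approaches the boundary of $K_{\lozenge}$. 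Coercivity together with strong convexity on a convex open set yields a unique minimiser $P_\infty$.

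\emph{Step 2: identification $\pi(P_\infty)=\Sigma_1$.} At the minimiser the intrinsic gradient vanishes, $\Pi_{P_\infty}\nabla_{\mathcal G}F(P_\infty)=0$, which places $\nabla_{\mathcal G}F(P_\infty)$ in $N_{P_\infty}K_{\lozenge}$. Because $F=H\circ\pi$ is fibre-constant, $\nabla_{\mathcal G}F(P_\infty)\in\Hor_{P_\infty}$ as well. The key transversality claim is $\Hor_{P_\infty}\cap N_{P_\infty}K_{\lozenge}=\{0\}$. A vector in the intersection has the form $SP_\infty$ with $S\in\TSym{n}$ (by~\eqref{eq:horizontal_linear_omt}) and satisfies $SP_\infty\Sigma_0+\Sigma_0 P_\infty S=0$ (by \autoref{lem:cone_complement_omt_finite}, using symmetry of $S$ and $P_\infty$). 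Setting $\tilde S=\Sigma_0^{1/2}S\Sigma_0^{1/2}$ and $\tilde P=\Sigma_0^{1/2}P_\infty\Sigma_0^{1/2}$ (both symmetric, with $\tilde P$ positive definite) turns the relation into $\tilde S\tilde P+\tilde P\tilde S=0$. Diagonalising $\tilde P$ with strictly positive eigenvalues $\lambda_i$, the entries satisfy $\tilde s_{ij}(\lambda_i+\lambda_j)=0$, hence $\tilde S=0$ and $S=0$. Consequently $\nabla_{\mathcal G}F(P_\infty)=0$, and~\eqref{eq:lifted_entropy_gradient} rearranges to $P_\infty\Sigma_0 P_\infty=\Sigma_1$.

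\emph{Step 3: exponential convergence.} Coercivity confines the flow to sublevel sets of $-F|_{K_{\lozenge}}$, which are relatively compact in $K_{\lozenge}$, so the flow is globally defined. Since $K_{\lozenge}$ is convex, straight lines are geodesics of $(K_{\lozenge},\mathcal{G}|_{K_{\lozenge}})$ and the intrinsic distance coincides with the ambient one induced by~\eqref{eq:metric_linear_OMT}; hence \autoref{lem:pos_def_hessian_lifted_omt_finite} yields $\alpha$-strong geodesic convexity of $-F|_{K_{\lozenge}}$. Standard gradient-flow theory on Riemannian manifolds (cf.~\cite[\S\,3.5]{Ot2001}, \cite[Ch.~24]{Vi2009}) then delivers the Polyak--{\L}ojasiewicz decay
\begin{equation}
    -F(P(t))-(-F(P_\infty))\leq \ee^{-2\alpha t}\bigl(-F(P(0))-(-F(P_\infty))\bigr)
\end{equation}
and the companion contraction $d^{2}(P(t),P_\infty)\leq \ee^{-2\alpha t}d^{2}(P(0),P_\infty)$. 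A direct evaluation of~\eqref{eq:rel_entropy_fin_dim} at $\Sigma=\Sigma_1$ gives $H(\Sigma_1)=0$, hence $F(P_\infty)=0$, and the first estimate collapses to $F(P(t))\geq\ee^{-2\alpha t}F(P(0))$ (both sides non-positive), matching the stated bounds.

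The main obstacle I anticipate is the transversality $\Hor_{P_\infty}\cap N_{P_\infty}K_{\lozenge}=\{0\}$ for arbitrary $\Sigma_0$, since it is the one place where the proof cannot be reduced to a direct invocation of \autoref{lem:pos_def_hessian_lifted_omt_finite} or \autoref{lem:transversal_tangent_cone_linear_omt}; the symmetric Sylvester argument above handles it. Everything else is a routine application of strong-convexity theory to a coercive, strongly convex function on a convex open domain.
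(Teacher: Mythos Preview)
Your approach is essentially the paper's: strict concavity from \autoref{lem:pos_def_hessian_lifted_omt_finite} on the convex set $K_{\lozenge}$ gives a unique maximiser, a transversality argument forces $\nabla_{\mathcal G}F(P_\infty)=0$ and hence $\pi(P_\infty)=\Sigma_1$, and standard strongly-convex gradient-flow estimates yield the exponential bounds.

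There is one computational slip in Step~2. With your substitution $\tilde S=\Sigma_0^{1/2}S\Sigma_0^{1/2}$ and $\tilde P=\Sigma_0^{1/2}P_\infty\Sigma_0^{1/2}$ one obtains
\[
\tilde S\tilde P+\tilde P\tilde S=\Sigma_0^{1/2}\bigl(S\Sigma_0 P_\infty+P_\infty\Sigma_0 S\bigr)\Sigma_0^{1/2},
\]
which is \emph{not} the relation $SP_\infty\Sigma_0+\Sigma_0 P_\infty S=0$ you actually need. The fix is to take $\tilde S=\Sigma_0^{-1/2}S\Sigma_0^{-1/2}$ instead (keeping $\tilde P$ as is); then $SP_\infty\Sigma_0+\Sigma_0 P_\infty S=\Sigma_0^{1/2}(\tilde S\tilde P+\tilde P\tilde S)\Sigma_0^{1/2}$ and your eigenvalue argument goes through unchanged.

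On the comparison: the paper does not prove $\Hor_{P_\infty}\cap N_{P_\infty}K_{\lozenge}=\{0\}$ directly. Instead it invokes \autoref{lem:transversal_tangent_cone_linear_omt}, i.e.\ $T_{P_\infty}K_{\lozenge}\cap\Ver_{P_\infty}=\{0\}$, and then passes to $\nabla_{\bar{\mathcal G}}H(\pi(P_\infty))=0$ together with strict concavity of $H$ (\autoref{lem:pos_def_hessian_omt_finite}) to identify $\pi(P_\infty)=\Sigma_1$. Since $\Hor$ and $\Ver$ are orthogonal complements, as are $TK_{\lozenge}$ and $NK_{\lozenge}$, and the dimensions sum to $n^2$, the two transversality statements are equivalent; you have established the dual form by hand and then read off $\pi(P_\infty)=\Sigma_1$ directly from~\eqref{eq:lifted_entropy_gradient}. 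Your added coercivity check in Step~1 is a welcome detail the paper glosses over.
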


\begin{proof}
  $F$ is strictly concave by \autoref{lem:pos_def_hessian_lifted_omt_finite}.
  Since, by \autoref{lem:polar_cone_isomorphism}, the set $K_{\lozenge}$ is convex, it follows that $F$ admits a unique maximum $P_{\infty} \in K_{\lozenge}$.

  From general results on gradient flows on Riemannian manifolds (see \cite[\S\,3.5]{Ot2001} for details), it also follows from \autoref{lem:pos_def_hessian_lifted_omt_finite} that
  \begin{equation}
    F(P_{\infty})-F(P(t)) \leq \ee^{-2\alpha t} \big( F(P_{\infty}) - F(P(0)) \big),
  \end{equation}
  and
  \begin{equation}
    d^{2}(P(t),P_{\infty}) \leq \ee^{-2\alpha t} d^{2}(P(0),P_{\infty}).
  \end{equation}
  We need to show that $\pi(P_{\infty}) = \Sigma_1$ and $F(P_{\infty}) = 0$.
  For this, we use that
  \begin{equation}
    \Pi_{P_{\infty}}\nabla_{\mathcal G} F(P_{\infty}) = 0,
  \end{equation}
  which, by \autoref{lem:transversal_tangent_cone_linear_omt}, implies
  \begin{align}
    \nabla_{\mathcal G} F(P_{\infty}) &= 0 \\
    &\Updownarrow \\
    \nabla_{\bar{\mathcal G}} H(\pi(P_{\infty})) &= 0.
  \end{align}
  Since $H$ is strictly concave with respect to $\bar{\mathcal G}$ (\autoref{lem:pos_def_hessian_omt_finite}), the last equality implies that $\pi(P_{\infty})$ must give the maximum value of the relative entropy.
  Thus, $F(P_{\infty}) = H(\pi(P_{\infty})) = 0$ and $\pi(P_{\infty}) = \Sigma_1$.
  This concludes the proof.
\end{proof}

We are now finally ready to give a geometric proof of \autoref{lem:shortest_geodesic_finite_dim}, based on the existence and uniqueness of the limit in \autoref{thm:limit_lifted_entropy_flow_omt_finite}

\begin{proof}[Geometric proof of \autoref{lem:shortest_geodesic_finite_dim}]

  First we show that $\pi|_{K_{\lozenge}}$ is surjective.
  Let $\Sigma_1 \in \Sym{n}$ and consider the lifted relative entropy gradient flow \eqref{eq:lifted_gradient_flow_polar_omt_finite}.
  By \autoref{thm:limit_lifted_entropy_flow_omt_finite} the limit of this flow gives an element $P_{\infty}\in K_{\lozenge}$ such that $\pi(P_{\infty}) = \Sigma_1$.
  Thus, $\pi|_{K_{\lozenge}}$ is surjective.

  Next, we show injectivity.
  Assume that $P' \in K_{\lozenge}$ fulfills $\pi(P') = \pi(P_{\infty}) = \Sigma_1$.
  Then, by \autoref{thm:limit_lifted_entropy_flow_omt_finite} the flow \eqref{eq:lifted_gradient_flow_polar_omt_finite} with $P(0) = P'$ converges towards $P_{\infty}$ as $t\to \infty$.
  Since $\Sigma_1$ is the maximum of $H$ we have for any $\dot\Sigma\in T_\Sigma\Sym{n}$ that
  \begin{equation}
    DH(\Sigma_1)\cdot \dot\Sigma = 0.
  \end{equation}
  Thus, for any $\dot P \in T_{P'}K_{\lozenge}$ we get
  \begin{equation}
    D F(P')\cdot \dot P = DH(\underbrace{\pi(P')}_{\Sigma_1})\cdot D\pi(P')\cdot \dot P =
    DH(\Sigma_0) \cdot (D\pi(P')\cdot \dot P )  = 0.
  \end{equation}
  This implies that $\Pi_{P'}\nabla_{\mathcal G} F(P') = 0$.
  Since the flow \eqref{eq:lifted_gradient_flow_polar_omt_finite} is
  \begin{equation}
    \dot P = \Pi_{P}\nabla_{\mathcal G} F(P)
  \end{equation}
  and $P(0) = P'$ it follows that the limit as $t\to\infty$ is given by $P'$.
  Thus $P' = P_{\infty}$ which proves that $\pi|_{K_{\lozenge}}$ is injective.
\end{proof}

\vspace*{-10pt}
\section{Fisher--Rao geometry and matrix decompositions}\label{sec:fisher_rao}

In this section we consider the same basic setting as in \autoref{sec:wasserstein} but with respect to a different Riemannian structure, namely the Fisher--Rao metric.
Whereas the Wasserstein metric is rooted in OMT, the Fisher--Rao metric originates from \emph{information geometry}---a branch of statistics that combines information theory and differential geometry.
Let us now give a brief introduction to both finite and infinite-dimensional information geometry.
For details, we refer to the monograph by Amari and Nagaoka~\cite{AmNa2000} (finite dimension) and the work by Khesin, Lenells, Misiolek, and Preston~\cite{KhLeMiPr2013} (infinite dimension).
Aspects of information geometry and multivariate Gaussian distributions, different from those presented here, are given by Barbaresco~\cite{Ba2013}.

\rednotes{

For details on the Lie group theory associated with the developments in this section, see \cite{Michor}.

}

Consider a probability distribution function $x\mapsto p(x,\theta)$ depending on parameters $\theta = (\theta^1,\ldots,\theta^k)$, for example, a multivariate Gaussian distribution depending on the covariance as discussed earlier. 
Fisher's information matrix~\cite{Fi1922} is given by
\begin{equation}\label{eq:fisher_information_matrix}
  \mathcal{I}_{ij}(\theta) = E\left[ \left(\frac{\pd}{\pd\theta^i} \ln p(\cdot,\theta)\right)\left(\frac{\pd}{\pd\theta^j} \ln p(\cdot,\theta)\right) \right].
\end{equation}
It measures the information about $\theta$ carried by a random variable with probability distribution $p(\cdot,\theta)\,\vol$.

Rao~\cite{Ra1945} interpreted $\mathcal{I}_{ij}(\theta)$ as a Riemannian metric on the ``manifold'' of probability distributions parameterized by~$\theta$.
This \emph{Fisher--Rao metric} is invariant under changes of coordinates $\theta\mapsto\tilde\theta$, which may appear curious at this stage.
However, the invariance becomes perfectly transparent when turning to the infinite-dimensional space $\Dens(\RR^{n})$ of all probability distributions on $\RR^{n}$, an approach pursued by Friedrich~\cite{Fr1991}.


\reviewertwo{
  Section 3 seems to focus on $\RR^n$ as base manifold, a few inconsistent references to a more general manifold $M$ are made in the beginning of the section. E.g. Def. 3.1 and the integral of the metric below.
  \\[2ex]
  DONE! These were typos that have now been fixed.
}

\begin{definition}\label{def:fisher_rao}
  The \emph{Fisher--Rao metric} is the Riemannian metric on $\Dens(\RR^{n})$ given by\footnote{Some authors use the factor $1/4$ in the definition of the Fisher--Rao metric.
  In our case, however, the formulas are easier without this factor.}
  \begin{equation}\label{eq:fisher_rao_metric_dens}
    \bar{\mathcal G}_{\mu}(\alpha,\beta) = \int_{\RR^{n}} \frac{\alpha}{\mu}\frac{\beta}{\mu}\,\mu .
  \end{equation}
\end{definition}



Notice something curious here: the Fisher--Rao metric is defined without using the Euclidean structure of~$\RR^{n}$.
(In contrast, the Wasserstein metric~\eqref{eq:wasserstein_metric_smooth_omt} uses the Euclidean structure through the gradient and divergence operators.)
\reviewertwo{
  “In contrast, the Wasserstein metric (7) uses the Euclidean structure through the gradient and the volume form dx.”: At least in the general (potentially non-smooth) setting, it is my understanding that the Wasserstein metric does not use a reference volume form. Choosing a reference measure simplifies solvability of the Monge-problem (because it must be solvable for two densities w.r.t. the same reference measure), but it does not affect the distance between measures and neither is there a preferred a-priori choice.
  \\[2ex]
  DONE! Indeed, if the two densities are represented as $\mu_0 = \rho_0\vol$ and $\mu_1 = \rho_1\vol$ the Wasserstein distance is of course independent of $\vol$.
  But, the point in the sentence above is to say that the Wasserstein distance depends on the Riemannian metric of the background manifold (in our case the Euclidean structure of $\RR^{n}$): that is the Wasserstein--Otto metric $\bar{\mathcal G}$ on $\Dens(\RR^{n})$ depends on the Euclidean structure.
  In contrast, the Fisher--Rao metric is independent of the Euclidean structure.
  The sentence above has been reformulated to better communicate this.
}%
As a consequence, it is invariant under arbitrary changes of coordinates, or, equivalently, under the pullback action of $\Diff(\RR^{n})$.
Explicitly, the invariance is seen as follows
\begin{equation}\label{eq:invariance_GF}
  \bar{\mathcal G}_{\mu}(\alpha,\beta) = \int_{\RR^{n}} \frac{\alpha}{\mu}\frac{\beta}{\mu}\mu = \int_{\RR^{n}} \varphi^* \left( \frac{\alpha}{\mu}\frac{\beta}{\mu}\mu \right) = \bar{\mathcal G}_{\varphi^*\mu}(\varphi^*\alpha,\varphi^*\beta).
\end{equation}

What is then the connection to Rao's original finite-dimensional metric given by the Fisher information matrix?
The answer is provided by the following result.

\begin{definition}\label{def:statistical_manifold}
  A \emph{statistical manifold} on $\RR^{n}$ is a submanifold of $\Dens(\RR^{n})$.
\end{definition}

\begin{proposition}\label{prop:link_finite_infinite_fisher_rao}
  Consider a finite-dimensional statistical manifold $\mathcal S\subset\Dens(\RR^{n})$, locally parameterized by $p(\cdot,\theta)\,\vol$.
  Then the Fisher--Rao metric~\eqref{eq:fisher_rao_metric_dens} restricted to $\mathcal S$, expressed in the local coordinates $\theta$, is given by Fisher's information matrix~\eqref{eq:fisher_information_matrix}.
\end{proposition}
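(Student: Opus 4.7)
The plan is to establish the identity by a direct computation in local coordinates, identifying tangent vectors on the statistical manifold as derivatives with respect to the parameters $\theta^i$ and then evaluating the Fisher--Rao metric~\eqref{eq:fisher_rao_metric_dens} on these vectors.

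First I would fix a point $\mu = p(\cdot,\theta)\,\vol \in \mathcal S$ and note that since $\mathcal S$ is locally parameterized by $\theta\mapsto p(\cdot,\theta)\,\vol$, a basis of $T_\mu \mathcal S$ is given by the vectors
\begin{equation}
  \partial_i\mu \coloneqq \frac{\pd p(\cdot,\theta)}{\pd\theta^i}\,\vol, \qquad i=1,\ldots,k.
\end{equation}
These are well-defined elements of $T_\mu\Dens(\RR^n)$ because differentiation under the normalization condition $\int p\,\vol=1$ gives $\int \partial_i\mu = 0$, so $\partial_i\mu$ indeed has zero total mass.

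Next I would substitute these tangent vectors into the Fisher--Rao metric~\eqref{eq:fisher_rao_metric_dens}. Writing $\alpha = \partial_i\mu$ and $\beta = \partial_j\mu$, and using that $\mu = p\,\vol$, a direct computation gives
\begin{equation}
  \bar{\mathcal G}_\mu(\partial_i\mu,\partial_j\mu) = \int_{\RR^n} \frac{1}{p}\frac{\pd p}{\pd \theta^i}\,\frac{1}{p}\frac{\pd p}{\pd \theta^j}\, p\,\vol.
\end{equation}
The key algebraic step is then the logarithmic derivative identity
\begin{equation}
  \frac{1}{p}\frac{\pd p}{\pd\theta^i} = \frac{\pd \ln p}{\pd \theta^i},
\end{equation}
which turns the integrand into the product of log-derivatives weighted by the density $p$.

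Finally I would recognize the resulting integral as an expectation with respect to the probability measure $\mu = p\,\vol$, yielding
\begin{equation}
  \bar{\mathcal G}_\mu(\partial_i\mu,\partial_j\mu) = E\!\left[\left(\frac{\pd \ln p(\cdot,\theta)}{\pd\theta^i}\right)\!\left(\frac{\pd \ln p(\cdot,\theta)}{\pd\theta^j}\right)\right] = \mathcal I_{ij}(\theta),
\end{equation}
which is exactly the expression~\eqref{eq:fisher_information_matrix}. There is no real obstacle here; the argument is entirely formal, the only subtlety being the verification that $\partial_i\mu$ is tangent to $\Dens(\RR^n)$, i.e.\ integrates to zero, which follows from differentiating the normalization condition. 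The conceptual content is that invariance of $\bar{\mathcal G}$ under $\Diff(\RR^n)$ as observed in~\eqref{eq:invariance_GF} is automatically inherited by $\mathcal I_{ij}$, explaining the coordinate invariance of Rao's original construction.
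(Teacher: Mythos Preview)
Your proof is correct and follows essentially the same route as the paper: identify the coordinate tangent vectors $\partial_i\mu = \frac{\pd p}{\pd\theta^i}\,\vol$, substitute into the definition of $\bar{\mathcal G}$, rewrite $\frac{1}{p}\frac{\pd p}{\pd\theta^i}$ as $\frac{\pd}{\pd\theta^i}\ln p$, and recognize the resulting integral as the expectation defining $\mathcal I_{ij}(\theta)$. Your additional remarks (that $\partial_i\mu$ integrates to zero, and the comment on inherited invariance) are correct but not needed for the argument itself.
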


\begin{proof}
  The expression $g_{ij}(\theta)$ for the Fisher--Rao metric expressed in local coordinates $\theta^1,\ldots,\theta^k$ is
  \begin{align}
    g_{ij}(\theta) &= \bar{\mathcal G}_{p(\cdot,\theta)\vol}\left(\frac{\pd p(\cdot,\theta)}{\pd \theta^i}\vol,\frac{\pd p(\cdot,\theta)}{\pd \theta^j}\vol\right)  \\
    &= \int_{\RR^{n}} \frac{\frac{\pd}{\pd{\theta^i}}p(x,\theta)}{p(x,\theta)}\frac{\frac{\pd}{\pd{\theta^j}}p(x,\theta)}{p(x,\theta)} p(x,\theta)\,\vol \\
    &= \int_{\RR^{n}} \left(\frac{\pd}{\pd\theta^i}\ln p(x,\theta)\right)\left(\frac{\pd}{\pd\theta^j}\ln p(x,\theta)\right) p(x,\theta)\,\vol \\
    &=  E\left[ \left(\frac{\pd}{\pd\theta^i} \ln p(x,\theta)\right)\left(\frac{\pd}{\pd\theta^j} \ln p(x,\theta)\right) \right] = \mathcal{I}_{ij}(\theta).
  \end{align}
  This concludes the proof.
\end{proof}




In this paper the primary example of a statistical manifold is, of course, the space of multivariate Gaussian distributions~$\mathcal{N}_{n}\simeq\Sym{n}$.
Let us now discuss this example in more detail.

When dealing with the Fisher--Rao metric on $\mathcal{N}_{n}$ it is convenient to use a different parameterization: instead of using the covariance matrix $\Sigma$ we use its inverse $W\coloneqq \Sigma^{-1}$.
The underlying reason is that the principle bundle structure associated with the Fisher--Rao geometry is based on a right action (pullback) instead of a left action (pushforward) as in the Wasserstein geometry.
Thus, the probability density function associated with $W\in\Sym{n}$ is given by
\begin{equation}\label{eq:gaussians3}
  p(x,W) \coloneqq \left( \sqrt{\frac{{\det(W)}}{{(2\pi)^n}}}\exp(-\frac{1}{2}x^{\top}W x) \right).
\end{equation}

\begin{lemma}\label{lem:sym_FR}
  The Fisher--Rao metric on $\mathcal{N}_n\simeq\Sym{n}$ is given by
  \begin{equation}\label{eq:FRsym}
    \bar{\mathcal G}_{W}(U,V) = \frac{1}{2}\tr(W^{-1}UW^{-1}V), \quad U,V\in \TSym{n}.
  \end{equation}
\end{lemma}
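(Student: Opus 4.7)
The plan is to invoke \autoref{prop:link_finite_infinite_fisher_rao} and directly compute Fisher's information matrix for the Gaussian family parametrized by $W\in\Sym{n}$. Since $\TSym{n}$ is the tangent space at every point of $\Sym{n}$, I may treat $W$ as the matrix-valued parameter itself and evaluate the metric on pairs $U,V\in\TSym{n}$.

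First I would compute the score function from~\eqref{eq:gaussians3}. Taking logarithms,
\begin{equation}
  \log p(x,W) = \tfrac{1}{2}\log\det(W) - \tfrac{n}{2}\log(2\pi) - \tfrac{1}{2}x^{\top}W x,
\end{equation}
and using the standard identity $\partial_U\log\det(W) = \tr(W^{-1}U)$,
\begin{equation}
  \partial_U\log p(x,W) = \tfrac{1}{2}\tr(W^{-1}U) - \tfrac{1}{2}x^{\top}U x.
\end{equation}
Since $x$ is drawn from $N(0,W^{-1})$, we have $E[x^{\top}U x] = \tr(W^{-1}U)$, so the score function has zero mean (as it must). Consequently, by \autoref{prop:link_finite_infinite_fisher_rao},
\begin{equation}
  \bar{\mathcal G}_W(U,V) = E[\partial_U\log p\cdot\partial_V\log p] = \tfrac{1}{4}\cov(x^{\top}U x,\,x^{\top}V x).
\end{equation}

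The key step is then Isserlis' theorem (Wick's formula) for Gaussian fourth moments, which yields
\begin{equation}
  E[(x^{\top}U x)(x^{\top}V x)] = \tr(W^{-1}U)\tr(W^{-1}V) + 2\tr(W^{-1}UW^{-1}V).
\end{equation}
Subtracting $E[x^{\top}U x]\,E[x^{\top}V x] = \tr(W^{-1}U)\tr(W^{-1}V)$ gives covariance $2\tr(W^{-1}UW^{-1}V)$, and dividing by $4$ produces the claimed formula~\eqref{eq:FRsym}.

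The main obstacle is the fourth-moment identity, which requires Isserlis' theorem (or an explicit integration against the Gaussian density). An alternative, calculation-light route would exploit invariance: under the $\GL(n)$-action $W\mapsto A^{\top}W A$ (derivable from the pullback computation in the red notes and already present in the paper), any Fisher--Rao metric on $\Sym{n}$ must be $\GL(n)$-invariant, and $(U,V)\mapsto c\,\tr(W^{-1}UW^{-1}V)$ is readily checked to be the unique such Riemannian metric up to a positive scalar $c$. The invariance argument determines the formula up to a constant; only fixing $c = 1/2$ requires evaluating both sides at a convenient point, e.g.\ $W=I$ with $U=V$ a rank-one symmetric matrix, which reduces to a one-dimensional Gaussian second-moment computation rather than the full fourth-moment identity.
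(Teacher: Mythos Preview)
Your proposal is correct and mirrors the paper's own two proofs: the primary argument via \autoref{prop:link_finite_infinite_fisher_rao} and Isserlis' theorem is exactly the paper's ``direct proof'' (written in directional-derivative form rather than componentwise indices, which is slightly cleaner), and your alternative invariance route is the paper's ``indirect proof,'' where the paper appeals to \v{C}encov's uniqueness theorem to identify the metric up to a scalar.
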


We shall prove this result in two ways; first by direct calculations, and then indirectly, by using the geometric invariance property, which gives the result up to multiplication by a scalar.

\begin{proof}[Direct proof of \protect\autoref{lem:sym_FR}]
  First, we rewrite \eqref{eq:gaussians3} as
  \begin{equation}
    p(x,W) = (2\pi)^{-n/2} \exp(\frac{1}{2}\log(\det(W))-\frac{1}{2}x^{\top} W x)
  \end{equation}
  Let $w_{ii'}$ denote the components of $W$.
  Then (using Einstein notation)
  \begin{align}
    \frac{\pd}{\pd w_{ii'}}p(x,W) &= p(x,W) \frac{1}{2}\frac{\pd}{\pd w_{ii'}} (\log(\det(W))-x^{\top}W x) \\
     &= p(x,W) \frac{1}{2} (\tr( W^{-1}\frac{\pd W}{\pd w_{ii'}}) - x^{\top} \frac{\pd W}{\pd w_{ii'}} x) \\
     &= p(x,W) \frac{1}{2} ( (W^{-1})_{jk} (\frac{\pd W}{\pd w_{ii'}})_{kj}  - (\frac{\pd W}{\pd w_{ii'}})_{kj} x_k x_j) \\
     &= p(x,W) \frac{1}{2} ( (W^{-1})_{i'i}  -  x_i x_{i'}).
  \end{align}
  From \autoref{prop:link_finite_infinite_fisher_rao} it then follows that the metric tensor is given by
  \begin{align}
    g_{ii'jj'}(W) &= \int_{\RR^{n}} \frac{1}{2^{2}}((W^{-1})_{i'i} - x_{i}x_{i'})((W^{-1})_{j'j} - x_{j}x_{j'})\, p(x,W)\,\vol \\
    \iftoggle{final}{}{
    &= \frac{1}{4}\left(
      (W^{-1})_{ii'}(W^{-1})_{jj'}
      -2 (W^{-1})_{ii'}(W^{-1})_{jj'}
      + \int_{\RR^{n}} x_{i}x_{i'}x_{j}x_{j'}\, p(x,W) \, \vol \right)
    \\
    }
    &= \frac{1}{4}\left(
      - (W^{-1})_{ii'}(W^{-1})_{jj'}
      + \int_{\RR^{n}} x_{i}x_{i'}x_{j}x_{j'}\, p(x,W) \, \vol \right)
    \\
    &= \frac{1}{4}\left(
      - (W^{-1})_{ii'}(W^{-1})_{jj'}
      + E[x_{i}x_{i'}x_{j}x_{j'}] \right).
  \end{align}
  Using Isserlis' theorem in statistics, the fourth order moments $E[x_{i}x_{i'}x_{j}x_{j'}]$ are given by
  \begin{equation}
    E[x_{i}x_{i'}x_{j}x_{j'}] = (W^{-1})_{ii'}(W^{-1})_{jj'} + (W^{-1})_{ij}(W^{-1})_{i'j'} + (W^{-1})_{ij'}(W^{-1})_{i'j}.
  \end{equation}
  We thereby get
  \begin{equation}
    g_{ii'jj'}(W) = \frac{1}{4}\left( (W^{-1})_{ij}(W^{-1})_{i'j'} + (W^{-1})_{ij'}(W^{-1})_{i'j} \right).
  \end{equation}
  Now,
\begin{align}
    \bar{\mathcal G}_{W}(U,V) &= g_{ii'jj'}(W) U_{ii'} V_{jj'}
    \\
    &= \frac{1}{4}\left( (W^{-1})_{ij}V_{jj'}(W^{-1})_{i'j'}U_{ii'} + (W^{-1})_{ij'}V_{jj'}(W^{-1})_{i'j}U_{ii'} \right) \\
    &= \frac{1}{4}\left( (W^{-1}V)_{ij'}(W^{-\top}U^{\top})_{j'i} + (W^{-1}V^{\top})_{ij}(W^{-\top}U^{\top})_{ji} \right)
  \end{align}
  \begin{eqnarray*}
    &= \frac{1}{4}\left( \tr(W^{-1}V W^{-\top}U^{\top}) + \tr(W^{-1}V^{\top}W^{-\top}U^{\top}) \right)\\
    &= \frac{1}{2} \tr(W^{-1}V W^{-1} U),
  \end{eqnarray*}
  where, in the last equality, we use that $W,U,V$ are symmetric matrices.
  This proves the result.
\end{proof}

\begin{proof}[Indirect proof of \protect\autoref{lem:sym_FR}]
  The action of $A\in\GL(n)$ lifted to act on $(W,U)\in T\Sym{n}$ is given by $(W,U)\cdot A = (A^\top W A,A^\top U A)$ (see \eqref{eq:right_GLn_action} below).
  We then have
  \begin{equation}
    \begin{split}
        \bar{\mathcal G}_{A^\top W A}(A^\top U A,A^\top V A) 
        &= \frac{1}{2}\tr((A^{\top}WA)^{-1}A^{\top}UA (A^{\top}WA)^{-1} A^{\top}VA) \\
        &= \frac{1}{2}\tr(A^{-1}W^{-1}A^{-\top}A^{\top}UA A^{-1}W^{-1}A^{-\top} A^{\top}VA) \\
        &= \frac{1}{2}\tr(A^{-1}W^{-1}UW^{-1}VA) \\
        & \text{(using cyclic property: $\tr(ABC) = \tr(BCA)$)} \\
        &= \frac{1}{2}\tr(W^{-1}UW^{-1}VA A^{-1}) \\
        &= \frac{1}{2}\tr(W^{-1}UW^{-1}V) = \bar{\mathcal G}_{W}(U,V)
    \end{split}
  \end{equation}
  \reviewertwo{
    The instantiation of the diffeomorphism action on densities in the linear category is only given in the proof of Lemma 3.4. Since it is used throughout this section it should have its own numbered equation.
    \\[2ex]
    DONE! The action is now given in the displayed equation \eqref{eq:right_GLn_action}.
  }%
  The metric $\bar{\mathcal G}$ is therefore invariant.
  From the classical uniqueness theorem by Cencov~\cite{Ch1982} it then follows that~$\bar{\mathcal G}$ is the Fisher--Rao metric up to multiplication by a positive scalar.
  \todo{This is not completely waterproof, since the group preserving $\mathcal N_n$ is larger than just $\GL(n)$. Fix this.}
  \iftoggle{final}{}{
  {\color{blue}
  To find the scalar, one can compare with \eqref{eq:fisher_rao_metric_dens} for $W=U=V=I$.
  Since
  \begin{align}
    \bar{\mathcal G}_{\mu}(\dot\mu,\dot\mu) &= \frac{1}{4}\int_{\RR^{n}} \left(\frac{\frac{\ud}{\ud t}p(x,W)}{p(x,W)}\right)^{2} p(x,W)\,\vol \\
    &= \frac{1}{16}\int_{\RR^{n}} \left(\tr(W^{-1}\dot W) - x^{\top}\dot W x\right)^{2} p(x,W)\,\vol \\
    &= \frac{1}{16} \tr(W^{-1}\dot W)^{2}\int_{\RR^{n}} p(x,W)\vol  \\ &\qquad\qquad
    -\frac{1}{8} \tr(W^{-1}\dot W) \int_{\RR^{n}} x^{\top}\dot W x \; p(x,W)\,\vol
    \\ &\qquad\qquad +\frac{1}{16}\int_{\RR^{n}} \left( x^{\top}\dot W x \right)^{2}p(x,W)\, \vol
    \\
    &= \frac{1}{16} \tr(W^{-1}\dot W)^{2} - \frac{1}{8} \tr(W^{-1}\dot W)^{2} + \frac{1}{16}\int_{\RR^{n}} \left( x^{\top}\dot W x \right)^{2}p(x,W)\, \vol
  \end{align}
  }}
\end{proof}

\reviewerone{
  Finally, the most interesting part of the paper for me personally was Lemma 3.4, which reproduces the natural metric on the space of Riemannian metrics, originally studied by Ebin in his thesis and later expanded upon by Freed/Groisser, Gil-Medrano/Michor, and Brian Clarke in his thesis. It is amusing that the metric on the space of densities is inherited by the metric on the space of metrics, but then bequeaths a metric on the space of (constant coefficient) metrics which essentially matches the original one. The authors mentioned above also give explicit formulas for the geodesics and curvature in this metric. In fact this also suggests that the descending metric from Section 2.3.2 could be used to generate a metric on the space of Riemannian metrics (or at least those which are pullbacks of a given metric under a diffeomorphism, a space relevant in Teichmuller theory as in
  Tromba’s book ``Teichmuller theory in Riemannian geometry''). I would be very curious to see what this geometry looks like, since it would seem to induce the well-known and physically important L2 geometry on the diffeomorphism group. This is much too big a project to include in the current paper, but at least Lemma 3.4 seems to deserve a remark along these lines, and I think it might be a nice project for the author to undertake in the future.
  \\[2ex]
  DONE! This is indeed an interesting topic to pursue! I am very grateful to the reviewer for sharing this idea. I have included it in the Outlook section, credited to the reviewer.
}

Of course, once a Riemannian metric is given, a succession of natural questions follows: the geodesic equation, a formula for geodesics, a formula for the distance, and a formula for the sectional curvature.
For $\Sym{n}$ equipped with the Fisher--Rao metric, these questions have been addressed in detail \cite{AtMi1981,BuRa1982,Bu1984,Sk1984,Ra1987,CaOl1990}.
Here, we give only a brief discussion.

First, a curve $W= W(t)$ with $t\in[0,1]$ is a geodesic if
\begin{equation}
  \frac{\ud}{\ud\epsilon}\Big|_{\epsilon = 0} \int_{0}^{1} \bar{\mathcal G}_{W+\epsilon\delta W} (\dot W +\epsilon\delta\dot W,\dot W+\epsilon\delta\dot W) \ud t = 0
\end{equation}
for all variations $\delta W = \delta W(t)$.
Using \autoref{lem:sym_FR} we get
\begin{align}
  \frac{\ud}{\ud\epsilon}\Big|_{\epsilon = 0}& \frac{1}{2}\int_{0}^{1}\tr((W+\epsilon\delta W)^{-1})(\dot W+\epsilon\delta\dot W) (W+\epsilon\delta W)^{-1})(\dot W+\epsilon\delta\dot W)  ) \ud t \\
  &= \int_{0}^{1}-\tr( W^{-1} \delta W W^{-1}\dot W W^{-1}\dot W ) + \tr( W^{-1}\delta\dot W W^{-1}\dot W ) \ud t \\
  &= \int_{0}^{1}-\tr( \delta W W^{-1}\dot W W^{-1}\dot W W^{-1}) - \tr( \delta W \frac{\ud}{\ud t}W^{-1}\dot W W^{-1}) \ud t \\
  &= \int_{0}^{1}-\tr( \delta W W^{-1}\dot W W^{-1}\dot W W^{-1}) - \tr( \delta W W^{-1}\ddot W W^{-1})  \\ & \qquad\qquad+ \tr( \delta W W^{-1}\dot W W^{-1}\dot W W^{-1}) + \tr( \delta W W^{-1}\dot W W^{-1}\dot W W^{-1}) \ud t  \\
  &= \int_{0}^{1} - \tr( \delta W W^{-1}\ddot W W^{-1})  + \tr( \delta W W^{-1}\dot W W^{-1}\dot W W^{-1}) \ud t .
\end{align}
Putting the last expression to zero and using the fundamental lemma of calculus of variations, we get, after multiplying from the left and right by $W$, the geodesic equation
\begin{equation}\label{eq:FR_gaussians_geo_eq}
  \ddot W - \dot W W^{-1} \dot W = 0.
\end{equation}

Next, we consider solutions to the geodesic equation \eqref{eq:FR_gaussians_geo_eq}.
We treat here only the case where the initial data is the identity $I$ (due to the invariance, each geodesic can be shifted, by the action of $\GL(n)$, to this case).
We claim that the solution is
\begin{equation}\label{eq:FR_gaussians_solution}
  W(t) = \exp(t \dot W_0)
\end{equation}
where $\dot W_0\in T_I\Sym{n} = \TSym{n}$ is the initial velocity and $\exp$ denotes the matrix exponential.
Let us now verify this.

\iftoggle{final}{We have}{Using $\ud/\ud t \exp(t A) = A\exp(t A) = \exp(t A) A$, we have}
\begin{equation}
  \dot W(t) = \dot W_0 \exp(t \dot W_0) \Rightarrow \ddot W(t) = \dot W_0 \dot W_0\exp(t \dot W_0) ,
\end{equation}
so
\begin{equation}
  \dot W(t) W(t)^{-1}\dot W(t) = \dot W_0 \exp(t \dot W_0) \exp(-t \dot W_0)  \dot W_0 \exp(t \dot W_0)
  = \dot W_0 \dot W_0 \exp(t\dot W_0).
\end{equation}
Therefore, $W(t)$ fulfill \eqref{eq:FR_gaussians_geo_eq} with $W(0) = I$ and $\dot W(0) = \dot W_0$.

\begin{remark}\label{rmk:commute_FR}
  Notice that the formula \eqref{eq:FR_gaussians_solution} for geodesics originating from the identity implies $[\dot W_0, W(t)] = 0$.
  Multiplying from the left by $W(t)^{-1}$ we get
  \begin{equation}
    W(t)^{-1}\underbrace{\dot W_0 W(t)}_{\dot W(t)} - \underbrace{\dot W_0 W(t)}_{\dot W(t)}W(t)^{-1} \iff [W(t)^{-1},\dot W(t)] = 0,
  \end{equation}
  so $W(t)^{-1}$ commutes with $\dot W(t)$ for all $t$.
  In \autoref{sec:eigen} below we shall give a geometric explanation of this observation.
\end{remark}

Finally, we now derive the geodesic distance function $\bar d(\cdot,\cdot)$.
If $W_0,W_1 \in \Sym{n}$, then $\bar d(W_0,W_1) = \bar d(I,W_1W_0^{-1})$ due to the invariance of the Fisher--Rao metric.
Therefore, it is enough to derive the distance from the identity to an element $W_1\in\Sym{n}$.
By definition, it is given by
\begin{equation}
  \bar d(I,W_1)^{2} = \int_0^{1} \bar{\mathcal G}_{\gamma(t)}(\dot\gamma(t),\dot\gamma(t))\ud t,
\end{equation}
where $\gamma(t)$ is the geodesic curve between $I$ and $W_1$.
From \eqref{eq:FR_gaussians_solution} it follows that
\begin{equation}\label{eq:FR_boundary_value_geodesics}
  \gamma(t) = \exp(t \log(W_1)),
\end{equation}
where $\log$ denotes the matrix logarithm.
We thereby get
\begin{align}
  \bar d(I,W_1)^{2} &= \int_0^{1} \frac{1}{2}\tr\Big(\exp(-t\log(W_1)) \exp(t\log(W_1)) \log(W_1) \\ &\qquad\qquad\quad\;  \exp(-t\log(W_1)) \exp(t\log(W_1)) \log(W_1) \Big)\ud t
  \\
  &=
  \int_0^{1} \frac{1}{2}\tr( \log(W_1) \log(W_1) )\ud t
  \\
  &=
  \frac{1}{2} \tr( \log(W_1) \log(W_1)).
\end{align}
The invariance then yields
\begin{equation}\label{eq:FR_gaussians_distance}
  \bar d(W_0,W_1)^{2} = \frac{1}{2} \tr( \log(W_1W_0^{-1}) \log(W_1W_0^{-1}) ).
\end{equation}
\iftoggle{final}{}{{\color{blue}
If $W_0$ and $W_1$ commute we get
\begin{align}
  \bar d(W_0,W_1)^{2}
  &= \frac{1}{2} \tr( \log(W_1 W_0^{-1}) (\log(W_1) - \log(W_0)) ) \\
  &= \frac{1}{2} \tr\Big( \big(\log(W_1) - \log(W_0)\big) \big(\log(W_1) - \log(W_0)\big) \Big)
\end{align}
}}

\iftoggle{final}{}{\newpage}
\subsection{Principal bundle structure}\label{sub:principal_bundle_qr}
So far we have that the statistical manifold of multivariate Gaussian distributions $\mathcal N_n$ is identified with $\Sym{n}$ by~\eqref{eq:gaussians3}, and the Fisher--Rao metric represented on $\Sym{n}$ is given by~\eqref{eq:FRsym}.
Let us now discuss the connection to matrix decompositions.

Recall from \autoref{sec:wasserstein} that a pivotal step to describe the geometry of the polar decomposition is to have (i) a principal bundle structure (to get fibers), and (ii) a Riemannian metric compatible with that bundle (to get horizontal geodesics).
The setting in this section follows the same pattern.
There are, however, some structural differences.

First, the principal bundle structure is based on pullback instead of pushforward:
\begin{equation}\label{eq:principal_bundle_fisher_infinite}
  \begin{tikzcd}
    \Diff_{\mu_0}(\RR^{n}) \arrow[hookrightarrow]{r}{} & \Diff(\RR^{n}) \arrow{d}{\pi} &  \\
    & \Dens(\RR^{n})
  \end{tikzcd}
\end{equation}
where
\begin{equation}
  \pi(\varphi) = \varphi^*\mu_0.
\end{equation}
If $\mu_0 = p(\cdot,W_0) \vol$ is a Gaussian distribution and $$\varphi(x) = Ax$$ with $A\in\GL(n)$, then
\begin{align}
  (\varphi^*\mu_0)(x) &= \varphi^*(p(\cdot,W_0)\vol)(x) = p(Ax,W_0)\det(A)\vol \\
  &= \left( \sqrt{\frac{\det(A)^2\det(W_0)}{(2\pi)^n}}\exp(-\frac{1}{2}(Ax)^{\top}W_0 Ax) \right)\vol \\
  &= \left( \sqrt{\frac{\det(A^{\top}W_0 A)}{(2\pi)^n}}\exp(-\frac{1}{2}x^{\top}A^{\top}W_0 Ax) \right)\vol \\
  &= p(x,A^{\top}W_0 A)\vol.
\end{align}
Thus, the action of $A\in\GL(n)$ on $W\in \Sym{n}$ is
\begin{equation}\label{eq:right_GLn_action}
  W \cdot A = A^{\top} W A
\end{equation}
and the isotropy group of $W$ is given by
\begin{equation}
  \OO(n,W^{-1}) = \{ Q\in\GL(n)\mid Q W^{-1} Q^{\top} = W^{-1} \}
  = \{ Q\in\GL(n)\mid Q^{\top} W Q = W \}.
\end{equation}
We now see that the finite-dimensional principal bundle corresponding to \eqref{eq:principal_bundle_fisher_infinite} is
\begin{equation}\label{eq:principal_bundle_fisher_finite}
  \begin{tikzcd}
    \OO(n,W_0^{-1}) \arrow[hookrightarrow]{r}{} & \GL(n) \arrow{d}{\pi} &  \\
    & \Sym{n}
  \end{tikzcd}
\end{equation}
\reviewertwo{
  eq. (32): it is pretty clear from the context what O(n,W0) is, but a formal definition should be given.
  \\[2ex]
  DONE! The definition is now given, and is consistent with that in Section 2.3.1.
}%
where
\begin{equation}\label{eq:symproj}
  \pi(A) = A^{\top}W_0 A.
\end{equation}

\iftoggle{final}{}{{\color{blue}
The derivative $D\pi(A)$ is computed as follows:
if $W = \pi(A)$ then
\begin{equation}
  \dot W = (A^{-1}\dot A)^{\top} A^{\top} W_0 A + A^{\top} W_0 A (A^{-1}\dot A),
%
\end{equation}
so
\begin{equation}\label{eq:pi_deriv_finite}
  D\pi(A)\cdot\dot A = V^{\top} W + W V, \qquad V=A^{-1}\dot A ,\; W = \pi(A).
\end{equation}
}}
The corresponding vertical distribution, i.e., the kernel of $D\pi$, is given by
\begin{equation}
  \Ver_A = \{ \dot A \in T_A\GL(n)\mid V^{\top} W_0 + W_0 V = 0, \; V=\dot A A^{-1} \}.
\end{equation}

\begin{remark}
  It is possible to describe the Fisher--Rao geometry using the pushforward bundle structure as in \autoref{sec:wasserstein}, but that reverses the order of the elements in the matrix decompositions discussed below.
  Also, the standard in the literature is to consider Fisher--Rao in a right-invariant setting, as in this section.
\end{remark}

The second structural difference is the following.
In Wasserstein geometry we start with a natural metric on $\Diff(\RR^{n})$ (or $\GL(n)$) and we show that it induces a metric on $\Dens(\RR^{n})$ (or $\Sym{n}$).
In Fisher--Rao geometry the situation is the reverse: we start with a metric on $\Dens(\RR^{n})$ (or $\Sym{n}$), so we need to construct a compatible metric on $\Diff(\RR^{n})$ (or $\GL(n)$).
Such a metric is heavily constrained: it must have left-invariant properties in order to descend with respect to the principal bundle~\eqref{eq:principal_bundle_fisher_infinite} (or~\eqref{eq:principal_bundle_fisher_finite}), but it must also be right-invariant in order to induce the Fisher--Rao metric on the base space.
That such metrics exist is not obvious a priori.
But they do exist.
Indeed, a two-parameter family is given in~\cite{Mo2015}, giving rise to \emph{optimal information transport} (OIT)---an information theoretic analogue of OMT where the polar decomposition is replaced by a different factorization of diffeomorphisms that solves the OIT problem.
However, in this paper we refrain from discussing OIT, and instead we focus on the finite-dimensional setting which gives us matrix decompositions.
Thus, our next objective is to derive a metric on $\GL(n)$ with the desired properties.

\subsection{\texorpdfstring{$QR$}{QR} (or Iwasawa) decomposition}\label{sec:qr}

\rednotes{
The approach is to construct a finite-dimensional analogue of the setting in \autoref{sec:top_hyd}.

The action of $\GL(n)$ on the element $I\in\Sym{n}$ yields a projection map
\begin{equation}
  \pi\colon \GL(n)\to\Sym{n},\qquad A\mapsto A^{\top}A.
\end{equation}
Its derivative at $A\in \GL(n)$ in the direction $U\in T_{A}\GL(n)$ is given by
\begin{equation}\label{eq:Dsymproj}
  D\pi(A)\cdot U = A^{\top}U + U^{\top} A.
\end{equation}

The action of $\GL(n)$ on $\Sym{n}$ is \emph{transitive}: there is only one group orbit.
The projection~$\pi$ is therefore a submersion.
Take now an element $W\in\Sym{n}$ and consider the set $\pi^{-1}(W)$ of all $A\in\GL(n)$ such that $\pi(A)=W$.
We call this set the \emph{fiber over $W$}.
The \emph{isotropy group} of $I$ is the subgroup of $\GL(n)$ that leaves $I$ invariant.
From~\eqref{eq:symproj} it follows that it is given by the special orthogonal group
\begin{equation}
  \OO(n) = \{ Q\in\GL(n)\mid Q^\top Q = I \}.
\end{equation}
In other words, the fiber over $I$ is given by $\OO(n)$.
More generally, we have the following result.

\begin{lemma}\label{lem:fiber_par}
  The fiber over an arbitrary $W\in\Sym{n}$ is isomorphic to $\OO(n)$.
\end{lemma}

\begin{proof}
  To construct an isomorphic mapping, take any $A\in \pi^{-1}(W)$.
  Then $Q A \in \pi^{-1}(W)$ for any $Q\in\OO(n)$ since
  \begin{equation}
    \pi(QA) = (QA)^\top QA = A^\top A = \pi(A).
  \end{equation}
  Thus, $Q\mapsto QA$ is a mapping $\OO(n)\to \pi^{-1}(W)$.
  It is injective since both $A$ and $Q$ are invertible, and surjective since for any $A'\in \pi^{-1}(W)$ we have
  \begin{equation}
    (A')^{\top} A' = A^{\top} A \iff (A'A^{-1})^{\top}A'A^{-1} = I \quad\Rightarrow\quad A'A^{-1}\in\OO(n),
  \end{equation}
  so if $Q=A'A^{-1}\in\OO(n)$ then $QA=A'$.
\end{proof}
}%

\iftoggle{final}{}{aa}
\todo[inline]{Something about the history and usage of the QR factorization.
Also, say that it is called Iwasawa decomposition in Lie theory.}

Following \cite[\S\! 5.2]{Mo2015}, we shall construct a right-invariant Riemannian metric on $\GL(n)$ such that the projection~\eqref{eq:symproj} with $W_0=I$ becomes a Riemannian submersion with respect to the Fisher--Rao metric on $\Sym{n}$.
To this end, consider the two projection operators $\ell\colon \gl(n) \to \gl(n)$ and $\sigma\colon\gl(n)\to\gl(n)$ given by
\begin{equation}
  \ell(U)_{ij} = \left\{ \begin{matrix} 0 & \text{if $i \leq j$} \\ U_{ij} & \text{otherwise}\end{matrix} \right.
\end{equation}
and
\begin{equation}
  \sigma(U)_{ij} = U_{ij} + U_{ji}.
\end{equation}
In words, $\ell(U)$ selects the strictly lower diagonal entries.\footnote{For readers acquainted with \textsf{\footnotesize MATLAB} or SciPy, $\ell(U)$ corresponds to $\texttt{tril}(U,-1)$.}
We then define a right-invariant metric on $\GL(n)$ by
\begin{equation}\label{eq:GLmet}
  \mathcal G_{A}(U,V) = \frac{1}{2}\tr\left( \ell(UA^{-1})^{\top}\ell(VA^{-1}) + \sigma(UA^{-1})\sigma(VA^{-1}) \right).
\end{equation}
Throughout this section, the corresponding distance function is denoted $d(\cdot,\cdot)$.
Notice that $\GL(n)$ is not connected (it contains two connected components).
As is customary for non-connected Riemannian manifolds we define the distance between two elements of different components to be infinite.

Recall that the Lie algebra $\oo(n)$ of $\OO(n)\coloneqq \OO(n,I)$ consists of skew symmetric matrices.
The metric~\eqref{eq:GLmet} is constructed so that the orthogonal complement of $\oo(n)$ in $\gl(n)$ is given by the upper triangular matrices.
That is,
\begin{equation}
  \oo(n)^{\bot} = \{ U \in \gl(n)\mid U_{ij}=0\;\;\text{if}\;\;i > j\}. 
\end{equation}
\reviewertwo{
  I am more familiar with the notation $U^\bot$ for the orthogonal comple- ment, also affects next equation and the proof of Lemma 3.8.
  \\[2ex]
  DONE!
}%
\reviewertwo{
  should the constraint not be “if $i \geq j$”?
  \\[2ex]
  DONE! Typo: it should of course be $i > j$.
}%
The horizontal distribution is thereby given by
\begin{equation}
  \mathrm{Hor}_A = \{ U \in T_{A}\GL(n)\colon UA^{-1} \in \oo(n)^{\bot} \}.
\end{equation}
Here, the horizontal distribution fulfills an additional feature: since the space of upper triangular matrices $\oo(n)^{\bot}$ is a Lie algebra (closed under the commutator bracket), it follows that the horizontal distribution is integrable.
That is, it corresponds to the tangent space of foliated manifolds.
As we shall see below in the proof of \autoref{lem:polar_cone}, the manifold that cuts through the identify is given by the Lie group of upper triangular matrices with positive diagonal entries.


Let us now conclude the relation between the constructed metric $\mathcal G$ and the Fisher--Rao metric $\bar{\mathcal G}$.
\begin{lemma}\label{lem:GLnmetric}
  Let $U\in T_A\GL(n)$.
  Then
  \begin{equation}
    \mathcal G_{A}(U,U) = \bar{\mathcal G}_{\pi(A)}(D\pi(A)\cdot U,D\pi(A)\cdot U).
  \end{equation}
  if and only if $U\in \mathrm{Hor}_A$.
  That is, the projection $\pi\colon (\GL(n),\mathcal G) \to (\Sym{n},\bar{\mathcal G})$ given by~\eqref{eq:symproj} is a \emph{Riemannian submersion}.
\end{lemma}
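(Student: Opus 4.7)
The plan is to reduce everything to the identity by right-invariance and then compare the two expressions term by term. Set $V \coloneqq U A^{-1}$, so that the right-invariance of the metric on $\GL(n)$ gives
\begin{equation}
  \mathcal G_A(U,U) = \mathcal G_I(V,V) = \frac{1}{2}\tr\bigl(\ell(V)^\top \ell(V)\bigr) + \frac{1}{2}\tr\bigl(\sigma(V)\sigma(V)\bigr).
\end{equation}
On the other hand, a direct computation using $\pi(A)=A^\top A$ yields
\begin{equation}
  D\pi(A)\cdot U = U^\top A + A^\top U = A^\top(V^\top + V) A = A^\top \sigma(V) A.
\end{equation}

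Next I would plug this into the Fisher--Rao expression from \autoref{lem:sym_FR} and cancel the $A$'s via the cyclic property of the trace:
\begin{align}
  \bar{\mathcal G}_{\pi(A)}\bigl(D\pi(A)\cdot U, D\pi(A)\cdot U\bigr)
    &= \tfrac{1}{2}\tr\bigl( (A^\top A)^{-1} A^\top \sigma(V) A (A^\top A)^{-1} A^\top \sigma(V) A \bigr)
    \\
    &= \tfrac{1}{2}\tr\bigl(\sigma(V)\sigma(V)\bigr).
\end{align}
Note that $\sigma(V)$ is symmetric, so $\tr(\sigma(V)\sigma(V)) = \tr(\sigma(V)^\top \sigma(V)) \geq 0$, and this term matches exactly the second term in the expression for $\mathcal G_A(U,U)$.

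Comparing the two expressions then gives the clean identity
\begin{equation}
  \mathcal G_A(U,U) - \bar{\mathcal G}_{\pi(A)}\bigl(D\pi(A)\cdot U, D\pi(A)\cdot U\bigr) = \tfrac{1}{2}\tr\bigl(\ell(V)^\top \ell(V)\bigr) = \tfrac{1}{2}\|\ell(V)\|_F^2,
\end{equation}
where $\|\cdot\|_F$ denotes the Frobenius norm. Since the Frobenius norm is positive definite, the right-hand side vanishes if and only if $\ell(V) = 0$, i.e.\ if and only if $V = U A^{-1}$ is upper triangular, i.e.\ $V \in \oo(n)^\bot$. By the definition of the horizontal distribution, this is precisely the condition $U \in \mathrm{Hor}_A$, proving the ``if and only if''. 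No step here is really an obstacle; the main point is merely to notice the clean cancellation $A^{-\top}A^\top = I$ that makes the $\sigma$-term on the $\GL(n)$ side match the Fisher--Rao norm identically, leaving the $\ell$-term as the obstruction to being a submersion off the horizontal distribution.
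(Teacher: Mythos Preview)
Your argument is correct. The paper itself does not give a proof here but simply cites \cite[Prop.~5.8]{Mo2015}, so your self-contained computation is more explicit than what appears in the text: you reduce to the identity by right-invariance, observe that $D\pi(A)\cdot U = A^\top\sigma(V)A$, cancel the $A$'s in the Fisher--Rao expression to recover exactly the $\sigma$-term of $\mathcal G_I(V,V)$, and identify the discrepancy as $\tfrac{1}{2}\lVert\ell(V)\rVert_F^2$, which vanishes precisely when $V$ is upper triangular. This is presumably the same computation carried out in the cited reference, just spelled out in place.
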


\begin{proof}
  See~\cite[Prop.~5.8]{Mo2015}.
\end{proof}

A direct consequence is the following.

\begin{lemma}\label{cor:horizon_geodesics}
  Let $[0,1]\ni t\to \zeta(t)$ be a geodesic curve in $\Sym{n}$, and let $A\in\pi^{-1}(\zeta(0))$.
  Then there is a unique geodesic curve $[0,1]\ni t \to \gamma(t)$ in $\GL(n)$ fulfilling $\gamma(0)=A$, $\gamma'(t)\in \mathrm{Hor}$, and $\pi(\gamma(t))=\zeta(t)$.
\end{lemma}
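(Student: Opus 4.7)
The plan is to follow the classical horizontal lift procedure for Riemannian submersions, exactly as invoked in \autoref{sub:geometry_omt_smooth} via Hermann's theorem. The essential ingredient is \autoref{lem:GLnmetric}: since $\pi\colon(\GL(n),\mathcal G)\to(\Sym{n},\bar{\mathcal G})$ is a Riemannian submersion, the restriction $D\pi(A)|_{\Hor_A}\colon\Hor_A\to T_{\pi(A)}\Sym{n}$ is a linear isometry, and in particular a bijection.

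First I would construct the initial horizontal lift: let $U_0\in\Hor_A$ be the unique tangent vector with $D\pi(A)\cdot U_0=\dot\zeta(0)$, and let $\gamma$ be the geodesic of $(\GL(n),\mathcal G)$ defined on a maximal subinterval $[0,t_*)\subseteq[0,1]$ with $\gamma(0)=A$ and $\dot\gamma(0)=U_0$. By Hermann's theorem (cited in \autoref{sub:geometry_omt_smooth}), a geodesic with horizontal initial velocity stays horizontal, so $\dot\gamma(t)\in\Hor_{\gamma(t)}$ on $[0,t_*)$. Setting $\tilde\zeta(t)\coloneqq\pi(\gamma(t))$, the Riemannian submersion property implies that $\tilde\zeta$ is a geodesic of $(\Sym{n},\bar{\mathcal G})$ with $\tilde\zeta(0)=\pi(A)=\zeta(0)$ and $\dot{\tilde\zeta}(0)=D\pi(A)\cdot U_0=\dot\zeta(0)$. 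By uniqueness of geodesics on $\Sym{n}$, $\tilde\zeta(t)=\zeta(t)$ on $[0,t_*)$, which gives the desired projection identity wherever $\gamma$ is defined.

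The main obstacle is showing that $t_*=1$, i.e.\ that $\gamma$ does not run off to the boundary of $\GL(n)$ in finite time. Here is where I expect the work to go: because $\pi\circ\gamma=\zeta$ on $[0,t_*)$, the projected curve stays in the compact set $\zeta([0,1])\subset\Sym{n}$, and because $\gamma$ is horizontal the pointwise speed satisfies $\mathcal G_{\gamma(t)}(\dot\gamma(t),\dot\gamma(t))=\bar{\mathcal G}_{\zeta(t)}(\dot\zeta(t),\dot\zeta(t))$, which is bounded on $[0,1]$. Combined with the fact that the fiber $\pi^{-1}(\zeta(t))$ is a single $\OO(n,I)$-coset of any preimage, these two bounds confine $\gamma$ to a compact subset of $\GL(n)$ as $t\to t_*$, so $\gamma$ extends past $t_*$ by the standard ODE continuation argument; hence $t_*=1$.

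Finally, uniqueness is immediate from the injectivity of $D\pi(A)|_{\Hor_A}$: any horizontal geodesic $\gamma'$ in $\GL(n)$ with $\gamma'(0)=A$ and $\pi\circ\gamma'=\zeta$ must have $\dot\gamma'(0)\in\Hor_A$ and $D\pi(A)\cdot\dot\gamma'(0)=\dot\zeta(0)$, which forces $\dot\gamma'(0)=U_0$; uniqueness of the geodesic ODE then gives $\gamma'=\gamma$.
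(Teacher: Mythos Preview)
Your proposal is correct and follows the same approach as the paper: both rely on \autoref{lem:GLnmetric}, i.e., the fact that $\pi$ is a Riemannian submersion. The paper's own proof is a one-liner that simply cites this fact and defers to \cite[Lem.~5.1]{Mo2015}; you have essentially written out the standard argument behind that reference (horizontal lift of the initial velocity, Hermann's theorem for persistence of horizontality, and uniqueness via injectivity of $D\pi|_{\Hor}$). Your completeness step can be tightened by noting that $\pi(A)=A^\top A$ is a proper map (compact fibers $\OO(n)$, and $A^\top A\in K$ compact forces both $\|A\|$ bounded and $\det(A)$ bounded away from zero), so $\gamma$ is confined to the compact set $\pi^{-1}(\zeta([0,1]))$; this makes the continuation argument clean.
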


\begin{proof}
  Follows since $\pi$ is a Riemannian submersion.
  See, e.g., \cite[Lem.~5.1]{Mo2015}.
\end{proof}

Recall the objective of this section: a geometric description of the $QR$ factorization of matrices.
For any $A\in\GL(n)$, we aim to construct, geometrically, a $Q\in\OO(n)$ and an upper triangular matrix $R$ such that $A=QR$.
The following central concept is analogous to the polar cone in \autoref{sec:wasserstein}.



\begin{definition}
  The \emph{upper triangular cone} in $\GL(n)$ is the subset
  \begin{equation}
    K_{\triangledown} = \{ R\in\GL(n)\mid d(I,R) \leq d(Q,R),\; \forall\; Q\in\OO(n) \}.
  \end{equation}
\end{definition}

\todo[inline]{Consider calling it \emph{slice} instead of \emph{cone}.}

In words, the upper triangular cone is the set of all elements in $\GL(n)$ whose closest point on the identity fiber $\pi^{-1}(I)=\OO(n)$ is the identity.
The connection to the $QR$ decomposition is established by the following result, which shows that $T_R K_{\triangledown} = \Hor_R$ and also motivates the name ``upper triangular cone''.

\begin{lemma}\label{lem:polar_cone}
  The upper triangular cone $K_{\triangledown}$ consists of upper triangular matrices with positive diagonal entries.
\end{lemma}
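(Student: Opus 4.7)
The plan is to identify $K_{\triangledown}$ with the connected Lie subgroup $U_{+} \subset \GL(n)$ of upper triangular matrices with positive diagonal entries, by showing both inclusions. The central geometric fact I will exploit is that $U_{+}$ is the integral leaf through $I$ of the horizontal distribution. Indeed, $\Hor_{A} = \{U : UA^{-1}\in\oo(n)^{\bot}\}$ is right-invariant, and since $\oo(n)^{\bot}$ (the upper triangular matrices) is a Lie subalgebra of $\gl(n)$, Frobenius' theorem says $\Hor$ is integrable. For $A\in U_{+}$ one has $\Hor_{A} = T_{A}U_{+}$, because upper triangular matrices are closed under products and inverses; and the leaf through $I$ is $U_{+}$ itself, since the diagonal entries along a continuous path in the leaf cannot cross zero without $\gamma(t)$ becoming singular, hence they remain positive.

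For the inclusion $K_{\triangledown}\subseteq U_{+}$, suppose $R\in K_{\triangledown}$, so that $I$ realizes the distance from the fiber $\OO(n)=\pi^{-1}(I)$ to $R$. By the standard Riemannian geometry of submersions, the minimizing geodesic from $\OO(n)$ to $R$ meets the fiber orthogonally at $I$, so its initial velocity lies in $\Hor_{I}$. By \autoref{cor:horizon_geodesics} a geodesic tangent to $\Hor$ at one point remains horizontal throughout, and by integrability it is trapped in the leaf $U_{+}$. Consequently $R\in U_{+}$.

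For the reverse inclusion $U_{+}\subseteq K_{\triangledown}$, I would construct a horizontal geodesic from $I$ to $R$ by lifting the base geodesic. Let $\zeta(t)$ be the Fisher--Rao geodesic in $\Sym{n}$ from $I$ to $\pi(R)=R^{\top}R$; by \autoref{cor:horizon_geodesics}, its unique horizontal lift $\gamma(t)$ with $\gamma(0)=I$ satisfies $\pi(\gamma(t))=\zeta(t)$, and by integrability $\gamma(t)\in U_{+}$ for all $t$. To conclude $\gamma(1)=R$ I need injectivity of $\pi|_{U_{+}}$, which I verify directly: if $R_{1},R_{2}\in U_{+}$ satisfy $R_{1}^{\top}R_{1}=R_{2}^{\top}R_{2}$, then $R_{1}R_{2}^{-1}$ is simultaneously upper triangular with positive diagonal and orthogonal, forcing $R_{1}R_{2}^{-1}=I$. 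Once $\gamma(1)=R$, the Riemannian submersion property (\autoref{lem:GLnmetric}) yields $d(I,R)$ equal to the length of $\gamma$, which equals $\bar d(I,\pi(R))$, while for arbitrary $Q\in\OO(n)$ one has $d(Q,R)\geq \bar d(\pi(Q),\pi(R))=\bar d(I,\pi(R))$, so $R\in K_{\triangledown}$.

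The delicate step is the orthogonality argument in the first inclusion: it presumes existence of a length-minimizing geodesic from the closed submanifold $\OO(n)$ to $R$. This is classical but not quite automatic, since $\GL(n)$ has two connected components and $\bar{\mathcal G}$ need not be complete a priori. I would handle this by restricting to the component of $\GL(n)$ containing $R$ (the other component is at infinite distance by convention), noting that $\OO(n)$ has nonempty intersection with each component, and using the first variation formula to extract orthogonality at the foot of the minimizer.
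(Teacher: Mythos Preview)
Your proposal is correct and follows essentially the same route as the paper: both arguments hinge on the observation that $\oo(n)^{\bot}$ is a Lie subalgebra, so the right-invariant horizontal distribution is integrable with integral leaf $U_{+}$ through $I$, whence $K_{\triangledown}\subseteq U_{+}$ because minimizing geodesics from $\OO(n)$ leave orthogonally and therefore stay in the leaf, and $U_{+}\subseteq K_{\triangledown}$ by lifting the unique Fisher--Rao geodesic via \autoref{cor:horizon_geodesics}. Your write-up is in fact more careful than the paper's terse version---you make the first-variation/orthogonality step explicit, and you verify $\gamma(1)=R$ via a direct linear-algebra injectivity argument for $\pi|_{U_{+}}$ (the paper defers injectivity to the next lemma and proves it geometrically there).
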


\begin{proof}
  The horizontal distribution $\Hor$ is right-invariant and given at the identity by the upper triangular matrices $\oo(n)^{\bot}$.
  But the space of upper diagonal matrices is a Lie algebra $\g$.
  Thus, the horizontal distribution is integrable and the corresponding Lie group $G$ connected to the identity is given exactly by the upper triangular matrices with positive diagonal entries.
  This proves that every element in $K_{\triangledown}$ is upper triangular with positive diagonal entries.
  That every element in $G$ is also an element in $K_{\triangledown}$ follows from \autoref{cor:horizon_geodesics} since any two elements in $\Sym{n}$ are connected by a unique Fisher--Rao geodesic.\todo[inline]{This should be more specific, for example by referring to the symmetric space structure.}
\end{proof}

As expected, the upper triangular cone $K_{\triangledown}$ gives us a subset of $\GL(n)$ that is in one-to-one relation with $\Sym{n}$.

\begin{lemma}\label{lem:section_FR_finite}
  $K_{\triangledown}\subset \GL(n)$ is a section of the bundle~\eqref{eq:principal_bundle_fisher_finite}.
  That is, the restriction
  \begin{equation}
    \pi\colon K_{\triangledown}\to \Sym{n}
  \end{equation}
  is an isomorphism.
  Furthermore, $\pi$ is an isometry between $(K_{\triangledown},\mathcal G)$ and $(\Sym{n},\bar{\mathcal G})$.
\end{lemma}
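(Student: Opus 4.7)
The plan is to exploit the Lie-group description of $K_{\triangledown}$ provided by \autoref{lem:polar_cone}, reduce bijectivity of $\pi|_{K_{\triangledown}}$ to a short linear-algebraic argument, and then read off the isometry from the Riemannian submersion property of \autoref{lem:GLnmetric}.

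The first observation I would record is the global geometric content of \autoref{lem:polar_cone}: $K_{\triangledown}$ is the connected Lie subgroup $G\subset\GL(n)$ of upper triangular matrices with positive diagonal, whose Lie algebra is $\oo(n)^\bot$. Since $\Hor$ is right-invariant with $\Hor_I=\oo(n)^\bot$, and $\oo(n)^\bot$ is closed under the commutator bracket, the horizontal distribution is integrable and its maximal leaf through the identity is precisely $G$. Consequently, $T_R K_{\triangledown}=\Hor_R$ for every $R\in K_{\triangledown}$. This identification of tangent spaces is the piece of input that makes all subsequent steps routine.

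For surjectivity of $\pi|_{K_{\triangledown}}$, given $W\in\Sym{n}$ I would take the Fisher--Rao geodesic $\zeta$ from $I$ to $W$ (given explicitly by \eqref{eq:FR_boundary_value_geodesics}) and horizontally lift it via \autoref{cor:horizon_geodesics} to a curve $\gamma$ in $\GL(n)$ with $\gamma(0)=I$. Since $\gamma$ is tangent to $\Hor$ at every point and $K_{\triangledown}=G$ is the leaf of $\Hor$ through $I$, $\gamma$ stays inside $K_{\triangledown}$; the endpoint $R:=\gamma(1)$ therefore lies in $K_{\triangledown}$ and satisfies $\pi(R)=W$. For injectivity, suppose $R_1,R_2\in K_{\triangledown}$ satisfy $R_1^\top R_1=R_2^\top R_2$. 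Then $Q:=R_2R_1^{-1}$ is orthogonal and at the same time upper triangular with positive diagonal (because $G$ is closed under products and inverses). A short row-by-row induction on $Q^\top Q=I$ forces $Q_{ii}=1$ and $Q_{ij}=0$ for $i<j$, so $Q=I$ and $R_1=R_2$.

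The isometry statement is then immediate: for any $\dot R\in T_R K_{\triangledown}=\Hor_R$, \autoref{lem:GLnmetric} yields
\[
  \mathcal G_R(\dot R,\dot R) = \bar{\mathcal G}_{\pi(R)}(D\pi(R)\cdot\dot R, D\pi(R)\cdot\dot R),
\]
which is the defining condition for $\pi|_{K_{\triangledown}}$ to be a Riemannian isometry. The only non-formal step in this whole argument is the identification $T_R K_{\triangledown}=\Hor_R$, and I expect that to be the main obstacle: it rests on integrability of $\Hor$ (i.e.\ $\oo(n)^\bot$ being a subalgebra) together with matching the maximal integral leaf through $I$ with the subgroup $G$. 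Once that identification is in hand, surjectivity reduces to a single horizontal lift, injectivity to an elementary induction on upper triangular orthogonal matrices, and the isometry to a line of metric bookkeeping.
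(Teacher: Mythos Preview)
Your proof is correct and, for surjectivity and the isometry statement, follows the paper's argument almost verbatim: horizontal lifting of Fisher--Rao geodesics via \autoref{cor:horizon_geodesics}, together with the identification $T_R K_{\triangledown}=\Hor_R$ from \autoref{lem:polar_cone}.

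The one genuine difference is injectivity. You take the direct linear-algebraic route: $Q=R_2R_1^{-1}$ is simultaneously orthogonal and upper triangular with positive diagonal, and a short induction on the entries of $Q^\top Q=I$ forces $Q=I$. The paper deliberately avoids this, explicitly stating that it wants a proof ``independent of results in linear algebra.'' Instead it argues geometrically: since $K_{\triangledown}$ is a group, $A:=R_1R_2^{-1}\in K_{\triangledown}$; the horizontal geodesic from $I$ to $A$ projects under $\pi$ to a geodesic from $I$ to $\pi(A)=I$, hence to the constant curve; therefore the lifted geodesic has velocity in $\Ver\cap\Hor=\{0\}$ and is itself constant, giving $A=I$. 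Your argument is shorter and entirely elementary; the paper's argument buys conceptual uniformity with the Wasserstein case (where no such triangular-matrix trick is available) and illustrates how the Riemannian-submersion structure alone forces injectivity.
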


\begin{proof}
  Whereas $\pi$ being an isomorphism is clear from basic linear algebra, using for example the spectral decomposition, we shall give another, geometric proof, that is independent of results in linear algebra.

  First, surjectively follows from \autoref{cor:horizon_geodesics}, since any element in $\Sym{n}$ can be connected to~$I$ by a minimal geodesic, which is then lifted to a curve in $K_{\triangledown}$.

  Let us now prove injectivity.
  Let $R_1,R_2\in K_{\triangledown}$ and assume that $\pi(R_1) = \pi(R_2)$.
  Since $K_{\triangledown}$ is integrable, it follows that $A=R_1R_2^{-1}\in K_{\triangledown}$.
  Thus, there is a shortest horizontal geodesic $\gamma(t)$ connecting $I$ with $A$.
  Since $\pi$ is a Riemannian submersion, we get a corresponding shortest geodesic $\bar\gamma(t)$ on $\Sym{n}$ between $I$ and $\pi(A)$.
  But, since $R_1$ and $R_2$ belong to the same fiber, we have $\pi(A) = I$.
  Therefore, $\bar\gamma(t) = I$ for all~$t$, in particular, $\dot{\bar\gamma}(t) = 0$.
  Thus, $\dot\gamma(t)\in\Ver$ for all~$t$.
  But we also have that $\dot\gamma(t)\in\Hor$.
  Since $\Ver$ and $\Hor$ are orthogonal, this implies $\dot\gamma(t) = 0$.
  In turn, $\gamma(t) = I$ for all $t$, so $R_1 = R_2$.
  This proves injectivity.

  Finally, that $\pi$ is an isometry follows directly from the fact that the horisontal distribution is integrable and $T_R K_{\triangledown} = \Hor_R$, so any geodesic in $K_{\triangledown}$ descends to a geodesic on $\Sym{n}$ since $\pi$ is a Riemannian submersion.
  \rednotes{

  By definition of $K_{\triangledown}$ we therefore have minimal horizontal geodesics $\gamma_1(t)$ and $\gamma_2(t)$ such that $\gamma_1(0)=\gamma_2(0)=I$, $\gamma_1(1) = R_1$, and $\gamma_2(1) = R_2$.
  Since $\pi$ is a Riemannian submersion, we get two geodesics $\bar\gamma_1(t) \coloneqq\pi(\gamma_1(t))$ and $\bar\gamma_2(t)\coloneqq\pi(\gamma_2(t))$ on $\Sym{n}$.
  The start and end points of $\bar\gamma_1$ and $\bar\gamma_2$ are given by $I$ and $\pi(R_1)$ respectively.
  Now, geodesics on $\Sym{n}$ with respect to the Fisher--Rao metric $\bar{\mathcal G}$ are unique.
  This follows, for example, since $\bar{\mathcal G}$ is a \emph{Hessian metric}, see \autoref{sub:entropy_gradient_sym} below or \cite[Ch.~6]{Sh2007}.
  Thus, we have that $\bar\gamma_1(t) = \bar\gamma_2(t)$ for all $t\in[0,1]$.
  From \autoref{cor:horizon_geodesics} it then follows that $\gamma_1(1) = \gamma_2(1)$.
  This proves injectivity.
  }
\end{proof}

\begin{theorem}[QR decomposition]\label{thm:QR}
  Let $A\in\GL(n)$.
  Then there exists a unique $Q\in\OO(n)$ and a unique upper triangular matrix $R$ with positive diagonal entries
  such that $A=QR$.
\end{theorem}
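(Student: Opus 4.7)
The plan is to read the factorization off directly from the principal bundle structure~\eqref{eq:principal_bundle_fisher_finite} with $W_0 = I$, combined with the section property of \autoref{lem:section_FR_finite}; since that lemma already encodes the nontrivial geometric content, the theorem itself should be little more than bookkeeping.

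First I would apply \autoref{lem:section_FR_finite} to the symmetric positive definite matrix $W \coloneqq A^\top A = \pi(A)$. This produces a unique $R \in K_\triangledown$ with $\pi(R) = R^\top R = A^\top A$, and by \autoref{lem:polar_cone} this $R$ is automatically upper triangular with positive diagonal entries. I would then define $Q \coloneqq A R^{-1}$ and verify
\begin{equation}
  Q^\top Q = R^{-\top} A^\top A R^{-1} = R^{-\top} (R^\top R) R^{-1} = I,
\end{equation}
so that $Q \in \OO(n)$. By construction $A = QR$, giving existence.

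For uniqueness, I would suppose $A = Q_1 R_1 = Q_2 R_2$ with $Q_i \in \OO(n)$ and each $R_i$ upper triangular with positive diagonal entries; by \autoref{lem:polar_cone} both $R_i$ lie in $K_\triangledown$. Applying $\pi$ to each factorization gives $R_i^\top R_i = A^\top A$ for $i = 1, 2$, and the injectivity half of \autoref{lem:section_FR_finite} then forces $R_1 = R_2$, whence also $Q_1 = A R_1^{-1} = A R_2^{-1} = Q_2$.

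The genuine obstacle has already been absorbed into \autoref{lem:section_FR_finite}, where integrability of the horizontal distribution (so that $K_\triangledown$ is honestly a submanifold, parameterized by the upper triangular matrices with positive diagonal) together with uniqueness of Fisher--Rao geodesics on $\Sym{n}$ were combined to show that each fiber of $\pi$ meets $K_\triangledown$ in exactly one point. Once that section property is granted, \autoref{thm:QR} is essentially a corollary: the map $A \mapsto (AR^{-1}, R)$ with $R$ chosen from Lemma~\ref{lem:section_FR_finite} is the inverse of the multiplication map $\OO(n) \times K_\triangledown \to \GL(n)$, and no further analysis is required.
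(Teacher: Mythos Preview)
Your proof is correct and follows essentially the same route as the paper: project $A$ to $W=\pi(A)=A^\top A$, invoke \autoref{lem:section_FR_finite} to get the unique $R\in K_\triangledown$ with $\pi(R)=W$, set $Q=AR^{-1}$, and deduce uniqueness from the uniqueness of $R$. The only cosmetic difference is that you verify $Q^\top Q=I$ by direct computation, whereas the paper phrases it as ``$A$ and $R$ lie in the same fiber, so $AR^{-1}\in\OO(n)$'' via the principal bundle structure~\eqref{eq:principal_bundle_fisher_finite}; these are the same observation.
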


\begin{proof}
  Let $W=\pi(A)$.
  From \autoref{lem:section_FR_finite} it follows that there is a unique $R\in K_{\triangledown}$ such that $\pi(R) = W$.
  By construction, $R$ and $A$ belong to the same fiber, so $Q\coloneqq AR^{-1}\in \OO(n)$.
  $Q$ is unique since $A$ is invertible and $R$ is unique.
  That~$R$ is upper triangular with positive diagonal entries follows from~\autoref{lem:polar_cone}.
\end{proof}

\subsubsection{Entropy Gradient Flow}\label{sub:entropy_gradient_sym}
Recall \autoref{sub:entropy_gradient_sym} where we considered the entropy gradient flow on $\Sym{n}$ with respect to the Wasserstein metric.
Here, we shall again derive the entropy gradient flow on $\Sym{n}$, but now with respect to the Fisher--Rao metric.

First, from \eqref{eq:rel_entropy_fin_dim} it follows that
the relative entropy functional in the variable $W$ is given by
\begin{equation}
  H(W) = \frac{n}{2} -\frac{1}{2}\tr(W_1W^{-1}) +\frac{1}{2}\log\left( \det(W_1 W^{-1})\right).
\end{equation}
Now,
\begin{equation}
  \begin{split}
    \frac{\ud}{\ud t}H(W) &= \frac{1}{2}\tr(W_1W^{-1}\dot W W^{-1}) -\frac{1}{2}\frac{\ud}{\ud t} \log(\det(W)) \\
    &= \frac{1}{2}\tr(W^{-1}W_1W^{-1}\dot W) - \frac{1}{2}\tr(W^{-1}\dot W) \\
    &= \frac{1}{2}\tr(W^{-1}W_1W^{-1}\dot W) - \frac{1}{2}\tr(W^{-1}W W^{-1}\dot W) \\
    &= \bar{\mathcal G}_W(W_1 - W,\dot W).
  \end{split}
\end{equation}
Thus, the gradient flow
\begin{equation}
  \dot W = \nabla_{\bar{\mathcal G}}H(W)
\end{equation}
\reviewerone{
  At the end of page 18 it started to seem strange that the gradient flow sometimes has a factor of 1/2 and sometimes not; this I think is a consequence of the author’s choice of scaled Riemannian metric.
  \\[2ex]
  DONE! The 1/2 is removed from all the gradient flows.
}
is given by
\begin{equation}\label{eq:entropy_gradient_flow_FR}
  \dot W =  W_1 -  W
\end{equation}
which, of course, has $W_1$ as a limit for any initial data.


The exponential convergence of \eqref{eq:entropy_gradient_flow_FR} towards $W_1$
is not a coincidence; there is geometry concealed here also.
Namely, the Fisher--Rao metric is a \emph{Hessian metric}, so it is given by the Hessian of a convex function.
The convex function is, in fact, minus the relative entropy functional (we encourage the reader to check this).
Thus,
\begin{equation}\label{eq:hessian_entropy_FR}
  \hess(H) = -\bar{\mathcal G}
\end{equation}
so, by the same standard technique as in \autoref{thm:limit_entropy_flow_omt_finite} we obtain a geometric proof for the exponential convergence of \eqref{eq:entropy_gradient_flow_FR} towards $W_1$ (which, of course, we already knew by basic linear ODE theory).
For more details on the Hessian structure of Fisher--Rao we refer to Shima~\cite[Ch.~6]{Sh2007}.
Gradient flows of Hessian metrics, in particular the non-smooth case, is discussed by Alvarez, Bolte, and Brahic~\cite{AlBoBr2004}.

\reviewerone{
  At the top of page 19 there is a very brief discussion of Hessian metrics, which is not a well-known topic. I think a little more time spent on this would be valuable, in particular summarizing why gradient flows of Hessian metrics are simple.
  \\[2ex]
  DONE! We now explain a little bit about Hessian structures.
  However, to fully understand the structure, one would have to discuss affine connections in detail, which is out of the scope of this paper.
  Therefore, the description is kept short.
}

\subsubsection{Lifted Gradient Flow} \label{subsub:lifted_gradientflow_QR}

In order to recover $R$ in the $QR$ decomposition by a horizontal gradient flow, we need to lift the flow~\eqref{eq:entropy_gradient_flow_FR} to the upper triangular polar cone~$K_{\triangledown}$.
Due to the result that the horizontal distribution is integrable, the situation is much simpler than in the corresponding lifted gradient flow in the Wasserstein geometry, treated in \autoref{subsub:lifted_gradient_omt_linear}.
Indeed, first define the lifted relative entropy functional on $\GL(n)$, given by
\begin{equation}\label{eq:lifted_rel_entropy_QR}
  F(A) \coloneqq H(\pi(A)) = H(A^{\top}A).
\end{equation}
Because $F$ is constant on the fibers (by construction), and because $T_R K_{\triangledown} = \Hor_R$, we automatically have that $\nabla_{\mathcal G} F(R) \in T_R K_{\triangledown}$ for any $R\in K_{\triangledown}$.
Thus, we never have to project onto the polar cone as for the Wasserstein geometry in \autoref{subsub:lifted_gradient_omt_linear}.
Furthermore, since $\pi$ is a Riemannian submersion, the gradient flow on $K_{\triangledown}$
\begin{equation}\label{eq:lifted_gradient_flow_qr}
  \dot R = \nabla_{\mathcal G} F(R)
\end{equation}
is in one-to-one relation with the entropy gradient flow \eqref{eq:entropy_gradient_flow_FR}.
In other words, $\gamma(t)$ is an integral curve of \eqref{eq:lifted_gradient_flow_qr} if and only if $\bar\gamma(t) \coloneqq \pi(\gamma(t))$ is an integral curve of \eqref{eq:entropy_gradient_flow_FR}.
We shall use this relation to derive the flow.


Since
\begin{equation}
  D\pi(R)\cdot \dot R = \dot R^{\top} R + R^{\top}\dot R
\end{equation}
we get from \eqref{eq:entropy_gradient_flow_FR} a flow for $R\in K_{\triangledown}$ as
\begin{equation}
  \dot R^{\top}R + R^{\top}\dot R =  W_1 -  R^{\top}R .
\end{equation}
From \cite{Ho1957} the equation for $\dot R$ can be rewritten as
\begin{equation}
  \dot R = \frac{1}{2} R^{-\top} (W_1 - R^{\top}R) + Z R
\end{equation}
where $Z$ is a skew-symmetric matrix.
We interpret $Z$ as a Lagrange multiplier to ensure that $\dot R$ is upper triangular.
Multiplying from the right by $R^{-1}$, we get
\begin{equation}
  \dot R R^{-1} = \frac{1}{2}\left( R^{-\top} W_1 R^{-1} -  I \right) + Z
\end{equation}
Since $\dot R R^{-1}$ is upper triangular,
it follows that $Z$ must be given by
\begin{equation}
  Z = \frac{1}{2}\left(\ell(R^{-\top} W_1 R^{-1} -  I)^{\top} - \ell(R^{-\top} W_1 R^{-1} -  I)\right).
\end{equation}
Using that $R^{-\top} W_1 R^{-1} -  I$ is a symmetric matrix, the lifted gradient flow \eqref{eq:lifted_gradient_flow_qr} then becomes
\begin{equation}\label{eq:lifted_gradientflow_QR_explicit}
  \dot R = (\mathsf{u}- \frac{1}{2}\mathsf{d})(R^{-\top} W_1 R^{-1} -  I) R,
\end{equation}
where $\mathsf{u}$ is the operator on matrices that selects the upper triangular entries\footnote{For readers acquainted with \textsf{\footnotesize MATLAB} or SciPy, $\mathsf{u}(B)$ corresponds to $\texttt{triu}(B)$.}
\begin{equation}
  \mathsf{u} \coloneqq \id - \ell,
\end{equation}
and $\mathsf{d}$ the operator that selects the diagonal entries\footnote{$\mathsf{d}(B)$ corresponds to $\texttt{diag}(\texttt{diag}(B))$ in \textsf{\footnotesize MATLAB} and SciPy.}.

\begin{theorem}\label{thm:limit_lifted_entropy_flow_QR}
  The lifted entropy function~\eqref{eq:lifted_rel_entropy_QR} restricted to $K_{\triangledown}$ admits a unique maximum $R_{\infty}\in K_{\triangledown}$.
  It fulfills
  \begin{equation}
    \pi(R_{\infty}) = R_{\infty}^{\top}R_{\infty} = W_1.
  \end{equation}
  Furthermore, for any initial data $R(0) \in K_{\triangledown}$, the lifted gradient flow~\eqref{eq:lifted_gradientflow_QR_explicit} converges towards $R_{\infty}$ as $t\to \infty$, with estimates
  \begin{equation}
    F(R(t)) \geq \ee^{-2 t} F(R(0)) \quad\text{and}\quad
    d^{2}(R(t),R_{\infty}) \leq \ee^{-2 t} d^{2}(R(0),R_{\infty}),
  \end{equation}
  where $d(\cdot,\cdot)$ is the distance function of the metric~\eqref{eq:GLmet}.
\end{theorem}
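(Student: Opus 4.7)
The plan is to exploit \autoref{lem:section_FR_finite}, which states not only that $\pi|_{K_{\triangledown}}\colon K_{\triangledown}\to \Sym{n}$ is an isomorphism but that it is a \emph{Riemannian isometry}. Unlike the analogous Wasserstein situation of \autoref{thm:limit_lifted_entropy_flow_omt_finite}, where the polar cone was not integrable and one had to project onto $TK_{\lozenge}$ and separately control a non-horizontal component of the Hessian, here the horizontal distribution is integrable (\autoref{lem:polar_cone}) and $T_R K_{\triangledown}=\Hor_R$. Consequently the whole theorem reduces to the corresponding statement for the base flow $\dot W = W_1 - W$ on $\Sym{n}$.

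First I would establish the existence and uniqueness of $R_{\infty}$. Since $F|_{K_{\triangledown}} = H\circ \pi|_{K_{\triangledown}}$ and $\pi|_{K_{\triangledown}}$ is bijective, critical points and extrema of $F|_{K_{\triangledown}}$ correspond bijectively to those of $H$ on $\Sym{n}$. The Hessian identity~\eqref{eq:hessian_entropy_FR} shows that $-H$ is strictly convex, so $H$ has at most one critical point, which must be the unique global maximum. A direct computation (or inspection of the flow $\dot W = W_1-W$) shows this maximum is attained at $W = W_1$. Hence $R_{\infty}\coloneqq (\pi|_{K_{\triangledown}})^{-1}(W_1)$ is the unique maximum of $F|_{K_{\triangledown}}$, and by the definition \eqref{eq:symproj} it satisfies $R_{\infty}^{\top}R_{\infty}=W_1$.

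For convergence, I would use that $\pi$ is a Riemannian submersion together with $T_R K_{\triangledown}=\Hor_R$ to conclude that the lifted gradient flow \eqref{eq:lifted_gradient_flow_qr} is $\pi$-related to the base entropy gradient flow \eqref{eq:entropy_gradient_flow_FR}. That is, if $R(t)$ solves the lifted flow with $R(0)\in K_{\triangledown}$, then $W(t)\coloneqq \pi(R(t))$ solves $\dot W = W_1 - W$, whose explicit solution is $W(t) = W_1 + \ee^{-t}(W(0)-W_1)$. The quantitative estimates then follow from the fact that \eqref{eq:hessian_entropy_FR} expresses that $-H$ is $1$-strongly convex with respect to $\bar{\mathcal G}$; by the standard gradient-flow argument already invoked in the proof of \autoref{thm:limit_lifted_entropy_flow_omt_finite} (cf.\ \cite[\S\,3.5]{Ot2001} or \cite[Ch.\!~24]{Vi2009}) with $\alpha=1$, this yields $H(W_1)-H(W(t))\leq \ee^{-2t}(H(W_1)-H(W(0)))$ and $\bar d^{\,2}(W(t),W_1)\leq \ee^{-2t}\bar d^{\,2}(W(0),W_1)$. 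Since $\pi|_{K_{\triangledown}}$ is an isometry, these bounds transfer verbatim to $F$ and $d$ on $K_{\triangledown}$, giving the claimed estimates.

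The main ``obstacle'' is really just the bookkeeping: making explicit that integrability of $\Hor$ upgrades \autoref{lem:section_FR_finite} from a mere isomorphism of smooth manifolds to an isometry, so that the convergence on the base can be pulled back without loss. Once that is in place the proof is essentially a one-liner, in sharp contrast to the Wasserstein version \autoref{thm:limit_lifted_entropy_flow_omt_finite}, where transversality (\autoref{lem:transversal_tangent_cone_linear_omt}) had to be invoked to compensate for the failure of integrability.
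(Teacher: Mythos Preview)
Your proposal is correct and follows essentially the same route as the paper: you reduce to the base flow on $\Sym{n}$, invoke the Hessian identity $\hess(H)=-\bar{\mathcal G}$ together with the standard gradient-flow estimates (Otto~\cite[\S\,3.5]{Ot2001}) to obtain the exponential rates with $\alpha=1$, and then pull everything back via the isometry $\pi|_{K_{\triangledown}}$ of \autoref{lem:section_FR_finite}. Your explicit contrast with the Wasserstein case (where integrability fails and projection plus transversality were needed) is exactly the point the paper is making by placing this result after \autoref{lem:section_FR_finite}.
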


\begin{proof}
  From general results on gradient flows on Riemannian manifolds (see \cite[\S\,3.5]{Ot2001} for details), it follows from \eqref{eq:hessian_entropy_FR} that the entropy gradient flow \eqref{eq:entropy_gradient_flow_FR} converges towards the unique maximum $W_1$ with rates
  \begin{equation}
    H(W_1)-H(W(t)) \leq \ee^{-2 t} \big( H(W_1) - H(W(0)) \big),
  \end{equation}
  and
  \begin{equation}
    \bar d^{2}(W(t),W_1) \leq \ee^{-2 t} \bar d^{2}(W(0),W_1),
  \end{equation}
  where $\bar d$ denotes the Riemannian distance with respect to the Fisher--Rao metric, given by \eqref{eq:FR_gaussians_distance}.
  Since solution curves of the lifted gradient flow on $K_{\triangledown}$ project to solutions of the entropy gradient flow on $\Sym{n}$, the result in the theorem now follow from \autoref{lem:section_FR_finite}, as $\pi\colon K_{\triangledown}\to\Sym{n}$ is an isometry.
  This concludes the proof.
\end{proof}

\begin{example}\label{ex:lifted_QR}
  We give a simple example of how the $QR$ decomposition can be numerically computed by solving the lifted gradient flow~\eqref{eq:lifted_gradientflow_QR_explicit}.
  Let
  \begin{equation} \label{eq:Rinf_and_Qinf}
    R_{\infty} = \begin{pmatrix} 3 & -1 \\ 0 & 2 \end{pmatrix} \qquad\text{and}\qquad
    Q_{\infty} = \begin{pmatrix} \cos\theta & -\sin\theta \\ \sin\theta & \cos\theta \end{pmatrix}
  \end{equation}
  with $\theta = \pi/3$.
  Further, let
  \begin{equation}
    A \coloneqq Q_{\infty} R_{\infty}.
  \end{equation}
  Our objective is to compute the $QR$ factorization of $A$ (which, of course, we already know to be $Q_{\infty} R_{\infty}$).
  We first set $W_1 = \pi(A) = A^{\top}A$.
  The lifted gradient flow on $K_{\triangledown}$ is then given by~\eqref{eq:lifted_gradientflow_QR_explicit}.
  The initial data is $R(0) = I$.

  We discretize the equation by the classical 4th order Runge--Kutta method \cite[\S\,322]{Bu2008}, with time-step~$\Delta t = 0.1$.
  The evolution of
  \begin{equation}
    R(t) = \begin{pmatrix}
      r_{11}(t) & r_{12}(t) \\
      0 & r_{22}(t)
    \end{pmatrix}
  \end{equation}
  is shown in \autoref{fig:lifted_example_QR};
  $R$ starts at the identity and converges towards $R_{\infty}$.
  The rate of convergence is shown in \autoref{fig:lifted_example_QR_errors};
  both quantities $-F(R(t))$ and $d^{2}(R(t),R_{\infty})$ converge to zero with rate $\exp(-2t)$ as $t\to\infty$.
  These results are fully explained by \autoref{thm:limit_lifted_entropy_flow_QR}.
\end{example}

\begin{figure}
  \iftoggle{arxiv}{\includegraphics[scale=\iftoggle{aims}{1.0}{1.2}]{fig_Rplot}}{\input{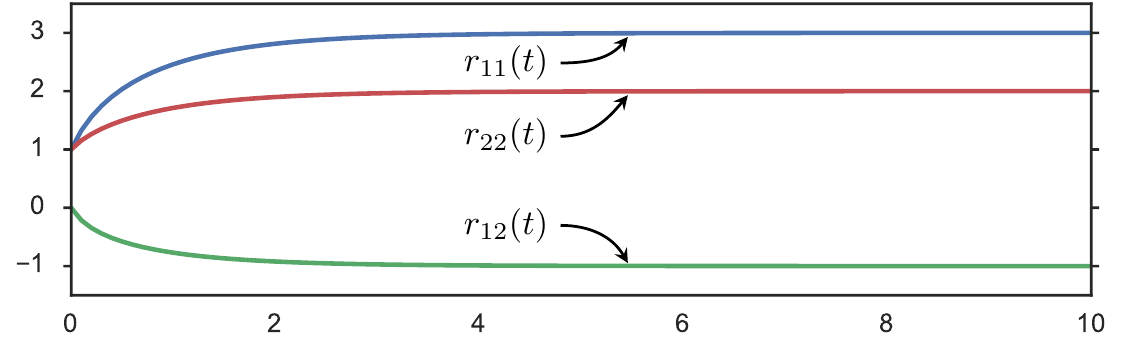}}
  \caption{Evolution of the lifted gradient flow in \protect\autoref{ex:lifted_QR}.
  Notice that $R(0)$ is the identity and that $R(t)$ converges towards $R_{\infty}$ in \protect\eqref{eq:Rinf_and_Qinf} as $t\to\infty$.
  }\label{fig:lifted_example_QR}
\end{figure}

\begin{figure}
  \iftoggle{arxiv}{\includegraphics[scale=\iftoggle{aims}{1.0}{1.2}]{fig_Rerrorplot2}}{\input{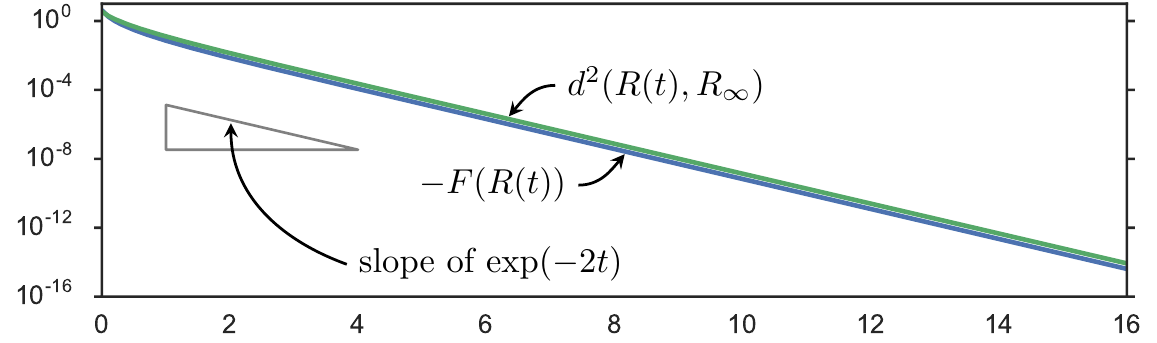}}
  \caption{Convergence towards the limit $R_{\infty}$ of the lifted gradient flow in \protect\autoref{ex:lifted_QR}.
  The convergence of both $-F(R(t))$ and $d^{2}(R(t),R_{\infty})$ is exponential with  rate $\exp(-2t)$, as ensured by \protect\autoref{thm:limit_lifted_entropy_flow_QR}.
  }\label{fig:lifted_example_QR_errors}
\end{figure}



\subsection{Cholesky decomposition}\label{sec:cholesky}

The Cholesky decomposition is another classical matrix factorization.
We now show that it is a direct consequence of the geometry developed in the previous section.

\begin{theorem}[Cholesky decomposition]
  Let $W\in \Sym{n}$.
  Then there is a unique lower triangular matrix $L$ with positive entries on the diagonal such that $W=LL^{\top}$.
\end{theorem}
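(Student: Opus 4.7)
The plan is to obtain the Cholesky decomposition as an immediate transpose of the $QR$ factorization applied to a carefully chosen element of the fiber of $W$. The section property of $K_{\triangledown}$ established in \autoref{lem:section_FR_finite} does almost all the work.

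First, given $W\in\Sym{n}$, I will invoke \autoref{lem:section_FR_finite} to obtain a unique $R\in K_{\triangledown}$ such that $\pi(R) = R^{\top}R = W$. By \autoref{lem:polar_cone}, this $R$ is upper triangular with positive diagonal entries. Setting $L \coloneqq R^{\top}$ yields a lower triangular matrix with positive diagonal entries satisfying
\begin{equation}
  LL^{\top} = R^{\top}R = \pi(R) = W,
\end{equation}
which gives existence.

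For uniqueness, suppose $L'$ is any lower triangular matrix with positive diagonal entries satisfying $L'(L')^{\top}=W$. Then $R' \coloneqq (L')^{\top}$ is upper triangular with positive diagonal entries, hence $R'\in K_{\triangledown}$ by \autoref{lem:polar_cone}, and $\pi(R') = W = \pi(R)$. Since $\pi|_{K_{\triangledown}}$ is injective by \autoref{lem:section_FR_finite}, we conclude $R' = R$, so $L' = L$.

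No genuine obstacle is anticipated, since the heavy lifting (existence and uniqueness of the horizontal section representative) was already carried out in the proof of \autoref{lem:section_FR_finite} via the integrability of $\Hor$ and the Riemannian submersion property. An alternative presentation would be to derive Cholesky by running the lifted entropy gradient flow \eqref{eq:lifted_gradientflow_QR_explicit} with target $W_1 = W$, transposing the resulting limit $R_{\infty}$; by \autoref{thm:limit_lifted_entropy_flow_QR} this would yield both the existence of $L = R_{\infty}^{\top}$ and exponential convergence of a dynamical scheme computing it, but for the statement as given the transpose-of-$QR$ argument above is the shortest route.
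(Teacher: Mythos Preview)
Your proof is correct and follows essentially the same route as the paper: invoke \autoref{lem:section_FR_finite} to get the unique $R\in K_{\triangledown}$ with $\pi(R)=R^{\top}R=W$, then set $L=R^{\top}$, with uniqueness coming from the injectivity of $\pi|_{K_{\triangledown}}$. Your uniqueness argument is spelled out slightly more explicitly (using the characterization in \autoref{lem:polar_cone} to place $R'=(L')^{\top}$ back in $K_{\triangledown}$), and your closing remark about the gradient-flow alternative mirrors the paper's own observation immediately following its proof.
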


\begin{proof}
  By \autoref{lem:section_FR_finite} there exists a unique $R\in K_{\triangledown}$ such that $\pi(R) = W$.
  Thus,
  \begin{equation}\label{eq:Rfactor}
    R^{\top} R = W.
  \end{equation}
  Let $L = R^{\top}$.
  In terms of $L$, equation \eqref{eq:Rfactor} reads
  \begin{equation}\label{eq:cholesky}
    LL^{\top} = W.
  \end{equation}
  Since $R$ is upper triangular with positive entries on the diagonal, it follows that $L$ is lower triangular with positive entries on the diagonal.
  Thus, equation~\eqref{eq:cholesky} is the Cholesky decomposition of~$W$.
  $L$ is unique since $R$ is unique.
\end{proof}

  It follows from the geometry that one can use the Cholesky decomposition to obtain the $QR$ decomposition of $A\in\GL(n)$.
  Indeed:
  \begin{enumerate}
    \item set $W=A^{\top}A$;
    \item compute the Cholesky decomposition $W=LL^{\top}$;
    \item set $R=L^{\top}$ and compute $Q=AR^{-1}$.
  \end{enumerate}

Furthermore, notice that the lifted entropy gradient flow \eqref{eq:lifted_gradientflow_QR_explicit} gives the Cholesky factorization of $W_1$, by taking $L=R_{\infty}^{\top}$.

\subsection{Spectral decomposition}\label{sec:eigen}
In this section we show how the Fisher--Rao geometry of $\Sym{n}$ is related to the spectral decomposition.
Our objective is to give a geometric description of the classical result that every $W\in\Sym{n}$ can be factorized as $W=Q\Lambda Q^{\top}$ where $\Lambda$ is diagonal with positive entries and $Q\in\OO(n)$.
\reviewertwo{
  Section 3.3: While I was able to understand the general idea of this section, I was not able to properly follow it to the end with reasonable effort. Admittedly, this may be due to my limited experience in this domain. Maybe this section can be made more accessible.
  \\[2ex]
  DONE! The section is now extended and largely rewritten. Hopefully, it is now more accessible.
}%

From the principal bundle structure~\eqref{eq:principal_bundle_fisher_finite} it follows that $\Sym{n}$ is a \emph{homogeneous space} with respect to the left action of $\OO(n)$ on $\GL(n)$, i.e., $\Sym{n}\simeq \OO(n)\backslash \GL(n)$. 
Recall that the Fisher--Rao metric is obtained from the metric $\mathcal G$ on $\GL(n)$ given by \eqref{eq:GLmet}.
The remarkable property of $\mathcal G$ is that even after exhausting its left-invariant properties, to yield the Fisher--Rao metric on $\OO(n)\backslash \GL(n)$, there are still right-invariant properties available.
It is therefore natural to continue exhausting the invariance properties by a second quotient, but now from the right.
In other words, to study the double quotient space $\OO(n)\backslash \GL(n)/\OO(n) \simeq \Sym{n}/\OO(n)$.
Roughly speaking, the results in this section are that $\Sym{n}/\OO(n)$ can be identified with the space of eigenvalues and that the Fisher--Rao metric induces horizontal directions, given by diagonal matrices.
This gives rise to the spectral decomposition, much like the situation of the polar and $QR$ decompositions.
There is, however, one important difference: the quotient $\Sym{n}/\OO(n)$ is not a manifold.
This complicates things, but we shall avoid getting technical.
For full details we refer to Michor~\cite[Ch.~VI]{Mi2008}.


\subsubsection{Double Bundle Structure}\label{subsub:double_bundle_structure}
Consider the right action of $\OO(n)$ on $\Sym{n}$, given by the map
\begin{equation}\label{eq:so3_right_action}
  \Phi\colon \OO(n)\times \Sym{n} \ni (Q,W) \mapsto Q^\top W Q.
\end{equation}
We say that $W$ and $W'$ belong to the same \emph{orbit} if there exists a $Q\in\OO(n)$ such that $W' = Q^{\top}W Q$.
The set of orbits is therefore the quotient space
\begin{equation}
  \Sym{n}/\OO(n) = \{ \Phi(\OO(n),W)\colon W\in\Sym{n} \}.
\end{equation}
Contrary to the situation in \autoref{sec:qr}, the action is not free, so the orbits are in general not isomorphic to each other: different orbits may have different dimensions.
For example, the orbit of the identity matrix is zero dimensional since $Q^\top I Q = I$ for any $Q\in\OO(n)$.
Consequently, the action~\eqref{eq:so3_right_action} does not give rise to a principal bundle, as before.
The highest dimensional orbits are those for which the eigenvalues of $W$ are all different; this is the generic case.
Such orbits have the same dimension as $\OO(n)$, namely $n(n-1)/2$.

We now give a different, more intuitive way to understand the orbits. 
Elements in $\Sym{n}$ can be thought of as $n$-dimensional ellipsoids embedded in $\RR^{n}$ and the action of $\OO(n)$ corresponds to rotations and reflections.
Rotation about an axis of an ellipsoid with all different semi-axes ``changes'' it.
\reviewertwo{
  Several times infinitesimal rotations are mentioned. I don’t see why we need to consider the infinitesimal limit here.
  \\[2ex]
  DONE! The point of using infinitesimal was that some global rotations doesn't change the ellipsoid even if the semi-axes are all different (for example rotation by $\pi$).
  As the reviewer points out, to use infinitesimal here is confusing, so now it instead reads rotation about an axis.
}%
If, however, the ellipsoid has two semi-axes of the same length, then rotations in the plane spanned by these axes does not change it, so the action is singular in such directions.
Here is the key point: the orbits themselves represent ellipsoids without reference to orientation.
\reviewertwo{
  “the orbits themselves represent ellipsoids without reference to attitude”: from my interpretation of this sentence it seems to me that “attitude” should be replaced by e.g. “coordinate system” or “orientation”.
  \\[2ex]
  DONE! Reformulated as suggested.
}%
In other words,
\begin{center}
  space of orbits $=$ space of ellipsoids.
\end{center}
As we shall see later in this section, the Fisher--Rao metric induces a natural metric on the space of ellipsoids; a formal statement, since the quotient $\Sym{n}/\OO(n)$ has singular points, so it is not a manifold.

The next question is how to work with the space of orbits $\Sym{n}/\OO(n)$.
One possibility is to represent an element in $\Sym{n}/\OO(n)$ by $n$ strictly positive real numbers representing the lengths of the semi-axes of the corresponding ellipsoid.
This description, however, is redundant, since any permutation give the same ellipsoid.
Hence, the correct space is $(\RR^{+})^n/\mathfrak{S}_n$, where the symmetric group $\mathfrak{S}_n$ consists of all permutations of~$n$ elements.
This shows that $(\RR^{+})^n$ is an $n!$ covering of $\Sym{n}/\OO(n)$.
\reviewerone{
  In the second-last paragraph of page 20 it is mentioned that there is an n-covering. I was puzzled here since the argument suggests it should be an n!-covering. DONE!
}
\todo[inline]{Mention Voronoi cone and Weyl chamber (as Peter Michor mentioned).}

There is also another, more explicit way to work with $\Sym{n}/\OO(n)$, namely it is isomorphic to the set of monic polynomials with positive roots, given by
\begin{equation}
  \poly{n} = \{p\mid p(\lambda) = \lambda^n + \sum_{k=0}^{n-1} a_k \lambda^k, \; a_k\in\RR,\; p^{-1}(\{0\})\subset \RR^{+} \}.
\end{equation}
The projection $\varpi\colon \Sym{n}\to \poly{n}$ is given by
\begin{equation}
  \varpi(W) = \lambda\mapsto \det(\lambda I-W).
\end{equation}
Thus, $\varpi(W)$ is the characteristic polynomial of~$W$, and consequently the roots of $\varpi(W)$ are the eigenvalues of~$W$.

To summarize, and connect again to $\GL(n)$, the combination of the left and right action of $\OO(n)$ on $\GL(n)$ gives rise to the following `double' bundle:
\begin{equation}\label{eq:principal_bundle_fisher_finite_double}
  \begin{tikzcd}
    \OO(n) \arrow[hookrightarrow]{r}{} & \GL(n) \arrow{d}{\pi} \arrow[hookleftarrow]{r}{} &  \OO(n) \arrow[leftrightarrow]{d}{\id} \\
    & \Sym{n} \arrow[hookleftarrow]{r}{} \arrow{d}{\varpi} & \OO(n) \\
    & \poly{n} &
  \end{tikzcd}
\end{equation}

\rednotes{

\begin{proof}
  First, notice that the polynomial $p=\varpi(W)$ is of the form
  \begin{equation}
    p(\lambda) = \lambda^n + \sum_{k=0}^{n-1} a_k \lambda^k .
  \end{equation}
  If $\lambda$ is a root, i.e., $p(\lambda)=0$ that implies that $\lambda I-W$ is singular.
  Let $\vect{v}\in\ker(\lambda I - W)$.
  Then $\vect{v}$ is an eigenvector of $W$.
  Because $W$ is positive definite we have that
  \begin{equation}
    0 < \vect{v}^\top W \vect{v} = \lambda \norm{\vect{v}}^2,
  \end{equation}
  which implies $\lambda>0$.
  Thus, $p$ has only positive roots, so $\varpi$ is indeed a map from $\Sym{n}$ to $\poly{n}$.

  If $Q\in\SO(n)$ then
  \begin{equation}
    \varpi(Q^{\top} WQ) = \det(\lambda I -Q^{\top} WQ) = \underbrace{\det(Q)}_1\det(\lambda I-W)\underbrace{\det(Q^{\top})}_1 = \varpi(W).
  \end{equation}
  Thus, $\varpi$ defines a mapping $[\varpi]\colon\Sym{n}/\SO(n)\to \poly{n}$.
  It remains to show that this mapping is bijective.

  First surjectivity.
  If $p\in \poly{n}$ then $p$ can be factorized as
  \begin{equation}
    p(\lambda) = \prod_{k=1}^{n}(\lambda - \lambda_k), \quad \lambda_k \in \RR^{+}.
  \end{equation}
  If $\Lambda\in\Sym{n}$ is the diagonal matrix with entries $\lambda_k$, then $p = \varpi(Q^{\top}\Lambda Q)$.
  Thus, $[\varpi]$ is surjective.

  For injectivity, we need to show that if $[W],[W']\in\Sym{n}/\SO(n)$, $p=\varpi(W)$, $p'=\varpi(W')$, and $p=p'$, then $W'\in [W]$.

  Then $p$ and $p'$ have the same roots $\lambda_1,\ldots,\lambda_n$.
  Thus, $\lambda_1,\ldots,\lambda_n$ are eigenvalues of both $W$ and $W'$.

  There is an $A\in\GL(n)$ such that $A^\top W' A = W$.
  Let $\lambda'$ be an eigenvalue of $W'$.
  Then $A^\top W' A (A^{-1}v') = A^\top \lambda' v' = WA^{-1}v$\todo{Finish this.}

  Since $p\neq p'$ there is a root $\lambda$ of $p$ which is not a root of $p'$.
  Therefore, $\lambda$ is an eigenvalue of $W$ but not of $W'$, so $W\neq W'$.
  Thus, $[\varpi]$ is injective.
\end{proof}

}


\subsubsection{Descending Metric and the Spectral Theorem}\label{subsub:descending_metric_diag}

We have already seen that the metric $\mathcal G$ on $\GL(n)$ descends to the Fisher--Rao metric on $\Sym{n}$.
Since the Fisher--Rao metric on $\Sym{n}$ is invariant with respect to the right action~\eqref{eq:so3_right_action} it formally descends to a ``metric'' on $\poly{n}$ (recall that $\poly{n}$ is not a manifold).
Let us now derive what it is.

To start, we need the vertical directions of the second projection $\varpi$, i.e., the kernel of~$D\varpi$.
From \eqref{eq:so3_right_action} we see that the infinitesimal action of $\xi\in\oo(n)$ on $W\in\Sym{n}$ is
\begin{equation}\label{eq:inf_action_on_sym}
  -\xi W + W\xi .
\end{equation}
Thus, the vertical directions at $W$ are given by
\begin{equation}
  \Ver_W = \{ -\xi W + W\xi \mid \xi\in\oo(n) \}.
\end{equation}

Let us now derive the horizontal directions, i.e., the directions orthogonal to $\Ver_W$.
We have that
\begin{equation}
  \begin{split}
    \bar{\mathcal G}_W(-\xi^\top W + W\xi,S) &= \frac{1}{2}\tr(W^{-1}(-\xi W + W\xi)W^{-1}S) \\
    &= \frac{1}{2}\tr(-W^{-1}\xi S+\xi W^{-1} S) \\
    &= \frac{1}{2}\tr(\xi (- S W^{-1}+ W^{-1} S)).
  \end{split}
\end{equation}
The condition on $S$ for this expression to vanish for any $\xi\in\oo(n)$ is that $-S W^{-1}+ W^{-1} S$ is a symmetric matrix.
In turn, this implies
\begin{equation}
   S W^{-1} - W^{-1} S = 0 \iff [S,W^{-1}] = 0.
\end{equation}
Thus, the horizontal directions at $W$ are given by
\begin{equation}
  \Hor_W = \{ S\in\TSym{n}\mid [S,W^{-1}]=0 \}.
\end{equation}
Consequently, $S\in \Hor_W$ if and only if $S$ and $W^{-1}$ are simultaneously diagonalizable.
Let us now discuss two special cases.

First, when $W = I$.
Since any matrix commutes with $I^{-1} = I$, we get that the horizontal directions at $I$ fills the entire tangent space: $\Hor_I = T_I\Sym{n}$.
Thus, any geodesic originating from the identity is horizontal.
Since an initially horizontal geodesic remains horizontal at all times \cite{He1960}, it follows that if $W(t)$ is a geodesic with $W(0) = I$, then
\begin{equation}
  [\dot W(t),W(t)^{-1}] = 0, \quad\forall\, t.
\end{equation}
Thus, as declared, we get a geometric explanation of the observation in \autoref{rmk:commute_FR}.

Second, when $W=\Lambda$ is a diagonal matrix with positive entries.
Since two diagonal matrices always commute, we immediately get that all diagonal matrices are contained in $\Hor_\Lambda$.
Let $\diag{n}\subset \Sym{n}$ be the submanifold of diagonal matrices with positive entries.
Notice that $\diag{n}$ itself is a Riemannian manifold, as it inherits, by restriction, the Fisher--Rao metric on $\Sym{n}$.
The tangent space $T_\Lambda\diag{n}$ consists of all diagonal matrices, so $T_\Lambda\diag{n} \subset \Hor_\Lambda$.
Thus, since initially horizontal geodesics remain horizontal, we get the following result.
\begin{lemma}\label{lem:diag_tg_FR}
  $\diag{n}$ is a totally geodesic submanifold of $\Sym{n}$.
  That is, if $\gamma(t)$ is a geodesic in $\diag{n}$, then $\gamma(t)$ is also a geodesic in $\Sym{n}$.
\end{lemma}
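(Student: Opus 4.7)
The plan is to establish total geodesicity via the invariance of the Fisher--Rao metric under the $\GL(n)$ action, combined with the explicit formula $W(t) = \exp(tV_0)$ for geodesics of $\Sym{n}$ emanating from the identity, which was derived earlier in \eqref{eq:FR_gaussians_solution}. The core observation is that the exponential of a diagonal matrix is again diagonal, so geodesics from $I$ with diagonal initial velocity remain in $\diag{n}$; translating this fact to an arbitrary base point in $\diag{n}$ by a diagonal isometry will then give the result for every geodesic in $\Sym{n}$ with initial data in $T\diag{n}$.

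First I would fix $\Lambda\in\diag{n}$ and $V\in T_\Lambda\diag{n}$ (so both $\Lambda$ and $V$ are diagonal), and note that $\Lambda^{1/2}$ (the componentwise positive square root) is well-defined, diagonal, and an element of $\GL(n)$. By the invariance computation in the indirect proof of \autoref{lem:sym_FR}, the map $\Phi_A\colon W\mapsto A^{\top}WA$ is an isometry of $(\Sym{n},\bar{\mathcal G})$ for every $A\in\GL(n)$, and its differential is $U\mapsto A^{\top}UA$. Taking $A=\Lambda^{1/2}$, the isometry $\Phi_A$ sends the identity to $\Lambda$, so it sends the unique geodesic starting at $I$ with initial velocity $V_0\coloneqq\Lambda^{-1/2}V\Lambda^{-1/2}$ to the unique geodesic starting at $\Lambda$ with initial velocity $A^{\top}V_0 A=V$.

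The second step is to invoke the explicit formula \eqref{eq:FR_gaussians_solution}: the geodesic from $I$ with velocity $V_0$ is $W(t)=\exp(t V_0)$. Since $V_0$ is a product of three diagonal matrices, it is diagonal, and therefore $\exp(tV_0)$ is diagonal for every $t$. Consequently
\begin{equation}
\gamma(t) \;=\; \Phi_{\Lambda^{1/2}}(\exp(tV_0)) \;=\; \Lambda^{1/2}\exp(tV_0)\,\Lambda^{1/2}
\end{equation}
is diagonal for every $t$, being a product of three diagonal matrices. By uniqueness of geodesics this curve is the geodesic in $\Sym{n}$ starting at $\Lambda$ with initial velocity $V$, and it lies in $\diag{n}$. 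Hence $\diag{n}$ is totally geodesic in $\Sym{n}$.

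I do not expect a real obstacle here: the explicit exponential form of geodesics, combined with the fact that diagonal matrices form an abelian subgroup closed under matrix exponentiation and (componentwise) square root, makes every step essentially algebraic. The only point that requires a tiny bit of care is the legitimacy of transporting by $A=\Lambda^{1/2}$; this is where one uses that $\Lambda\in\diag{n}$ has strictly positive diagonal entries, so $\Lambda^{1/2}$ exists in $\GL(n)\cap\diag{n}$. As a cross-check one could instead argue via Hermann's theorem on horizontal geodesics: at points of $\diag{n}$ with pairwise distinct eigenvalues the horizontal distribution $\Hor_\Lambda=\{S\in\TSym{n}:[S,\Lambda^{-1}]=0\}$ coincides with $T_\Lambda\diag{n}$, so any initially diagonal geodesic stays horizontal, and a density/continuity argument handles the degenerate stratum; but the direct exponential argument above is cleaner and avoids separating cases.
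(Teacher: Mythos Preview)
Your proof is correct. The paper, however, takes a different route: it observes that $T_\Lambda\diag{n}\subset\Hor_\Lambda$ (diagonal matrices commute with $\Lambda^{-1}$) and then invokes Hermann's result that initially horizontal geodesics remain horizontal; it also remarks that the lemma follows directly from the geodesic equation $\ddot W-\dot W W^{-1}\dot W=0$, since the right-hand side is diagonal whenever $W$ and $\dot W$ are. So the paper's primary argument is precisely the Hermann-based approach you relegated to a cross-check, while you instead use the integrated exponential formula \eqref{eq:FR_gaussians_solution} together with the isometry $\Phi_{\Lambda^{1/2}}$ to transport from the identity. Your route is slightly more explicit and entirely self-contained (it does not rely on the horizontal-distribution machinery or on the subsequent \autoref{lem:hor_diag_FR_generic}); the paper's route, on the other hand, ties the result into the broader story of the double bundle $\Sym{n}\to\poly{n}$ and requires no computation beyond what has already been set up.
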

This result also follows directly from the geodesic equation \eqref{eq:FR_gaussians_geo_eq}.
We now give a characterization of $\Hor_\Lambda$ for a generic $\Lambda$.

\begin{lemma}\label{lem:hor_diag_FR_generic}
  Let $\Lambda\in\diag{n}$ be generic (all its entries are different).
  Then
  \begin{equation}
    \Hor_\Lambda = T_\Lambda\diag{n}.
  \end{equation}
  That is, $\Hor_\Lambda$ is the space of all diagonal matrices.
\end{lemma}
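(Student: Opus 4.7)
The plan is straightforward: apply the characterization $\Hor_\Lambda = \{S \in \TSym{n} \mid [S, \Lambda^{-1}] = 0\}$ already established, and use the classical linear-algebra fact that the commutant of a diagonal matrix with pairwise distinct entries consists precisely of diagonal matrices.

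First, I would note that the inclusion $T_\Lambda \diag{n} \subseteq \Hor_\Lambda$ has already been observed in the paragraph preceding \autoref{lem:diag_tg_FR}, since any two diagonal matrices commute. So only the reverse inclusion requires argument.

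For the reverse inclusion, I would take $S \in \Hor_\Lambda$ and write $\Lambda^{-1} = \operatorname{diag}(\lambda_1^{-1}, \ldots, \lambda_n^{-1})$, where the $\lambda_i$ are pairwise distinct (hence so are the $\lambda_i^{-1}$). Computing the commutator entrywise gives
\begin{equation}
  [S, \Lambda^{-1}]_{ij} = S_{ij}(\lambda_j^{-1} - \lambda_i^{-1}).
\end{equation}
The condition $[S, \Lambda^{-1}] = 0$ therefore forces $S_{ij} = 0$ whenever $i \neq j$ (since $\lambda_i^{-1} \neq \lambda_j^{-1}$ in the generic case), so $S$ must be diagonal. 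Combined with $S \in \TSym{n}$, this shows $S \in T_\Lambda \diag{n}$.

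There is no genuine obstacle here; the result is essentially a restatement of the standard commutant calculation in the language of the Fisher--Rao horizontal distribution. The lemma is important conceptually, though, because together with \autoref{lem:diag_tg_FR} it will identify $\diag{n}$ as the natural carrier of horizontal geodesics at generic diagonal matrices, paving the way to interpret the spectral decomposition as the intersection of an $\OO(n)$-orbit with the ``diagonal cone'' $\diag{n}$, in direct analogy with the polar and $QR$ decompositions treated previously.
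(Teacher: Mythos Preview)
Your proof is correct, but it takes a different route from the paper. The paper argues by dimension counting: since $\Lambda$ is generic, the $\OO(n)$-orbit through $\Lambda$ has the maximal dimension $n(n-1)/2$, hence $\dim\Ver_\Lambda = n(n-1)/2$ and therefore $\dim\Hor_\Lambda = \dim T_\Lambda\Sym{n} - \dim\Ver_\Lambda = n$; combined with the already-known inclusion $T_\Lambda\diag{n}\subset\Hor_\Lambda$ and $\dim T_\Lambda\diag{n}=n$, equality follows.

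Your approach is the direct, entrywise commutant computation, and is arguably more elementary and self-contained: it does not invoke the orbit-dimension statement from \autoref{subsub:double_bundle_structure} (which the paper asserts but does not prove in detail). The paper's argument, on the other hand, stays closer to the geometric narrative of vertical/horizontal splitting and reinforces the bundle picture. Either proof is perfectly acceptable here.
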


\begin{proof}
  If $\Lambda\in\diag{n}$ is generic, then from \autoref{subsub:double_bundle_structure} the orbit of $\Lambda$ has the maximum dimension $n(n-1)/2$.
  Thus, the dimension of $\Ver_\Lambda$ is also $n(n-1)/2$, so
  \begin{equation}
    \dim(\Hor_\Lambda) = \dim(T_\Lambda\Sym{n}) - \dim(\Ver_\Lambda) = \frac{n(n+1)}{2} - \frac{n(n-1)}{2} = n.
  \end{equation}
  Since the dimension of both $T_\Lambda\diag{n}$ and $\Hor_\Lambda$ is $n$ and since $T_\Lambda\diag{n}\subset \Hor_\Lambda$, it follows that $\Hor_\Lambda = T_\Lambda\diag{n}$.
\end{proof}


\reviewerone{
  I was confused by the discussion in the middle of page 21. W goes from diagonalizable to diagonal: “Since W is always diagonalizable, it follows that the space of diagonal matrices belong to HorW .” This is probably a minor point but I got a bit stuck on it.
  \\[2ex]
  DONE! Indeed, the formulation was very confusing. It has now been changed completely.
}

From~\autoref{lem:diag_tg_FR} we have that the submanifold $\diag{n}$ of diagonal matrices with positive entries is totally geodesic in~$\Sym{n}$.
From our discussion it is also clear that it is tangential to the (non-regular) horizontal bundle $\Hor$ and that $\Hor_\Lambda = T_\Lambda\diag{n}$ at generic points (\autoref{lem:hor_diag_FR_generic}).
It is therefore natural to think of $\diag{n}$ as the analogue of the upper triangular cone $K_{\triangledown}$ (or the polar cone in \autoref{sec:wasserstein}).

Let us now give the key result leading to the spectral decomposition.
It is the analog of \autoref{lem:shortest_geodesic_finite_dim} and \autoref{lem:section_FR_finite}, although the result is slightly weaker because of the singular action (we do not have injectivity).

\begin{lemma}\label{lem:surjective}
  The restricted mapping
  \begin{equation}
    \varpi\colon \diag{n}\to \poly{n}
  \end{equation}
  is surjective.
\end{lemma}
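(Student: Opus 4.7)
The plan is essentially to invert $\varpi$ explicitly using the factorization of polynomials over $\RR$.

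First I would take an arbitrary $p \in \poly{n}$ and use the defining property of $\poly{n}$, namely that $p^{-1}(\{0\}) \subset \RR^{+}$, to factor it (over $\RR$, counting multiplicity) as
\begin{equation}
  p(\lambda) = \prod_{k=1}^{n}(\lambda - \lambda_k), \qquad \lambda_k \in \RR^{+}.
\end{equation}
The existence of such a factorization is standard: since $p$ is a monic polynomial of degree $n$ with real coefficients whose roots all lie in $\RR^{+}$, the fundamental theorem of algebra gives complex roots, but the hypothesis forces every root to be a positive real, so no complex-conjugate pairs appear.

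Next I would construct a preimage in $\diag{n}$ by setting
\begin{equation}
  \Lambda \coloneqq \diag(\lambda_1,\dots,\lambda_n).
\end{equation}
By construction $\Lambda \in \diag{n}$ since $\lambda_k > 0$. A direct computation using the diagonal structure gives
\begin{equation}
  \varpi(\Lambda)(\lambda) = \det(\lambda I - \Lambda) = \prod_{k=1}^{n}(\lambda - \lambda_k) = p(\lambda),
\end{equation}
so $\varpi(\Lambda) = p$, which establishes surjectivity.

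There is essentially no obstacle here; the statement is almost tautological once one recognizes that $\poly{n}$ was defined precisely so that every element factors with positive real roots, and that $\varpi$ evaluated on a diagonal matrix returns exactly the monic polynomial whose roots are the diagonal entries. Note that injectivity fails (any permutation of the diagonal entries produces the same characteristic polynomial), which is why the lemma only asserts surjectivity; as noted earlier in the paper, $\diag{n}$ is an $n!$-fold cover of $\poly{n} \simeq \Sym{n}/\OO(n)$.
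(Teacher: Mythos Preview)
Your proof is correct and follows essentially the same approach as the paper: factor $p$ over its positive real roots and form the corresponding diagonal matrix. The paper's version is more terse (three sentences), while you have spelled out the factorization argument and the verification $\det(\lambda I-\Lambda)=\prod_k(\lambda-\lambda_k)$ explicitly, and your closing remark on non-injectivity matches the paper's subsequent remark.
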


\begin{proof}
  Let $p\in \poly{n}$.
  Then $p$ has $n$ real positive roots $\lambda_1,\ldots,\lambda_n$.
  The corresponding diagonal matrix $\Lambda\in\diag{n}$ then fulfills $\varpi(\Lambda) = p$.
\end{proof}

\begin{theorem}[Spectral decomposition]\label{thm:spectral_decomposition}
  Let $W\in\Sym{n}$.
  Then there exists $Q\in\OO(n)$ and a diagonal matrix $\Lambda$ with positive entries such that
  \begin{equation}
    W = Q^\top\Lambda Q .
  \end{equation}
\end{theorem}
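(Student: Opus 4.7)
The strategy is to use the surjectivity of $\varpi|_{\diag{n}}$ to produce a candidate $\Lambda$, and then to show every $\OO(n)$-orbit in $\Sym{n}$ meets $\diag{n}$. First, by \autoref{lem:surjective} there exists $\Lambda\in\diag{n}$ with $\varpi(\Lambda)=\varpi(W)$. Since the characteristic polynomial is invariant under $(Q,W)\mapsto Q^\top W Q$, the theorem reduces to exhibiting some $Q_0\in\OO(n)$ with $Q_0^\top W Q_0\in\diag{n}$: the resulting diagonal matrix and $\Lambda$ share a characteristic polynomial, hence differ by a coordinate permutation realized by a permutation matrix $P\in\OO(n)$, and absorbing $P$ into $Q_0$ produces the required $Q$ with $W=Q^\top\Lambda Q$.

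To produce the diagonalizing $Q_0$, I would run a variational argument on the compact orbit $\mathcal O_W=\{Q^\top W Q : Q\in\OO(n)\}$. Let $\mathsf{o}=\id-\mathsf{d}$ denote the off-diagonal projection and minimize the off-diagonal energy $f(W')=\tr(\mathsf{o}(W')^\top\mathsf{o}(W'))$ over $\mathcal O_W$. A minimizer $W_*$ exists by compactness. Using the infinitesimal action $\delta W = W_*\xi - \xi W_*$ from \eqref{eq:inf_action_on_sym}, the first-order optimality condition reduces, after a cyclic rearrangement, to
\begin{equation*}
  \tr\bigl([\mathsf{o}(W_*),W_*]\,\xi\bigr) = 0 \qquad\forall\,\xi\in\oo(n),
\end{equation*}
which forces the (skew-symmetric) commutator $[\mathsf{o}(W_*),W_*]=[\mathsf{o}(W_*),\mathsf{d}(W_*)]$ to vanish. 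When the diagonal entries of $\mathsf{d}(W_*)$ are distinct, matrices commuting with $\mathsf{d}(W_*)$ are necessarily diagonal; since $\mathsf{o}(W_*)$ has zero diagonal, we conclude $\mathsf{o}(W_*)=0$ and $W_*\in\diag{n}$. In keeping with the paper's narrative, the same conclusion can be reached dynamically as the $\omega$-limit of an isospectral horizontal gradient flow on $\mathcal O_W$ of the type discussed in \autoref{sub:isospectral}, mirroring the role of the lifted entropy gradient flow in the proof of \autoref{thm:QR}.

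The main obstacle is the degenerate case in which $\mathsf{d}(W_*)$ has repeated entries, where the commutator condition no longer forces $\mathsf{o}(W_*)=0$. I would close this gap by a density/compactness argument: the subset of $\Sym{n}$ with simple spectrum is open and dense, so approximate $W$ by a sequence $W_k\in\Sym{n}$ with simple spectrum, diagonalize each by $Q_k^\top W_k Q_k\in\diag{n}$ via the argument above, and extract a subsequential limit $Q_k\to Q_0$ using compactness of $\OO(n)$; then $Q_0^\top W Q_0=\lim_k Q_k^\top W_k Q_k$ is diagonal by continuity. Together with the permutation step, this produces the required $Q\in\OO(n)$ and completes the proof.
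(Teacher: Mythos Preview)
Your approach differs substantially from the paper's. The paper simply invokes the identification $\Sym{n}/\OO(n)\simeq\poly{n}$ set up in \autoref{subsub:double_bundle_structure}: since $\varpi$ is the quotient map for the $\OO(n)$-action, $\varpi(\Lambda)=\varpi(W)$ already means that $\Lambda$ and $W$ lie on the same orbit, and the theorem follows in two lines. Your variational argument instead tries to prove directly that every orbit meets $\diag{n}$, which is more self-contained but also where the real work lies.

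There is, however, a genuine gap. You conflate two different non-degeneracy conditions: \emph{simple spectrum of $W$} and \emph{distinct diagonal entries of the minimizer $W_*$}. These are not the same. The matrix $W_*=\left(\begin{smallmatrix}2&1\\1&2\end{smallmatrix}\right)$ has simple spectrum $\{1,3\}$ yet equal diagonal entries, and since $\mathsf{d}(W_*)=2I$ one has $[\mathsf{o}(W_*),\mathsf{d}(W_*)]=0$: it is a non-diagonal critical point of $f$ on its own orbit. Your first-order condition therefore does not force $\mathsf{o}(W_*)=0$ even when the spectrum is simple, so perturbing $W$ to have simple spectrum does not close the gap---on each $\mathcal O_{W_k}$ you have still not ruled out that the minimizer is one of these bad critical points rather than a diagonal one.

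The clean fix is to bypass first-order conditions entirely and argue directly that a minimizer must be diagonal. If $(W_*)_{ij}\neq 0$ for some $i\neq j$, a Givens rotation in the $(i,j)$-plane chosen to annihilate that entry strictly decreases $f$ by $2(W_*)_{ij}^2$: the rotation preserves $(W_*)_{ik}^2+(W_*)_{jk}^2$ for every $k\notin\{i,j\}$, and in the $2\times 2$ block it transfers $2(W_*)_{ij}^2$ from off-diagonal to diagonal energy. Hence any minimizer of $f$ on $\mathcal O_W$ lies in $\diag{n}$, uniformly in $W$, with no case split or density argument needed; the permutation step then finishes exactly as you wrote.
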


\begin{proof}
  Let $p=\varpi(W)$.
  From \autoref{lem:surjective} we get that there exists $\Lambda \in \diag{n}$ such that $\varpi(\Lambda) = p$.
  Since $\Lambda$ and $W$ belong to the same orbit, there exists a $Q\in\OO(n)$ such that $W = Q^\top\Lambda Q$.
\end{proof}

\begin{remark}
  The non-injectivity of $\varpi$ in \autoref{lem:surjective} is the reason that the spectral decomposition is not unique.
  This non-uniqueness is directly related to $(\RR^{+})^{n}\simeq \diag{n}$ being an $n!$ covering of $\Sym{n}/\OO(n)\simeq \poly{n}$.
\end{remark}

So far, we did not state what the Riemannian metric on $\poly{n}$ is.
Actually, strictly speaking we cannot, since $\poly{n}$ is not a manifold (we do not want to get into technical details of Riemannian orbifolds).
But, since $\diag{n}$ is a manifold and also an $n!$ covering of $\poly{n}$, we derive the geodesics on $\diag{n}$ instead.

From \autoref{lem:sym_FR} it is clear that the Fisher--Rao metric on $\diag{n}$, expressed in the diagonal entries $\lambda_1,\ldots,\lambda_n$, is given by
\begin{equation}\label{eq:met_diag}
  \hat{\mathcal G}_\Lambda\big( (a_1,\ldots,a_n),(b_1,\ldots,b_n) \big)
   = \frac{1}{2}\sum_{i=1}^{n} \frac{a_i b_i}{\lambda_i^{2}}.
\end{equation}
Since, by \autoref{lem:diag_tg_FR}, $\diag{n}$ is totally geodesic in $\Sym{n}$, it follows directly from \eqref{eq:FR_gaussians_geo_eq} that the geodesic equation is
\begin{equation}
  \ddot \lambda_i - \frac{\dot\lambda_i^{2}}{\lambda_i} = 0.
\end{equation}
Let us now derive its Hamiltonian form, without using the reference to \eqref{eq:FR_gaussians_geo_eq}.

The Hamiltonian corresponding to the Lagrangian $L(\Lambda,\dot\Lambda) = \hat{\mathcal{G}}_\Lambda(\dot\Lambda,\dot\Lambda)$ is given by
\begin{equation}
  H\big((\lambda_1,\ldots,\lambda_n),(m_1,\ldots,m_n) \big) = \frac{1}{2}\sum_{i=1}^{n} \lambda_i^{2} m_i^{2},
\end{equation}
where $m_i = \dot\lambda_i/\lambda_i^{2}$ are the momentum variables.
Since $H$ is separable in the index $i$ (the $i$th term only depends on $\lambda_i$ and $m_i$), it follows that the associated Hamiltonian system decouples into $n$ independent systems, each given by
\begin{equation}\label{eq:horizontal_diag}
  \begin{split}
    \dot \lambda &= m\lambda^2 \\
    \dot m &= -m^2\lambda.
  \end{split}
\end{equation}
It is straightforward to check that the quantity $\lambda m$ is a first integral.
From
\begin{equation}
  \lambda(t)m(t) = \frac{\dot\lambda(t)}{\lambda(t)} = -\frac{\dot m(t)}{m(t)} = \lambda(0)m(0)
\end{equation}
it then follows that the solution to \eqref{eq:horizontal_diag} is
\begin{equation}
  \lambda(t) = \lambda(0)\exp(t \lambda(0) m(0)),
  \quad
  m(t) = m(0)\exp(-t  \lambda(0) m(0)).
\end{equation}
In particular, for every $l_0,l_1 > 0$, there is a unique geodesic curve $\lambda(t)$ such that $\lambda(0) = l_0$ and $\lambda(1)=l_1$.
It is given by
\begin{equation} \label{eq:diag_boundary_value_geodesics}
  \lambda(t) = l_0 \exp\left(t \log\left(\frac{l_1}{l_0}\right)\right) = l_0 \left( \frac{l_1}{l_0} \right)^{t}.
\end{equation}
We encourage the reader to compare this formula with \eqref{eq:FR_boundary_value_geodesics}, which corresponds to $l_0=1$.
The phase diagram of~\eqref{eq:horizontal_diag} is illustrated in \autoref{fig:horizontal_diag}.

\reviewerone{
  Where is the geodesic equation (40) coming from? Is this a special case of another geodesic equation, or should it be immediately derivable from the Fisher-Rao metric (30)? And what is the Hamiltonian?
  \\[2ex]
  DONE! The geodesic equation is now described in much more detail.
}%

\begin{figure}
  \centering
  \iftoggle{arxiv}{\includegraphics[scale=\iftoggle{aims}{1.0}{1.2}]{fig_horizontal_diag}}{\input{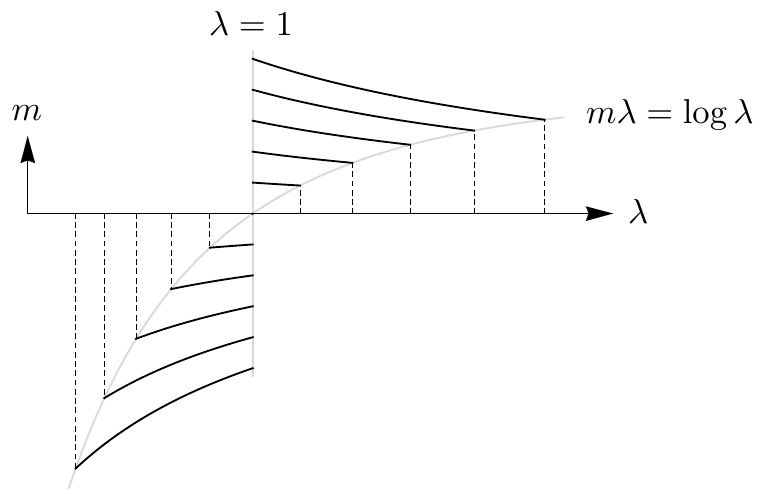}}
  \caption{
    Phase diagram of equation~\eqref{eq:horizontal_diag} for geodesics on~$\diag{n}$.
    For every $l>0$ there is a unique integral curve $\lambda(t)$ such that $\lambda(0)=1$ and $\lambda(1)=l$.
    In consequence, every $\Lambda\in\diag{n}$ is connected to the identity~$I$ by a unique horizontal geodesic.
    \iftoggle{final}{}{\protect{\color{blue}
    The existence of unique geodesics between any two points is a consequence of the geometry, since the Fisher--Rao metric on~$\diag{n}$ is Hessian.}}
  }\label{fig:horizontal_diag}
\end{figure}

It remains to compute the Riemannian distance $\hat d(\Lambda_0,\Lambda_1)$ between $\Lambda_0,\Lambda_1 \in \diag{n}$.
From \eqref{eq:met_diag} and \eqref{eq:diag_boundary_value_geodesics} we get
\begin{align}
  \hat d(\Lambda_0,\Lambda_1)^{2} &= \int_0^{1} \hat{\mathcal G}_{\Lambda(t)}(\dot\Lambda(t),\dot\Lambda(t))\ud t\\
  & = \int_0^{1}\frac{1}{2}\sum_{i=1}^{n} \left(\frac{\dot\lambda_i(t)}{\lambda_i(t)}\right)^{2}\ud t\\
  & =  \frac{1}{2}\sum_{i=1}^{n} \left(\log\left(\frac{(\Lambda_1)_{i}}{(\Lambda_0)_{i}}\right)\right)^{2} \\
  & =  \frac{1}{2}\sum_{i=1}^{n} \Big(\log((\Lambda_1)_{i})-\log((\Lambda_0)_{i}) \Big)^{2}.
\end{align}
A direct consequence is the following.

\begin{proposition}\label{prop:isometric_map_diag}
  The mapping
  \begin{equation}
    (\lambda_1,\ldots,\lambda_n) \mapsto \frac{1}{\sqrt{2}}\big(\log(\lambda_1),\ldots,\log(\lambda_n) \big)
  \end{equation}
  is an isometric isomorphism between $\diag{n}$, equipped with the Fisher--Rao metric \eqref{eq:met_diag}, and $\RR^{n}$, equipped with the standard Euclidean structure.
\end{proposition}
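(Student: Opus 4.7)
The plan is to verify the two defining properties of an isometric isomorphism: bijectivity (as a smooth map) and preservation of the Riemannian metric. Bijectivity is essentially free, since $\log\colon \RR^{+}\to\RR$ is a smooth diffeomorphism with inverse $\exp$, so the component-wise map $(\lambda_1,\ldots,\lambda_n)\mapsto \tfrac{1}{\sqrt{2}}(\log\lambda_1,\ldots,\log\lambda_n)$ is a smooth diffeomorphism from $\diag{n}\simeq (\RR^{+})^{n}$ onto $\RR^{n}$.

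For the metric compatibility, I would compute the pullback of the standard Euclidean metric $g^{\mathrm{eucl}}$ on $\RR^{n}$ under $\phi(\Lambda) \coloneqq \tfrac{1}{\sqrt{2}}(\log\lambda_1,\ldots,\log\lambda_n)$ and check that it equals $\hat{\mathcal G}$ given by~\eqref{eq:met_diag}. The derivative is $D\phi(\Lambda)\cdot (a_1,\ldots,a_n) = \tfrac{1}{\sqrt{2}}(a_1/\lambda_1,\ldots,a_n/\lambda_n)$, so
\begin{equation}
  (\phi^{*}g^{\mathrm{eucl}})_{\Lambda}(a,b) = g^{\mathrm{eucl}}\bigl(D\phi(\Lambda)\cdot a, D\phi(\Lambda)\cdot b\bigr) = \frac{1}{2}\sum_{i=1}^{n}\frac{a_i b_i}{\lambda_i^{2}} = \hat{\mathcal G}_{\Lambda}(a,b).
\end{equation}
This one-line computation is the crux; no step here should present a genuine obstacle.

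As an alternative (or corroborating) argument, I would appeal to the explicit distance formula derived immediately above the proposition: $\hat d(\Lambda_0,\Lambda_1)^{2} = \tfrac{1}{2}\sum_i\bigl(\log((\Lambda_1)_i)-\log((\Lambda_0)_i)\bigr)^{2}$, which is exactly the squared Euclidean distance between $\phi(\Lambda_0)$ and $\phi(\Lambda_1)$. Since both $(\diag{n},\hat{\mathcal G})$ and $(\RR^{n},g^{\mathrm{eucl}})$ are connected Riemannian manifolds and $\phi$ is a distance-preserving smooth diffeomorphism, the Myers--Steenrod theorem (or direct inspection) yields that $\phi$ is a Riemannian isometry. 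The main thing to note is that the factor $1/\sqrt{2}$ is precisely what absorbs the factor $1/2$ in the Fisher--Rao metric, so no nontrivial obstacle remains.
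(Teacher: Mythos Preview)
Your proposal is correct and essentially matches the paper's approach: the paper states the proposition as ``a direct consequence'' of the distance formula $\hat d(\Lambda_0,\Lambda_1)^{2} = \tfrac{1}{2}\sum_i(\log((\Lambda_1)_i)-\log((\Lambda_0)_i))^{2}$ derived just above, which is exactly your second argument. Your first argument (direct pullback computation) is an equally valid and arguably cleaner route to the same conclusion.
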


\iftoggle{final}{}{{\color{blue}

If $\Lambda,\Lambda' \in \diag{n}$, then the entropy of $\Lambda$ relative to $\Lambda'$ is given by
\begin{equation}
  H(\Lambda) = \frac{n}{2} + \frac{1}{2}\sum_{i=1}^{n}\left( \lambda'_i \lambda_i^{-1} + \log(\lambda_i' \lambda_i^{-1}\right)
\end{equation}

}}

\subsubsection{Vertical (or Isospectral) Flows}\label{sub:isospectral}

Let $A(t)$ be a curve such that $\varpi(A(t)) = \varpi(A(s))$ for all~$s$ and~$t$.
That is, $A(t)$ is a curve on a single orbit of $\Sym{n}/\OO(n)$.
Clearly, this implies that the eigenvalues of $A(t)$ are independent of~$t$.
Differential equations whose flows are such curves are called \emph{isospectral flows}.
From \eqref{eq:inf_action_on_sym} we see that any isospectral flow must be of the form
\begin{equation}
  \dot S = [\Phi(S),S],
\end{equation}
where $\Phi\colon \Sym{n}\to\oo(n)$ and $[\cdot,\cdot]$ is the matrix commutator.

The most studied case is the Toda flow~\cite{To1970}, which describes waves on non-linear lattices.
The Hamiltonian integrable structure of this flow and its connection to the KdV equation has been studied extensively, starting with the work of Flaschka~\cite{Fl1974}; see also \cite{Mo1975,DeLiNaTo1986,MoVe1991}.
Furthermore, Symes~\cite{Sy1982} showed that the Toda flow is a continuous version of the $QR$ algorithm for computing eigenvalues, so there is a strong connection to numerical linear algebra; see also \cite{DeNaTo1983,Wa1984,Ch1984}.
For a numerical treatment of general isospectral flows, see Calvo, Iserles, and Zanna~\cite{CaIsZa1997} and references therein.
The Toda flow is known to be a gradient flow \cite{BlBrRa1992}, but we shall not discuss it more here; we refer the survey paper by Tomei~\cite{To2013} for details and further references.
Instead, we make a connection to the work by Brockett~\cite{Br1991b}, who had the idea to construct isospectral gradient flows that diagonalize matrices.

Brockett's flow is constructed as follows.
The orbit of $M\in\Sym{n}$ is parametrized by $Q^{\top} M Q$ with $Q\in\OO(n)$.
Thus, a flow $\gamma(t)$ on the Lie group $\OO(n)$ induces a flow $\gamma(t)^{\top} M \gamma(t)$ on the orbit of $M$.
Now, $\OO(n)$ comes with a canonical Riemannian metric (corresponding to minus the Killing form).
At a base point $Q\in\OO(n)$ for vectors $U,V\in T_Q\OO(n)$, it is given by
\begin{equation}\label{eq:can_met_On}
  \mathcal{H}_{Q}(U,V) = \tr(Q^{\top}U Q^{\top} V).
\end{equation}
Brockett constructed a gradient flow on $\OO(n)$ such that if $Q^{\top}MQ$ is diagonal, then $Q$ is an equilibrium.
He came up with the functional
\begin{equation}
  E(Q) = \tr(NQ^{\top}MQ),
\end{equation}
where $N\in\diag{n}$, and showed that the gradient flow
\begin{equation}
  \dot Q = - \nabla_{\mathcal H} E(Q)
\end{equation}
is given by
\begin{equation}\label{eq:brockett_flow}
  \dot Q = - Q(N Q^{\top}M Q - Q^{\top}M Q N).
\end{equation}
Notice that if $Q^{\top}M Q$ is diagonal, then $Q$ is an equilibrium.
Thus, such an equilibrium corresponds to a diagonalization of $M$, i.e., it produces a spectral decomposition of $M$.
We now show how Brockett's flow \eqref{eq:brockett_flow} is related to the Fisher--Rao geometry presented in this paper.

The action of $\OO(n)$ on $W_1\in\Sym{n}$ is given by the map
\begin{equation}
  \Gamma(Q)\coloneqq Q^{\top}W_{1}Q.
\end{equation}
Its derivative is
\begin{equation}
  D\Gamma(Q)\cdot Q = \dot Q^{\top} W_1 Q + Q^{\top} W_1 \dot Q.
\end{equation}
Written in the left translated variable $\xi = Q^{\top} \dot Q$, and using that $\xi\in\oo(n)$ is skew-symmetric, we get
\begin{equation}
  D\Gamma(Q)\cdot Q = [\Gamma(Q),\xi].
\end{equation}

\begin{lemma}\label{lem:transpose_brocket_operator}
  The Riemannian transpose $D^{\top}\Gamma(Q)\colon T_{\Gamma(Q)}\Sym{n}\to T_{Q}\OO(n)$, defined by
  \begin{equation}
    \mathcal H_{Q}(D^{\top}\Gamma(Q)\cdot \dot W, \dot Q) = \bar{\mathcal G}_{\Gamma(Q)}(\dot W, D\Gamma(Q)\cdot Q),
  \end{equation}
  is given by
  \begin{equation}
    D^{\top}\Gamma(Q)\cdot \dot W = \frac{1}{2}Q[\Gamma(Q)^{-1},\dot W].
  \end{equation}
\end{lemma}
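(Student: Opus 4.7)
The plan is a direct calculation using the defining equation of the Riemannian transpose, relying on the cyclic property of trace and the characterization $T_Q\OO(n) = \{Q\xi : \xi \in \oo(n)\}$. First, I would use the already-derived expression $D\Gamma(Q)\cdot \dot Q = [\Gamma(Q),\xi]$ with $\xi = Q^\top \dot Q \in \oo(n)$, together with the Fisher--Rao metric formula from \autoref{lem:sym_FR}. Writing $W \coloneqq \Gamma(Q)$ for brevity, the pairing to be matched is
\begin{equation*}
  \bar{\mathcal G}_{W}(\dot W, [W,\xi]) = \frac{1}{2}\tr\bigl(W^{-1}\dot W W^{-1}(W\xi - \xi W)\bigr).
\end{equation*}

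Next, I would expand this into two traces and use cyclicity to move factors around:
\begin{align*}
  \tfrac{1}{2}\tr\bigl(W^{-1}\dot W W^{-1}W\xi\bigr) - \tfrac{1}{2}\tr\bigl(W^{-1}\dot W W^{-1}\xi W\bigr)
  &= \tfrac{1}{2}\tr\bigl(W^{-1}\dot W\, \xi\bigr) - \tfrac{1}{2}\tr\bigl(\dot W W^{-1}\xi\bigr) \\
  &= \tfrac{1}{2}\tr\bigl([W^{-1},\dot W]\,\xi\bigr).
\end{align*}
On the other side, for the candidate $X \coloneqq \tfrac{1}{2}Q[W^{-1},\dot W]$ one computes $Q^\top X = \tfrac{1}{2}[W^{-1},\dot W]$, so
\begin{equation*}
  \mathcal H_Q(X,\dot Q) = \tr\bigl(Q^\top X\, Q^\top \dot Q\bigr) = \tfrac{1}{2}\tr\bigl([W^{-1},\dot W]\,\xi\bigr),
\end{equation*}
which agrees with the pairing above for every $\xi \in \oo(n)$, i.e.\ for every $\dot Q \in T_Q\OO(n)$.

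The only remaining point is to verify that $X$ indeed lies in $T_Q\OO(n)$, which is equivalent to showing that $Q^\top X = \tfrac{1}{2}[W^{-1},\dot W]$ is skew-symmetric. Since both $W^{-1}$ and $\dot W$ are symmetric, $[W^{-1},\dot W]^\top = [\dot W^\top, W^{-\top}] = [\dot W, W^{-1}] = -[W^{-1},\dot W]$, so skewness holds. Combined with the non-degeneracy of $\mathcal H_Q$ on $T_Q\OO(n)$, the displayed identification characterizes $D^\top\Gamma(Q)\cdot \dot W$ uniquely, completing the proof. There is no serious obstacle here; the calculation is short, and the only subtlety is keeping track of the sides on which $Q$ acts so that the output lands in the correct tangent space.
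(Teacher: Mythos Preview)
Your proof is correct and follows essentially the same route as the paper: both compute $\bar{\mathcal G}_{\Gamma(Q)}(\dot W,[\Gamma(Q),\xi])$ via the same trace manipulations to arrive at $\tfrac{1}{2}\tr([\Gamma(Q)^{-1},\dot W]\,\xi)$, and then identify this with $\mathcal H_Q(\tfrac{1}{2}Q[\Gamma(Q)^{-1},\dot W],\dot Q)$ (the paper phrases this last step as left-invariance of $\mathcal H$, which is exactly your direct computation). Your explicit verification that $\tfrac{1}{2}[W^{-1},\dot W]$ is skew-symmetric, so that the candidate actually lies in $T_Q\OO(n)$, is a nice addition that the paper leaves implicit.
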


\begin{proof}
  From the definitions \eqref{eq:FRsym} and \eqref{eq:can_met_On} of $\bar{\mathcal G}$ and $\mathcal H$ we get
  \begin{align}
    \bar{\mathcal G}_{\Gamma(Q)}(\dot W,D\Gamma(Q)\cdot Q)
    &= \frac{1}{2}\tr(\Gamma(Q)^{-1}\dot W \Gamma(Q)^{-1}[\Gamma(Q),\xi]) \\
    &= \frac{1}{2}\tr(\Gamma(Q)^{-1}\dot W \Gamma(Q)^{-1}(\Gamma(Q)\xi - \xi\Gamma(Q)]) \\
    &= \frac{1}{2}\tr(\Gamma(Q)^{-1}\dot W \xi - \dot W \Gamma(Q)^{-1}\xi) \\
    &= \frac{1}{2}\tr( (\Gamma(Q)^{-1}\dot W - \dot W \Gamma(Q)^{-1})\xi) \\
    &= \frac{1}{2}\tr( [\Gamma(Q)^{-1},\dot W]\xi) \\
    &= \mathcal H_{I}\left(\frac{1}{2}[\Gamma(Q)^{-1},\dot W],\xi \right) \\
    &= \mathcal H_{Q}\left(\frac{1}{2}Q[\Gamma(Q)^{-1},\dot W],\dot Q \right),
  \end{align}
  where, in the last equality, we used the left invariance of $\mathcal H$.
  This concludes the proof.
\end{proof}

Consider now the entropy \eqref{eq:rel_entropy_fin_dim} relative to $N\in\diag{n}$, given by
\begin{equation}
  H(W) = \frac{n}{2} - \frac{1}{2}\tr(N W^{-1}) + \frac{1}{2}\log(\det(N W^{-1})).
\end{equation}
Through pullback by $\Gamma$, we then obtain a functional on $\OO(n)$, given by
\begin{equation}\label{eq:pullback_entropy_On}
  F(Q) \coloneqq H(\Gamma(Q)) = \frac{n}{2} - \frac{1}{2}\tr(N Q^{\top}W_1^{-1}Q) + \frac{1}{2}\log(\det(N Q^{\top}W_1^{-1} Q)).
\end{equation}

\begin{proposition}\label{prop:gradient_flow_pullback_entropy_On}
  The pullback entropy gradient flow
  \begin{equation}
    \dot Q = \nabla_{\mathcal H} F(Q)
  \end{equation}
  is given by
  \begin{equation}\label{eq:gradient_flow_pullback_entropy_On}
    \dot Q = -\frac{1}{2}Q[N,\Gamma(Q)^{-1}].
  \end{equation}
\end{proposition}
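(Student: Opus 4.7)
The strategy is to combine the chain rule with the Riemannian transpose identity established in \autoref{lem:transpose_brocket_operator}, so that the gradient of the pullback functional $F = H\circ \Gamma$ on $\OO(n)$ can be expressed as the transpose of $D\Gamma$ applied to the Fisher--Rao gradient of the relative entropy $H$ on $\Sym{n}$. The plan is to first derive the Fisher--Rao gradient $\nabla_{\bar{\mathcal G}} H(W)$, then push it through $D^{\top}\Gamma(Q)$, and finally simplify using the fact that $\Gamma(Q)^{-1}$ commutes with $\Gamma(Q)$.

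First I would compute $\nabla_{\bar{\mathcal G}} H$. The calculation is a verbatim repetition of the one leading to \eqref{eq:entropy_gradient_flow_FR} in \autoref{sub:entropy_gradient_sym}, but with $W_1$ replaced by~$N$: differentiating $H(W) = \frac{n}{2} - \frac{1}{2}\tr(N W^{-1}) + \frac{1}{2}\log(\det(N W^{-1}))$ along a curve $W(t)$ yields $\frac{\ud}{\ud t}H(W) = \tfrac{1}{2}\tr(W^{-1}N W^{-1}\dot W) - \tfrac{1}{2}\tr(W^{-1}\dot W) = \bar{\mathcal G}_{W}(N-W,\dot W)$, so that $\nabla_{\bar{\mathcal G}} H(W) = N - W$.

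Next, for any tangent vector $\dot Q \in T_Q \OO(n)$, the chain rule and the defining property of the Riemannian transpose give
\begin{align}
  \mathcal H_Q(\nabla_{\mathcal H} F(Q), \dot Q)
  &= DF(Q)\cdot \dot Q
  = DH(\Gamma(Q))\cdot D\Gamma(Q)\cdot \dot Q \\
  &= \bar{\mathcal G}_{\Gamma(Q)}\bigl(\nabla_{\bar{\mathcal G}} H(\Gamma(Q)),\, D\Gamma(Q)\cdot \dot Q\bigr) \\
  &= \mathcal H_Q\bigl(D^{\top}\Gamma(Q)\cdot \nabla_{\bar{\mathcal G}} H(\Gamma(Q)),\, \dot Q\bigr).
\end{align}
Since $\dot Q$ is arbitrary, this gives the identity $\nabla_{\mathcal H} F(Q) = D^{\top}\Gamma(Q)\cdot(N-\Gamma(Q))$.

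Finally, applying \autoref{lem:transpose_brocket_operator} with $\dot W = N - \Gamma(Q)$ yields
\begin{equation}
  \nabla_{\mathcal H} F(Q) = \tfrac{1}{2}Q\bigl[\Gamma(Q)^{-1},\, N - \Gamma(Q)\bigr]
  = \tfrac{1}{2}Q\bigl[\Gamma(Q)^{-1}, N\bigr]
  = -\tfrac{1}{2}Q\bigl[N, \Gamma(Q)^{-1}\bigr],
\end{equation}
where the middle equality uses that $[\Gamma(Q)^{-1},\Gamma(Q)] = 0$. This establishes \eqref{eq:gradient_flow_pullback_entropy_On}. No real obstacle is anticipated; the only point requiring care is keeping track of the factor $1/2$ in both the Fisher--Rao metric~\eqref{eq:FRsym} and the transpose formula of \autoref{lem:transpose_brocket_operator}, which combine to give the overall $1/2$ in the final answer.
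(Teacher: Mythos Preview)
Your proposal is correct and follows essentially the same approach as the paper: both use the chain rule together with the Riemannian transpose formula from \autoref{lem:transpose_brocket_operator} to obtain $\nabla_{\mathcal H}F(Q)=D^{\top}\Gamma(Q)\cdot(N-\Gamma(Q))$, and then simplify via $[\Gamma(Q)^{-1},\Gamma(Q)]=0$. The only cosmetic difference is the order of exposition---the paper first writes the abstract chain-rule identity and then substitutes the Fisher--Rao gradient, whereas you compute $\nabla_{\bar{\mathcal G}}H$ first.
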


\begin{proof}
  First,
  \begin{align}
    \frac{\ud}{\ud t} F(Q) &= \frac{\ud}{\ud t} (H\circ \Gamma (Q)) = \bar{\mathcal G}_{\Gamma(Q)}(\nabla_{\bar{\mathcal G}} H(\Gamma(Q)), \frac{\ud}{\ud t}\Gamma(Q)) \\
    &= \bar{\mathcal G}_{\Gamma(Q)}(\nabla_{\bar{\mathcal G}} H(\Gamma(Q)),D\Gamma(Q)\cdot \dot Q) \\
    &= \mathcal H_{Q}(D^{\top}\Gamma(Q)\cdot \nabla_{\bar{\mathcal G}} H(\Gamma(Q)),\dot Q).
  \end{align}
  Thus, the gradient of $F$ with respect to $\mathcal H$ is given by
  \begin{equation}
    \nabla_{\mathcal H} F(Q) = D^{\top}\Gamma(Q)\cdot \nabla_{\bar{\mathcal G}} H(\Gamma(Q)).
  \end{equation}
  From \eqref{eq:entropy_gradient_flow_FR} we get
  \begin{equation}
    \nabla_{\bar{\mathcal G}} H(\Gamma(Q)) = N - \Gamma(Q).
  \end{equation}
  From \autoref{lem:transpose_brocket_operator} it then follows that
  \begin{align}
    D^{\top}\Gamma(Q)\cdot \nabla_{\bar{\mathcal G}} H(\Gamma(Q)) &=
    -\frac{1}{2}Q[N-\Gamma(Q),\Gamma(Q)^{-1}] = -\frac{1}{2}Q[N,\Gamma(Q)^{-1}],
  \end{align}
  where the last equality follows since $[\Gamma(Q),\Gamma(Q)^{-1}] = 0$.
  This proves the result.
\end{proof}

Comparing the pullback entropy gradient flow \eqref{eq:gradient_flow_pullback_entropy_On} with Brockett's flow \eqref{eq:brockett_flow}, and using that $\Gamma(Q)^{-1} = Q^{\top}W_1^{-1}Q$, we immediately get the following result, which shows how Brockett's flow is related to entropy and Fisher--Rao geometry.

\begin{corollary}\label{cor:brockett_as_pullback_entropy}
  Up to scaling by $1/2$, Brockett's flow \eqref{eq:brockett_flow} is the gradient flow \eqref{eq:gradient_flow_pullback_entropy_On} with $W_1 = M^{-1}$.
\end{corollary}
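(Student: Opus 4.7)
The plan is to verify the identification by direct algebraic comparison of the two right-hand sides, reducing the claim to computing $\Gamma(Q)^{-1}$ when $W_1 = M^{-1}$.

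First, I would unpack the definition $\Gamma(Q) = Q^{\top}W_1 Q$ under the substitution $W_1 = M^{-1}$. Since $Q\in\OO(n)$ we have $Q^{-1} = Q^{\top}$, so
\begin{equation}
  \Gamma(Q)^{-1} = (Q^{\top}M^{-1}Q)^{-1} = Q^{-1} M Q^{-\top} = Q^{\top} M Q.
\end{equation}
(Here it is essential that $\OO(n)$ means the orthogonal group with respect to the standard inner product, i.e., $\OO(n,I)$ in the notation of \autoref{sec:wasserstein}; this is the setting of Brockett's flow.)

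Next, I would substitute this expression into the commutator appearing on the right-hand side of the pullback entropy gradient flow \eqref{eq:gradient_flow_pullback_entropy_On}. Expanding the bracket,
\begin{equation}
  [N, \Gamma(Q)^{-1}] = [N, Q^{\top}M Q] = N Q^{\top}M Q - Q^{\top}M Q N .
\end{equation}
Therefore the pullback entropy gradient flow, specialized to $W_1 = M^{-1}$, reads
\begin{equation}
  \dot Q = -\tfrac{1}{2} Q\bigl(N Q^{\top}M Q - Q^{\top}M Q N\bigr) ,
\end{equation}
which is exactly $\tfrac{1}{2}$ times the right-hand side of Brockett's flow \eqref{eq:brockett_flow}.

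There is essentially no obstacle: once \autoref{prop:gradient_flow_pullback_entropy_On} has been established (which is the substantive step, relying on \autoref{lem:transpose_brocket_operator} and the form \eqref{eq:entropy_gradient_flow_FR} of the entropy gradient on $\Sym{n}$), the corollary is a one-line identification. The only subtle point worth flagging in the write-up is the inversion identity $\Gamma(Q)^{-1} = Q^{\top}M Q$, because it is precisely the orthogonality $Q^{\top}Q = I$ that converts the inverse covariance $W_1 = M^{-1}$ appearing naturally in the Fisher--Rao framework (see \eqref{eq:gaussians3}) into the matrix $M$ appearing in Brockett's diagonalizing functional. The constant factor $1/2$ is an artifact of the normalization chosen in \autoref{lem:sym_FR}; a different normalization of the Fisher--Rao metric would absorb it.
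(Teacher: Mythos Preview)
Your proposal is correct and takes essentially the same approach as the paper: the corollary is stated as an immediate consequence of comparing \eqref{eq:gradient_flow_pullback_entropy_On} with \eqref{eq:brockett_flow} using the identity $\Gamma(Q)^{-1} = Q^{\top}W_1^{-1}Q$, which under $W_1 = M^{-1}$ becomes exactly your computation $\Gamma(Q)^{-1} = Q^{\top}MQ$. The paper gives no formal proof beyond this one-line observation, so your write-up is, if anything, more detailed than the original.
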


Implicitly, what this result shows is that Brockett's flow can be interpreted as the entropy gradient flow restricted to the Riemannian submanifold given by the orbit of $W_1$.
Thus, if $\Orb{W_1}$ denotes the orbit of $W_1$ and $\Pi_W\colon T_W\Sym{n}\to \Ver_W$ denotes orthogonal projection, then we have the following result.

\begin{proposition}
  The gradient flow of $H$ restricted to $\Orb{W_1}$
  \begin{equation}
    \dot W = \Pi_W\nabla_{\bar{\mathcal G}} H(W), \quad W(0) \in \Orb{W_1}
  \end{equation}
  is given by
  \begin{equation}\label{eq:entropy_gradient_flow_OrbW1}
    \dot W = \frac{1}{2}[W,[W^{-1},N]].
  \end{equation}
\end{proposition}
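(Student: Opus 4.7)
The plan is to derive the formula as the pushforward under $\Gamma\colon\OO(n)\to\Orb{W_1}$, $Q\mapsto Q^\top W_1 Q$, of the pullback entropy gradient flow from \autoref{prop:gradient_flow_pullback_entropy_On}. Since $F=H\circ\Gamma$ by construction and $\Gamma$ surjects onto the orbit, this will relate the gradient flow on $(\OO(n),\mathcal H)$ to the gradient flow of $H|_{\Orb{W_1}}$.

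First I will invoke \autoref{prop:gradient_flow_pullback_entropy_On} to write $\dot Q = -\tfrac{1}{2}Q[N,W^{-1}]$ with $W=\Gamma(Q)$, and set $\xi = Q^\top\dot Q = \tfrac{1}{2}[W^{-1},N]$. This $\xi$ is skew-symmetric, since $[W^{-1},N]^\top = NW^{-1}-W^{-1}N = -[W^{-1},N]$ by the symmetry of $W^{-1}$ and $N$, and thus lies in $\oo(n)$. Then applying the infinitesimal action formula $D\Gamma(Q)\cdot Q\xi = [W,\xi]$ from the paragraph preceding \autoref{lem:transpose_brocket_operator}, I obtain at once
\[
\dot W \;=\; D\Gamma(Q)\cdot\dot Q \;=\; [W,\xi] \;=\; \tfrac{1}{2}\,[W,\,[W^{-1},N]],
\]
which is the stated formula. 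Tangency to the orbit is automatic, since $[W,\xi]\in\Ver_W$ by the description of the vertical distribution at the start of \autoref{subsub:descending_metric_diag}.

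To close the loop and identify this pushforward with $\Pi_W\nabla_{\bar{\mathcal G}}H(W)$, I will combine the computation above with \autoref{lem:transpose_brocket_operator}, which gives $\nabla_{\mathcal H}F(Q) = D^\top\Gamma(Q)\cdot\nabla_{\bar{\mathcal G}}H(W)$, so that $\dot W = D\Gamma(Q)\,D^\top\Gamma(Q)\cdot\nabla_{\bar{\mathcal G}}H(W)$. The composite $D\Gamma\,D^\top\Gamma$ is self-adjoint with image $\Ver_W$. The main obstacle I expect is verifying that this composite coincides with the orthogonal projection $\Pi_W$ onto $\Ver_W$; geometrically this amounts to showing that $\Gamma$ descends to a Riemannian submersion onto $\Orb{W_1}$ with the induced metric, i.e.\ that the orthogonal complement in $\oo(n)$ of the isotropy subalgebra at $W_1$ is mapped isometrically onto $\Ver_W$ via $\xi\mapsto [W,\xi]$. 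Once this compatibility is in place the identification is immediate, and the proposition follows.
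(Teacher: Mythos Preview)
Your derivation of the formula is essentially identical to the paper's proof: both read off $\xi=Q^\top\dot Q=\tfrac12[W^{-1},N]$ from \autoref{prop:gradient_flow_pullback_entropy_On} and push forward via $\dot W=D\Gamma(Q)\cdot\dot Q=[W,\xi]=\tfrac12[W,[W^{-1},N]]$. The paper's proof is equally terse and stops at this point; it does not separately justify the identification of this pushforward with $\Pi_W\nabla_{\bar{\mathcal G}}H(W)$ that you flag in your final paragraph.

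Your caution about that last step is in fact well-founded, and it is a point the paper glosses over. The map $\Gamma\colon(\OO(n),\mathcal H)\to(\Orb{W_1},\bar{\mathcal G}|_{\Orb{W_1}})$ is \emph{not} a Riemannian submersion: already for $W=\mathrm{diag}(a,b)$ and $\xi=\begin{psmallmatrix}0&1\\-1&0\end{psmallmatrix}$ one finds $\bar{\mathcal G}_W([W,\xi],[W,\xi])=(a-b)^2/(ab)$ while $\mathcal H_I(\xi,\xi)=\tr(\xi^2)=-2$, so the two metrics on $\Ver_W$ do not agree. Consequently $D\Gamma\,D^\top\Gamma$ is not the Fisher--Rao orthogonal projection $\Pi_W$, and a direct computation in this example shows $\tfrac12[W,[W^{-1},N]]$ and $\Pi_W(N-W)$ differ by a nontrivial factor along $\Ver_W$. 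What the argument actually produces is the gradient of $H$ on the orbit with respect to the \emph{normal metric} pushed forward from $\mathcal H$, not the restricted Fisher--Rao metric; the paper's phrasing conflates these. So your core argument matches the paper's, but you should not expect to ``close the loop'' and obtain the literal equality with $\Pi_W$ as stated.
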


\begin{proof}
  If $W = \Gamma(Q)$ we have
  \begin{equation}
    \dot W = \frac{\ud}{\ud t}\Gamma(Q) = D\Gamma(Q)\cdot Q = [\Gamma(Q),\xi].
  \end{equation}
  It follows from \eqref{eq:gradient_flow_pullback_entropy_On} that $\xi = \frac{1}{2}[\Gamma(Q)^{-1},N] = \frac{1}{2}[W^{-1},N]$.
  This proves the \break result.
\end{proof}

The flow \eqref{eq:gradient_flow_pullback_entropy_On} also induces a flow on the orbit of $\Sigma_1 \coloneqq W_{1}^{-1}$, which recovers, up to scaling by $1/2$, the double bracket formulation of \eqref{eq:brockett_flow} given by Brocket~\cite[Eq.\ 2]{Br1991b}.

\begin{corollary}\label{cor:brockett_double_bracket_as_entropy_gradient_flow}
  Expressed in the variable $\Sigma = W^{-1}$, the entropy gradient flow \eqref{eq:entropy_gradient_flow_OrbW1} takes the double bracket form
  \begin{equation}\label{eq:double_bracket_flow}
    \dot \Sigma = \frac{1}{2}[\Sigma,[\Sigma,N]].
  \end{equation}
\end{corollary}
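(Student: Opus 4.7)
The plan is to derive the identity by direct computation, using only the chain rule for the matrix inverse and the definition of the commutator. First I would differentiate the relation $\Sigma W = I$ to obtain
\begin{equation}
  \dot\Sigma = -\Sigma\,\dot W\,\Sigma,
\end{equation}
and then substitute the right hand side of \eqref{eq:entropy_gradient_flow_OrbW1} for $\dot W$, yielding
\begin{equation}
  \dot\Sigma = -\tfrac{1}{2}\,\Sigma\,[W,[W^{-1},N]]\,\Sigma.
\end{equation}

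The second step is to expand the inner double commutator. Writing it out,
\begin{equation}
  [W,[W^{-1},N]] = 2N - WNW^{-1} - W^{-1}NW,
\end{equation}
and using the basic identities $\Sigma W = W\Sigma = I$ and $\Sigma W^{-1} = W^{-1}\Sigma = \Sigma^{2}$, the conjugation by $\Sigma$ collapses cleanly:
\begin{equation}
  \Sigma\,[W,[W^{-1},N]]\,\Sigma = 2\Sigma N\Sigma - N\Sigma^{2} - \Sigma^{2} N.
\end{equation}
This gives $\dot\Sigma = \tfrac{1}{2}(\Sigma^{2}N + N\Sigma^{2} - 2\Sigma N\Sigma)$.

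Finally I would expand $[\Sigma,[\Sigma,N]]$ in exactly the same way and observe that
\begin{equation}
  [\Sigma,[\Sigma,N]] = \Sigma^{2}N + N\Sigma^{2} - 2\Sigma N\Sigma,
\end{equation}
matching the previous expression and proving \eqref{eq:double_bracket_flow}. There is no genuine obstacle here: the result is a bookkeeping identity, and the only care needed is to track the cancellations in the conjugation of a double bracket by the inverse of one of its entries. A more conceptual alternative would be to observe that the map $W\mapsto W^{-1}$ is an equivariant intertwiner for the conjugation action of $\OO(n)$ on $\Sym{n}$, so it carries the infinitesimal generator $[W^{-1},N]$ of the orbit flow on the $W$-side to the generator $[\Sigma,N]$ on the $\Sigma$-side; the outer bracket $[W,\cdot]$ then transforms to $[\Sigma,\cdot]$ up to the sign absorbed by $\dot\Sigma = -\Sigma\dot W\Sigma$. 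Either argument yields the same conclusion, but the direct computation seems preferable given its brevity.
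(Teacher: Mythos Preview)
Your proof is correct and follows essentially the same approach as the paper: both start from $\dot\Sigma = -W^{-1}\dot W W^{-1}$ and substitute the orbit flow. The paper is more concise, using the one-line identity $W^{-1}[W,X]W^{-1} = -[W^{-1},X]$ (valid for any $X$) rather than expanding the double commutator term by term, but this is a difference in bookkeeping, not in method.
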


\begin{proof}
  Since $\Sigma = W^{-1}$ we get
  \begin{equation}
    \dot \Sigma = -W^{-1}\dot W W^{-1},
  \end{equation}
  and from \eqref{eq:entropy_gradient_flow_OrbW1}
  \begin{equation}
    \dot \Sigma = -\frac{1}{2}W^{-1}[W,[W^{-1},N]]W^{-1} = -\frac{1}{2}[[W^{-1},N],W^{-1}]
    = \frac{1}{2}[\Sigma,[\Sigma,N]].
  \end{equation}
  This proves the result.
\end{proof}

If $N\in\diag{n}$ is generic (all elements are different), then the double bracket flow \eqref{eq:double_bracket_flow} converges to a diagonal matrix in $\Orb{\Sigma_1}$, thus giving a spectral decomposition of $\Sigma_1$ \cite[Th.\ 2]{Br1991b}.\footnote{For numerical experiments confirming the convergence, see \cite[Fig.\ 1]{Br1991b}.}
Consequently, under the same condition, \eqref{eq:entropy_gradient_flow_OrbW1} converges to a diagonal matrix in $\Orb{W_1}$.




\subsubsection{Horizontal Gradient Flow to Factorize Characteristic Polynomials}\label{subsub:hor_flow_factorize_polynomials}

We have seen how to construct a vertical gradient flow on the orbit of $W_1$ that converges towards a spectral decomposition of $W_1$.
Here, we aim to construct a horizontal gradient flow, evolving on $\diag{n}$, such that the eigenvalues of $W_1$ are obtained in the limit.

The ideas goes as follows.
For $W_1\in\Sym{n}$, construct its characteristic polynomial $p_1 = \varpi(W_1)$.
Then define a functional $\hat F\colon\poly{n}\to \RR^{+}$ such that $\hat F(p)=0$ if and only if $p=p_1$.
Lift $\hat F$ to a functional on $\diag{n}$
\begin{equation}
  F(\Lambda) = \hat F(\varpi(\Lambda))
\end{equation}
and consider the gradient flow
\begin{equation}\label{eq:abstract_diag_flow}
  \dot \Lambda = - \nabla_{\bar{\mathcal G}} F(\Lambda).
\end{equation}
If $F$ is a strictly convex functional on $\diag{n}$ with respect to $\bar{\mathcal G}$, then this flow will converge to $\Lambda_{\infty}$ such that $\varpi(\Lambda_{\infty}) = p_1$.

There is, however, an obstruction with the suggested approach.
Namely, $F$ can never be strictly convex.
If it was, then $F$ would have a unique minimum, so there should be a unique sequence of eigenvalues of $W_1$.
But this cannot be, since any reshuffling of the eigenvalues gives a different minimum.

Let us take a closer look of what can happen.
If $\Lambda=\mathrm{diag}(\lambda_1,\ldots,\lambda_n)$ and, say, $\lambda_1=\lambda_2$, then the symmetry $\lambda_1\leftrightarrow \lambda_2$ implies that $(\nabla_{\bar{\mathcal G}}F(\Lambda))_{1} = (\nabla_{\bar{\mathcal G}}F(\Lambda))_{2}$, so $\dot\lambda_1 = \dot\lambda_2$ along the gradient flow.
That is, $\lambda_1(t) = \lambda_2(t)$, so the flow cannot converge to a solution where $\lambda_1 \neq \lambda_2$.
Consequently, if $\lambda_1(0) < \lambda_2(0)$, then $\lambda_1(t) < \lambda_2(t)$, so the flow of \eqref{eq:abstract_diag_flow} preserves ordering.
As a remedy, if $W_1$ is generic (all eigenvalues different), then it is possible for the functional $F$ to be strictly convex on the subset of ordered elements
\begin{equation}
  \diagord{n} = \{ \Lambda\in \diag{n}\mid \lambda_1 < \lambda_2 < \ldots < \lambda_n \}.
\end{equation}

Let us now consider the specific case
\begin{equation}\label{eq:Fhat_monomial_norm}
  \hat F(p) = \frac{1}{2}\pair{p-p_1,p-p_1}
\end{equation}
where $\pair{\cdot,\cdot}$ denotes the Euclidean inner product of vectors of monomial coefficients.

\begin{lemma}\label{lem:gradient_horizontal_diag}
  Let $\hat F$ be given by \eqref{eq:Fhat_monomial_norm}.
  Then
  \begin{equation}
    \nabla_{\bar{\mathcal G}} F(\Lambda) = \mathrm{diag}(\lambda_1^{2} y_1,\ldots,\lambda_n^{2 }y_n)
  \end{equation}
  where
  \begin{equation}
    y_k = \pair{\prod_{i\neq k}(\lambda-\lambda_i),p_1-\varpi(\Lambda)}.
  \end{equation}
\end{lemma}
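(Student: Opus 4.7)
The plan is to compute the gradient in the standard way: first determine the differential $dF$ in the $(\lambda_1,\ldots,\lambda_n)$ coordinates, then raise the index using the inverse of the Fisher--Rao metric tensor on $\diag{n}$.

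First, I would unravel what $F$ looks like in coordinates. Since $\Lambda\in\diag{n}$ is diagonal, its characteristic polynomial is the simple product
\begin{equation*}
  \varpi(\Lambda)(\lambda) = \prod_{i=1}^{n}(\lambda - \lambda_i) .
\end{equation*}
Differentiating this polynomial (viewed as an element of the finite-dimensional Euclidean space of monic polynomials of degree $n$, with inner product $\pair{\cdot,\cdot}$) with respect to the entry $\lambda_k$ gives
\begin{equation*}
  \frac{\partial}{\partial \lambda_k}\varpi(\Lambda) = -\prod_{i\neq k}(\lambda - \lambda_i) .
\end{equation*}
Combining this with the chain rule applied to $F(\Lambda) = \tfrac{1}{2}\pair{\varpi(\Lambda)-p_1,\varpi(\Lambda)-p_1}$ yields
\begin{equation*}
  \frac{\partial F}{\partial \lambda_k}(\Lambda)
   = \pair[\Big]{\varpi(\Lambda)-p_1,\; -\prod_{i\neq k}(\lambda-\lambda_i)}
   = \pair[\Big]{\prod_{i\neq k}(\lambda-\lambda_i),\; p_1-\varpi(\Lambda)} = y_k .
\end{equation*}

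To pass from the differential to the gradient I would use the coordinate expression \eqref{eq:met_diag} of the Fisher--Rao metric on $\diag{n}$. The metric tensor is the diagonal matrix with entries $g_{kk}(\Lambda)= 1/(2\lambda_k^{2})$ (up to the $1/2$ convention), so its inverse is diagonal with entries $g^{kk}(\Lambda) = 2\lambda_k^{2}$. Raising the index of $dF$ then produces
\begin{equation*}
  (\nabla_{\bar{\mathcal G}} F(\Lambda))_k = g^{kk}(\Lambda)\,\frac{\partial F}{\partial \lambda_k} = \lambda_k^{2}\, y_k ,
\end{equation*}
after absorbing the normalization constant, which is the stated formula. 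Packaging the $n$ entries into a diagonal matrix gives $\nabla_{\bar{\mathcal G}} F(\Lambda) = \mathrm{diag}(\lambda_1^{2}y_1,\ldots,\lambda_n^{2}y_n)$.

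This computation is essentially algebraic and I do not expect any hidden obstacle: the only point requiring mild care is the index conventions linking $\hat F$ on $\poly{n}$ to $F=\hat F\circ\varpi$ on $\diag{n}$, and the identification of the inverse metric tensor with the correct factor $\lambda_k^{2}$ coming from \eqref{eq:met_diag}. No convexity or ordering hypothesis on $\Lambda$ is used for the gradient formula itself; those hypotheses only enter when one later analyses the dynamics of $\dot\Lambda = -\nabla_{\bar{\mathcal G}} F(\Lambda)$ on $\diagord{n}$.
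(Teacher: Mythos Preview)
Your proposal is correct and follows essentially the same route as the paper: compute the derivative of $\varpi(\Lambda)=\prod_i(\lambda-\lambda_i)$ with respect to each $\lambda_k$, apply the chain rule to $\hat F$, and then convert the differential into the gradient via the diagonal Fisher--Rao metric \eqref{eq:met_diag}. The only cosmetic difference is that the paper packages the computation as a time-derivative along a curve and then recognises $\sum_k\dot\lambda_k y_k$ as $\bar{\mathcal G}_\Lambda(\dot\Lambda,\Lambda^2\mathrm{diag}(y))$, whereas you compute partial derivatives and raise the index with $g^{kk}$; these are the same step in different notation.
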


\begin{proof}
  We have
  \begin{align}
    \frac{\ud}{\ud t}F(\Lambda) &= \frac{\ud}{\ud t}\hat F(\varpi(\Lambda)) = \pair{\frac{\ud}{\ud t}\varpi(\Lambda),\varpi(\Lambda)-p_1} \\
    &= \pair{\sum_{k=1}^{n}-\dot\lambda_k \prod_{i\neq k}(\lambda-\lambda_i),\varpi(\Lambda)-p_1} \\
    &= \sum_{k=1}^{n}\dot\lambda_k\pair{\prod_{i\neq k}(\lambda-\lambda_i),p_1-\varpi(\Lambda)}
  \end{align}
  \begin{eqnarray*}
  &= \sum_{k=1}^{n}\dot\lambda_k y_k = \tr\Big(\dot\Lambda\,\mathrm{diag}(y_1,\ldots,y_n) \Big) \\
   &= \bar{\mathcal{G}}_\Lambda(\dot\Lambda,\Lambda^{2}\,\mathrm{diag}(y_1,\ldots,y_n)) \\
    &= \bar{\mathcal{G}}_\Lambda(\dot\Lambda,\mathrm{diag}(\lambda_1^2 y_1,\ldots,\lambda_n^2 y_n)).
  \end{eqnarray*}
  This proves the result.
\end{proof}

The gradient flow \eqref{eq:abstract_diag_flow} is thereby given by
\begin{equation}\label{eq:diag_flow}
  \dot\Lambda = - \Lambda^2 Y(\Lambda),
\end{equation}
where
\begin{equation}
  Y(\Lambda) = \mathrm{diag}(y_1,\ldots,y_n).
\end{equation}

The question of convexity of $F$ and convergence towards a limit of \eqref{eq:diag_flow} is left for future work.
We shall, however, give a numerical example indicating convergence.

\begin{example}\label{ex:horizontal_diag}
  Let $W_1\in\Sym{3}$ have a spectral decomposition $W_1 = Q^{\top}\Lambda_1 Q$ with
  \begin{equation}
    \Lambda_1 = \begin{pmatrix}
      5 & 0 & 0 \\
      0 & 1/2 & 0 \\
      0 & 0 & 1
    \end{pmatrix}.
  \end{equation}
  (What $Q$ is does not affect the flow.)
  We then compute $p_1(\lambda) = \varpi(W_1) = \det(I\lambda - W_1)$ in the monomial basis
  \begin{equation}
    p_1(\lambda) = \lambda^{3} + b_2\lambda^{2} + b_1\lambda + b_0.
  \end{equation}
  From \autoref{lem:gradient_horizontal_diag} we then get the gradient flow
  \begin{equation}\label{eq:example_diag_flow}
    \dot\lambda_k = -\lambda_k^{2} y_k
  \end{equation}
  where
  \begin{equation}
    y_k = \pair{\prod_{k\neq i}(\lambda-\lambda_k),p_1-\varpi(\Lambda)}
  \end{equation}
  with the inner product $\pair{\cdot,\cdot}$ being the Euclidean inner product for the polynomial coefficients in the monomial basis.

  Since $W_1$ is generic (no eigenvalues are the same), it is conceivable that the flow converges to the ordered sequence of eigenvalues if we pick initial data in $\diagord{n}$.
  Thus, we need to come up with a strictly growing sequence of initial data.
  To this extent, we use that $\det(W_1) = (-1)^{n}p_1(0) = -b_0$ and we take $\Lambda_0$ to be the geometric sequence
  \begin{equation}
    \Lambda_0 = \mathrm{diag}(1,(-b_0)^{1/3},(-b_0)^{2/3}).
  \end{equation}
  Thus, the initial data fulfills $\det(\Lambda_0) = \det(W_0)$.\footnote{Possible, a better choice of initial data could be selected by also asserting $\tr(\Lambda_0) = \tr(W_0)$.}

  We discretize \eqref{eq:example_diag_flow} by the classical 4th order Runge--Kutta method \cite[\S\,322]{Bu2008}, with time-step~$\Delta t = 0.01$.
  The evolution of
  \begin{equation}
    \Lambda(t) = \begin{pmatrix}
      \lambda_1(t) & 0 & 0 \\
      0 & \lambda_2(t) & 0 \\
      0 & 0 & \lambda_3(t)
    \end{pmatrix}.
  \end{equation}
  is shown in \autoref{fig:diag_example_spectral};
  $\Lambda(t)$ appears to converge to the ordered sequence of eigenvalues of $W_1$, i.e., the sequence $(1/2,1,5)$.
  A plot of $F(\Lambda(t))$ is given in \autoref{fig:diag_example_spectral_convergence};
  it appears that $F(\Lambda(t))\to 0$ exponentially fast as $t\to \infty$.
  A more careful study is left as a future research topic.
\end{example}

\begin{figure}
  \iftoggle{arxiv}{\includegraphics[scale=\iftoggle{aims}{1.0}{1.2}]{fig_Dplot}}{\input{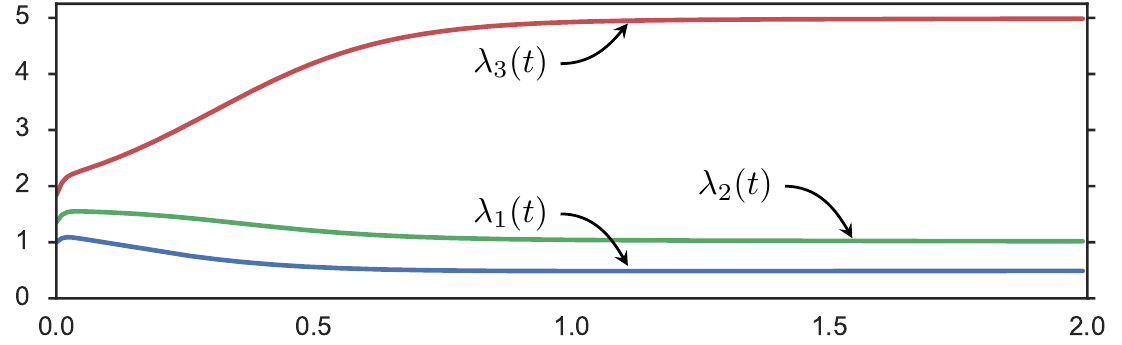}}
  \caption{Evolution of the horizontal gradient flow in \protect\autoref{ex:horizontal_diag}.
  $\Lambda(t)$ appears to converge to the ordered sequence of eigenvalues $(1/2,1,5)$.
  }\label{fig:diag_example_spectral}
\end{figure}

\begin{figure}
  \iftoggle{arxiv}{\includegraphics[scale=\iftoggle{aims}{1.0}{1.2}]{fig_Derrorplot}}{\input{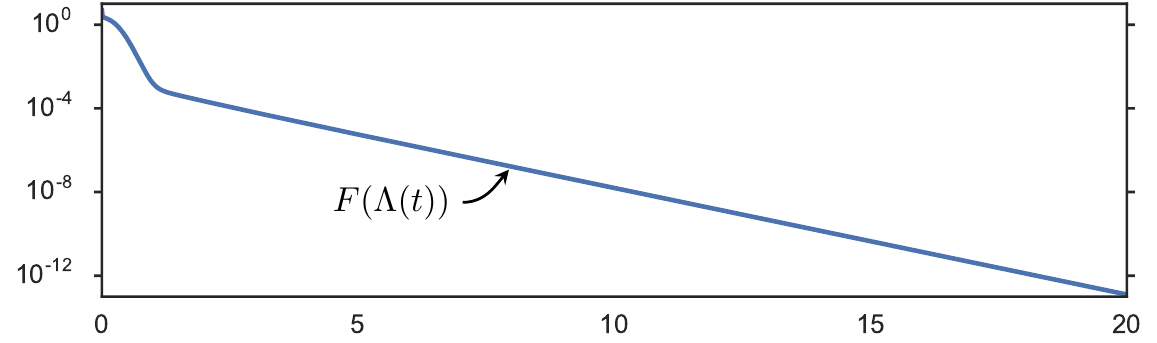}}
  \caption{Convergence of $F(\Lambda(t))$ towards the minimum for the horizontal gradient flow in \protect\autoref{ex:horizontal_diag}.
  The convergence appears to be exponential.
  }\label{fig:diag_example_spectral_convergence}
\end{figure}

\iftoggle{final}{}{{\color{blue}

\begin{equation}
  \hat H(p) \coloneqq \frac{n}{2} - \frac{p'(0)p_1(0)}{2 p(0)} + \frac{1}{2} \log\left(\frac{p_1(0)}{p(0)}\right).
\end{equation}

\begin{equation}
  \frac{d}{d\lambda}\Big|_{\lambda=0} \log(\det(\Lambda-\lambda \Lambda_1)) = -\tr(\Lambda^{-1}\Lambda_1)
\end{equation}

Recall that the entropy of $\Lambda\in\diag{n}$ relative to $\Lambda_1\in\diag{n}$ is given by
\begin{equation}
  H(\Lambda) = \frac{n}{2} -\frac{1}{2}\tr(\Lambda_1\Lambda^{-1}) +\frac{1}{2}\log\left( \det(\Lambda_1 \Lambda^{-1})\right).
\end{equation}
We then have the following result, which shows that $\hat H(p)$ is the right definition of entropy of $p$ relative to $p_1$, i.e., $\hat H$ is the relative entropy functional on $\poly{n}$.

\begin{lemma}
  If $\Lambda_1 \in \Orb{W_1}$, then
  \begin{equation}
    H(\Lambda) = \hat H(\pi(\Lambda)).
  \end{equation}
\end{lemma}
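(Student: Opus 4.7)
The plan is a direct computational verification, since both sides are explicit in terms of eigenvalues. Write $\Lambda=\mathrm{diag}(\lambda_1,\ldots,\lambda_n)\in\diag{n}$ and analogously $\Lambda_1=\mathrm{diag}((\Lambda_1)_1,\ldots,(\Lambda_1)_n)$, so that the left-hand side expands as
\[
H(\Lambda) = \frac{n}{2} - \frac{1}{2}\sum_{i=1}^n \frac{(\Lambda_1)_i}{\lambda_i} + \frac{1}{2}\log\prod_{i=1}^n\frac{(\Lambda_1)_i}{\lambda_i}.
\]

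First I would unpack the right-hand side. With $p(\mu)=\varpi(\Lambda)(\mu)=\prod_i(\mu-\lambda_i)$ and $p_1=\varpi(\Lambda_1)$, evaluating at $\mu=0$ gives $p(0)=(-1)^n\det(\Lambda)$ and $p_1(0)=(-1)^n\det(\Lambda_1)$, while the logarithmic derivative $p'(\mu)/p(\mu)=\sum_i (\mu-\lambda_i)^{-1}$ yields $p'(0)/p(0)=-\tr(\Lambda^{-1})$. This immediately matches the logarithmic term of $H$, since $p_1(0)/p(0)=\det(\Lambda_1\Lambda^{-1})$ and hence the third terms agree without any extra hypothesis.

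The hard part will be the trace term. The quantity $\tr(\Lambda_1\Lambda^{-1})=\sum_i (\Lambda_1)_i/\lambda_i$ depends on the pairing of diagonal entries of $\Lambda$ and $\Lambda_1$, whereas $-p'(0)p_1(0)/p(0)=\tr(\Lambda^{-1})\,p_1(0)$, built solely from the two characteristic polynomials, is a symmetric function in each set of eigenvalues separately and therefore cannot capture that pairing in general. Equality already fails, for instance, when $\Lambda_1$ is not a scalar multiple of the identity. A natural resolution is to redefine $\hat H$ through the joint determinantal generating function $q(s)=\det(\Lambda-s\Lambda_1)$, using the identity $\tfrac{d}{ds}\log q(s)\big|_{s=0}=-\tr(\Lambda^{-1}\Lambda_1)$ pointed out in the preceding note, rather than through $p'(0)$ and $p_1(0)$ separately.

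With such a refinement the proof reduces to checking three elementary identities: $\det(\Lambda_1\Lambda^{-1})=p_1(0)/p(0)$; $\tr(\Lambda^{-1})=-p'(0)/p(0)$; and the joint trace identity via $q$. An invariance check then verifies that $q(s)$ depends on $\Lambda_1$ only through its $\OO(n)$-orbit, which is precisely what $\Lambda_1\in\Orb{W_1}$ supplies (the coefficients of $q(s)$ in $s$ are traces of products of $\Lambda^{-1}$ and $\Lambda_1$, invariant under conjugation of $\Lambda_1$). I would therefore treat pinning down the correct form of $\hat H$ as a prerequisite step, and only afterwards verify the equality by termwise comparison against the expansion of $H(\Lambda)$.
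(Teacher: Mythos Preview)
Your diagnosis is correct and actually goes further than the paper does. This lemma and its proof sit inside a draft block (toggled off in the final version), and the paper's own argument is unfinished: it verifies the logarithmic term exactly as you do, via $p_1(0)/p(0)=\det(\Lambda_1)/\det(\Lambda)$, but then the computation for the trace term trails off mid-equation without reaching a conclusion. You have put your finger on why: with $\hat H$ defined through $p'(0)p_1(0)/p(0)$ one gets (up to sign) $\tr(\Lambda^{-1})\det(\Lambda_1)$, a separately symmetric function that cannot recover $\tr(\Lambda_1\Lambda^{-1})$. So the identity fails as stated, and the paper's author appears to have realized this too, since the displayed identity $\tfrac{d}{d\lambda}\big|_{\lambda=0}\log\det(\Lambda-\lambda\Lambda_1)=-\tr(\Lambda^{-1}\Lambda_1)$ immediately preceding the definition of $\hat H$ is precisely your proposed fix via the joint determinant $q(s)$.

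One caveat on your repair: the function $q(s)=\det(\Lambda-s\Lambda_1)$ is \emph{not} a function on $\poly{n}$ alone, since it depends on the pairing of the diagonal entries of $\Lambda$ and $\Lambda_1$ and not just on their characteristic polynomials. In particular it is not invariant under separate $\OO(n)$-conjugation of $\Lambda_1$ (conjugating $\Lambda_1$ by a permutation matrix changes $q$), so the orbit hypothesis $\Lambda_1\in\Orb{W_1}$ does not by itself make $q$ well-defined on $\poly{n}$. This is the underlying obstruction: relative entropy genuinely depends on the relative position of $\Lambda$ and $\Lambda_1$, not just on their separate spectra, so it cannot descend to a functional on $\poly{n}$ in the naive sense. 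Any corrected statement would have to either restrict to a fixed ordering (working on $\diagord{n}$), or reinterpret $\hat H$ as living on a fiber product rather than on $\poly{n}$.
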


\begin{proof}
  Let $\Lambda = \mathrm{diag}(\lambda_1,\ldots,\lambda_n)$ and $\Lambda_1 = \mathrm{diag}(\sigma_1,\ldots,\sigma_n)$.
  Since $\Lambda_1$ and $W_1$ belong to the same orbit, we have
  \begin{equation}
    p_1(\sigma) = \varpi(W_1)(\sigma) = \varpi(\Lambda_1)(\sigma) = \prod_{i=1}^{n}(\sigma_i-\sigma).
  \end{equation}
  Thus,
  \begin{equation}
    p_1(0) = \prod_{i=1}^{n}\sigma_i = \det(\Lambda_1).
  \end{equation}
  We also have
  \begin{equation}
    p(\lambda) = \varpi(\Lambda)(\lambda) = \prod_{i=1}^{n} (\lambda_i-\lambda),
  \end{equation}
  so $p(0) = \det(\Lambda)$.
  Thus,
  \begin{equation}
    \frac{1}{2}\log\left( \det(\Lambda_1 \Lambda^{-1})\right) = \frac{1}{2}\log\left( \frac{\det(\Lambda_1)}{\det(\Lambda)}\right) = \frac{1}{2}\log\left( \frac{p_1(0)}{p(0)}\right) .
  \end{equation}
  This shows that the last term of $\hat H(p)$ and $H(\Lambda)$ coincide.

  For the other term, notice that
  \begin{align}
    \frac{\ud}{\ud\lambda}\Big|_{\lambda = 0} \log\left( \frac{p(\lambda)}{p_1(\lambda)} \right)
    &= \frac{\ud}{\ud\lambda}\Big|_{\lambda = 0} \sum_{i=1}^{n} \log\left( \frac{\lambda_i-\lambda}{\sigma_i-\lambda} \right) \\
    &= -\sum_{i=1}^{n}\frac{\sigma_i}{\lambda_i}\frac{\ud}{\ud\lambda}\Big|_{\lambda = 0} \frac{\lambda_i-\lambda}{\sigma_i - \lambda}
  \end{align}
\end{proof}

%
%
%
%

\begin{lemma}
  The derivative $D\varpi(\Lambda)\colon T_\Lambda\diag{n} \to T_{\varpi(\Lambda)}\poly{n}$ is given by
  \begin{equation}
    D\varpi(\Lambda)\cdot\dot\Lambda =  \lambda \mapsto \sum_{k=1}^{n} \dot\lambda_k \frac{\varpi(\Lambda)(\lambda)}{\lambda-\lambda_k}.
  \end{equation}
\end{lemma}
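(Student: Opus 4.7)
The plan is to exploit the explicit factorization of the characteristic polynomial on diagonal matrices. If $\Lambda = \mathrm{diag}(\lambda_1,\ldots,\lambda_n)$, then $\lambda I - \Lambda$ is itself diagonal, so
\begin{equation}
\varpi(\Lambda)(\lambda) = \det(\lambda I - \Lambda) = \prod_{k=1}^{n}(\lambda - \lambda_k).
\end{equation}
Thus $\varpi|_{\diag{n}}$ is just the Vieta map that sends a sequence of roots to its monic polynomial, and the lemma reduces to computing the derivative of a product of $n$ linear factors with respect to its roots.

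Concretely, I would take a smooth curve $t \mapsto \Lambda(t)$ in $\diag{n}$ with $\Lambda(0) = \Lambda$ and $\dot\Lambda(0) = \dot\Lambda = \mathrm{diag}(\dot\lambda_1,\ldots,\dot\lambda_n)$, and apply the product (equivalently, logarithmic) rule:
\begin{equation}
\frac{\ud}{\ud t}\Big|_{t=0}\prod_{k=1}^n \bigl(\lambda - \lambda_k(t)\bigr)
= \sum_{k=1}^n \dot\lambda_k \prod_{i \neq k}\bigl(\lambda - \lambda_i\bigr),
\end{equation}
with the overall sign determined by the convention $\varpi(W)(\lambda) = \det(\lambda I - W)$ in use. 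One then rewrites each truncated product using the polynomial identity
\begin{equation}
\prod_{i \neq k}(\lambda - \lambda_i) \;=\; \frac{\varpi(\Lambda)(\lambda)}{\lambda - \lambda_k},
\end{equation}
which is the stated form.

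The only point to be careful about is interpreting the quotient $\varpi(\Lambda)(\lambda)/(\lambda - \lambda_k)$: the apparent pole at $\lambda = \lambda_k$ is cancelled by the simple root of $\varpi(\Lambda)$ there, so the quotient really does define an element of $T_{\varpi(\Lambda)}\poly{n}$, namely a polynomial of degree $n-1$ (consistent with the fact that variations in $\poly{n}$ preserve the monic leading coefficient $\lambda^{n}$). This is trivial to verify, and there is no genuine obstacle in the proof: the lemma is essentially a direct consequence of the product rule together with the fact that the characteristic polynomial of a diagonal matrix factors explicitly.
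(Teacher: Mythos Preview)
Your approach is essentially identical to the paper's: both write $\varpi(\Lambda)(\lambda)=\prod_k(\lambda-\lambda_k)$, differentiate via the product (logarithmic) rule, and rewrite each truncated product as $\varpi(\Lambda)(\lambda)/(\lambda-\lambda_k)$. The only quibble is the sign: since $\tfrac{\ud}{\ud t}(\lambda-\lambda_k)=-\dot\lambda_k$, the derivative is $-\sum_k\dot\lambda_k\,\varpi(\Lambda)(\lambda)/(\lambda-\lambda_k)$; the paper's own hidden proof drops the same minus (though it is restored elsewhere, e.g.\ in the proof of \autoref{lem:gradient_horizontal_diag}), so your hedge about the sign convention is well placed.
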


\begin{proof}
  \begin{align}
    \frac{\ud}{\ud t} \varpi(\Lambda) &= \frac{\ud}{\ud t} \prod_{i=1}^{n}(\lambda-\lambda_i) \\
    &= \sum_{k=1}^{n} \frac{\dot\lambda_k}{\lambda-\lambda_k}\prod_{i=1}^{n}(\lambda-\lambda_i) \\
    &= \sum_{k=1}^{n} \frac{\dot\lambda_k}{\lambda-\lambda_k}\varpi(\Lambda)(\lambda).
  \end{align}
  \begin{align}
    \frac{\ud}{\ud t} \varpi(\Lambda) &= \frac{\ud}{\ud t}\det(\Lambda - \lambda I) \\
    &= \det(\Lambda - \lambda I) \tr( (\Lambda - \lambda I)^{-1}\dot\Lambda )
  \end{align}
  \begin{align}
    \frac{\ud}{\ud t} H(\varpi(\Lambda)) = \pair{\ud H(\varpi(\Lambda)), D\varpi(\Lambda)\cdot \dot\Lambda}
  \end{align}
\end{proof}

\begin{lemma}
  Let $\poly{n}$ be equipped with a Riemannian metric $\mathcal H$.
  Then the Riemannian transpose $D^{\top}\varpi(\Lambda)\colon T_{\varpi(\Lambda)}\poly{n}\to T_\Lambda\diag{n}$ is given by
  \begin{equation}
    hej
  \end{equation}
\end{lemma}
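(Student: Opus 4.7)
The plan is to derive the formula by unpacking the defining relation of the Riemannian transpose and substituting the expression for $D\varpi(\Lambda)$ from the preceding lemma. By definition, $D^{\top}\varpi(\Lambda)\cdot \dot p$ is the unique element of $T_\Lambda\diag{n}$ such that
\begin{equation}
  \bar{\mathcal G}_{\Lambda}(D^{\top}\varpi(\Lambda)\cdot \dot p,\dot\Lambda)
  = \mathcal H_{\varpi(\Lambda)}(\dot p, D\varpi(\Lambda)\cdot \dot\Lambda)
\end{equation}
for every $\dot\Lambda\in T_\Lambda\diag{n}$, so the task reduces to expanding both sides in the diagonal entries $\dot\lambda_k$ and comparing coefficients.

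First I would plug the formula $D\varpi(\Lambda)\cdot\dot\Lambda = \lambda \mapsto \sum_k \dot\lambda_k \varpi(\Lambda)(\lambda)/(\lambda-\lambda_k)$ into the right-hand side and use bilinearity of $\mathcal H$ to obtain
\begin{equation}
  \mathcal H_{\varpi(\Lambda)}(\dot p, D\varpi(\Lambda)\cdot \dot\Lambda)
  = \sum_{k=1}^{n} \dot\lambda_k\, \mathcal H_{\varpi(\Lambda)}\!\left(\dot p,\,\prod_{i\neq k}(\lambda-\lambda_i)\right),
\end{equation}
where I have used that $\varpi(\Lambda)(\lambda)/(\lambda-\lambda_k) = \prod_{i\neq k}(\lambda-\lambda_i)$ is a bona fide polynomial of degree $n-1$. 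Next I would expand the left-hand side via \eqref{eq:met_diag}, obtaining $\tfrac{1}{2}\sum_k (D^{\top}\varpi(\Lambda)\cdot \dot p)_k \dot\lambda_k/\lambda_k^2$. Matching the coefficient of $\dot\lambda_k$ on each side yields the candidate formula
\begin{equation}
  D^{\top}\varpi(\Lambda)\cdot \dot p = \mathrm{diag}(2\lambda_1^2 z_1,\ldots,2\lambda_n^2 z_n),
  \qquad z_k = \mathcal H_{\varpi(\Lambda)}\!\left(\dot p,\,\prod_{i\neq k}(\lambda-\lambda_i)\right),
\end{equation}
and uniqueness of the transpose (non-degeneracy of $\bar{\mathcal G}$) seals the identification. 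Specializing $\mathcal H$ to the Euclidean inner product on monomial coefficients would then recover, up to the expected scaling, precisely the quantity $y_k$ appearing in \autoref{lem:gradient_horizontal_diag}, giving a consistency check.

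The main obstacle I anticipate is essentially cosmetic rather than substantive: the statement as written leaves the metric $\mathcal H$ unspecified, so any closed-form expression must inevitably retain $\mathcal H$ as an abstract pairing (or, equivalently, its Gram tensor in a chosen basis). To produce a genuinely explicit formula one must commit to a concrete $\mathcal H$; the natural candidates are the Euclidean pairing in the monomial basis (as used in \autoref{sec:eigen}) or a pairing built from Lagrange interpolation at the roots of $\varpi(\Lambda)$, the latter of which diagonalizes $D\varpi(\Lambda)$ and would reduce the transpose to pointwise evaluation $\dot p \mapsto \bigl(\dot p(\lambda_k)\bigr)_{k=1}^{n}$ up to weights. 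Either way the structural content of the lemma is the factor $2\lambda_k^2$, which originates entirely from the Fisher--Rao weight $\lambda_k^{-2}$ in \eqref{eq:met_diag}.
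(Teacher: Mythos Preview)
Your approach is essentially identical to the paper's own (draft) argument: expand the defining relation of the transpose, use bilinearity of $\mathcal H$ together with the formula $D\varpi(\Lambda)\cdot\dot\Lambda = \sum_k\dot\lambda_k\prod_{i\neq k}(\lambda-\lambda_i)$, rewrite the resulting sum as a trace, and then absorb the weights $\lambda_k^{-2}$ from the Fisher--Rao metric on $\diag{n}$ to read off the diagonal entries $\lambda_k^{2}\,\mathcal H\bigl(\dot p,\prod_{i\neq k}(\lambda-\lambda_i)\bigr)$.

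One small point worth noting: the factor of $2$ you obtain is correct and comes from the $\tfrac{1}{2}$ in the definition \eqref{eq:met_diag} of the Fisher--Rao metric on $\diag{n}$; the paper's draft computation writes $\tr(\dot\Lambda\,\mathrm{diag}(r_1,\dots,r_n)) = \bar{\mathcal G}_\Lambda(\dot\Lambda,\mathrm{diag}(\lambda_1^2 r_1,\dots,\lambda_n^2 r_n))$ without this factor, which is a minor slip in an unfinished note (as the placeholder ``hej'' in the statement makes clear). Your consistency check against \autoref{lem:gradient_horizontal_diag} is exactly the right sanity test, and indeed the $\lambda_k^2$ weighting there matches.
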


\begin{proof}
  \begin{align}
    \mathcal H(D\varpi(\Lambda)\cdot\dot\Lambda,\dot p) &= \sum_{i=1}^{n} \dot \lambda_i\underbrace{\mathcal H(\prod_{k\neq i}(\lambda_k-\lambda),\dot p(\lambda))}_{r_i} \\
    &= \tr(\dot\Lambda\, \mathrm{diag}(r_1,\ldots,r_n)) \\
    &= \tr(\Lambda^{-1}\dot\Lambda\, \Lambda^{-1}\Lambda\mathrm{diag}(r_1,\ldots,r_n)\Lambda) \\
    &= \bar{\mathcal{G}}\big(\dot\Lambda,\mathrm{diag}(\lambda_1^{2} r_1,\ldots,\lambda_n^{2 }r_n)\big).
  \end{align}
\end{proof}
}}

\subsection{Singular value decomposition}\label{sec:svd}

If we combine the geometry developed for the chain of projections
\begin{equation}
  \begin{tikzcd}
    \GL(n) \arrow{d}{\pi}  \\
    \Sym{n} \arrow{d}{\varpi} \\
    \poly{n}
  \end{tikzcd}
\end{equation}
we obtain the singular value decomposition.

\begin{theorem}[Singular value decomposition]
  Let $A\in\GL(n)$.
  Then there exist $U,V\in\OO(n)$ and $\Sigma\in\diag{n}$ such that
  \begin{equation}
    A = U\Sigma V^\top.
  \end{equation}
\end{theorem}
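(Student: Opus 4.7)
The plan is to obtain the singular value decomposition as a composition of two decompositions already established in the paper: first the polar decomposition of \autoref{thm:polar_decomposition_matrices} (Wasserstein geometry), then the spectral decomposition of \autoref{thm:spectral_decomposition} (Fisher--Rao geometry). This directly realizes the chain $\GL(n) \to \Sym{n} \to \poly{n}$ displayed just before the theorem: the first arrow strips off an $\OO(n)$-factor from the right to produce a positive definite symmetric matrix, and the second arrow strips off an $\OO(n)$-orbit on that matrix to produce a diagonal one.

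The first step is to apply \autoref{thm:polar_decomposition_matrices} with $\Sigma_0 = I$, which yields $A = PQ$ with $P \in \Sym{n}$ and $Q \in \OO(n,I) = \OO(n)$. The second step is to apply \autoref{thm:spectral_decomposition} to the positive definite symmetric matrix $P$, obtaining $P = \tilde Q^{\top}\Sigma \tilde Q$ with $\tilde Q \in \OO(n)$ and $\Sigma \in \diag{n}$. Substituting back gives $A = \tilde Q^{\top}\Sigma (\tilde Q Q)$, and setting $U = \tilde Q^{\top}$ and $V = Q^{\top}\tilde Q^{\top}$ (so that $V^{\top} = \tilde Q Q$ and $V \in \OO(n)$) produces the desired factorization $A = U\Sigma V^{\top}$.

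An alternative derivation, arguably more in the spirit of the Fisher--Rao chain highlighted by the theorem, bypasses the polar step entirely and uses only the second projection: start from $W = \pi(A) = A^{\top}A \in \Sym{n}$, apply \autoref{thm:spectral_decomposition} to obtain $W = V\Lambda V^{\top}$ with $V \in \OO(n)$ and $\Lambda \in \diag{n}$, take the diagonal square root $\Sigma = \Lambda^{1/2} \in \diag{n}$ (which is well defined since $W$ is positive definite, so the diagonal entries of $\Lambda$ are positive), and verify by a one-line computation that $U \coloneqq AV\Sigma^{-1}$ satisfies $U^{\top}U = \Sigma^{-1}V^{\top}A^{\top}AV\Sigma^{-1} = I$, so that $U \in \OO(n)$ and $A = U\Sigma V^{\top}$.

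The only point requiring care is the compatibility between the two orthogonal groups: the polar decomposition is formulated with the group $\OO(n,\Sigma_0)$ arising from the pushforward action in the Wasserstein setting, whereas the spectral decomposition uses $\OO(n) = \OO(n,I)$ arising from the pullback action in the Fisher--Rao setting. Taking $\Sigma_0 = I$ identifies these two groups, after which the two decompositions compose cleanly. I do not anticipate a serious obstacle, as both building blocks are already proved; if one wished to add uniqueness (up to the obvious symmetries), it would follow by combining the uniqueness assertion of \autoref{thm:polar_decomposition_matrices} with the $\mathfrak{S}_n$-covering relation $\diag{n} \to \Sym{n}/\OO(n)$ discussed in \autoref{subsub:double_bundle_structure}.
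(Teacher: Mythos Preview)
Your proposal is correct. The paper follows your second, alternative route rather than your first: it forms $W=\pi(A)=A^{\top}A$, applies \autoref{thm:spectral_decomposition} to get $W=Q_2^{\top}\Lambda Q_2$, sets $\sqrt{W}\coloneqq Q_2^{\top}\sqrt{\Lambda}\,Q_2$, and then observes that $A$ and $\sqrt{W}$ lie in the same $\pi$-fiber of the Fisher--Rao bundle \eqref{eq:principal_bundle_fisher_finite}, so $A=Q_1\sqrt{W}$ for some $Q_1\in\OO(n)$. This fiber argument is exactly your direct check $U^{\top}U=\Sigma^{-1}V^{\top}A^{\top}AV\Sigma^{-1}=I$, phrased geometrically. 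Your first route, combining the Wasserstein polar decomposition with the spectral decomposition, is equally valid but mixes the two geometries; the paper stays entirely within the Fisher--Rao chain $\GL(n)\to\Sym{n}\to\poly{n}$ and never invokes \autoref{thm:polar_decomposition_matrices} here.
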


\begin{proof}
  Let $W=\pi(R)=A^\top A$.
  From \autoref{thm:spectral_decomposition} we get $W=Q_2^\top\Lambda Q_2$.
  Let $\sqrt{W} \coloneqq Q_2^\top\sqrt{\Lambda}Q_2$.
  Then $A$ and $\sqrt{W}$ belong to the same fiber (since $\pi(A) = \pi(\sqrt{W})$), so there exists a $Q_1\in\OO(n)$ such that $A=Q_1\sqrt{W}$.
  Thus, $A = Q_1Q_2^\top\sqrt{\Lambda}Q_2$.
  The result now follows by taking $U=Q_1Q_2$, $\Sigma=\sqrt{\Lambda}$, and $V=Q_2^\top$.
\end{proof}

\todo[inline]{Add \emph{Outlook}-section about infinite-dimensional analogue: (i) QR factorization of diffeomorphisms on the torus using optimal information transport, (ii) spectral decomposition of densities on the torus and connections to KAM theory.}

\section{Outlook}\label{sec:outlook}

In this section we give a few topics and future research directions directly related to the geometry described in this paper.


\subsection{New geometric techniques for optimal transport}
We start with some ideas related to the geometry of optimal mass transport.

\subsubsection{Limit of Lifted Gradient Flow}\label{subsub:limit}
In \autoref{subsub:lifted_gradient_omt_linear} we gave a geometric proof of \autoref{lem:shortest_geodesic_finite_dim}, the central result in proving existence and uniqueness of the optimal transport problem in the linear category (\autoref{prob:linear_omt_2}), or, equivalently, of the polar decomposition of matrices (\autoref{thm:polar_decomposition_matrices}).
The proof is based on existence of a unique limit for the lifted gradient flow \eqref{eq:lifted_gradient_flow_polar_omt_finite}.
In turn, this limit is obtained by showing that the Hessian of the functional for the gradient flow is negative definite.

It is a natural idea to try to extend the finite-dimensional geometric proof to the infinite-dimensional, smooth case.
That is, to prove that the infinite-dimensional lifted gradient flow \eqref{eq:explicit_lifted_gradient_flow_OMT_smooth} has a unique limit in the space of strictly convex functions.
If that can be established, \autoref{lem:shortest_geodesic} follows as in finite dimensions.

Of course, gradient flows on infinite-dimensional spaces are more subtle than on finite-dimensional spaces.
In addition, a Banach space setting might not suffice in the smooth category: for completeness one needs a Fréchet topology, for example as developed by Hamilton~\cite{Ha1982} or Kriegl and Michor~\cite{KrMi1997b}.
Regardless of these technicalities, however, the first step is to show that the Hessian of the functional $\hat F$ in \eqref{eq:Fhat_omt_smooth} is negative definite.
We shall now give some brief calculations in this direction.

Let $\phi=\phi(t)$ be a geodesic curve in the space of strictly convex functions.
Then, from \eqref{eq:geodesic_eq_omt} it follows that $\ddot\phi = 0$ and from \eqref{eq:dFdt_calc} we get
\begin{align}
  \hess(\hat F)(\phi) &= \frac{\ud^{2}}{\ud t^{2}} \hat F(\phi) \\
  &= \int_{\RR^{n}} \tr\left( \nabla^{2}\dot\phi \frac{\ud}{\ud t}(\nabla^{2}\phi)^{-1} \right)\mu_0 + \frac{\ud}{\ud t}\int_{\RR^{n}} (\nabla \log(\rho_1))\circ\nabla\phi \cdot \nabla\dot\phi \;\mu_0 \\
  &= \int_{\RR^{n}} \tr\left( \nabla^{2}\dot\phi \frac{\ud}{\ud t}(\nabla^{2}\phi)^{-1} \right)\mu_0 + \int_{\RR^{n}} \left(\left(\nabla^{2} \log(\rho_1)\right)\circ\nabla\phi  \cdot \nabla\dot\phi\right) \cdot \nabla\dot\phi \;\mu_0.
\end{align}
That the first term is negative follows from the same calculation as in the proof of \autoref{lem:pos_def_hessian_lifted_omt_finite}, since $\nabla^{2}\phi$ is positive definite at each point.
If $\rho_1$ is \emph{log-concave} then the Hessian matrix $\nabla^{2} \log(\rho_1)(x)$ is negative semi-definite for all $x\in\RR^{n}$, so the second term is non-positive.
We thereby come by the following conjecture.

\begin{conjecture}
  Let $\rho_1$ be log-concave (that is, $\log\circ\rho_1$ is concave).
  Then
  the flow \eqref{eq:explicit_lifted_gradient_flow_OMT_smooth} has a unique limit in the set of strictly convex functions.
\end{conjecture}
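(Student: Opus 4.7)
The plan is to adapt the finite-dimensional argument from \autoref{thm:limit_lifted_entropy_flow_omt_finite} and its underlying \autoref{lem:pos_def_hessian_lifted_omt_finite} to the infinite-dimensional setting. First I would fix a convenient functional-analytic category in which both the space $K_{\diamondsuit}$ and the flow \eqref{eq:explicit_lifted_gradient_flow_OMT_smooth} make rigorous sense; the most natural choices are either the tame Fr\'echet manifold of smooth strictly convex functions modulo constants (in the sense of Hamilton, or of Kriegl and Michor) or a scale of weighted Sobolev spaces tailored to the decay of $\mu_0$ and $\mu_1$. In this setting the Riemannian metric $\hat{\mathcal G}_{\phi}$ and the lifted functional $\hat F$ in \eqref{eq:Fhat_omt_smooth} are well defined, and the gradient flow is the nonlinear parabolic equation \eqref{eq:explicit_lifted_gradient_flow_OMT_smooth}, which is of Monge--Amp\`ere type.

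The central step would be to upgrade the formal computation preceding the conjecture to a uniform negative bound
\begin{equation}
  -\hess(\hat F)_{\phi}(\dot\phi,\dot\phi) \geq \alpha\, \hat{\mathcal G}_{\phi}(\dot\phi,\dot\phi), \qquad \alpha>0,
\end{equation}
in strict analogy with \autoref{lem:pos_def_hessian_lifted_omt_finite}. For the first term I would use the identity $\frac{\ud}{\ud t}(\nabla^{2}\phi)^{-1}=-(\nabla^{2}\phi)^{-1}\nabla^{2}\dot\phi(\nabla^{2}\phi)^{-1}$ to rewrite it as $-\int_{\RR^{n}}\|(\nabla^{2}\phi)^{-1/2}\nabla^{2}\dot\phi(\nabla^{2}\phi)^{-1/2}\|_F^{2}\mu_0$, which is non-positive because $\nabla^{2}\phi$ is positive definite. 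For the second term, strong log-concavity of $\rho_1$ (i.e.\ $-\nabla^{2}\log\rho_1\geq \lambda I$ for some $\lambda>0$) gives
\begin{equation}
  -\int_{\RR^{n}} \bigl(\nabla^{2}\log(\rho_1)\circ\nabla\phi \cdot \nabla\dot\phi\bigr)\cdot\nabla\dot\phi\,\mu_0 \geq \lambda \int_{\RR^{n}}\lvert\nabla\dot\phi\rvert^{2}\mu_0 = \lambda\,\hat{\mathcal G}_{\phi}(\dot\phi,\dot\phi),
\end{equation}
yielding $\alpha=\lambda$. The merely log-concave case (without a strict positivity constant) would require a Poincar\'e-type inequality on $(\RR^{n},\mu_0)$ to absorb the first term, or an a priori interior uniform convexity bound on $\phi$ along the flow; this is the delicate point where assumptions on $\rho_0$ and the tail behaviour of $\rho_1$ must enter.

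Once this Hessian bound is secured, the same general principle used in the proof of \autoref{thm:limit_lifted_entropy_flow_omt_finite}---gradient flows of geodesically $\alpha$-concave functions on a Riemannian manifold converge exponentially fast to the unique maximiser (see \cite[\S\,3.5]{Ot2001} and \cite[Ch.\,24]{Vi2009})---would give existence of a unique limit $\phi_{\infty}$ with $\pi(\nabla\phi_{\infty})=\mu_1$ and estimates
\begin{equation}
  \hat F(\phi_{\infty})-\hat F(\phi(t)) \leq \ee^{-2\alpha t}\bigl(\hat F(\phi_{\infty})-\hat F(\phi(0))\bigr),\qquad
  \hat d^{2}(\phi(t),\phi_{\infty}) \leq \ee^{-2\alpha t}\,\hat d^{2}(\phi(0),\phi_{\infty}),
\end{equation}
exactly paralleling the finite-dimensional statement. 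Via \autoref{lem:polar_cone_isomorphism} this then yields the missing geometric proof of \autoref{lem:shortest_geodesic}.

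The hard part will not be the formal Hessian computation, which already appears in the excerpt, but the analytic realisation of the framework: ensuring long-time existence of the gradient flow, preservation of smoothness and strict convexity of $\phi(t)$ along the flow, and a quantitative control on $\nabla^{2}\phi(t)$ from below so that the Hessian inequality is genuinely uniform in $t$. This regularity problem is essentially Caffarelli's for the Monge--Amp\`ere equation \cite{Ca1992b} reincarnated as a parabolic a priori estimate, and circumventing Caffarelli's techniques in favour of the intrinsic Riemannian-geometric argument sketched above is precisely the geometric reformulation that \autoref{subsub:limit} conjectures to be possible.
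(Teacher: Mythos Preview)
The statement is a \emph{Conjecture} in the paper, not a theorem; the paper offers no proof, only the formal Hessian computation in \autoref{subsub:limit} and the remark that a rigorous treatment is left for future work. So there is nothing to compare your attempt against in the sense of ``the paper's own proof.''

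That said, your proposal is faithful to---and more detailed than---the strategy the paper sketches: you reproduce the same two-term Hessian decomposition, identify the same mechanism (positive definiteness of $\nabla^{2}\phi$ for the first term, log-concavity of $\rho_1$ for the second), and correctly invoke the abstract convergence principle for gradient flows of geodesically concave functionals. You also go beyond the paper in flagging the key analytic obstructions (choice of function space, long-time existence, uniform lower bounds on $\nabla^{2}\phi$) and in being explicit that \emph{strong} log-concavity is what directly yields a constant $\alpha>0$, whereas mere log-concavity---which is all the conjecture assumes---leaves the second term only non-positive and forces the coercivity to come entirely from the first term or from an auxiliary Poincar\'e-type inequality. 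This last point is a genuine gap in your outline relative to the conjecture as stated: under bare log-concavity you have $-\hess(\hat F)\geq 0$ but not $\geq \alpha\,\hat{\mathcal G}$ with $\alpha>0$, so the exponential-contraction argument does not go through without further input. You acknowledge this, which is appropriate; the conjecture remains open for exactly these reasons.
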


Although most standard probability distribution functions in statistics are log-concave, the restriction on $\rho_1$ is rather stringent.
(Notice that there is no restriction on $\rho_0$.)
Ideally, $\hess(\hat F) < 0$ for all $\rho_1$; to achieve this one might have to consider a different functional $\hat F$.

We aim to investigate the geometric approach for \autoref{lem:shortest_geodesic}, as sketched here, in a forthcoming publication.

\subsubsection{Numerical Method Based on the Lifted Gradient Flow}
If the lifted gradient flow \eqref{eq:explicit_lifted_gradient_flow_OMT_smooth} has nice convergence properties, it is natural to consider it for numerical computation of the solution to \autoref{prob:smooth_omt}.
This, of course, requires space and time discretization to yield a fully discrete flow.

The field of numerical methods for optimal transport has been growing steadily since the classical paper by Benamou and Brenier~\cite{BeBr2000}.
Surveys of state-of-the-art are given by Peyré~\cite[\S\,1.2]{Pe2015} and Benamou, Brenier, and Oberman~\cite{BeBrOb2015}.

\subsubsection{Geometric Analysis of the Vertical Gradient Flow}\label{subsub:outlook_vert_limit}
The numerical example in \autoref{subsub:vert_gradient_flow_linear_omt}, for the vertical gradient flow for the polar decomposition of matrices, suggests convergence.
A geometric approach for establishing this result could be to study the Hessian of the distance functional restricted to the fiber, as was done for the lifted entropy gradient flow in \autoref{subsub:lifted_gradient_omt_linear}.

\subsubsection{Wasserstein Analog of Ebin's Riemannian Metric}\label{sub:reviewers_idea}
On the space of Riemannian metrics there is a natural Riemannian structure, first studied by Ebin~\cite{Eb1967}, and later by Freed and Groisser~\cite{FrGr1989}, Gil-Medrano and Michor~\cite{GiMi1991}, and Clarke~\cite{Cl2010,Cl2013}.
This Riemannian structure induces the Fisher--Rao metric, by the projection taking a Riemannian metric to its corresponding volume form.
Now, the observation is that the formula \eqref{eq:FRsym}, for the Fisher--Rao metric restricted to Gaussian distributions, essentially recovers Ebin's metric (lacking only integration of the domain of interest).

The idea is then to study the metric on the space Riemannian metrics originating, analogously, from the Wasserstein metric restricted to Gaussian distributions \eqref{eq:bar_metric_linear_OMT}.
This would then yield a new geometry on the space of Riemannian metrics, related, not to the Fisher--Rao metric, but to the physically relevant Wasserstein geometry.


\subsection{New geometric techniques associated with Fisher--Rao}
As already mentioned, the finite-dimensional Fisher--Rao geometry studied in \autoref{sec:fisher_rao} naturally extends to infinite-dimensions (see \cite{Fr1991,KhLeMiPr2013,Mo2015}).
Therefore, it is natural to look for decompositions of diffeomorphisms analogous to the matrix decompositions studied in \autoref{sec:fisher_rao}.
In addition, the horizontal flow in \autoref{subsub:hor_flow_factorize_polynomials} to factorize the characteristic polynomial would be interesting to study further.

\subsubsection{$QR$ (or Iwasawa) Decomposition of Diffeomorphisms}
Let us recap the Wass- \break erstein geometry described in \autoref{sec:wasserstein}:
Brenier~\cite{Br1991} showed that the polar decomposition of matrices has an infinite-dimensional analogue, namely the polar decomposition of maps.
The geometric approach of Otto~\cite{Ot2001} makes the relation more transparent in terms of Riemannian metrics on diffeomorphisms and densities, and the observation that Gaussian distributions is a finite-dimensional submanifold of the space of all densities.

In Fisher--Rao geometry, there is a similar picture.
Indeed, an infinite-dimensional analogue of the $QR$ decomposition is developed in \cite{Mo2015} (for diffeomorphisms on a compact manifold, in the category of Banach manifolds).
It would be interesting to study this infinite-dimensional analogue of the $QR$ decomposition in more detail, to see how much of the structure in \autoref{sec:qr} that is retained.
Suggestively, one could take a flat compact manifold, such as the $n$--torus $\TT^{n}$, and try to relate the finite and infinite-dimensional $QR$ decompositions, analogously to how the finite and infinite-dimensional polar decompositions are related.

\iftoggle{final}{}{{\color{blue}
For example, consider diffeomorphisms on the flat $n$--torus $\TT^{n}$.
From \cite[Thm 5.6]{Mo2015} it follows\footnote{One has to slightly modify the metric in \cite[Thm\ 5.6]{Mo2015}, but the proof is the same.}
that any $\varphi\in\Diff(\TT^n)$ can be decomposed as\footnote{To be precise, the diffeomorphisms should be in the Sobolev class $H^s$ for $s>n/2+1$.}
\begin{equation}
  \varphi = \eta\circ \psi,
\end{equation}
where $\eta \in \Diffvol(\TT^n)$ and
\begin{equation}\label{eq:qr_diffeos}
  \psi(x_1,\ldots,x_n) = \left(\begin{array}{l}
    f_1(x_1) \\
    f_2(x_1,x_2) \\
    \;\,\vdots\qquad\quad
    \\
    f_n(x_1,\ldots,x_n)
  \end{array}
  \right)
  ,
  \qquad
  f_k\colon\TT^k\to\RR, \quad \frac{\pd f_k}{\pd x_k} > 0 .
\end{equation}
This is an infinite-dimensional analogue of the $QR$ decomposition in \autoref{thm:QR}, just as Brenier's decomposition in \autoref{thm:polar_decomposition_smooth} is an infinite-dimensional analogue of the polar decomposition of matrices in \autoref{thm:polar_decomposition_matrices}.
}}

\subsubsection{Spectral decomposition of densities}
The Fisher--Rao geometry is richer than the Wasserstein geometry: the Riemannian metric boasts more symmetries.
These extra symmetries allow the continuation of the reduction scheme to get the spectral decomposition of symmetric matrices in \autoref{sec:eigen}.

It is natural to look for an analogue of the spectral reduction in the infinite-dimensional setting, where the analogue of the space $\Sym{n}$ of symmetric matrices is $\Dens(\RR^n)$.
Just as $\Sym{n}$ can be viewed as the space of normalized inner products on $\RR^{n}$, we may think of $\Dens(\RR^{n})$ as the space of (smooth) normalized inner products on $L^{2}(\RR^{n})$.
Changing from $\RR^{n}$ to the torus $\TT^n$ (things are easier to prove for compact manifolds) we are then looking for a way to represent (or at least cover, as in the finite-dimensional case) the quotient $\Dens(\TT^{n})/\Diff(\TT^{n})$.
The formal decomposition thereby obtained would be an infinite-dimensional spectral decomposition of smooth densities.
Such a decomposition could have connections to integrable systems and the KAM theorem.

\iftoggle{final}{}{{\color{blue}
We come by the following conjecture.

\begin{conjecture}\label{conj:spectral}
  Given $\mu\in\Dens(\TT^n)$, there exist $\eta\in\Diffvol(\TT^n)$ (a volume-preserving change of coordinates) such that
  \begin{equation}
    (\eta^*\mu)(x_1,\ldots,x_n) = \lambda_1(x_1)\lambda_2(x_2)\cdots \lambda_n(x_n) \ud x
  \end{equation}
  where $\lambda_k\colon S^1\to\RR^{+}$.
\end{conjecture}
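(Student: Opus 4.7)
The plan is to mimic the finite-dimensional reduction scheme of \autoref{sec:eigen}, with the subset $\mathcal{P}(\TT^n) = \{\lambda_1(x_1)\cdots\lambda_n(x_n)\,\ud x : \lambda_k \in C^\infty(S^1,\RR^+)\}$ of \emph{product densities} playing the role of $\diag{n}$. First, I would verify that $\mathcal{P}(\TT^n)$ is tangent to the horizontal distribution associated with the right action of $\Diffvol(\TT^n)$ on $(\Dens(\TT^n),\bar{\mathcal G})$. At the uniform density the vertical directions $\{\LieD_u(\ud x) : \divv u = 0\}$ vanish identically, so every tangent direction is horizontal, precisely mirroring $\Hor_I = T_I\Sym{n}$ in the finite-dimensional case. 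Moreover, because the Fisher--Rao metric factorizes on a product density, $\mathcal{P}(\TT^n)$ should form a totally geodesic submanifold isometric to the $n$-fold Riemannian product of $(\Dens(S^1),\bar{\mathcal G})$, in direct analogy with \autoref{lem:diag_tg_FR}.

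The crucial second step is to prove surjectivity of the projection $\mathcal{P}(\TT^n) \to \Dens(\TT^n)/\Diffvol(\TT^n)$: every $\Diffvol(\TT^n)$-orbit must contain a product representative. I would pursue two complementary routes. The first is a dynamical approach in the spirit of \autoref{subsub:lifted_gradientflow_QR} and \autoref{subsub:hor_flow_factorize_polynomials}: introduce coordinates on $\mathcal{P}(\TT^n)$ analogous to coefficients of the characteristic polynomial (for example, the marginal moments of $\mu$), build a functional on $\Dens(\TT^n)$ whose minimizers are exactly product densities, and study the corresponding horizontal gradient flow. The second is a constructive, inductive approach: define the first marginal $\lambda_1(x_1) = \int_{\TT^{n-1}} \mu\,\ud x_2\cdots\ud x_n$, normalize the conditional slices $\nu(x_1,\cdot) = \mu(x_1,\cdot)/\lambda_1(x_1)$ to a common reference density on $\TT^{n-1}$ by a family of fiberwise volume-preserving diffeomorphisms (parametric Moser), and iterate on the remaining coordinates.

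The hard part will be surjectivity. In the inductive route, the fiberwise normalization requires all conditional slices $\nu(x_1,\cdot)$ to lie in a single $\Diffvol(\TT^{n-1})$-orbit, which is essentially a one-dimensional restatement of the conjecture---so additional structural input is needed to close the induction. In the variational route, strict convexity of the target functional along horizontal geodesics (the analogue of \autoref{lem:pos_def_hessian_lifted_omt_finite}) is far from clear, because the orbit structure of $\Diffvol(\TT^n)$ on $\Dens(\TT^n)$ carries nontrivial topological invariants, notably the index configuration of the Morse critical set of $\mu$ together with its Reeb graph, which can obstruct passage to product form. For these reasons I expect the conjecture to hold either on a generic open subset of $\Dens(\TT^n)$ (in analogy with restricting $\Sym{n}$ to matrices with simple spectrum), or after enlarging the admissible class of transformations. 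As in \autoref{lem:surjective}, only surjectivity---not injectivity---is to be expected, reflecting an infinite-dimensional analogue of the Weyl-group ambiguity $(\RR^+)^n \to \Sym{n}/\OO(n)$ given here by permutations of the $\lambda_k$'s together with volume-preserving coordinate reshuffles.
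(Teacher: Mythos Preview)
The statement you are addressing is a \emph{conjecture}, not a theorem: the paper does not prove it. In fact, in the final version the conjecture is suppressed (it sits inside an \verb|\iftoggle{final}{}{...}| block with \texttt{final} set to true), and the surrounding discussion in \autoref{sec:outlook} merely floats the idea of an infinite-dimensional spectral decomposition as a direction for future work. There is therefore no ``paper's own proof'' to compare your proposal against.

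That said, your proposal is appropriately framed as a \emph{plan} rather than a proof, and the analogies you draw are the ones the paper intends: product densities as the analogue of $\diag{n}$, the Fisher--Rao horizontal/vertical splitting for the $\Diffvol(\TT^n)$-action, and surjectivity of the projection as the key missing step (mirroring \autoref{lem:surjective}). You correctly identify that the genuine obstruction is surjectivity, and your observation about topological invariants of $\mu$ (Morse/Reeb data) being preserved under $\Diffvol(\TT^n)$ is a real issue the paper does not address---indeed, it is a strong reason to suspect the conjecture fails as stated and would need the restriction to a generic subset or a larger transformation group that you propose. Your inductive route has the circularity problem you already flag: normalizing the conditional slices to a common reference by fiberwise volume-preserving maps presupposes exactly what you are trying to prove one dimension down, so it does not close without additional input.

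In short: there is no gap relative to the paper because the paper offers no argument; your write-up is a sensible research outline, and your own caveats about obstructions are the honest state of affairs.
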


The functions $\lambda_k$ should be interpreted as `eigenvalues of $\mu$', and the set $\{\lambda_1,\ldots,\lambda_n\}$ corresponds to an infinite-dimensional ellipsoid.
Likewise, the diffeomorphism $\eta$ plays the role of a `matrix of eigenvectors' for $\mu$.

In view of the whole chain $\Diff(\TT^{n})\to $

Here are some questions related to Conjecture \ref{conj:spectral} that would be interesting to pursue:
}}

\subsubsection{New Numerical Methods for Spectral Decompositions}\label{subsub:new_num_methods_spectral}

As discussed in \autoref{sub:isospectral}, there are several vertical (isospectral) flows for obtaining spectral decompositions, e.g., the Toda and Brockett flows.
The horizontal flow \eqref{eq:diag_flow} in \autoref{subsub:hor_flow_factorize_polynomials} to factorize characteristic polynomials represents a new type of flows for spectral decompositions.
It would be interesting to study this class of flows in-depth.

Convergence to a limit is, of course, one important question.
Another is to develop numerical methods based on these flows.
For that, one should look for a strictly convex functional $\hat F$ on $\poly{n}$ such that the gradient vector field $\nabla_{\bar{\mathcal G}}(\Lambda)$ can be evaluated efficiently.
For sparse, banded matrices, there are $\mathcal{O}(n)$ algorithms for evaluating the characteristic polynomial \cite{KrTe2008}.
This might be helpful.
It is plausible that some known iterative methods for computing eigenvalues can be seen as discretizations of flows of the form in \autoref{subsub:hor_flow_factorize_polynomials}.
The computation of eigenvalues of symmetric matrices is discussed thoroughly in the monograph by Parlett~\cite{Pa1980}.
For a summary of numerical methods for spectral decompositions, see \cite{Wa1993,GoVo2000}.



\vspace*{-4pt}
\section*{Acknowledgments}

I would like to thank Darryl Holm, Boris Khesin, Peter Michor, and Olivier Verdier for helpful discussions.
Furthermore, I am grateful to the anonymous reviewers, who provided many excellent suggestions for improvement.
In particular, the first reviewer suggested the idea described in \autoref{sub:reviewers_idea}.

\providecommand{\href}[2]{#2}
\providecommand{\arxiv}[1]{\href{http://arxiv.org/abs/#1}{arXiv:#1}}
\providecommand{\url}[1]{\texttt{#1}}
\providecommand{\urlprefix}{URL }


\begin{thebibliography}{10}

\bibitem{Ad1978} (MR520927) [10.1007/BF01410079]
\newblock M.~Adler,
\newblock \doititle{On a trace functional for formal pseudo-differential operators and the symplectic structure of the {K}orteweg--de{V}ries type equations,}
\newblock \emph{Invent. Math.}, \textbf{50} (1978), 219--248.

\bibitem{AlBoBr2004} (MR2086170) [10.1137/S0363012902419977]
\newblock F.~Alvarez, J.~Bolte and O.~Brahic,
\newblock \doititle{Hessian {R}iemannian gradient flows in convex programming,}
\newblock \emph{SIAM J. Control and Optim.}, \textbf{43} (2004), 477--501.

\bibitem{AmNa2000} (MR1800071)
\newblock S.~Amari and H.~Nagaoka,
\newblock \emph{Methods of Information Geometry},
\newblock Amer. Math. Soc., Providence, RI, 2000.

\bibitem{AnHaTa2003} (MR2001465) [10.1137/S0036141002410927]
\newblock S.~Angenent, S.~Haker and A.~Tannenbaum,
\newblock \doititle{Minimizing flows for the {M}onge--{K}antorovich problem,}
\newblock \emph{SIAM J. Math. Anal.}, \textbf{35} (2003), 61--97.

\bibitem{AtMi1981} (MR665876)
\newblock C.~Atkinson and A.~Mitchell,
\newblock Rao's distance measure,
\newblock \emph{Indian J. Stat. A}, \textbf{43} (1981), 345--365.

\bibitem{Ba2013} (MR2964453) [10.1007/978-3-642-30232-9_9]
\newblock F.~Barbaresco,
\newblock \doititle{Information geometry of covariance matrix: Cartan-siegel homogeneous bounded domains, mostow/berger fibration and frechet median},
\newblock in \emph{Matrix Information Geometry}, Springer, 2013, 199--255.

\bibitem{BaSt1972} [10.1145/361573.361582]
\newblock R.~H. Bartels and G.~Stewart,
\newblock \doititle{Solution of the matrix equation {$AX+XB=C$},}
\newblock \emph{Comm. ACM}, \textbf{15} (1972), 820--826.

\bibitem{BeBr2000} (MR1738163) [10.1007/s002110050002]
\newblock J.-D. Benamou and Y.~Brenier,
\newblock \doititle{A computational fluid mechanics solution to the monge-kantorovich mass transfer problem,}
\newblock \emph{Numer. Math.}, \textbf{84} (2000), 375--393.

\bibitem{BeBrOb2015}
\newblock J.-D. Benamou, Y.~Brenier and A.~Oberman,
\newblock \emph{Advances in Numerical Optimal Transportation},
\newblock Technical Report 15w5067, Banff International Research Station, 2015.

\bibitem{Bl1985} (MR801020) [10.1016/0167-6911(85)90005-2]
\newblock A.~Bloch,
\newblock \doititle{Estimation, principal components and {H}amiltonian systems,}
\newblock \emph{Sys. \& Cont. Lett.}, \textbf{6} (1985), 103--108.

\bibitem{Bl1990} (MR1097866) [10.1090/conm/114/1097866]
\newblock A.~M. Bloch,
\newblock \doititle{Steepest descent, linear programming and {H}amiltonian flows,}
\newblock \emph{Contemp. Math. AMS}, \textbf{114} (1990), 77--88.

\bibitem{BlBrRa1992} (MR1171760) [10.1007/BF02099528]
\newblock A.~M. Bloch, R.~W. Brockett and T.~S. Ratiu,
\newblock \doititle{Completely integrable gradient flows},
\newblock \emph{Comm. Math. Phys.}, \textbf{147} (1992), 57--74.

\bibitem{Br1991} (MR1100809) [10.1002/cpa.3160440402]
\newblock Y.~Brenier,
\newblock \doititle{Polar factorization and monotone rearrangement of vector-valued functions,}
\newblock \emph{Comm. Pure Appl. Math.}, \textbf{44} (1991), 375--417.

\bibitem{Br1991b} (MR1083465) [10.1016/0024-3795(91)90021-N]
\newblock R.~W. Brockett,
\newblock \doititle{Dynamical systems that sort lists, diagonalize matrices, and solve linear programming problems,}
\newblock \emph{Linear Algebra Appl.}, \textbf{146} (1991), 79--91.

\bibitem{Bu1984}
\newblock J.~Burbea,
\newblock \emph{Informative Geometry of Probability Spaces},
\newblock Technical report, DTIC Document, 1984.

\bibitem{BuRa1982} (MR680530) [10.1016/0047-259X(82)90065-3]
\newblock J.~Burbea and C.~R. Rao,
\newblock \doititle{Entropy differential metric, distance and divergence measures in probability spaces: A unified approach,}
\newblock \emph{J. Multivariate Anal.}, \textbf{12} (1982), 575--596.

\bibitem{Bu2008} (MR2401398) [10.1002/9780470753767]
\newblock J.~C. Butcher,
\newblock \emph{Numerical Methods for Ordinary Differential Equations},
\newblock 2nd edition, John Wiley \& Sons Ltd., Chichester, 2008.

\bibitem{Ca1992b} (MR1124980) [10.1090/S0894-0347-1992-1124980-8]
\newblock L.~A. Caffarelli,
\newblock \doititle{The regularity of mappings with a convex potential,}
\newblock \emph{J. Amer. Math. Soc.}, \textbf{5} (1992), 99--104.

\bibitem{CaIsZa1997} (MR1434938) [10.1090/S0025-5718-97-00902-2]
\newblock M.~P. Calvo, A.~Iserles and A.~Zanna,
\newblock \doititle{Numerical solution of isospectral flows,}
\newblock \emph{Math. Comp.}, \textbf{66} (1997), 1461--1486.

\bibitem{CaOl1990} (MR1079669) [10.1016/0047-259X(90)90026-E]
\newblock M.~Calvo and J.~M. Oller,
\newblock \doititle{A distance between multivariate normal distributions based in an embedding into the siegel group,}
\newblock \emph{J. Multivariate Anal.}, \textbf{35} (1990), 223--242.

\bibitem{CeMaOw2014} (MR3133432) [10.1016/j.jcp.2012.12.031]
\newblock E.~Celledoni, H.~Marthinsen and B.~Owren,
\newblock \doititle{An introduction to {L}ie group integrators--basics, new developments and applications,}
\newblock \emph{J. Comput. Phys.}, \textbf{257} (2014), 1040--1061.

\bibitem{Ch1982} (MR645898)
\newblock N.~N. {\v{C}}encov,
\newblock \emph{Statistical Decision Rules and Optimal Inference},
\newblock Amer. Math. Soc., Providence, R.I., 1982.

\bibitem{Ch1984} (MR745438) [10.1137/0605020]
\newblock M.~T. Chu,
\newblock \doititle{The generalized {T}oda flow, the {QR} algorithm and the center manifold theory,}
\newblock \emph{SIAM J. Alg. Discrete Meth.}, \textbf{5} (1984), 187--201.

\bibitem{Ch1992} (MR1171601) [10.1016/0362-546X(92)90157-A]
\newblock M.~T. Chu,
\newblock \doititle{Matrix differential equations: A continuous realization process for linear algebra problems,}
\newblock \emph{Nonlin. Anal.: Theor. Meth. \& Appl.}, \textbf{18} (1992), 1125--1146.

\bibitem{Ch1994} (MR1297988)
\newblock M.~T. Chu,
\newblock A list of matrix flows with applications,
\newblock \emph{Fields Institute Communications}, \textbf{3} (1994), 87--97.

\bibitem{Ch1995} (MR1317481) [10.1016/0024-3795(93)00091-D]
\newblock M.~T. Chu,
\newblock \doititle{Scaled {T}oda-like flows},
\newblock \emph{Linear Algebra Appl.}, \textbf{215} (1995), 261--273.

\bibitem{Ch2008b} (MR2436009) [10.1017/S0962492906340019]
\newblock M.~T. Chu,
\newblock \doititle{Linear algebra algorithms as dynamical systems,}
\newblock \emph{Acta Numer.}, \textbf{17} (2008), 1--86.

\bibitem{ChDr1991} (MR1112514) [10.1137/0522088]
\newblock M.~T. Chu and K.~R. Driessel,
\newblock \doititle{Constructing symmetric nonnegative matrices with prescribed eigenvalues by differential equations,}
\newblock \emph{SIAM J. Math. Anal.}, \textbf{22} (1991), 1372--1387.

\bibitem{ChNo1988} (MR972462) [10.1137/0725080]
\newblock M.~T. Chu and L.~K. Norris,
\newblock \doititle{Isospectral flows and abstract matrix factorizations,}
\newblock \emph{SIAM J. Numer. Anal.}, \textbf{25} (1988), 1383--1391.

\bibitem{Cl2010} (MR2729311) [10.1007/s00526-010-0323-5]
\newblock B.~Clarke,
\newblock \doititle{The metric geometry of the manifold of {R}iemannian metrics over a closed manifold,}
\newblock \emph{Calc. Var. PDE}, \textbf{39} (2010), 533--545.

\bibitem{Cl2013} (MR3024306) [10.4310/jdg/1361800866]
\newblock B.~Clarke,
\newblock \doititle{The completion of the manifold of {R}iemannian metrics,}
\newblock \emph{J. Differential Equations}, \textbf{93} (2013), 203--268.

\bibitem{DeDeLiTo1991} (MR1119279) [10.1137/0728076]
\newblock P.~Deift, J.~Demmel, L.-C. Li and C.~Tomei,
\newblock \doititle{The bidiagonal singular value decomposition and {H}amiltonian mechanics,}
\newblock \emph{SIAM J. Numer. Anal.}, \textbf{28} (1991), 1463--1516.

\bibitem{DeNaTo1983} (MR687364) [10.1137/0720001]
\newblock P.~Deift, T.~Nanda and C.~Tomei,
\newblock \doititle{Ordinary differential equations and the symmetric eigenvalue problem,}
\newblock \emph{SIAM J. Numer. Anal.}, \textbf{20} (1983), 1--22.

\bibitem{DeLiNaTo1986} (MR820068) [10.1002/cpa.3160390203]
\newblock P.~Deift, L.~Li, T.~Nanda and C.~Tomei,
\newblock \doititle{The {T}oda flow on a generic orbit is integrable,}
\newblock \emph{Comm. Pure Appl. Math.}, \textbf{39} (1986), 183--232.

\bibitem{Eb1967} [10.1090/S0002-9904-1968-12115-9]
\newblock D.~G. Ebin,
\newblock \doititle{On the space of Riemannian metrics},
\newblock \emph{Bull. Amer. Math. Soc.}, \emph{74} (1968), 1001--1003.

\bibitem{EbMa1970} (MR0271984) [10.2307/1970699]
\newblock D.~G. Ebin and J.~E. Marsden,
\newblock \doititle{Groups of diffeomorphisms and the notion of an incompressible fluid},
\newblock \emph{Ann. of Math.}, \textbf{92} (1970), 102--163.

\bibitem{Fi1922} [10.1007/978-1-4612-0919-5_2]
\newblock R.~A. Fisher,
\newblock \doititle{On the mathematical foundations of theoretical statistics,}
\newblock \emph{Breakthroughs in Statistics: Part of the series Springer Series in Statistics,} (1992), 11--44.

\bibitem{Fl1974} (MR0408647) [10.1103/PhysRevB.9.1924]
\newblock H.~Flaschka,
\newblock \doititle{The {T}oda lattice. {II}. existence of integrals,}
\newblock \emph{Physical Review B}, \textbf{9} (1974), 1924--1925.

\bibitem{FrGr1989} (MR1027070) [10.1307/mmj/1029004004]
\newblock D.~S. Freed and D.~Groisser,
\newblock \doititle{The basic geometry of the manifold of {R}iemannian metrics and of its quotient by the diffeomorphism group},
\newblock \emph{Michigan Math. J.}, \textbf{36} (1989), 323--344.

\bibitem{Fr1991} (MR1131949) [10.1002/mana.19911530125]
\newblock T.~Friedrich,
\newblock \doititle{Die {F}isher-information und symplektische strukturen,}
\newblock \emph{Math. Nachr.}, \textbf{153} (1991), 273--296.

\bibitem{GeMa1997} (MR1444110) [10.1016/S0024-3795(96)00235-2]
\newblock N.~H. Getz and J.~E. Marsden,
\newblock \doititle{Dynamical methods for polar decomposition and inversion of matrices,}
\newblock \emph{Linear Algebra Appl.}, \textbf{258} (1997), 311--343.

\bibitem{GiMi1991} (MR1107281) [10.1093/qmath/42.1.183]
\newblock O.~Gil-Medrano and P.~W. Michor,
\newblock \doititle{The {R}iemannian manifold of all {R}iemannian metrics},
\newblock \emph{Quart. J. of Math.}, \textbf{42} (1991), 183--202.

\bibitem{GoVo2000} (MR1798518) [10.1016/S0377-0427(00)00413-1]
\newblock G.~H. Golub and H.~A. van~der Vorst,
\newblock \doititle{Eigenvalue computation in the 20th century},
\newblock \emph{J. Comput. Appl. Math.}, \textbf{123} (2000), 35--65.

\bibitem{GrSh1979} (MR542888) [10.1090/S0002-9947-1979-0542888-3]
\newblock R.~E. Greene and K.~Shiohama,
\newblock \doititle{Diffeomorphisms and volume-preserving embeddings of noncompact manifolds,}
\newblock \emph{Trans. Amer. Math. Soc.}, \textbf{255} (1979), 403--414.

\bibitem{Ha1982} (MR656198) [10.1090/S0273-0979-1982-15004-2]
\newblock R.~S. Hamilton,
\newblock \doititle{The inverse function theorem of {N}ash and {M}oser,}
\newblock \emph{Bull. Amer. Math. Soc. (N.S.)}, \textbf{7} (1982), 65--222.

\bibitem{HeMo1992} (MR1158371) [10.1016/0024-3795(92)90180-I]
\newblock U.~Helmke and J.~Moore,
\newblock \doititle{Singular-value decomposition via gradient and self-equivalent flows},
\newblock \emph{Linear Algebra Appl.}, \textbf{169} (1992), 223--248.

\bibitem{HeMoPe1994} (MR1282692) [10.1137/S0895479891222490]
\newblock U.~Helmke, J.~Moore and J.~Perkins,
\newblock \doititle{Dynamical systems that compute balanced realizations and the singular value decomposition,}
\newblock \emph{SIAM J. Matrix Anal. Appl.}, \textbf{15} (1994), 733--754.

\bibitem{He1960} (MR0112151) [10.1090/S0002-9939-1960-0112151-4]
\newblock R.~Hermann,
\newblock \doititle{A sufficient condition that a mapping of {R}iemannian manifolds be a fibre bundle},
\newblock \emph{Proc. Amer. Math. Soc.}, \textbf{11} (1960), 236--242.

\bibitem{Ho1957} (MR0095178) [10.1007/BF02415202]
\newblock J.~H. Hodges,
\newblock \doititle{Some matrix equations over a finite field,}
\newblock \emph{Annali di Matematica Pura ed Applicata}, \textbf{44} (1957), 245--250.

\bibitem{JoKiOt1998} (MR1617171) [10.1137/S0036141096303359]
\newblock R.~Jordan, D.~Kinderlehrer and F.~Otto,
\newblock \doititle{The variational formulation of the {F}okker--{P}lanck equation,}
\newblock \emph{SIAM J. Math. Anal.}, \textbf{29} (1998), 1--17.

\bibitem{KhLeMiPr2013} (MR3037902) [10.1007/s00039-013-0210-2]
\newblock B.~Khesin, J.~Lenells, G.~Misio{\l}ek and S.~C. Preston,
\newblock \doititle{Geometry of diffeomorphism groups, complete integrability and geometric statistics,}
\newblock \emph{Geom. Funct. Anal.}, \textbf{23} (2013), 334--366.

\bibitem{KhWe2009} (MR2456522)
\newblock B.~Khesin and R.~Wendt,
\newblock \emph{The Geometry of Infinite-dimensional Groups}, vol.~51,
\newblock Springer-Verlag, Berlin, 2009.

\bibitem{KrTe2008} (MR2416564) [10.1016/j.laa.2007.11.024]
\newblock W.~Kratz and M.~Tentler,
\newblock \doititle{Recursion formulae for the characteristic polynomial of symmetric banded matrices},
\newblock \emph{Linear Algebra Appl.}, \textbf{428} (2008), 2482--2500.

\bibitem{KrMi1997b} (MR1471480) [10.1090/surv/053]
\newblock A.~Kriegl and P.~W. Michor,
\newblock \emph{The Convenient Setting of Global Analysis}, vol.~53,
\newblock American Mathematical Society, Providence, RI, 1997.

\bibitem{La1999} (MR1666820) [10.1007/978-1-4612-0541-8]
\newblock S.~Lang,
\newblock \emph{Fundamentals of Differential Geometry},
\newblock Springer-Verlag, New York, 1999.

\bibitem{Lo2008} (MR2358290) [10.1007/s00220-007-0367-3]
\newblock J.~Lott,
\newblock \doititle{Some geometric calculations on {W}asserstein space},
\newblock \emph{Comm. Math. Phys.}, \textbf{277} (2008), 423--437.

\bibitem{MaRa1999} (MR1723696) [10.1007/978-0-387-21792-5]
\newblock J.~E. Marsden and T.~S. Ratiu,
\newblock \emph{Introduction to Mechanics and Symmetry}, vol.~17,
\newblock 2nd edition, Springer-Verlag, New York, 1999.

\bibitem{Mc2001} (MR1844080) [10.1007/PL00001679]
\newblock R.~J. McCann,
\newblock \doititle{Polar factorization of maps on {R}iemannian manifolds,}
\newblock \emph{Geom. Funct. Anal.}, \textbf{11} (2001), 589--608.

\bibitem{Mi2008} (MR2428390) [10.1090/gsm/093]
\newblock P.~W. Michor,
\newblock \emph{Topics in Differential Geometry}, vol.~93,
\newblock American Mathematical Society, Providence, RI, 2008.

\bibitem{Mo2015} (MR3319973) [10.1007/s12220-014-9469-2]
\newblock K.~Modin,
\newblock \doititle{Generalized {H}unter--{S}axton equations, optimal information transport, and factorization of diffeomorphisms},
\newblock \emph{J. Geom. Anal.}, \textbf{25} (2015), 1306--1334.

\bibitem{Mo1781}
\newblock G.~Monge,
\newblock M{\'e}moire sur la th{\'e}orie des d{\'e}blais et de remblais,
\newblock 1781.

\bibitem{Mo1965} (MR0182927) [10.1090/S0002-9947-1965-0182927-5]
\newblock J.~Moser,
\newblock \doititle{On the volume elements on a manifold,}
\newblock \emph{Trans. Amer. Math. Soc.}, \textbf{120} (1965), 286--294.

\bibitem{Mo1975} (MR0455038)
\newblock J.~Moser,
\newblock Finitely many mass points on the line under the influence of an exponential potential--an integrable system,
\newblock in \emph{Dynamical systems, theory and applications}, Springer, 1975, 467--497.

\bibitem{MoVe1991} (MR1120138) [10.1007/BF02352494]
\newblock J.~Moser and A.~P. Veselov,
\newblock \doititle{Discrete versions of some classical integrable systems and factorization of matrix polynomials},
\newblock \emph{Comm. Math. Phys.}, \textbf{139} (1991), 217--243.

\bibitem{Ot2001} (MR1842429) [10.1081/PDE-100002243]
\newblock F.~Otto,
\newblock \doititle{The geometry of dissipative evolution equations: The porous medium equation,}
\newblock \emph{Comm. Partial Differential Equations}, \textbf{26} (2001), 101--174.

\bibitem{Pa1980} (MR570116)
\newblock B.~N. Parlett,
\newblock \emph{The Symmetric Eigenvalue Problem},
\newblock SIAM, 1980.

\bibitem{Pe2006} (MR2243772)
\newblock P.~Petersen,
\newblock \emph{Riemannian Geometry}, vol. 171 of Graduate Texts in Mathematics,
\newblock 2nd edition, Springer, New York, 2006.

\bibitem{Pe2015} (MR3413589) [10.1137/15M1010087]
\newblock G.~Peyr{\'e},
\newblock \doititle{Entropic approximation of {W}asserstein gradient flows,}
\newblock \emph{SIAM J. on Imag. Sci.}, \textbf{8} (2015), 2323--2351.

\bibitem{Ra1945} (MR0015748)
\newblock C.~Rao,
\newblock Information and the accuracy attainable in the estimation of statistical parameters,
\newblock \emph{Bull. Calcutta Math. Soc.}, \textbf{37} (1945), 81--91.

\bibitem{Ra1987}
\newblock C.~R. Rao,
\newblock Differential metrics in probability spaces,
\newblock \emph{Diff. Geom. Stat. Inference}, \textbf{10} (1987), 217--240.

\bibitem{ReSe1979} (MR549548) [10.1007/BF01391179]
\newblock A.~Reyman and M.~Semenov-Tian-Shansky,
\newblock \doititle{Reduction of {H}amiltonian systems, affine {L}ie algebras and {L}ax equations},
\newblock \emph{Inv. Math.}, \textbf{54} (1979), 81--100.

\bibitem{Ru1954} (MR0063764) [10.1007/BF01899329]
\newblock H.~Rutishauser,
\newblock \doititle{Ein infinitesimales analogon zum quotienten-differenzen-algorithmus,}
\newblock \emph{Archiv der Mathematik}, \textbf{5} (1954), 132--137.

\bibitem{Ru1958} (MR0090118)
\newblock H.~Rutishauser,
\newblock Solution of eigenvalue problems with the {LR}-transformation,
\newblock \emph{Nat. Bur. Standards Appl. Math. Ser}, \textbf{1958} (1958), 47--81.

\bibitem{Sh2007} (MR2293045) [10.1142/9789812707536]
\newblock H.~Shima,
\newblock \emph{The Geometry of {H}essian Structures},
\newblock World Scientific Publishing Co. Inc., Hackensack, NJ, 2007.

\bibitem{Sk1984} (MR793171)
\newblock L.~T. Skovgaard,
\newblock A {R}iemannian geometry of the multivariate normal model,
\newblock \emph{Scandinavian J. of Stat.}, \textbf{11} (1984), 211--223.

\bibitem{Sy1884}
\newblock J.~Sylvester,
\newblock Sur l'equations en matrices $px=xq$,
\newblock \emph{C.R. Acad. Sci.}, \textbf{99} (1884), 115--116.

\bibitem{Sy1980} (MR601577) [10.1016/0167-2789(80)90017-2]
\newblock W.~Symes,
\newblock \doititle{Hamiltonian group actions and integrable systems,}
\newblock \emph{Phys. D}, \textbf{1} (1980), 339--374.

\bibitem{Sy1982} (MR653781) [10.1016/0167-2789(82)90069-0]
\newblock W.~Symes,
\newblock \doititle{The {QR} algorithm and scattering for the finite nonperiodic {T}oda lattice},
\newblock \emph{Phys. D}, \textbf{4} (1982), 275--280.

\bibitem{Ta2011} (MR2871291)
\newblock A.~Takatsu,
\newblock Wasserstein geometry of {G}aussian measures,
\newblock \emph{Osaka J. Math.}, \textbf{48} (2011), 1005--1026.

\bibitem{To1970} [10.1142/9789814354332_0017]
\newblock M.~Toda,
\newblock \doititle{Waves in nonlinear lattice,}
\newblock \emph{Selected Papers of Morikazu Toda}, \textbf{18} (1993), 112--138.

\bibitem{To2013} (MR3180710) [10.3934/jgm.2013.5.511]
\newblock C.~Tomei,
\newblock \doititle{The {T}oda lattice, old and new,}
\newblock \emph{J. Geom. Mech.}, \textbf{5} (2013), 511--530.

\bibitem{Vi2009} (MR2459454) [10.1007/978-3-540-71050-9]
\newblock C.~Villani,
\newblock \emph{Optimal Transport: Old and New}, vol. 338,
\newblock Springer-Verlag, Berlin, 2009.

\bibitem{Wa1984} (MR750456) [10.1137/1026075]
\newblock D.~S. Watkins,
\newblock \doititle{Isospectral flows,}
\newblock \emph{SIAM Rev.}, \textbf{26} (1984), 379--391.

\bibitem{Wa1993} (MR1234638) [10.1137/1035090]
\newblock D.~S. Watkins,
\newblock \doititle{Some perspectives on the eigenvalue problem,}
\newblock \emph{SIAM Rev.}, \textbf{35} (1993), 430--471.

\end{thebibliography}
\medskip
Received January 2016; revised August 2016.
\medskip

\end{document}